\documentclass[reqno,10pt]{amsart}
\usepackage[utf8]{inputenc}
\usepackage{a4wide,amsmath,amsfonts,amssymb,amsthm}
\usepackage{mathtools,stmaryrd,bbm}
\usepackage{subfiles,appendix}
\usepackage{tikz}
\usepackage{hyperref}   

\title[Asymptotic stability of a finite sum of solitary waves for the ZK equation]{Asymptotic stability of a finite sum of solitary waves for the Zakharov-Kuznetsov equation}

\author[D. Pilod]{Didier Pilod}
\address{Department of Mathematics, University of Bergen, Postbox 7800, 5020 Bergen, Norway}
\email{Didier.Pilod@uib.no}

\author[F. Valet]{Fr\'ed\'eric Valet}
\address{CY Cergy Paris Universit\'e. Laboratoire de recherche Analyse, G\'eom\'etrie, Mod\'elisation (UMR CNRS 8088), 2 avenue Adolphe Chauvin, 95302 Cergy-Pontoise Cedex, France}
\email{Frederic.Valet@cyu.fr}

\newtheorem{lemm}{Lemma}[section]
\newtheorem{theo}[lemm]{Theorem}

\newtheorem{propo}[lemm]{Proposition}

\newtheorem{claim}[lemm]{Claim}

\newtheorem{rema}[lemm]{Remark}

\numberwithin{equation}{section}

\newcommand{\beone}{\mathbf{e}_1}

\newcommand{\bx}{\mathbf{x}}
\newcommand{\by}{\mathbf{y}}
\newcommand{\bz}{\mathbf{z}}

\usepackage{comment}
\excludecomment{toexclude}

\usetikzlibrary{math} 
\tikzmath{\Num=200;}

\date{\today}

\begin{document}
	
	\maketitle
	
	\begin{abstract}
		We prove the asymptotic stability of a finite sum of well-ordered solitary waves for the Zakharov-Kuznetsov equation in dimensions two and three. Moreover, we derive a qualitative version of the orbital stability result which turns out to be useful for the study of the collision of two solitary waves.  
		
		The proof extends the ideas of Martel, Merle and Tsai for the sub-critical gKdV equation in dimension one to the higher-dimensional case. It relies on  monotonicity properties on oblique half-spaces and rigidity properties around one solitary wave introduced by C\^ote, Mu\~noz, Pilod and Simpson in dimension two, and by Farah, Holmer, Roudenko and Yang in dimension three.
	\end{abstract}
	
	\section{Introduction}
	
	\subsection{The Zakharov-Kuznetsov equation}
	In this paper, we consider the Zakharov-Kuzentsov (ZK) equation
	\begin{align}\label{ZK}
		\partial_t u + \partial_x \left( \Delta u + u^2 \right)=0, 
	\end{align}
	where $u=u(t,\bx)$ is a real-valued function, $t \in \mathbb R$, $\bx =(x,\by) \in \mathbb R \times \mathbb R^{d-1}$, $d \ge 2$ is an integer, and $\Delta=\partial_x^2+\Delta_{\by}$ denotes the Laplacian in $\mathbb R^d$. This equation is a natural high-dimensional generalization of the Korteweg-de Vries (KdV) equation, which corresponds to the case of spatial dimension $d=1$. In dimensions $d=2$ and $3$, it describes the propagation of ionic-acoustic waves in a magnetized cold plasma (see  equation (15.175) in \cite{Bellan}). Equation \eqref{ZK} was introduced by Zakharov and Kuznetsov \cite{ZK} in the $3$-dimensional case. The rigorous derivation of the ZK equation from the Euler-Poisson system with magnetized field in the long wave regime was carried out by Lannes, Linares and Saut in \cite{LaLiSa}. We also refer to the work of Han Kwan \cite{HanKwan} for the derivation of ZK from the Vlasov-Poisson system. 
	
	Contrary to the KdV equation in one dimension, the Zakharov-Kuznetsov equation is not known to be completely integrable in dimensions $d \ge 2$. Nevertheless, it enjoys a Hamiltonian structure, where the Hamiltonian (also called energy) is given by
	\begin{equation} \label{def:energy}
		E(u)(t) := \int_{\mathbb{R}^d} \left( \frac{\vert \nabla u(t,\bx) \vert^2}{2} - \frac{u(t,\bx)^3}{3} \right) d\bx .
	\end{equation}
	The functionals $\int_{\mathbb{R}^d} u(t,\bx) d\bx$ and 
	\begin{equation} \label{def:mass}
		M(u)(t) := \int_{\mathbb{R}^d} u(t,\bx)^2 d\bx
	\end{equation}
	are also, at least formally, conserved by the flow of ZK. 
	Moreover, note that the set of solutions of \eqref{ZK} is invariant under scaling and translation. More precisely, if $u$ is a solution of \eqref{ZK}, then, for any $c>0$ and $\bz \in \mathbb R^d$, $u_{c,\bz}$ is also a solution, where
	\begin{align*}
		u_{c,\bz}(t,\bx) := c u (c^\frac32 t, c^{\frac12} (\bx-\bz)).
	\end{align*}
	Here we will focus on the two and three dimensional cases which are $L^2$-subcritical. 
	
	The Cauchy problem associated to the ZK equation has been extensively studied in the recent years. In dimension $d=2$, global well-posedness has been proved in the energy space $H^1(\mathbb R^2)$ by Faminski\u{\i} \cite{Fam}. We also refer to \cite{LiPa,GruHe,MoPi,Kino} for more results at lower regularity. To prove global well-posedness in the energy space in dimension $d=3$ is more challenging and the problem has only been solved recently by Herr and Kinoshita \cite{HeKin} (see \cite{LiSa,RiVe,MoPi} for former results). Below, we summarize the results in \cite{Fam,HeKin}. Let $d=2$ or $d=3$. Then, for any $u_0 \in H^1(\mathbb R^d)$, there exists a unique\footnote{The uniqueness only holds in a subspace of $C(\mathbb R : H^1(\mathbb R^d))$} solution $u$ of \eqref{ZK} in $C(\mathbb R : H^1(\mathbb R^d))$. Moreover, $M(u)(t)=M(u_0)$ and $E(u)(t)=E(u_0)$, for all $t \in \mathbb R$. 
	
	\subsection{Solitary waves}
	The Zakharov-Kuznetsov admits a family of solitary wave solutions propagating at constant speed in the horizontal direction. These solutions, which play a fundamental role in the study of the ZK dynamics, are of the form 
	\[ u(t,\bx)=Q_c(x - ct ,\by) \quad  \text{with} \quad Q_c(\bx) \underset{|\bx| \to +\infty}{\longrightarrow} 0 , \]
	where 
	\begin{align}\label{defi:scaling}
		Q_c(\bx) := c Q ( c^{\frac12} \bx ), \quad \text{for} \ c>0, 
	\end{align}
	and $Q$ is the \emph{ground state} of 
	\begin{align}\label{eq:ground_state}
		- \Delta Q+Q -Q^2=0.
	\end{align}
	Indeed, it is well-known that the elliptic equation \eqref{eq:ground_state} has a unique positive solution $Q$, called \emph{ground state}, which is smooth and radially symmetric. We refer to \cite{Strau77,BLP81} for the existence and \cite{Kwo89} for the uniqueness. Note that, in contrast to the one dimensional case, $Q$ is not explicit in dimension $d \ge 2$. However, it was proved in \cite{GNN81} that $Q$ and its derivative decay exponentially at infinity, \textit{i.e.} for any $\alpha \in \mathbb N^d$, there exists $C=C(\alpha)>0$ such that
	\begin{align}\label{asym:Q}
		\forall \vert \bx \vert >1, \quad \left\vert \partial^{\alpha}Q(\bx) \right\vert \leq C \vert \bx \vert^{-\frac{d-1}2} e^{-\vert \bx \vert}.
	\end{align}
	
	The solitary waves $Q_c$ were proved to be orbitally stable in dimension $d=2$ and $d=3$ by de Bouard \cite{deB96}. More recently, the asymptotic stability of the family $\{Q_c\}_{c>0}$ has been proved by C\^ote, Mu\~noz, Pilod and Simpson \cite{CMPS16} in dimension $d=2$ by following the strategy introduced by Martel and Merle for the generalized KdV equation in dimension $1$ (see \cite{MM01,MM05,MM08}). The proof relies on a Liouville property classifying the solutions localized around a solitary wave. A solution close to a solitary wave converges up to a subsequence to a limit object, whose emanating solution satisfies good decay properties. Thanks to the rigidity result, this solution has to be a solitary wave. The decay of the solution is shown by using monotonicity results for portions of the mass and the energy away from the solitary wave, exploiting the fact that solitary waves propagate to the right while the radiation is ejected to the left. 
	
	In higher dimensions, the geometry of the problem is more involved. Monotonicity properties on oblique half-planes were introduced in \cite{CMPS16} in order to reduce the convergence region to a compact set, where a classical convergence result can be used. 
	Another challenge that arises in higher dimensions is the non-explicit nature of the solitary waves. This creates additional problems in proving the rigidity result close to the solitary wave. In particular the virial estimate in \cite{CMPS16}, which is shown through a dual problem introduced by Martel \cite{MA06} in the one dimensional case, relies on the negativity of a scalar product which is checked numerically in dimension $d=2$. However, this sign condition breaks down in dimension $d=3$. To bypass this difficulty, Farah, Holmer, Roudenko and Yang derived in \cite{FHRY23} a new virial estimate in dimension $d=3$ based on different orthogonality conditions and relying on the numerical analysis of the spectra of a linear operator. This allowed them to extend the asymptotic stability result of the ZK solitary waves to the dimension $d=3$. 
 
 Finally, let us also mention the work of Mendez, Mu\~noz, Poblete and Pozo \cite{MMPP21} where new virial estimates in dimension $2$ and $3$ are used to prove the decay of the solutions in large time dependent spatial regions.
	
	\subsection{Finite sum of solitary waves}
	
	In dimension $1$, there exist explicit families of solitary waves called $N$-solitons. These solutions behave asymptotically at time $-\infty$ and $+\infty$ as the  superposition of $N$ well-ordered solitary waves with different speeds (see \cite{Miura76}). Since the solitary waves return to their original shape after interacting, the collision is called \emph{elastic}. The elasticity of the collision is somehow related to the integrable nature of the KdV equation.
	
	In higher dimensions, such explicit families of $N$-solitons are not known. However, the study of solutions behaving at infinity as the sum of $N$ solitary waves is important, since these solutions are expected to be universal attractors of the flow of \eqref{ZK}. This is related to the \emph{soliton resolution conjecture} for ZK, (see \cite{KRS21} for numerical simulations in dimension $d=3$). The existence and uniqueness of such asymptotic $N$-solitary waves was shown by Valet in the $2$ and $3$ dimensional setting \cite{Va21}, by using the strategy introduced by Martel for the gKdV equation in the $1$ dimensional case \cite{Ma05}. 
	
	Another important question concerning the finite sum of solitary waves is that of their stability. In \cite{CMPS16}, the orbital stability of a sum of $N$ well ordered solitary waves was proved in $H^1$ by extending the fundamental work of Martel, Merle and Tsai \cite{MMT02} for the gKdV equation to the $2$-dimensional case. In this paper, we revisit the stability result of \cite{CMPS16}. We first derive a qualitative version of the orbital stability, which will be useful for the study of the collision of solitary waves for ZK in a forthcoming work \cite{PV23}. Then, we also prove the asymptotic stability of a finite sum of well ordered solitary waves. 
	
	We state our main results in the theorem below. Without loss of generality, we will work with two solitary waves.

	\begin{theo}\label{theo:orbital} Let $d=2$ or $d=3$ and
		let $0<\underline{c}<\bar{c}$ be fixed. There exist positive constants $k=k(\underline{c},\bar{c})$, $K=K(\underline{c},\bar{c})$ and $A=A(\underline{c},\bar{c})$ such that the following is true. For two velocities $c_1^0$ and $c_2^0$ satisfying $\underline{c}<c_2^0<c_1^0<\bar{c}$ and $\sigma=c_1^0-c_2^0<\min\{1, \underline{c}\}$, we define
		\begin{align*}
			\alpha^\star:= k\sigma \quad \text{and} \quad Z^\star := K | \ln \sigma| .
		\end{align*}
		Let $\alpha<\alpha^\star$, $Z> Z^\star$, $(z_i^0, \omega_i^0) \in \mathbb{R} \times \mathbb{R}^{d-1}$, $i=1,2$, and $u_0 \in H^1(\mathbb{R}^d)$ satisfy
		\begin{align}\label{eq:IC}
			\left\| u_0 - \sum_{i=1}^2 Q_{c_i^0} \left( \cdot - \left( z_i^0, \omega_i^0 \right) \right) \right\|_{H^1} < \alpha \quad \text{and} \quad z_1^0-z_2^0 >Z,
		\end{align}
		and let $u \in C(\mathbb R: H^1(\mathbb R^d))$ be the solution of \eqref{ZK} evolving from the initial condition $u(0)=u_0$. Then, there exist functions $(z_i, \omega_i, c_i) \in \mathcal{C}^{1}(\mathbb{R}_+ : \mathbb{R}^d \times (0,+\infty))$, $i=1,2,$ such that: 
		\begin{itemize}
			\item[(i)] (Orbital stability) for all $t \ge 0$,
			\begin{align}
				&\left\| u(t) - \sum_{i=1}^2 Q_{c_i(t)} \left( \cdot - (z_i(t), \omega_i(t)) \right) \right\|_{H^1} \leq A \left( \alpha + e^{-\frac{1}{32} \sqrt{\underline{c}}Z }\right); \label{eq:u_orbital} \\
				&\sum_{i=1}^2 \left( \left\vert \dot{z}_i(t) -c_i(t) \right\vert + \left\vert \dot{\omega}_i(t) \right\vert \right) \leq    A \left( \alpha + e^{-\frac{1}{32} \sqrt{\underline{c}}Z }\right)  ; \label{eq:bound_dot:z_omega_i_t}\\
               & z(t)=z_1(t)-z_2(t)\geq \frac12(Z+ \sigma t) ;\label{defi:z} \\
                & \sum_{i=1}^2 \left\vert c_i(t) - c_i^0 \right\vert \leq  A \alpha .\label{eq:bound_c_i_t}
			\end{align}

			\item[(ii)] (Asymptotic stability) the limits $c_1^+=\lim_{+\infty} c_1(t)$, $c_2^+=\lim_{+\infty}c_2(t)$ exist, and  
			\begin{align}
               & \sum_{i=1}^2\left\vert c_i^+ - c_i^0 \right\vert \leq A \left( \alpha + e^{-\frac18 \sqrt{\underline{c}}Z } \right); \\
				&\lim_{t\rightarrow +\infty} \left\| u(t) - \sum_{i=1}^2 Q_{c_i(t)} \left( \cdot - \left( z_i(t), \omega_i(t) \right) \right) \right\|_{H^1(x>\frac{1}{100}\underline{c} t)} = 0 ; \label{conv_asymp}
                \\ & (\dot{z}_i(t),\dot{\omega}_i(t)) \underset{t \rightarrow + \infty}{\longrightarrow} (c_i^+,0), \quad i=1,2 . \nonumber
			\end{align}
		\end{itemize}
	\end{theo}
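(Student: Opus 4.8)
\emph{Proof proposal.} The plan is to adapt to the present higher-dimensional setting the strategy of Martel, Merle and Tsai \cite{MMT02} for the subcritical gKdV equation, feeding in as external ingredients the monotonicity properties on oblique half-spaces and the rigidity (Liouville) property around a single solitary wave established in \cite{CMPS16} ($d=2$) and \cite{FHRY23} ($d=3$).

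\textbf{Step 1 (modulation and bootstrap).} By the Cauchy theory and a standard continuity argument, I fix $A$ large and let $[0,T^\star)$ be the maximal interval on which $u(t)$ stays within $H^1$-distance $2A(\alpha+e^{-\frac1{32}\sqrt{\underline c}Z})$ of a sum of two translated solitary waves with speeds in a fixed compact subset of $(0,\infty)$. On this interval the implicit function theorem yields $\mathcal C^1$ parameters $(z_i,\omega_i,c_i)$, $i=1,2$, such that, writing $R_i(t,\bx)=Q_{c_i(t)}(\bx-(z_i(t),\omega_i(t)))$ and $\varepsilon(t)=u(t)-R_1(t)-R_2(t)$, the remainder $\varepsilon(t)$ satisfies $d+1$ orthogonality conditions per wave, against $\partial_x R_i$, $\partial_{y_j}R_i$ ($1\le j\le d-1$) and $\partial_cQ_c|_{c=c_i(t)}(\cdot-(z_i,\omega_i))$. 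Differentiating these relations in time, using \eqref{ZK} and the exponential decay \eqref{asym:Q}, produces the modulation system
\[
\sum_{i=1}^2\big(|\dot z_i-c_i|+|\dot\omega_i|+|\dot c_i|\big)\lesssim \|\varepsilon(t)\|_{H^1}+ e^{-\frac1{16}\sqrt{\underline c}\,z(t)},\qquad z(t)=z_1(t)-z_2(t).
\]
Since $\dot z=(c_1-c_2)+O(\|\varepsilon\|_{H^1}+e^{-\frac1{16}\sqrt{\underline c}z})$ and the choices $\alpha<k\sigma$, $Z>K|\ln\sigma|$ force the bootstrap size and the interaction term to be $\le\sigma/2$ on $[0,T^\star)$, integrating $\dot z\ge\sigma/2$ gives \eqref{defi:z}.

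\textbf{Step 2 (Lyapunov functional and orbital stability).} I introduce cutoffs $\psi_i(t,\bx)$ attached to \emph{oblique} half-spaces separating the two waves, each $\psi_i$ passing from $1$ on $R_1,\dots,R_i$ to $0$ on $R_{i+1},\dots$, its transition region translating at a speed strictly between $c_2$ and $c_1$, and set
\[
\mathcal F(t):=E(u)(t)+\sum_{i=1}^2\frac{c_i(t)}2\int_{\mathbb R^d}u(t,\bx)^2\psi_i(t,\bx)\,d\bx .
\]
The monotonicity estimates on oblique half-spaces of \cite{CMPS16} (and their $d=3$ counterparts) give almost-monotonicity, $\mathcal F(t_2)\le\mathcal F(t_1)+Ce^{-\frac18\sqrt{\underline c}z(t_1)}$ for $t_1\le t_2$ — the mechanism being that the solitary waves travel rightwards while the radiation is ejected leftwards through the transition regions. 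On the other hand, the coercivity of the action $E+cM$ around a single solitary wave (de Bouard \cite{deB96}), together with the orthogonality conditions and the conservation of $M$, yields $\mathcal F(t)-\mathcal F_\infty\ge\lambda\|\varepsilon(t)\|_{H^1}^2-C(\|\varepsilon(t)\|_{H^1}^3+e^{-\frac18\sqrt{\underline c}z(t)})$ for a suitable reference value and $\lambda>0$. Chaining these with $\mathcal F(0)\lesssim\alpha^2+e^{-\frac1{16}\sqrt{\underline c}Z}$ closes the bootstrap, so $T^\star=+\infty$ and \eqref{eq:u_orbital}, \eqref{eq:bound_dot:z_omega_i_t} follow. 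The sharper bound \eqref{eq:bound_c_i_t} comes from comparing the almost-conserved localized masses $M_i(t)=\int u^2\psi_i$ at times $0$ and $t$: expanding $M_i(t)=\sum_{j\le i}M(Q_{c_j(t)})+O(\|\varepsilon\|_{H^1})$ and inverting the diffeomorphism $c\mapsto M(Q_c)=c^{2-d/2}M(Q)$ (monotone for $d\le3$).

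\textbf{Step 3 (asymptotic stability).} The localized masses $M_i(t)$ are bounded and almost-monotone, hence converge; by Step 2 the $c_i(t)$ converge to $c_i^+$ with $\sum_i|c_i^+-c_i^0|\lesssim\alpha+e^{-\frac18\sqrt{\underline c}Z}$. For \eqref{conv_asymp}, fix $t_n\to+\infty$; for each $i$ the sequence $u(t_n,\cdot+(z_i(t_n),\omega_i(t_n)))$ converges weakly in $H^1$, up to a subsequence, to some $w_{0,i}$. The monotonicity estimates show that the ZK solution emanating from $w_{0,i}$ is uniformly (in $t\in\mathbb R$) localized in space and stays close to $Q_{c_i^+}$; the rigidity theorem of \cite{CMPS16,FHRY23} then forces it to be exactly a translated $Q_{c_i^+}$, and the orthogonality conditions passed to the weak limit pin down $w_{0,i}=Q_{c_i^+}$, i.e. $\varepsilon(t_n)\rightharpoonup0$ near each wave. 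A further monotonicity argument (comparing the localized mass of $\varepsilon$ just to the left of each wave at $t_n$ with earlier times) upgrades this to $\|\varepsilon(t_n)\|_{H^1(x>\frac1{100}\underline c t_n)}\to0$; as this holds along every sequence, \eqref{conv_asymp} follows. Finally, since the waves live in $\{x>\frac1{100}\underline c t\}$ (because $c_i>\underline c$), inserting $\varepsilon(t)\to0$ there into the modulation system gives $(\dot z_i,\dot\omega_i)\to(c_i^+,0)$.

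\textbf{Main obstacle.} The delicate point is the geometric construction underlying Steps 2–3: one must design a family of cutoffs attached to oblique half-spaces that simultaneously (a) separate the two waves from each other, (b) separate them from the leftward radiation, and (c) isolate a single wave so that the Liouville property applies to the weak limit — all while keeping the errors in the monotonicity estimates exponentially small in $z(t)$, which demands careful control of the transverse Laplacian $\Delta_{\by}$ against the decay \eqref{asym:Q}. The second difficulty is the passage from weak to strong convergence in the rigidity step, where the non-explicit nature of $Q$ in dimensions $d\ge2$ makes the numerically-verified spectral information of \cite{CMPS16,FHRY23} the only available input.
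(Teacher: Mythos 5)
Your roadmap follows the paper's overall strategy (MMT02-type modulation and bootstrap for part (i); weak limits, monotonicity on oblique half-spaces and the Liouville theorems of \cite{CMPS16,FHRY23} for part (ii)), and your Step 2, based on a single functional $\mathcal F=E(u)+\sum_i\tfrac{c_i}2\int u^2\psi_i$, is the legitimate alternative route that the paper itself points to in Remark 1.3 (Martel--Merle \cite{MM11}); the paper instead combines energy conservation, the almost-monotonicity of one localized mass at the midpoint $m(t)=\tfrac12(z_1+z_2)$, and an Abel resummation. However, as written there are concrete gaps. The most serious one is your choice of orthogonality conditions: you modulate so that $\epsilon\perp\partial_xR_i,\ \partial_{y_j}R_i,\ \partial_cQ_c|_{c=c_i}$, whereas the paper imposes $\epsilon\perp R_i$ (see \eqref{eps:ortho}). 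This is not cosmetic. The coercivity available for $L=-\Delta+1-2Q$ (Proposition \ref{prop:L} (iv), and its two-soliton localization, Lemma \ref{lemm:coercivity}) requires $\langle f,Q\rangle=0$: since $L\Lambda Q=-Q$ and $\langle\Lambda Q,Q\rangle=\tfrac12\int Q>0$, orthogonality to $Q$ eliminates the negative direction of $L$, while orthogonality to $\Lambda Q$ carries no such sign information and coercivity is not guaranteed without an additional argument (e.g.\ using mass conservation to control $\langle\epsilon,R_i\rangle$), which you do not supply. Moreover, the condition $\int R_i\epsilon=0$ is exactly what makes the linear term $2\int R\epsilon\,\psi_\gamma$ in the localized mass exponentially small; with your conditions this term is only $O(\|\epsilon\|_{L^2})$ and the quadratic control of $c_i(t)-c_i(0)$ (Lemma \ref{lemm:quadra_control_error}) breaks down. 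Your functional $\mathcal F$ removes the linear term for structural reasons but not the coercivity issue. Relatedly, you never identify where the quantified scaling $\alpha^\star=k\sigma$ comes from: in the paper it is forced by the resummation step, in which the signed quantity $D(t)=\sum_ic_i(0)\left(c_i(t)-c_i(0)\right)$ is divided by $c_1(0)-c_2(0)\ge\sigma/2$; any proof of the theorem as stated must exhibit this loss of $\sigma^{-1}$.

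In part (ii) you treat the two waves symmetrically, but the decay on the right of the limit profile attached to the \emph{second} wave is the genuinely new difficulty in $d\ge2$: the oblique transition line of the weight must be propagated backwards in time from the right of the second wave to the left of the first one, which forces one to work with $u-R_1$, to introduce the intermediate time $T_1\ge\kappa t_0$ at which the line reaches distance $x_0$ from the first wave (Lemma \ref{lemm:limsup2_mass}), and to inject the already-established strong convergence around the first wave (Lemma \ref{lemm:decay_u_2}). Your sketch asserts the uniform localization of the second limit profile without addressing this. Finally, in $d=3$ the Liouville theorem needs the polynomially weighted decay \eqref{decay:3d} in \emph{all} directions, extracted by running the oblique monotonicity for several angles and combining $|\bx|\lesssim|x|+|x+y_1|+|x+y_2|$; you flag the obstacle but give no mechanism. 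These points are all fixable along the lines of the paper, but they are precisely where the proof lives.
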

	
	\begin{rema}
		The theorem can be generalized to $N$ solitary waves with velocities $0<\underline{c}< c_N^0 < \cdots < c_1^0< \bar{c}$ by defining
		\begin{align*}
			\sigma := \min \left(c_{N-1}^0 - c_N^0, \cdots, c_1^0-c_2^0 \right).
		\end{align*}
	\end{rema} 
	
	\begin{rema} Regarding the orbital stability results in \cite{MMT02,CMPS16}, we precise the dependence of the constants $\alpha^\star$ and $Z^\star$ with respect to the difference $\sigma$ between the velocities of the solitary waves. This refinement will be useful to study the dynamics of the collision of nearly equals solitary waves of the ZK equation in a forthcoming work \cite{PV23}.  
		
		For this reason and for the sake of simplicity, we choose to focus on the case where $\sigma=c_1^0-c_2^0 < \min\{1,\underline{c} \}$. Without this restriction, a similar result would hold by redefining $\alpha^\star=k \min\{1,\sigma\}$ and $Z^\star=K\left(1+|\ln(\min\{1,\sigma\})|\right)$.
		
		Finally, we refer to Proposition 3.2 in \cite{MM11} for a different proof of a similar result in one dimension based on a refined energy functional. 
	\end{rema}

 \begin{rema}
It follows from the proof that the asymptotic stability result also holds on half-spaces of the form 
\begin{equation*}
\mathcal{AS}(t,\theta)=\left\{ \bx=(x,\by) \in \mathbb R^d : x-\frac1{100}\underline{c}t+(\tan{\theta})y_j>0 \right\},
\end{equation*}
where $|\theta|<\frac{\pi}3$ and $j=1,2$ ($y_j=y$ if $d=2$). We refer to Remark 1.1 in \cite{CMPS16} for a similar property around one solitary wave.

Moreover, in dimension $d=3$, the convergence might be improved on conical regions of the form 
\begin{equation*}
\mathcal{CAS}(t,\theta)=\left\{ \bx=(x,y_1,y_2) \in \mathbb R^3 : x-\frac1{100}\underline{c}t+(\tan{\theta})\sqrt{1+y_1^2+y_2^2}>0 \right\},
\end{equation*}
$|\theta|<\frac{\pi}3$, arguing as in \cite{FHRY23}.
 \end{rema}

\subsection{Outline of the proof}

The proof of the orbital stability follows the road map introduced in \cite{MMT02} in the one dimensional case, and already used in \cite{CMPS16} in the two dimensional case. The main novelty here is that we quantify the range of values of the distance $\alpha$ of the initial datum $u_0$ to the sum of the two solitary waves and the initial distance of their centers $Z$ in the horizontal direction for which the orbital stability holds in terms of a lower bound $\underline{c}$, a higher bound $\bar{c}$ and the difference between the initial velocities $\sigma:=c_1^0-c_2^0$.

The strategy consists in using a bootstrap argument in a tubular neighbourhood of the sum of the two decoupled solitary waves and assuming for the sake of contradiction that the maximal time interval $[0,t^{\star})$ for which the solution stays in this neighbourhood is finite. 

On this finite time interval, by using modulation theory, the geometrical parameters representing the centers $\bz_i(t)$ and the velocities $c_i(t)$ of the waves are adjusted in such a way that the error $\epsilon(t)$ between the solution and the sum of the solitary waves satisfies suitable orthogonality relations. The conservation of the mass and the energy (see \eqref{def:energy} and \eqref{def:mass}) provides a quadratic bound in the $H^1$-norm of the error $\epsilon$ for the variation of $c_1(t) + c_2(t)$ and of $c_1^2(t)+c_2^2(t)$. Furthermore, since the mass around the first solitary wave is almost conserved, a monotonicity argument gives a signed control of the evolution of $c_1(t)$. Then, an Abel resummation argument yields a quadratic bound for the variation of the speeds $c_1(t)$ and $c_2(t)$ in terms of the $H^1$ norm of the error.

Next, by refining and combining the previous bounds with a coercivity argument for the linearized operator around each solitary wave, we obtain an estimate of the error at time $t$ in terms of the initial error and the horizontal distance between the solitary waves. Note that the coercivity is obtained up to some orthogonality relations, which had been previously  adjusted by modulation theory. This allows to strictly improve the $H^1$-bound of the error on the whole time interval $[0,t^{\star})$, which concludes the proof of Theorem \ref{theo:orbital} (i) by contradiction.

\smallskip

The proof of the asymptotic stability of the sum of two solitary waves in Theorem \ref{theo:orbital} (ii) extends the strategy of Martel and Merle for the generalized KdV equation in one dimension (see \cite{MMT02,MM08}) to the two and three dimensional cases. It relies on the arguments used for the asymptotic stability around one solitary wave (see see \cite{MM01,MM05,MM08} in one dimension, \cite{CMPS16} in two dimensions and \cite{FHRY23} in three dimensions). In particular, the rigidity results (Liouville properties) around one solitary wave proved in the two dimensional case in \cite{CMPS16} and in the three dimensional case in \cite{FHRY23} are the key ingredients. 

In the orbital stability setting, the solution $u$ is close to a sum of two well-ordered  modulated solitary waves. We proceed by induction on the solitary waves from right to left to prove the asymptotic stability. 

First, we show that the solution retranslated around the first solitary wave converges in a half-space to a rescaled ground state. By compactness, we extract from the translated solution, a subsequence which converges on a sequence of times tending to $\infty$ to a limit profile $\tilde{u}_{0,1}$, whose emanating solution $\tilde{u}_1$ is orbitally close to the first solitary wave and satisfies a suitable decay condition. Then, we conclude from the Liouville property that $\tilde{u}_1$ has to be a rescaled ground state. In order to prove the decay of the limit profile $\tilde{u}_1$, we make use of monotonicity properties for portions of the mass and energy of $u$ on oblique half spaces of angle $\vert \theta_0 \vert < \frac{\pi}{3}$ around the hyperplane $\{x=0\}$. This reduces the convergence region to a compact set around the first solitary wave, where classical convergence results can be used. By showing that the limit ground state is independent of the sequence of times, we conclude the proof of the asymptotic stability on the left of the first solitary wave.

Next, we show that the difference between the solution and the first solitary wave, retranslated around the second solitary wave, converges in a half-space to another rescaled ground-state. We follow the same strategy as the one explained above around the first solitary wave and only highlight the main difference. As in the one dimensional case (see \cite{MMT02}), the decay of the solution $\tilde{u}_2$, emanating from the limit profile $\tilde{u}_{0,2}$, is proved by using monotonicity arguments on the mass and the energy from the right of the second solitary wave to the left of the first solitary wave. Note however that in higher dimension, the geometry plays a significant role and one has to carefully adapt the times and angles to ensure that the oblique lines where the derivative of the weight function is localized always stay on the right of the second solitary wave and on the left of the first one. This problem does not arise in the case of one solitary wave \cite{CMPS16,FHRY23} or in the one dimensional case in \cite{MMT02,MM08}.

Finally, we conclude the proof of the asymptotic stability result in a region that goes slower than the two solitary waves using monotonicity properties as previously.

\medskip 
The paper is organized as follows: the first sections are devoted to the case of the dimension $d=2$; in Section \ref{Sec:property}, we introduce several properties which will be used  to prove the quantitative orbital stability in Section \ref{Sec:orbital} and the asymptotic stability in Section \ref{Sec:asympt}. Finally, in Section \ref{Sec:3d_case}, we explain the main modifications of the proof of Theorem \ref{theo:orbital} in dimension $d=3$.

	\subsection{Notation}
	
	\begin{itemize}
		\item The inequality $a \lesssim b$ means that there exists a constant $C$ independent of all the parameters such that $a \leq C b$. This constant may change from line to line.
		
		\item The inequality $a \lesssim_{\underline{c},\bar{c}} b$ means that there exists a function $C=C_{\underline{c}, \bar{c}}$ that depends only on $\underline{c}$ and $\bar{c}$ such that $a \leq C_{\underline{c}, \bar{c}} b$. Once again, the constant may change from line to line. 
		
		\item In the case $d=2$, we will denote $\bx =(x,y) \in \mathbb R^2$. Then, $\vert \bx \vert=(x^2+y^2)^{\frac12}$ and $\langle \bx \rangle=(1+\vert \bx \vert^2)^{\frac12}.$ We specify the notation for the $3$-dimensional case in Section \ref{Sec:3d_case}.
		
		\item We will work with real-valued function $f=f(\bx)=f(x,y): \mathbb R^d \to \mathbb R$. For $1 \le p \le +\infty$ and $s \in \mathbb R$, $L^p(\mathbb R^d)$, respectively $W^{s,p}(\mathbb R^d)$, denotes the standard Lebesgue spaces, respectively Sobolev spaces. We also write $H^s(\mathbb R^d)=W^{s,2}(\mathbb R^d)$. 
		For $f,g \in L^2(\mathbb R^d)$ two real-valued functions, we denote 
		by $\langle f,g \rangle =\int_{\mathbb R^d} f(\bx)g(\bx) d\bx$ their scalar product\footnote{Note that there is no risk of confusion with the japanese bracket defined above for an element $\bx \in \mathbb R^d$.}.

		\item For $c>0$, we define the generator of the scaling symmetry $\Lambda_c$ by
		\begin{align}
			& \Lambda_c (f)(\bx) = \Lambda_c f(\bx) := \frac{d}{d\tilde{c}} \left( \tilde{c} f(\sqrt{\tilde{c}} \cdot) \right)_{\vert \tilde{c}=c}(\bx) = \left( \left(1+\frac{1}{2}\bx \cdot \nabla\right)  f\right)\left( \sqrt{c} \bx \right), \label{defi:Lambda}
		\end{align}
		In the case $c=1$, we denote 
		\begin{equation} \label{defi:Lambda_Q}
			\Lambda Q=\Lambda_1 Q
		\end{equation}
	\end{itemize}

	\section{Decomposition and Properties of a solution close to the sum of \texorpdfstring{$2$}{2} solitons} \label{Sec:property}

 In this section, we state and prove several useful results that will be used in the proof Theorem \ref{theo:orbital_stab} (i). For simplicity, we focus in the case of dimension $d=2$, but the same results hold also in dimension $d=3$.
	
	\subsection{Properties of the linearized operator around the ground state}
	We recall that the ground state $Q$ of \eqref{eq:ground_state} is positive, smooth and radially symmetric. Moreover, it enjoys the exponential decay \eqref{asym:Q}. 
	
	The linearized operator $L$ around the ground state $Q$ is defined by
	\begin{align}\label{defi:L}
		L:= -\Delta+1-2Q.
	\end{align}
	The following proposition gathers some classical results on $L$ (see for example \cite{CMPS16} and the references therein).
	
	\begin{propo}\label{prop:L} The operator $L: H^2(\mathbb R^2) \subseteq L^2(\mathbb R^2) \to L^2(\mathbb R^2)$ satisfies the following properties.
		\begin{enumerate}
			\item[(i)] \emph{Essential spectrum:} $L$ is a self-adjoint operator and its essential spectrum is given by $ \sigma_{ess}(L) = [1,+\infty)$. 
			\item[(ii)] \emph{Non-degeneracy of the spectrum:} $ ker(L) = \text{span} \, \{\partial_x Q, \partial_y Q\}$.
			\item[(iii)] \emph{Negative eigenvalue:} $L$ has a unique negative eigenvalue $-\lambda_0<0$, which is simple, and there exists a radially symmetric positive eigenfunction $\chi_0$associated to $-\lambda_0$. 
			\item[(iv)] \emph{Coercivity:} there exists $C>0$ such that, for any $f\in H^2 \subset L^2$ satisfying $\langle f, \partial_xQ \rangle=\langle f, \partial_yQ \rangle=\langle f, Q \rangle =0$:
			\begin{align*}
				\left\langle L f,f \right\rangle \geq C \| f \|_{L^2}^2.
			\end{align*}
			\item[(v)]\emph{Scaling:} 
			$L\Lambda Q=-Q$. Moreover,  $\langle \Lambda Q , Q \rangle=\frac12 \int Q >0$.
		\end{enumerate}
	\end{propo}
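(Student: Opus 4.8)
The statement to be proved is Proposition~\ref{prop:L}, collecting the standard spectral facts about $L = -\Delta + 1 - 2Q$ on $L^2(\mathbb R^2)$. Here is how I would organize the argument.

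\smallskip

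\textbf{Setup and self-adjointness.} Since $Q$ is smooth, bounded and decays exponentially by \eqref{asym:Q}, the multiplication operator $-2Q$ is a bounded symmetric perturbation of $-\Delta+1$. Hence $L$ is self-adjoint on the domain $H^2(\mathbb R^2)$ of $-\Delta$, and by Weyl's theorem on the invariance of the essential spectrum under relatively compact perturbations, $\sigma_{ess}(L)=\sigma_{ess}(-\Delta+1)=[1,+\infty)$. (The potential $-2Q$ is $(-\Delta)$-compact because it decays at infinity.) This proves (i).

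\smallskip

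\textbf{Negative eigenvalue and ground-state structure.} Differentiating the scaling relation $Q_c = cQ(\sqrt{c}\cdot)$ in $c$ at $c=1$ and using \eqref{eq:ground_state} gives $L\Lambda Q = -Q$; since $\langle L\Lambda Q,\Lambda Q\rangle = -\langle Q,\Lambda Q\rangle = -\tfrac12\int Q < 0$, the operator $L$ has at least one negative direction, so its bottom eigenvalue $-\lambda_0$ is strictly negative. Because $L$ is a Schrödinger operator with potential bounded below, the bottom of the spectrum is an eigenvalue (the infimum in the Rayleigh quotient is attained, as the potential well is integrable and the essential spectrum starts at $1$), it is simple, and the associated eigenfunction $\chi_0$ can be taken strictly positive — this is the standard Perron--Frobenius/ground-state argument for Schrödinger operators (e.g.\ via the positivity-improving property of $e^{-tL}$, or via the fact that $|\chi_0|$ is also a minimizer so $\chi_0$ cannot change sign, combined with unique continuation). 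Radial symmetry of $\chi_0$ follows from radial symmetry of $Q$ together with simplicity: the rotation of $\chi_0$ is again a positive ground state, hence a scalar multiple of $\chi_0$, forcing it to be radial. This gives (iii), and the computation $L\Lambda Q=-Q$, $\langle\Lambda Q,Q\rangle=\tfrac12\int Q>0$ gives (v).

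\smallskip

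\textbf{Kernel.} For (ii), translation invariance of \eqref{eq:ground_state} gives $L\partial_x Q = L\partial_y Q = 0$, so $\{\partial_x Q,\partial_y Q\}\subset \ker L$. The reverse inclusion is the non-degeneracy theorem of Kwong~\cite{Kwo89} (and, in the form stated, of \cite{CMPS16} and references therein): any $H^2$ solution of $Lf=0$ is a linear combination of the first spherical-harmonic modes, i.e.\ of the partial derivatives of $Q$. I would simply cite this; reproving it requires an ODE analysis of the radial and angular-mode reductions together with the sign of $Q$ and is outside the scope one would redo here.

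\smallskip

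\textbf{Coercivity.} For (iv), this is the classical consequence of (ii) and (iii) plus one scalar computation. On the subspace orthogonal to $\partial_x Q,\partial_y Q$ (the kernel), $L$ is nonnegative with a spectral gap above $0$ \emph{except} for the single negative direction spanned by $\chi_0$; imposing additionally $\langle f,Q\rangle=0$ removes that negative direction provided $\langle \chi_0,Q\rangle\neq 0$, which holds because both $\chi_0$ and $Q$ are positive. Quantitatively: write $f = a\chi_0 + g$ with $g$ in the positive part of the spectrum and orthogonal to the kernel; the constraint $\langle f,Q\rangle=0$ together with $\langle L\Lambda Q,\cdot\rangle$-type bilinear identities (the classical ``$\langle L^{-1}Q,Q\rangle<0$'' computation, here available since $L^{-1}Q = -\Lambda Q$ and $\langle -\Lambda Q,Q\rangle=-\tfrac12\int Q<0$) forces $a$ to be controlled by $\|g\|_{L^2}$, from which $\langle Lf,f\rangle \gtrsim \|f\|_{L^2}^2$ follows by a standard compactness/contradiction argument on the unit sphere of the constrained subspace. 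I expect the only genuinely delicate point to be making this last coercivity estimate clean without circularity; everything else is either a one-line differentiation of the scaling/translation symmetries or a citation (Weyl for (i), Kwong for (ii), Perron--Frobenius for (iii)).
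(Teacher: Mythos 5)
The paper does not prove this proposition at all: it is presented as a collection of classical facts with a pointer to \cite{CMPS16} and the references therein, so there is no internal argument to compare yours against. Your sketch follows the standard literature route (Weyl for (i), Kwong-type non-degeneracy for (ii), Perron--Frobenius for the ground state in (iii), differentiation of the scaling/translation symmetries for (v) and the kernel inclusion, and the constrained-coercivity argument via $\langle L^{-1}Q,Q\rangle=-\langle \Lambda Q,Q\rangle<0$ for (iv)), and most of it is correct as written.

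There is, however, one genuine gap: the \emph{uniqueness} of the negative eigenvalue in (iii). Your argument produces a negative direction (via $\langle L\Lambda Q,\Lambda Q\rangle<0$), shows the bottom of the spectrum is a simple eigenvalue with a positive radial eigenfunction, but never rules out a second negative eigenvalue. This is not a formality: it is exactly the statement that the Morse index of $L$ equals one, and it is the input your step (iv) silently uses when you write $f=a\chi_0+g$ ``with $g$ in the positive part of the spectrum'' --- orthogonality to $\chi_0$ and to $\ker L$ only lands $g$ in the nonnegative part if there is no further negative spectrum. The standard way to close this is the variational characterization of $Q$ (as a minimizer of the Weinstein functional, or of the action on the Nehari manifold), which shows that the second variation $\langle Lf,f\rangle$ is nonnegative on a codimension-one subspace; the min--max principle then caps the number of negative eigenvalues at one. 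You should either supply this argument or cite it explicitly alongside Kwong, since without it both (iii) and (iv) are incomplete. A second, minor point: in (iv) the passage from ``$a$ controlled by $\|g\|_{L^2}$'' to $\langle Lf,f\rangle\gtrsim\|f\|_{L^2}^2$ by ``compactness/contradiction on the unit sphere'' needs care because the constrained subspace is not locally compact in $L^2$; the clean route is the purely spectral-decomposition computation (as in Weinstein's lemma), which avoids any compactness argument.
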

	
	\subsection{Modulation close to the sum of decoupled solitary waves}
	
	We consider a solution $v$ close to a sum of two decoupled and \emph{well prepared} solitary waves. In this subsection, we adjust the set of geometric parameters of translation and velocities of the solitary waves $\Gamma=\left(z_1,z_2,\omega_1,\omega_2,c_1,c_2\right)$ to ensure some important orthogonality relations for the difference between the solution and the sum of the two solitary waves. 
	
	Let $\underline{c}$ and $\bar{c}$ be two positive numbers, such that $0<\underline{c}<\bar{c}$, representing the lower bound and the upper bound $\bar{c}$ of the set of velocities.
	For $Z>1$, $\alpha>0$ and two velocities $\underline{c}<c_2^0<c_1^0<\bar{c}$, we define the sets of geometric parameters
	\begin{equation}\label{defi:Xi}
		\mathbb R^{4}_Z:= \left\{ (z_1, z_2,\omega_1, \omega_2) \in \mathbb{R}^4  \ : \ z_{1}-z_2 > Z,  \right\}.
	\end{equation}
	and the tubular neighbourhood of the sum of the two well-ordered solitary waves $Q_{c_1^0}$ and $Q_{c_2^0}$ by
	\begin{align} \label{def:tub}
		\mathcal{U}_{c_1^0, c_2^0, Z, \alpha} := \left\{ u \in H^1 (\mathbb{R}^2) : \ \inf_{(z_1,z_2,\omega_1,\omega_2) \in \mathbb R^4_Z} \left\| u - \sum_{i=1}^2 Q_{c_i^0} \left( \cdot - (z_i,\omega_i) \right) \right\|_{H^1} < \alpha \right\}.
	\end{align}

	\begin{propo}[Choice of modulation parameters]\label{propo:choice_mod_param}
		Let $0<\underline{c}<\bar{c}$. There exist positive constants $A_1=A_1(\underline{c},\bar{c})$, $\alpha_1^\star=\alpha_1^\star(\underline{c},\bar{c})$, and $Z_1^\star =Z_1^\star(\underline{c},\bar{c})$ such that the following is true. For $0<\alpha< \alpha_1^\star$, $Z>Z_1^\star$ and $\underline{c}<c_2^0<c_1^0<\bar{c}$, let $\mathcal{I}$ be a time interval and $u\in \mathcal{C}(\mathcal{I} : H^1(\mathbb{R}^2))$ be a solution of \eqref{ZK} satisfying $u(t) \in \mathcal{U}_{c_1^0, c_2^0, Z, \alpha}$, for all $t \in \mathcal{I}$.
		Then there exists a unique function $\Gamma=\left(z_1,z_2,\omega_1,\omega_2,c_1,c_2\right) \in \mathcal{C}^1\left(\mathcal{I} : \mathbb{R}^4 \times (0,+\infty)^2\right)$ such that, by defining
		\begin{align}\label{defi:R_n}
			R_i(t,\bx) := Q_{c_i(t)} \left( \bx - \left( z_i(t), \omega_i(t)\right) \right), \ i=1,2, \quad \text{and} \quad \epsilon(t) :=  u(t) - \sum_{i=1}^2 R_i(t) ,
		\end{align}
		we have, for any $t \in \mathcal{I}$ and $i\in \{ 1,2\}$,
		\begin{align}
			& \int R_i(t) \epsilon(t) = \int \partial_x  R_i(t) \epsilon(t) = \int \partial_y  R_i(t)\epsilon(t) = 0, \label{eps:ortho} \\
			& \left\| \epsilon (t) \right\|_{H^1} \leq A_1 \alpha, \label{estimate_modulation:eps} \\ & z(t)= z_1(t)- z_2(t) > Z - A_1 \alpha>\frac{3}4Z, \label{estimate_modulation:z} \\
			&|c_i(t)-c_i^0| \le A_1 \alpha, \label{estimate_modulation:c} \\
            &  \frac{14}{15}\underline{c} < c_2(t) \quad \text{and} \quad c_1(t) < \frac{16}{15}\bar{c}. \label{estimate_modulation:c:bis}
		\end{align}
	\end{propo}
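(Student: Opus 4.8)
The plan is to obtain $\Gamma$ by a quantitative application of the implicit function theorem, in the spirit of the modulation arguments of \cite{MMT02,CMPS16}.

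\emph{Set-up.} Since $u(t)\in\mathcal U_{c_1^0,c_2^0,Z,\alpha}$, I would first pick, for each $t\in\mathcal I$, parameters $(\tilde z_i,\tilde\omega_i)\in\mathbb R^4_Z$ almost realising the infimum in \eqref{def:tub}, so that $\big\|u(t)-\sum_i Q_{c_i^0}(\cdot-(\tilde z_i,\tilde\omega_i))\big\|_{H^1}<2\alpha$, and set $\Gamma^0=(\tilde z_1,\tilde z_2,\tilde\omega_1,\tilde\omega_2,c_1^0,c_2^0)$. For $\Gamma=(z_1,z_2,\omega_1,\omega_2,c_1,c_2)$ in a fixed-size neighbourhood of $\Gamma^0$ and $w\in H^1$, writing $R_i[\Gamma]=Q_{c_i}(\cdot-(z_i,\omega_i))$ as in \eqref{defi:R_n}, I define $\Phi(\Gamma,w)\in\mathbb R^6$ with components $\langle R_i[\Gamma],w-R_1[\Gamma]-R_2[\Gamma]\rangle$, $\langle\partial_xR_i[\Gamma],w-R_1[\Gamma]-R_2[\Gamma]\rangle$ and $\langle\partial_yR_i[\Gamma],w-R_1[\Gamma]-R_2[\Gamma]\rangle$, $i=1,2$. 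Then \eqref{eps:ortho} is exactly $\Phi(\Gamma(t),u(t))=0$, and $\Phi(\Gamma^0,R_1[\Gamma^0]+R_2[\Gamma^0])=0$.

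\emph{The Jacobian.} The core of the argument is to show that $D_\Gamma\Phi$ is uniformly invertible near base points of the form $(\Gamma^0,R_1[\Gamma^0]+R_2[\Gamma^0])$. There the ``error slot'' $w-R_1-R_2$ vanishes, so, using $\partial_{z_j}R_j=-\partial_xR_j$, $\partial_{\omega_j}R_j=-\partial_yR_j$, $\partial_{c_j}R_j=(\Lambda_{c_j}Q)(\cdot-(z_j,\omega_j))$ (cf. \eqref{defi:Lambda}) and $\partial_{z_j}R_k=\partial_{\omega_j}R_k=\partial_{c_j}R_k=0$ for $k\neq j$, every term in which a $\Gamma$-derivative hits the first entry of a scalar product drops out, and $D_\Gamma\Phi=-\langle\,\cdot\,,\partial_\Gamma(R_1+R_2)\rangle$. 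Ordering the parameters as $(c_1,c_2,z_1,z_2,\omega_1,\omega_2)$, the matrix is block $2\times2$ in the two waves, with the $i$-th diagonal $3\times3$ block equal, after the change of variables $\bx\mapsto\sqrt{c_i}\bx$, the radial symmetry and parity of $Q$, and Proposition~\ref{prop:L}(v), to the invertible diagonal matrix with entries
\begin{equation*}
-\langle Q_{c_i},\Lambda_{c_i}Q\rangle=-c_i^{1-\frac d2}\,\tfrac12{\textstyle\int Q}\neq0,\qquad \|\partial_xR_i\|_{L^2}^2=c_i^{3-\frac d2}\|\partial_xQ\|_{L^2}^2,\qquad \|\partial_yR_i\|_{L^2}^2=c_i^{3-\frac d2}\|\partial_yQ\|_{L^2}^2,
\end{equation*}
the vanishing of the off-diagonal entries of each block coming from identities such as $\langle R_i,\partial_xR_i\rangle=\frac12\int\partial_x(R_i^2)=0$, $\langle\partial_xR_i,\partial_yR_i\rangle=0$, and $\langle\partial_xR_i,\Lambda_{c_i}Q_i\rangle=0$ by parity. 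These three entries are bounded away from $0$ and $\infty$ uniformly for $c_i$ in a fixed compact subset of $(0,+\infty)$ containing $[\underline c,\bar c]$, while every entry of the off-diagonal ($i\neq j$) blocks couples the two waves and is therefore $O(e^{-\gamma_0 Z})$ for some $\gamma_0=\gamma_0(\underline c)>0$, by the exponential decay \eqref{asym:Q} and $z_1-z_2>Z$. Hence, for $Z>Z_1^\star(\underline c,\bar c)$ large enough, $D_\Gamma\Phi$ is invertible with inverse bounded in terms of $\underline c,\bar c$ only.

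\emph{Conclusion.} The quantitative implicit function theorem then provides constants $\alpha_1^\star,A_1$ depending only on $\underline c,\bar c$ such that, for $\|u(t)-R_1[\Gamma^0]-R_2[\Gamma^0]\|_{H^1}<2\alpha<\alpha_1^\star$, there is a unique $\Gamma(t)$ in the fixed-size neighbourhood of $\Gamma^0$ with $\Phi(\Gamma(t),u(t))=0$ and $|\Gamma(t)-\Gamma^0|\leq A_1\alpha$. This yields \eqref{estimate_modulation:c}, hence \eqref{estimate_modulation:c:bis} after shrinking $\alpha_1^\star$ so that $A_1\alpha_1^\star<\frac1{15}\underline c$; the bound \eqref{estimate_modulation:z} from $z_1-z_2\geq(\tilde z_1-\tilde z_2)-2A_1\alpha>Z-A_1\alpha>\frac34Z$; and \eqref{estimate_modulation:eps} from $\|\epsilon(t)\|_{H^1}\leq\|u(t)-R_1[\Gamma^0]-R_2[\Gamma^0]\|_{H^1}+\sum_i\|R_i[\Gamma(t)]-R_i[\Gamma^0]\|_{H^1}\lesssim_{\underline c,\bar c}\alpha+|\Gamma(t)-\Gamma^0|\lesssim_{\underline c,\bar c}\alpha$. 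Global uniqueness of $\Gamma$ on $\mathcal I$ follows from the local uniqueness of the implicit function theorem together with a connectedness argument on $\mathcal I$. Finally, for the $\mathcal C^1$-regularity in $t$: since $u$ solves \eqref{ZK} and $R_i[\Gamma],\partial_xR_i[\Gamma],\partial_yR_i[\Gamma]$ are Schwartz, the maps $t\mapsto\langle R_i[\Gamma],u(t)\rangle$ (and the analogues) are $\mathcal C^1$, with derivative obtained by integrating by parts so that the factor $\partial_x(\Delta u+u^2)$ falls on the Schwartz function; thus $\Phi$ is jointly $\mathcal C^1$ in $(t,\Gamma)$ and the implicit function theorem with the parameter $t$ gives $\Gamma\in\mathcal C^1(\mathcal I:\mathbb R^4\times(0,+\infty)^2)$. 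I expect the main obstacle to be the Jacobian step: one has to compute all $36$ entries, single out the exactly vanishing ones using the symmetries of $Q$ and Proposition~\ref{prop:L}(v), and bound the wave--wave coupling by $O(e^{-\gamma_0 Z})$ via \eqref{asym:Q}, so that the explicit diagonal part (with entries bounded below only in terms of $\underline c,\bar c$) dominates; the rest is a routine quantitative implicit function theorem argument.
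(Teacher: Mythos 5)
Your proposal is correct and follows essentially the same route as the paper: a quantitative implicit function theorem applied to the same six orthogonality functionals, with the Jacobian split into the explicit diagonal part (computed via scaling, parity and Proposition~\ref{prop:L}(v), exactly the matrix $D$ of the paper) plus a remainder controlled by $\alpha$ and the $O(e^{-\gamma_0 Z})$ wave--wave coupling from \eqref{asym:Q}. The only cosmetic difference is the treatment of $\mathcal C^1$-regularity in $t$ (you differentiate $\Phi$ in time after integrating by parts, the paper invokes Cauchy--Lipschitz), and your per-time choice of near-minimizer plus connectedness argument replaces the paper's construction of a single $C^1$ map $v\mapsto\Gamma(v)$ on the whole tube; both are standard and equivalent here.
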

	
	\begin{toexclude}
    \begin{rema}\label{rema:IFT}
			If we assume moreover that, for a given time $t_0 \in I$ and for a given $\Gamma_0=(z_1^0,z_2^0, \omega_1^0, \omega_2^0, c_1^0, c_2^0) \in \Xi_{\underline{c},\bar{c},Z}$, $u$ satisfies the condition
			\begin{align} \label{tube:t0}
				\left\| u(t_0) - \sum_{i=1}^2 Q_{c_i^0} \left( \cdot -(z_i^0, \omega_i^0) \right) \right\|_{H^1} < \alpha,
			\end{align}
			then in addition to \eqref{eps:ortho},\eqref{estimate_modulation:eps} and \eqref{estimate_modulation:z}, we also have, for $i=1,2$, 
			\begin{align*}
				\left\vert z_i(t_0) -z_i^0\right\vert \leq K_1 \alpha,\quad \left\vert \omega_i(t_0)-\omega_i^0 \right\vert \leq K_1\alpha \quad \text{and} \quad \left\vert c_i(t_0) - c_i^0 \right\vert \leq K_1 \alpha.
			\end{align*}
		\end{rema}
	\end{toexclude}
	
	\begin{proof}
		The proof of the existence, uniqueness and continuity of $\Gamma$ relies on classical arguments based on a qualitative version of the implicit function theorem for functions independent of time (see for example Lemma 1 and Lemma 8 in \cite{MMT02}, Proposition 2.2 in \cite{Eyc22} or \cite{PV23} and the references therein).
		
		More precisely, for any $v \in \mathcal{U}_{c_1^0, c_2^0, Z, \alpha}$ and $\Gamma=(z_1,z_2,\omega_1,\omega_2,c_1,c_2) \in \mathbb R^4_Z \times I(c_1^0;\gamma) \times I(c_2^0;\gamma)$, where $I(c_j^0,\gamma)=(c_j^0-\gamma,c_j^0+\gamma)$ and $\gamma>0$ is small enough, we define, for $i=1,2$,
		\begin{align*}
			\bar{R}_i(\bx):= Q_{c_i}(\bx-(z_i,\omega_i)), \quad \bar{\epsilon} (\bx) := v(\bx) - \sum_{i=1}^2 \bar{R}_i(\bx)
		\end{align*}
		and  $\Phi : \mathcal{U}_{c_1^0, c_2^0, Z, \alpha}  \times \mathbb R^4_Z \times I(c_1^0;\gamma) \times I(c_2^0;\gamma)$ by
		\begin{equation*}
			\Phi (v,\Gamma)  = \left( \int \partial_x \bar{R}_1 \bar{\epsilon}, \ \int \partial_x \bar{R}_2 \bar{\epsilon}, \ \int \partial_y \bar{R}_1 \bar{\epsilon}, \ \int \partial_y \bar{R}_2 \bar{\epsilon}, \ \int \bar{R}_1 \bar{\epsilon}, \ \int \bar{R}_2 \bar{\epsilon} \right) . 
		\end{equation*}
		The main point consists in proving  that $d_\Gamma \Phi(v,\Gamma)$ is invertible. 
		This is ensured by decomposing $d_\Gamma \Phi(v,\Gamma)=D+A$, where 
		\begin{equation*}
			D=\text{diag}\left(c_1^2\int (\partial_xQ)^2, c_2^2\int (\partial_xQ)^2, c_1^2\int (\partial_yQ)^2, c_2^2\int (\partial_yQ)^2,\int Q \Lambda Q,\int Q \Lambda Q  \right)
		\end{equation*}
		is invertible (see Proposition \ref{prop:L} (v) ) and, by choosing $\gamma \lesssim \alpha$, $\alpha_1^\star$
		small enough and $Z_1^\star$ large enough depending only on $\underline{c}$ and $\bar{c}$, $\|A\|_{L^{\infty}} \le e^{-\frac12 
			\sqrt{\underline{c}} Z^\star}+\alpha^{\star}<\|D^{-1}\|_{L^{\infty}}^{-1}$ (see the computations in the proof of Lemma \ref{est:R1R2} in Appendix \ref{app:R1R2}). Thus, it follows from the implicit function theorem that there exist a positive constant $A_1=A_1(\underline{c},\bar{c})$ and a unique $C^1$-function $\Gamma:\mathcal{U}_{c_1^0, c_2^0, Z, \alpha}  \to \mathbb R^4_{Z-A_1\alpha} \times I(c_1^0;A_1 \alpha) \times I(c_2^0;A_1 \alpha)$ such that $\Phi(v,\Gamma(v))=0$, for all $v \in \mathcal{U}_{\underline{c}, \bar{c}, Z, \alpha}$.
		
		Now, let $u \in C(\mathcal{I}:\mathbb R^2)$ be a solution of \eqref{ZK} such that $u(t) \in \mathcal{U}_{c_1^0, c_2^0, Z, \alpha}$ for all $t \in \mathcal{I}$. By abuse of notation, we define $\Gamma(t):=\Gamma(u(t))$, so that $\Gamma \in C(\mathcal{I})$. The fact that $\Gamma$ is of class $C^1$ follows from the Cauchy-Lipschitz theorem. 
		
		The orthogonality conditions \eqref{eps:ortho} are equivalent to $\Phi(u(t),\Gamma(t))=0$, $t \in \mathcal{I}$, while estimates \eqref{estimate_modulation:eps}-\eqref{estimate_modulation:c:bis} follow from the fact that $(u(t),\Gamma(t)) \in \mathcal{U}_{c_1^0, c_2^0, Z, \alpha}  \to \mathbb R^4_Z \times I(c_1^0;A_1 \alpha) \times I(c_2^0;A_1 \alpha)$ and the triangle inequality.
	\end{proof}

	Below, we derive a useful lemma to estimate different integrals involving $R_1$ and $R_2$, whose proof is given in Appendix \ref{app:R1R2}. With the notation \eqref{defi:R_n} in hand, we introduce, for $i=1,2$,
	\begin{equation} \label{def:LambdaRi}
		\Lambda R_i(t,\bx):=\Lambda_{c_i(t)}Q(\bx-(z_i(t),\omega_i(t))),
	\end{equation}
	where $\Lambda_{c_i}$ is defined in \eqref{defi:Lambda}.
	
	\begin{lemm} \label{est:R1R2}
		Under the assumptions of Proposition \ref{propo:choice_mod_param}, we have, for $i=1,2$, 
		\begin{align} 
			& \int R_i^2=c_i \int Q^2, \label{id:Ri}\\  
			& \int (\partial_x R_i)^2=\int (\partial_y R_i)^2=c_i^2 \int (\partial_xQ)^2,\label{id:dRi} \\
			&\int (\Lambda_i R_i)^2=c_i^{-1} \int (\Lambda Q)^2.\label{Lambda:dRi}
		\end{align}
		Moreover, if $z=z_1-z_2$, we also have, for $i,j=1,2$ with $i \neq j$,
		\begin{align} 
			& \left| \int R_i R_j \right| \lesssim  \bar{c}e^{-\frac78  \sqrt{\underline{c}} z}, \label{est:R1R2.1}\\
			& \left| \int R_i \Lambda R_j \right| \lesssim  \bar{c}\underline{c}^{-1}e^{-\frac78 \sqrt{\underline{c}} z} , \label{est:R1R2.2}\\  
			& \left| \int \partial_xR_i R_j \right|+\left| \int \partial_yR_i R_j \right| \lesssim  \bar{c}^{\frac32}e^{-\frac78 \sqrt{\underline{c}} z},
			\label{est:R1R2.3} \\ 
			& \left| \int \partial_xR_i \partial_xR_j \right|+\left| \int \partial_xR_i \partial_yR_j \right|+\left| \int \partial_yR_i \partial_yR_j \right| \lesssim  \bar{c}^{\frac52}e^{-\frac78 \sqrt{\underline{c}} z},
			\label{est:R1R2.4} \\ 
			&\left| \int \partial_xR_i \Lambda R_j \right|+\left| \int \partial_yR_i \Lambda R_j \right| 
			\lesssim \bar{c}^{\frac32}\underline{c}^{-1}e^{-\frac78 \sqrt{\underline{c}} z}, \label{est:R1R2.5} \\ 
			&\left| \int \partial_x(R_iR_j) R_i \right|  
			\lesssim \bar{c}^{\frac52}e^{-\frac{14}{15}\sqrt{\underline{c}} z} \label{est:R1R2.6}\\ 
			&\left| \int \partial_x(R_iR_j) \partial_xR_i \right|  
			\lesssim \bar{c}^{3}e^{-\frac{14}{15}\sqrt{\underline{c}} z} . \label{est:R1R2.7}
		\end{align}
	\end{lemm}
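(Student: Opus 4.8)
The plan is to prove Lemma \ref{est:R1R2} by separating the exact identities for a single solitary wave from the interaction estimates between two well-separated solitary waves.

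\textbf{Single-wave identities.} The identities \eqref{id:Ri}, \eqref{id:dRi}, \eqref{Lambda:dRi} are pure changes of variables. For \eqref{id:Ri}, write $R_i(t,\bx)=Q_{c_i}(\bx-(z_i,\omega_i))=c_i Q(c_i^{1/2}(\bx-(z_i,\omega_i)))$ from \eqref{defi:scaling}, substitute $\by = c_i^{1/2}(\bx-(z_i,\omega_i))$, and pick up the Jacobian $c_i^{-d/2}$; in dimension $d=2$ this gives $\int R_i^2 = c_i^2 \cdot c_i^{-1}\int Q^2 = c_i\int Q^2$. For \eqref{id:dRi}, each spatial derivative brings down a factor $c_i^{1/2}$, so $\int(\partial_x R_i)^2 = c_i^2 \cdot c_i \cdot c_i^{-1}\int(\partial_x Q)^2 = c_i^2\int(\partial_x Q)^2$, and the radial symmetry of $Q$ forces $\int(\partial_x Q)^2=\int(\partial_y Q)^2$. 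For \eqref{Lambda:dRi}, from the definition \eqref{defi:Lambda} one has $\Lambda_{c_i}Q(\bx)=((1+\tfrac12\bx\cdot\nabla)Q)(\sqrt{c_i}\bx)$, so $\Lambda R_i$ is just a translate of $\Lambda_{c_i}Q$; the change of variables yields $\int(\Lambda_i R_i)^2 = c_i^{-1}\int(\Lambda Q)^2$ (the single $c_i^{-1}$ survives since $\Lambda_{c_i}Q$ carries no prefactor of $c_i$ beyond the rescaled argument, unlike $R_i$ itself).

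\textbf{Interaction estimates.} For \eqref{est:R1R2.1}--\eqref{est:R1R2.5}, each integrand is a product of two functions, one concentrated near $(z_1,\omega_1)$ and one near $(z_2,\omega_2)$, each decaying like $\langle\cdot\rangle^{-(d-1)/2}e^{-\sqrt{c}|\cdot|}$ by the pointwise bound \eqref{asym:Q} combined with \eqref{defi:scaling} (derivatives and the $\Lambda$-operator preserve this type of decay, only changing polynomial prefactors and powers of $c$). Using $\frac{14}{15}\underline{c}<c_i<\frac{16}{15}\bar c$ from \eqref{estimate_modulation:c:bis}, I bound each factor by $C\bar c^{\,m}\langle\cdot\rangle^{-1/2}e^{-\sqrt{\underline c}|\cdot|}$ in $d=2$ (the power $m$ tracks how many derivatives act). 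Then the standard convolution-type estimate $e^{-a|\bx-\bx_1|}e^{-a|\bx-\bx_2|}\le e^{-a|\bx_1-\bx_2|}e^{-a\min(|\bx-\bx_1|,|\bx-\bx_2|)}$, together with $|z_1-z_2|>\frac34 Z$ and the crude bound $|(z_1,\omega_1)-(z_2,\omega_2)|\ge z_1-z_2 = z$, gives a factor $e^{-\sqrt{\underline c}\,z}$ out front and an integrable remainder; the slight loss from $e^{-\sqrt{\underline c}\,z}$ to $e^{-\frac78\sqrt{\underline c}\,z}$ absorbs the polynomial weights $\langle\bx\rangle^{1/2}$ via $\langle\bx\rangle^{1/2}\lesssim e^{\frac18\sqrt{\underline c}|\bx|}$ for large separation (this is where $Z_1^\star$ is taken large). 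The powers of $\bar c$ in each line ($\bar c$, $\bar c\underline c^{-1}$, $\bar c^{3/2}$, $\bar c^{5/2}$, $\bar c^{3/2}\underline c^{-1}$) are read off by counting the prefactors from \eqref{defi:scaling} and the $c_i^{1/2}$ per derivative, plus a $c_i^{-1}$ whenever $\Lambda R_j$ appears. Estimates \eqref{est:R1R2.6}--\eqref{est:R1R2.7} are similar but now the integrand contains \emph{three} wave factors: two copies near $(z_i,\omega_i)$ and one near $(z_j,\omega_j)$ (after expanding $\partial_x(R_iR_j)$ by Leibniz and noting $\partial_x R_i$ and $\partial_x R_j$ each still decay like the wave). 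Two of the three exponentials sit at the same center and only contribute polynomial and $c$-factors, while the pairing of the remaining two yields $e^{-\sqrt{\underline c}\,z}$; the stronger exponent $e^{-\frac{14}{15}\sqrt{\underline c}\,z}$ reflects that one can share the separation between \emph{both} cross pairings, and $\frac{14}{15}$ rather than $1$ again leaves room to swallow the polynomial weights.

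\textbf{Main obstacle.} The only genuinely delicate point is the bookkeeping of the exponential rates: one must check that after extracting $e^{-\sqrt{\underline c}\,z}$ the leftover integral $\int_{\mathbb R^d}\langle\bx\rangle^{1/2}e^{-\sqrt{\underline c}\min(|\bx-\bx_1|,|\bx-\bx_2|)}\,d\bx$ (and its three-center analogue) is finite and uniformly bounded in terms of $\underline c$ alone, and that the passage from rate $1$ to $\tfrac78$ (resp.\ $\tfrac{14}{15}$) is legitimate for $Z>Z_1^\star(\underline c,\bar c)$. Everything else is a routine, if lengthy, change-of-variables and triangle-inequality computation, which is why it is deferred to Appendix \ref{app:R1R2}.
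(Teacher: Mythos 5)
Your overall route --- a change of variables for \eqref{id:Ri}--\eqref{Lambda:dRi}, then exponential-decay plus triangle-inequality arguments for the cross terms, extracting only the horizontal separation $z=z_1-z_2$ and leaving a localized, integrable remainder --- is exactly the paper's; the paper packages the two-center estimate as a technical lemma (Lemma \ref{lemma:fg}) proved by splitting $\mathbb R^2$ into the regions $x<0$, $0<x<z$, $x>z$, and then reduces every cross term to it by rescaling.

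There is, however, one step that fails as written. The ``standard convolution-type estimate'' you invoke,
\begin{equation*}
e^{-a|\bx-\bx_1|}e^{-a|\bx-\bx_2|}\le e^{-a|\bx_1-\bx_2|}\,e^{-a\min(|\bx-\bx_1|,|\bx-\bx_2|)},
\end{equation*}
is false: at the midpoint $\bx=\tfrac12(\bx_1+\bx_2)$ the left-hand side equals $e^{-aD}$ with $D=|\bx_1-\bx_2|$, while the right-hand side equals $e^{-3aD/2}$. The triangle inequality gives $d_1+d_2\ge D$, but you cannot in addition keep the full factor $e^{-a\min(d_1,d_2)}$ at rate $a$. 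The standard fix is to split the exponent: since $d_1+d_2\ge D$ and $d_1+d_2\ge 2\min(d_1,d_2)$, one has for any $\theta\in(0,1)$
\begin{equation*}
e^{-a(d_1+d_2)}\le e^{-a\theta D}\,e^{-2a(1-\theta)\min(d_1,d_2)},
\end{equation*}
so the integrable (and polynomial-weight-absorbing) remainder is paid for precisely by dropping the rate from $1$ to $\theta$. This is what the paper's Lemma \ref{lemma:fg} implements (rate $\tfrac{31}{32}$ pointwise, $\tfrac{15}{16}$ in $L^p$), and after combining with $\sqrt{c_j}\ge\sqrt{\tfrac{14}{15}\underline c}$ this still beats $\tfrac78\sqrt{\underline c}$. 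Since you already budget a loss down to $\tfrac78$ --- though you attribute it to the polynomial weights rather than to integrability --- your stated conclusions survive once this step is corrected. Two further small points of the same kind: $e^{-\sqrt{c_i}|\cdot|}$ is only bounded by $e^{-\sqrt{14\underline c/15}\,|\cdot|}$, not by $e^{-\sqrt{\underline c}|\cdot|}$, a second loss that the $\tfrac78$ slack must also cover; and for \eqref{est:R1R2.6}--\eqref{est:R1R2.7} your idea is the right one and matches the paper, namely that the doubled factor at one center supplies integrability so the cross pairing may be kept at the full rate $e^{-\sqrt{c_j}z}\le e^{-\frac{14}{15}\sqrt{\underline c}z}$.
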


	If a solution $u$ falls within the scope of the previous proposition, then additional assumptions on $\alpha$ and $Z$ provide a dynamical control on the modulation parameters.
	
	\begin{propo}[Evolution of the modulation parameters]\label{propo:evol_mod_param}
		Let $0<\underline{c}<\bar{c}$ be fixed. There exist positive constants $k_2=k_2(\underline{c},\bar{c})$, $K_2=K_2(\underline{c},\bar{c})$ and $A_2=A_2(\underline{c},\bar{c})$  such that the following is true.   For two velocities $c_1^0$ and $c_2^0$ satisfying $\underline{c}<c_2^0<c_1^0<\bar{c}$ and 
        \begin{equation} \label{def:sigma}
        \sigma=c_1^0-c_2^0<\min\{1, \underline{c}\},
        \end{equation}
        we define
		\begin{align} \label{def:k2K2}
			\alpha^\star_2:= k_2\sigma \quad \text{and} \quad Z^\star_2 := K_2 | \ln \sigma| .
		\end{align}
		Let $0<\alpha<\alpha_2^{\star}$, $Z>Z_2^{\star}$, $\mathcal{I}$ be a time interval containing $0$ and $u \in C(\mathcal{I} : H^1(\mathbb R^2))$ be a solution of \eqref{ZK} satisfying $u(t) \in \mathcal{U}_{c_1^0, c_2^0, Z, \alpha}$ for all $t \in \mathcal{I}$. Then, under the notation of Proposition \ref{propo:choice_mod_param}, it holds, for all $t \in \mathcal{I}$, 
		\begin{align}
			&\sum_{i=1}^2 \left( \left\vert \dot{c}_i(t) \right\vert  + \left\vert \dot{z}_i(t) -c_i(t) \right\vert + \left\vert \dot{\omega}_i(t) \right\vert \right) \nonumber\\ & \quad \quad \quad \quad \leq A_2 \left( \sum_{i=1}^2\left( \int e^{- \frac{1}{4}\sqrt{\underline{c}} \left\vert \bx - \left( z_i,\omega_i \right) \right\vert } \epsilon^2(\bx) d\bx \right)^\frac12 + e^{-\frac12\sqrt{\underline{c}}\left( Z +  \sigma t \right) } \right), \label{eq:ODE} \\
			& z(t)=z_1(t)-z_2(t)  > \frac34\left(Z +\sigma t\right), \label{eq:boundz} \\
			&\sum_{i=1}^2\left\vert c_i(t) - c_i^0 \right\vert \leq \frac1{32} \sigma. \label{eq:c_n}
		\end{align}
	\end{propo}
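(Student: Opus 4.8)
The plan is to derive the modulation equations for $\dot{c}_i, \dot{z}_i, \dot{\omega}_i$ by differentiating the orthogonality relations \eqref{eps:ortho} in time, substitute the equation \eqref{ZK} satisfied by $u$ (hence by $\epsilon$), and extract a linear system whose principal part is the invertible diagonal matrix $D$ appearing in the proof of Proposition \ref{propo:choice_mod_param}. Writing $u = R_1 + R_2 + \epsilon$ and using that each $R_i$ solves the soliton profile equation up to the transport terms, the time-derivative of $\int R_i \epsilon = 0$ produces, after integration by parts, a relation of the schematic form
\begin{align*}
\dot{c}_i \int Q\Lambda Q + (\text{analogous } z_i,\omega_i\text{-terms})= (\text{quadratic terms in }\epsilon) + (\text{interaction terms between } R_1 \text{ and } R_2),
\end{align*}
and similarly for $\partial_x R_i$ and $\partial_y R_i$. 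The quadratic-in-$\epsilon$ terms are handled by Cauchy–Schwarz combined with the exponential decay \eqref{asym:Q} of $Q$ and its derivatives, which localizes them as $\big(\int e^{-\frac14\sqrt{\underline c}|\bx-(z_i,\omega_i)|}\epsilon^2\big)^{1/2}$; the terms linear in $\epsilon$ but multiplied by a difference between $u$'s nonlinearity and the sum of the solitons' nonlinearities contribute a factor controlled by $\|\epsilon\|_{H^1} \le A_1\alpha$ times the same localized weight, plus soliton-interaction contributions. The cross terms between $R_1$ and $R_2$ are estimated via Lemma \ref{est:R1R2}, giving bounds of order $e^{-\frac78\sqrt{\underline c}z}$ (or the slightly better/worse exponents in \eqref{est:R1R2.6}–\eqref{est:R1R2.7}).

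Once the linear system is inverted, one gets a preliminary bound of the form $\sum_i(|\dot c_i| + |\dot z_i - c_i| + |\dot\omega_i|) \lesssim_{\underline c,\bar c} \sum_i\big(\int e^{-\frac14\sqrt{\underline c}|\bx-(z_i,\omega_i)|}\epsilon^2\big)^{1/2} + e^{-\frac78\sqrt{\underline c}z}$. To upgrade the interaction exponential from $e^{-\frac78\sqrt{\underline c}z}$ and $Z$-only to $e^{-\frac12\sqrt{\underline c}(Z+\sigma t)}$, I would run the argument as a bootstrap/continuity argument in $t$: assume on a maximal subinterval that \eqref{eq:boundz} and \eqref{eq:c_n} hold with the stated constants, deduce from \eqref{eq:ODE} that $|\dot z_i - c_i| + |\dot\omega_i|$ is small, hence $\dot z = \dot z_1 - \dot z_2 \ge c_1 - c_2 - (\text{small}) \ge \sigma - \frac1{16}\sigma - (\text{small}) \ge \frac34\sigma$ using \eqref{eq:c_n}, which integrates to $z(t) \ge \frac34 Z + \frac34\sigma t \ge \frac34(Z+\sigma t)$ provided $Z_2^\star$ is large enough; plugging this back converts $e^{-\frac78\sqrt{\underline c}z}$ into $e^{-\frac{21}{32}\sqrt{\underline c}(Z+\sigma t)} \le e^{-\frac12\sqrt{\underline c}(Z+\sigma t)}$. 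Finally, \eqref{eq:c_n} is closed by integrating $\dot c_i$: from \eqref{eq:ODE} and \eqref{estimate_modulation:eps}, $|c_i(t) - c_i^0| \le |c_i(t)-c_i(0)| + |c_i(0)-c_i^0| \le A_2\int_0^t\big((A_1\alpha)\cdot 1 + e^{-\frac12\sqrt{\underline c}(Z+\sigma s)}\big)ds + A_1\alpha$; the exponential integrates to $\lesssim \frac1{\sigma\sqrt{\underline c}}e^{-\frac12\sqrt{\underline c}Z}$, which is $\le \frac1{64}\sigma$ once $Z > Z_2^\star = K_2|\ln\sigma|$ with $K_2$ large (using $\sigma < 1$ and $\sigma<\underline c$ so that $\sigma^2\sqrt{\underline c} \gtrsim \sigma^{5/2}$ and $|\ln\sigma|$ dominates), while the $\alpha$-term is $\le \frac1{64}\sigma$ once $\alpha < \alpha_2^\star = k_2\sigma$ with $k_2$ small.

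The main obstacle is bookkeeping the exponents so that all three conclusions close simultaneously with consistent constants: the interaction estimates in Lemma \ref{est:R1R2} come with various exponents ($\frac78$, $\frac{14}{15}$) and $\bar c$-powers, and one must check that after inverting the linear system (which costs $\underline c,\bar c$-dependent but $\sigma$-independent factors) the residual interaction term is still $\le e^{-\frac12\sqrt{\underline c}(Z+\sigma t)}$ — this forces the choice $Z_2^\star = K_2|\ln\sigma|$ so that $e^{-(\frac78 - \frac12)\sqrt{\underline c}\cdot\frac34 Z}$ absorbs the $\underline c,\bar c$-prefactors and leaves room; and the time-integration for \eqref{eq:c_n} must not destroy the smallness, which is where the logarithmic lower bound on $Z$ in terms of $\sigma$ is essential. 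The derivation of the modulation system itself (differentiating orthogonality relations, integrating by parts using the profile equation and the exponential decay of $Q$) is standard and follows, e.g., the computations in \cite{MMT02,CMPS16}; I would present it concisely and devote the bulk of the argument to the constant-tracking bootstrap just described.
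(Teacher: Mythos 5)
Your derivation of the modulation system and the preliminary bound, as well as the treatment of \eqref{eq:ODE} and \eqref{eq:boundz} (get $\vert \dot z_i - c_i\vert \le \tfrac{1}{32}\sigma$ from the preliminary estimate, integrate to obtain $z(t)\ge \tfrac34(Z+\sigma t)$, then reinject), follow the same route as the paper; the bootstrap framing is unnecessary since the preliminary estimate holds on all of $\mathcal{I}$ directly from the hypothesis $u(t)\in\mathcal{U}_{c_1^0,c_2^0,Z,\alpha}$, but it is not wrong.

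There is, however, a genuine gap in your proof of \eqref{eq:c_n}. You propose to close it by integrating $\dot c_i$ in time, bounding
\begin{equation*}
\vert c_i(t)-c_i(0)\vert \le A_2\int_0^t \Bigl( A_1\alpha + e^{-\frac12\sqrt{\underline{c}}(Z+\sigma s)}\Bigr)\,ds .
\end{equation*}
The exponential term indeed integrates to something of size $\sigma^{-1}e^{-\frac12\sqrt{\underline{c}}Z}$, which is absorbed by taking $K_2$ large. But the first term produces $A_2A_1\alpha\, t$, which grows linearly in $t$ and cannot be bounded by $\tfrac1{64}\sigma$ on an unbounded time interval, no matter how small $k_2$ is chosen. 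Your claim that ``the $\alpha$-term is $\le \frac1{64}\sigma$ once $\alpha<k_2\sigma$'' ignores this factor of $t$. Since \eqref{eq:c_n} also feeds into your derivation of $\dot z \ge \tfrac34\sigma$, the circle does not close. The fix is the one the paper uses: \eqref{eq:c_n} is \emph{not} obtained by integrating the ODE but is a direct consequence of the static modulation estimate \eqref{estimate_modulation:c}, i.e. $\vert c_i(t)-c_i^0\vert \le A_1\alpha$ for every $t\in\mathcal{I}$ (this comes from the implicit-function-theorem construction of $\Gamma(u(t))$ and holds pointwise in time because $u(t)$ stays in the tube by hypothesis); then $\sum_i\vert c_i(t)-c_i^0\vert \le 2A_1 k_2\sigma \le \tfrac1{32}\sigma$ for $k_2$ small. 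The same estimate \eqref{estimate_modulation:c} is what should be used to control $\vert c_1-c_2-(c_1^0-c_2^0)\vert$ in the step where you lower-bound $\dot z$, removing any need for a bootstrap.
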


	\begin{proof} 
		First, since $\sigma<1$, we choose $k_2$ and $K_2$ small enough to have $\alpha_2^{\star}<\alpha_1^{\star}$ and $Z_2^{\star}>Z_1^{\star}$, so that we are in the setting of Proposition \ref{propo:choice_mod_param}.

		By inserting \eqref{defi:R_n} in \eqref{ZK} and using \eqref{eq:ground_state}, we derive the equation satisfied by $\epsilon$, \textit{i.e.}
		\begin{align} \label{eq:eps}
			\partial_t \epsilon +\partial_x \Delta \epsilon = -\sum_{i=1}^2 \left( \dot{c}_i \Lambda_i R_i + (c_i- \dot{z}_i)\partial_x R_i - \dot{\omega}_i \partial_y R_i \right) - \partial_x \left( 2R_1R_2+ 2\epsilon(R_1+R_2)+ \epsilon^2 \right). 
		\end{align}
		
		\begin{toexclude}
			Since $Z>Z_2^\star \geq Z_1^\star + K_1 \alpha_1^\star$, Proposition \ref{propo:choice_mod_param} provides $z>Z_2^\star -K_1\alpha_1 > Z_1^\star$. To take the (formal) scalars products of the previous quantities with $R_n$, $\partial_x R_n$ and $\partial_y R_n$, we use the estimates
			\begin{align*}
				& \left\langle \partial_t \epsilon, R_n \right\rangle \lesssim \vert \dot{c}_n \vert c_n^{-1} \left( \int e^{-\frac{1}{2}\sqrt{c_n}\left\vert \bx - (z_n,\omega_n)\right\vert} \epsilon^2 \right)^\frac12, & & \left\langle \partial_t \epsilon, \partial_x R_n \right\rangle \lesssim \left( \vert \dot{c}_n \vert +  \left( \vert \dot{z}_n \vert + \vert \dot{\omega}_n \vert \right) c_n^{\frac32} \right) \left( \int e^{-\frac{1}{2}\sqrt{c_n}\left\vert \bx - (z_n,\omega_n)\right\vert} \epsilon^2 \right)^\frac12, \\
				& \left\langle \partial_x \Delta \epsilon, R_n \right\rangle \lesssim c_n^2\left( \int e^{-\frac{1}{2}\sqrt{c_n}\left\vert \bx - (z_n,\omega_n)\right\vert} \epsilon^2 \right)^\frac12, & & \left\langle \partial_x \Delta \epsilon, \partial_x R_n \right\rangle \lesssim c_n^{\frac52} \left( \int e^{-\frac{1}{2}\sqrt{c_n}\left\vert \bx - (z_n,\omega_n)\right\vert} \epsilon^2 \right)^\frac12, \\
				& \left\langle \partial_x(2 R_n \epsilon), R_n \right\rangle \lesssim c_n^2 \left( \int e^{-\frac{1}{2}\sqrt{c_n} \left\vert \bx - (z_n,\omega_n)\right\vert} \epsilon^2 \right)^\frac12, && \left\langle \partial_x (2 R_n \epsilon), \partial_x R_n \right\rangle \lesssim c_n^{\frac52} \left( \int e^{-\frac{1}{2}\sqrt{c_n}\left\vert \bx - (z_n,\omega_n)\right\vert} \epsilon^2 \right)^\frac12, \\
				& \left\langle \partial_x (2 R_1R_2), R_1 \right\rangle \lesssim c_1^\frac32 c_2 e^{-\sqrt{c_2}z}, && \left\langle \partial_x (2 R_1 R_2), R_2 \right\rangle \lesssim c_1 c_2^\frac32 e^{-\sqrt{c}_2 z},\\
				& \left\langle \partial_x (2 R_1R_2), \partial_x R_1 \right\rangle \lesssim c_1^2 c_2 e^{-\sqrt{c_2}z}, && \left\langle \partial_x (2 R_1 R_2), \partial_x R_2 \right\rangle \lesssim c_1 c_2^2 e^{-\sqrt{c}_2 z},
			\end{align*}
		\end{toexclude}
  
		Now, by taking the time derivative of the orthogonality relations \eqref{eps:ortho}, using the equation \eqref{eq:eps}, estimates \eqref{estimate_modulation:eps}-\eqref{estimate_modulation:z} and Lemma \ref{est:R1R2}, we find that
		\begin{align} \label{est:mod:int}
			\sum_{i=1}^2\left( \left\vert \dot{c}_i \right\vert + \left\vert \dot{z}_i - c_i \right\vert + \left\vert \dot{\omega}_i \right\vert \right)
			& \lesssim_{\underline{c},\bar{c}} \sum_{i=1}^2 \left( \int e^{-\frac{1}{2}\sqrt{c_i}\left\vert \bx - (z_i,\omega_i)\right\vert} \epsilon^2 \right)^\frac12 +  e^{-\frac78 \sqrt{\underline{c}} z} \lesssim_{\underline{c},\bar{c}}  \left( \alpha + e^{-\frac12 \sqrt{\underline{c}} Z} \right). 
		\end{align}
		Therefore, by adjusting the constants $k_2$ and $K_2$ depending only on $\underline{c}$ and $\bar{c}$, and by recalling \eqref{def:k2K2}, we deduce that $\vert \dot{z}_j - c_j \vert \leq \frac1{32} \sigma$, for $j=1,2$. This, combined to \eqref{estimate_modulation:c}, yields
		\begin{align*}
			\left\vert \dot{z}_1- \dot{z}_2 - c_1^0 + c_2^0 \right\vert \leq \left\vert \dot{z}_1- \dot{z}_2 - c_1+c_2 \right\vert + \left\vert c_1-c_2 - c_1^0 + c_2^0 \right\vert  \leq \frac1{16} \sigma.
		\end{align*}
		An integration of the previous inequality from $0$ to $t$ implies
		\begin{align}\label{eq:encad_z}
			\left\vert z(t)-z(0) - \sigma t \right\vert \leq \frac1{16} \sigma t,
		\end{align}
		which, combined with \eqref{estimate_modulation:z}, yields the bound \eqref{eq:boundz}. The proof of \eqref{eq:ODE} is then obtained  by reinjecting \eqref{eq:boundz} into \eqref{est:mod:int}, while the proof of \eqref{eq:c_n} is a direct consequence of \eqref{estimate_modulation:c} and the choice of $\alpha_2^{\star}$. 
	\end{proof}

	\subsection{Evolution of the energy}
	
	Recall the definition of the energy in \eqref{def:energy}.
	With $R_1$ and $R_2$ defined in \eqref{defi:R_n}, we introduce
	\begin{align*}
		R(t, \bx):= R_1(t,\bx) + R_2(t, \bx).
	\end{align*}
	
	\begin{lemm}\label{lemm:energy}
		Let $0<\underline{c}<\bar{c}$. There exists a positive constant $A_3=A_3(\underline{c},\bar{c})$ such that the following holds. In the setting of Propositions \ref{propo:choice_mod_param} and \ref{propo:evol_mod_param}, we have, for any $t \in \mathcal{I}$,
		\begin{align*}
			\left\vert \sum_{i=1}^2 \left( E(R_i(t))- E(R_i(0))\right) + \frac12 \int \left( \vert \nabla \epsilon \vert^2 - 2 R \epsilon^2 \right)(t) \right\vert \leq A_3 \left( \| \epsilon (0) \|_{H^1}^2 + \| \epsilon(t) \|_{H^1}^3 + e^{-\frac12 \sqrt{\underline{c}}Z} \right).
		\end{align*}
	\end{lemm}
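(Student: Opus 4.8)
The idea is to expand the energy $E(u(t))$ around the sum of solitary waves, to use the conservation of energy $E(u(t))=E(u(0))$ together with the orbital estimate \eqref{estimate_modulation:eps}, and to carefully track the interaction terms between $R_1$ and $R_2$ using Lemma \ref{est:R1R2}. Writing $u=R+\epsilon$ with $R=R_1+R_2$, the cubic functional $E$ decomposes exactly into
\begin{align*}
E(u) = E(R) + \int\left(\nabla R\cdot\nabla\epsilon - R^2\epsilon\right) + \frac12\int\left(|\nabla\epsilon|^2 - 2R\epsilon^2\right) - \frac13\int\epsilon^3.
\end{align*}
So the plan is to analyse each of the four remaining pieces on the right-hand side. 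The last term is $O(\|\epsilon\|_{H^1}^3)$ by Sobolev embedding in dimension $d=2$ (or $d=3$), which contributes to the $\|\epsilon(t)\|_{H^1}^3$ error. The quadratic form $\frac12\int(|\nabla\epsilon|^2-2R\epsilon^2)$ is exactly the term appearing in the statement, so it stays as is.

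\medskip

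The heart of the argument is the linear term $\mathcal{L}(t):=\int(\nabla R\cdot\nabla\epsilon - R^2\epsilon)$. Since each $R_i$ solves the elliptic equation $-\Delta R_i + c_i R_i - R_i^2=0$, an integration by parts gives $\int(\nabla R_i\cdot\nabla\epsilon - R_i^2\epsilon) = \int(\Delta R_i + R_i^2)(-\epsilon)\cdot(-1) = c_i\int R_i\epsilon$... more precisely $\int \nabla R_i\cdot\nabla\epsilon = -\int \Delta R_i\,\epsilon = \int(c_iR_i - R_i^2)\epsilon$, hence $\int(\nabla R_i\cdot\nabla\epsilon - R_i^2\epsilon) = c_i\int R_i\epsilon - 2\int R_i^2\epsilon$. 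The first term vanishes by the orthogonality relation $\int R_i\epsilon=0$ in \eqref{eps:ortho}. For the cross terms, $\int\nabla R_1\cdot\nabla R_2 - $ wait, I must be careful: expanding $R^2 = R_1^2 + 2R_1R_2 + R_2^2$ and $\nabla R\cdot\nabla\epsilon = \sum_i\nabla R_i\cdot\nabla\epsilon$, one gets $\mathcal{L}(t) = \sum_i\left(c_i\int R_i\epsilon - \int R_i^2\epsilon\right) - 2\int R_1R_2\,\epsilon$. The terms $c_i\int R_i\epsilon$ vanish, but a genuine remainder $-\sum_i\int R_i^2\epsilon - 2\int R_1R_2\epsilon$ survives; this does not vanish but is not small either, so I need to be more careful and instead NOT integrate by parts prematurely — rather keep $\mathcal{L}$ and bound it by a different route. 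Reconsidering: the cleanest approach is to not isolate $\mathcal{L}$ but to use that $E(R(t))$ itself expands as $\sum_i E(R_i(t)) + \text{(interaction)}$, where the interaction is $\int\nabla R_1\cdot\nabla R_2 - \int R_1^2R_2 - \int R_1R_2^2$, which is $O(e^{-\frac78\sqrt{\underline c}z})$ by \eqref{est:R1R2.3}, \eqref{est:R1R2.4} and similar exponentially-small estimates, hence $\lesssim e^{-\frac12\sqrt{\underline c}Z}$ after using $z(t)\geq\frac34 Z$ from \eqref{estimate_modulation:z}. So the strategy is: (1) $E(u(t))=E(u(0))$; (2) expand $E(u(t))=E(R(t))+\mathcal{L}(t)+\frac12\int(|\nabla\epsilon|^2-2R\epsilon^2)-\frac13\int\epsilon^3$ and handle $\mathcal{L}(t)$ as above using the elliptic equation and orthogonality to reduce it to $-\sum_i\int R_i^2\epsilon - 2\int R_1R_2\epsilon$; (3) the term $\sum_i\int R_i^2\epsilon$ — this is the subtle point.

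\medskip

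\textbf{The main obstacle} is precisely controlling $\sum_i\int R_i^2\epsilon$: this is $O(\|\epsilon\|_{H^1})$, not quadratic, so if it appeared alone it would break the estimate. The resolution, following \cite{MMT02} and \cite{CMPS16}, is that the same linear-in-$\epsilon$ term appears on the $t=0$ side when one expands $E(u(0))$, and the difference $\int R_i^2(t)\epsilon(t) - \int R_i^2(0)\epsilon(0)$ must itself be controlled. The correct bookkeeping is: write $E(u(t)) - E(u(0)) = 0$, expand \emph{both} sides, and observe that the linear terms do NOT automatically cancel because $\epsilon(t)\neq\epsilon(0)$. Hence one genuinely needs that $\int R_i^2\epsilon$ is \emph{quadratically} small — and indeed it is, but only because the orthogonality $\int R_i\epsilon=0$ together with the fact that $R_i^2$ is, to leading order in the modulation, a combination of $R_i$ and $\Lambda R_i$ and derivatives... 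Actually the honest mechanism: $\int R_i^2\epsilon$ is controlled by noting $R_i^2 = c_iR_i - \Delta R_i$ (from the elliptic equation!), so $\int R_i^2\epsilon = c_i\int R_i\epsilon - \int\Delta R_i\,\epsilon = 0 - \int R_i\Delta\epsilon$... which is not obviously small either. The resolution used in the literature is that one does \emph{not} prove $\int R_i^2\epsilon$ is small; instead the statement of the lemma is arranged so that $\sum_i E(R_i(t))$ on the left already absorbs the leading behaviour, and one uses the conservation law $E(u(t))=E(u(0))$ directly: $\frac12\int(|\nabla\epsilon(t)|^2-2R\epsilon^2(t)) = -\sum_i(E(R_i(t))-E(R_i(0))) + [\text{interaction}(t)-\text{interaction}(0)] + [\mathcal L(t)-\mathcal L(0)] + \frac13\int\epsilon^3(t) - \frac13\int\epsilon^3(0) + [\text{the }t=0\text{ quadratic form}]$, and then bound the $t=0$ quadratic form $\frac12\int(|\nabla\epsilon(0)|^2-2R\epsilon^2(0))$ crudely by $A\|\epsilon(0)\|_{H^1}^2$, bound $\mathcal L(t)$ and $\mathcal L(0)$ and the interactions by the exponential terms plus $\|\epsilon\|^2$, and bound the cubic terms by $\|\epsilon\|_{H^1}^3$ and $\|\epsilon(0)\|_{H^1}^2$ (using $\|\epsilon(0)\|_{H^1}\lesssim\alpha<1$). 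The term $\mathcal L(t) = -\sum_i\int R_i^2\epsilon(t) - 2\int R_1R_2\epsilon(t)$: here $\int R_1R_2\epsilon$ is exponentially small by Cauchy–Schwarz and \eqref{est:R1R2.1}-type bounds on $\|R_1R_2\|_{L^2}$, while $\int R_i^2\epsilon(t)$ — and this is the real content — must be estimated by \emph{again} using the elliptic equation and \emph{also} invoking that $\int R_i^2 = $ (a constant) times $\langle R_i,\cdot\rangle$-type projections; honestly in \cite{CMPS16,MMT02} this term is handled by writing $R_i^2 = \frac{c_i}{2}\partial_c(Q_c^2)|\cdot$... I will follow their computation: use $R_i^2 = c_i R_i - \Delta R_i$ so $\int R_i^2\epsilon = c_i\int R_i\epsilon + \int\nabla R_i\cdot\nabla\epsilon = \int\nabla R_i\cdot\nabla\epsilon$, which combined with the $\nabla R\cdot\nabla\epsilon$ term in $\mathcal L$ actually \emph{cancels} the linear term entirely, leaving $\mathcal L(t) = -2\int R_1R_2\epsilon + (\text{exponentially small cross gradient terms})$ — let me recheck: $\mathcal L = \int\nabla R\cdot\nabla\epsilon - \int R^2\epsilon = \sum_i\int\nabla R_i\cdot\nabla\epsilon - \sum_i\int R_i^2\epsilon - 2\int R_1R_2\epsilon$, and since $\int R_i^2\epsilon = \int\nabla R_i\cdot\nabla\epsilon + c_i\int R_i\epsilon = \int\nabla R_i\cdot\nabla\epsilon$ (using orthogonality), the first two sums cancel exactly, giving $\mathcal L = -2\int R_1R_2\epsilon$, which is $O(e^{-\frac{7}{16}\sqrt{\underline c}z}\|\epsilon\|_{L^2})\lesssim \|\epsilon(t)\|_{H^1}^2 + e^{-\frac12\sqrt{\underline c}Z}$ by Young's inequality and $z\geq\frac34 Z$. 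This closes the estimate. So the plan in order is: (1) expand $E(u)=E(R)+\mathcal L+\frac12\int(|\nabla\epsilon|^2-2R\epsilon^2)-\frac13\int\epsilon^3$; (2) simplify $\mathcal L=-2\int R_1R_2\epsilon$ via the elliptic equation and \eqref{eps:ortho}, bound it by the RHS; (3) expand $E(R(t)) = \sum_i E(R_i(t)) + O(e^{-\frac78\sqrt{\underline c}z})$ using Lemma \ref{est:R1R2}, and the same at $t=0$; (4) apply $E(u(t))=E(u(0))$, move $\sum_i(E(R_i(t))-E(R_i(0)))$ and $\frac12\int(|\nabla\epsilon|^2-2R\epsilon^2)(t)$ to one side, bound the remaining terms (the $t=0$ quadratic form by $A\|\epsilon(0)\|_{H^1}^2$, the cubic terms by $\|\epsilon(t)\|_{H^1}^3$ and $\|\epsilon(0)\|_{H^1}^2$, the interactions and $\mathcal L$'s by the exponential) and collect constants into $A_3=A_3(\underline c,\bar c)$.
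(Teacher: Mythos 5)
Your final plan is correct and is essentially the paper's own proof: expand $E(u)$ around $R=R_1+R_2$, use the elliptic equation $-\Delta R_i=R_i^2-c_iR_i$ together with the orthogonality $\int R_i\epsilon=0$ to cancel the linear terms (leaving only the exponentially small $-2\int R_1R_2\,\epsilon$), split $E(R)$ into $\sum_i E(R_i)$ plus exponentially small interactions via Lemma \ref{est:R1R2}, and then take the difference of the resulting identity at times $t$ and $0$ using conservation of energy, absorbing the $t=0$ quadratic form and cubic terms into $\|\epsilon(0)\|_{H^1}^2$. The only blemish is the sign slip in your first attempt at $\int\nabla R_i\cdot\nabla\epsilon$, which you correct yourself later, so the argument as finally stated goes through.
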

	
	\begin{proof}
		Let us develop the energy of the solution at a time $t$. We have
		\begin{align*}
			E(u)= E(R) + \int \nabla R \cdot \nabla \epsilon - \int R^2 \epsilon + \int \left( \frac{\vert \nabla \epsilon \vert^2}{2} - R\epsilon^2 \right) - \int \frac{\epsilon^3}{3}.
		\end{align*}
		By using Lemma \ref{est:R1R2}, we have
		\begin{align*}
			\left\vert E(R) - E(R_1)- E(R_2) \right\vert \lesssim \int \left\vert \nabla R_1 \cdot \nabla R_2 \right\vert + \int \left( R_1^2 R_2 + R_1 R_2^2 \right) \lesssim_{\underline{c}, \bar{c}} e^{-\frac78 \sqrt{\underline{c}} z}.
		\end{align*}
		In a similar way, we obtain
		\begin{align*}
			\left\vert \int R^2 \epsilon - \int R_1^2 \epsilon - \int R_2^2 \epsilon \right\vert \leq \int 2 R_1 R_2 \vert \epsilon \vert \lesssim_{\underline{c}, \bar{c}} e^{-\frac78 \sqrt{\underline{c}}z} \| \epsilon \|_{L^2}.
		\end{align*}
		We deduce combining these estimates with \eqref{eq:ground_state}, \eqref{eps:ortho} and \eqref{eq:boundz} that there exists a constant $A_3=A_3(\underline{c},\bar{c})$ such that
		\begin{align*}
			\left\vert E(u(t))- \sum_{i=1}^2 E(R_i(t)) - \int \left( \frac{\vert \nabla \epsilon \vert^2}{2} - R \epsilon^2 \right)(t) \right\vert \leq A_3 \left( e^{-\frac12 \sqrt{\underline{c}}Z} + \| \epsilon(t) \|_{H^1}^3 \right) .
		\end{align*}
		Therefore, we conclude the proof of the lemma by taking the difference of the previous inequality at times $t$ and $0$ and using the conservation of the energy.
	\end{proof}
	
	\subsection{Almost monotonicity of the mass on the right}\label{sec:mass_right}

 In this part, we consider a solution that is close to the sum of two solitary waves and study the evolution of a half portion of the mass of the solution. To this aim, define the weight function $\psi$ by
 \begin{align}
 \psi(x) := \frac{2}{\pi}\arctan (e^x), \label{defi:psi}
 \end{align}
so that $\psi$ is increasing on $\mathbb R$, $\lim_{x\to -\infty}\psi(x)=0$, $\lim_{x\to -\infty}\psi(x)=1$ and $\psi(-x)=1-\psi(x)$. Note also that, for any $x \in \mathbb R$, 
\begin{equation} \label{prop:psi}
\psi' (x)=\frac1{\pi \cosh (x)} \quad \text{and} \quad |\psi^{(3)}(x)| \le \psi' (x) .
\end{equation}
 
We suppose that a solution of \eqref{ZK} $u\in\mathcal{C}^1(\mathcal{I}:H^1(\mathbb{R}^2))$ satisfies $u(t) \in \mathcal{U}_{c_1^ 0,c_2^ 0,Z,\alpha}$ on an interval $\mathcal{I}=[0,t_0]$. As a consequence of Proposition \ref{propo:choice_mod_param}, $u$ is approximated by a sum of two ground states scaled by $c_i(t)$ and translated by $(z_i(t),\omega_i(t))$.  

We define the average function on the $x$-axis by 
	\begin{align}\label{defi:m}
		m(t) := \frac12 \left( z_1(t) + z_2(t)\right). 
	\end{align}
From the two fixed velocities $c_1^0>c_2^0$ we define, inspired by \cite{MMT02, CMPS16}, the parameter $\gamma$, the scaled weight function $\psi_s$ and the localized mass by
		\begin{align}\label{defi:orbital_stab}
		\gamma:=\frac12 \left( c_1^0 + c_2^0\right), \quad \psi_\gamma(x) := \psi \left( \frac{\sqrt{\gamma}}{2} x \right) \quad \text{and} \quad I(t) := \int u^2(t,\bx) \psi_{\gamma} \left( x - m(t)\right) d\bx. 
		\end{align}
Note from \eqref{prop:psi} that the weight function $\psi_s$ satisfies
		\begin{align}\label{eq:psi3}
			\forall x \in \mathbb{R}, \quad \left\vert \psi_\gamma^{(3)}(x) \right\vert \leq \frac{\gamma}{4} \psi_\gamma'(x).
		\end{align}
  
	In Picture \ref{fig:monot_middle}, the two solitary waves are represented by two disks in the plane. The first solitary wave, represented by a black disk thus on the right, is more concentrated and faster than the second solitary wave, that is represented in grey, and is less concentrated and thus moves slower.
  
  \begin{figure}[h]
      \centering
			\begin{tikzpicture}
				
				\draw[->] (-5,0) -- (-1,0);
				\draw (-1,0) node[right]{$x$};
				\draw (-4,1) node[above]{$y$};
				\draw (-2,1) node[above]{time $0$};
				\draw[->] (-4,-1) -- (-4,1);
				
				\fill[opacity=0.5,black] (-5,0.5) circle(0.3);
				\fill[opacity=0.8,black] (-3.5,-0.5) circle(0.2);
				
				\draw[color=red] (-4.25,-1) -- (-4.25,1);
				\draw[color=red] (-4.25,-1) node[below]{$x=m(0)$};
				
				
				\draw[->] (1,0) -- (5,0);
				\draw[->] (2,-1) -- (2,1);
				\draw (5,0) node[right]{$x$};
				\draw (2,1) node[above]{$y$};
				\draw (5,1) node[above]{time $t_0>0$};
				
				\fill[opacity=0.5,black] (2,0.5) circle(0.3);
				\fill[opacity=0.8,black] (4.5,-0.5) circle(0.2);
				
				\draw[color=red] (3.25,-1) -- (3.25,1);
				\draw[color=red] (3.25,-1) node[below]{$x=m(t_0)$};
			\end{tikzpicture}
      \caption{Scheme of the localization of the two solitary waves and the line $x=m(t)$ at two different times}
      \label{fig:monot_middle}
  \end{figure}
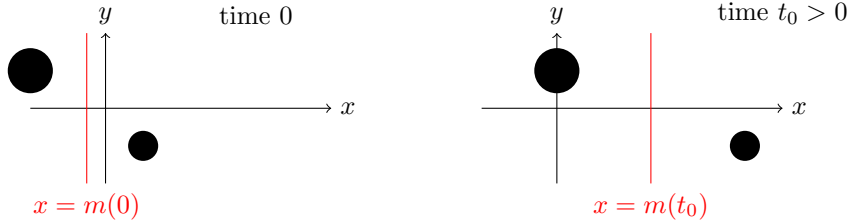

	\begin{lemm}[Almost monotonicity of the mass]\label{lemm:monotonicity_mass}
		There exist positive constants $k_4=k_4(\underline{c},\bar{c})$, $K_4=K_4(\underline{c},\bar{c})$ and $A_4=A_4(\underline{c},\bar{c})$ such that the following is true. Define
		\begin{align}\label{defi:alpha_4}
			\alpha_4^\star:=  k_4\sigma \quad \text{and} \quad 
			Z_4^\star:= K_4 \left\vert \ln \sigma \right\vert ,
		\end{align}
    where we recall that $\sigma=c_1^ 0-c_2^ 0 < \min\{1,\underline{c}\}$. Assume that $Z>Z_4^\star$ and $\alpha< \alpha_4^\star$. Then, we have, for any $t \in \mathcal{I}= [0,t_0]$,
			\begin{align}\label{eq:monotonic_mass}
				I(t)- I(0) \leq A_4  e^{-\frac1{16} \sqrt{\underline{c}}(Z+\sigma t))}.
			\end{align}
	\end{lemm}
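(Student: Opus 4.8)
The plan is to differentiate $I(t)$ in time, substitute the ZK equation \eqref{ZK}, integrate by parts, and split the resulting expression into a part localized near the two solitary waves (which is controlled by the separation $z(t)$ and hence decays exponentially in $Z+\sigma t$ thanks to \eqref{eq:boundz}) and a genuinely ``dissipative'' part coming from the transport term, which carries the correct sign because both solitary waves move to the right of the line $x=m(t)$ at speed bounded below. First I would write $u(t)=R_1(t)+R_2(t)+\epsilon(t)$ with the bounds of Proposition \ref{propo:choice_mod_param} and the modulation estimates of Proposition \ref{propo:evol_mod_param} available. A direct computation gives
\begin{align*}
\frac{d}{dt}I(t)=&-\dot m(t)\int u^2\,\psi_\gamma'\big(x-m(t)\big)\,d\bx\\
&-\int\Big(3(\partial_x u)^2+(\partial_y u)^2-2u^3\Big)\psi_\gamma'\big(x-m(t)\big)\,d\bx+\int u^2\,\psi_\gamma^{(3)}\big(x-m(t)\big)\,d\bx,
\end{align*}
using $\partial_t u=-\partial_x(\Delta u+u^2)$ and integrating by parts (the $u^3$ comes from $\int u^2\partial_x(u^2)\psi_\gamma=-\tfrac23\int u^3\psi_\gamma'$ up to constants). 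The $\psi_\gamma^{(3)}$ term is harmless: by \eqref{eq:psi3} it is bounded by $\frac{\gamma}{4}\int u^2\psi_\gamma'$, which will be absorbed once we show the leading term has a definite sign with a constant $\gtrsim\gamma$.

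The heart of the argument is the monotonicity near the supports of $R_1,R_2$. Split $\mathbb R^2$ into the region $\mathcal{C}_i$ where $\psi_\gamma'(x-m(t))$ is comparable to the ``mass'' of $R_i$ and the exterior region. Since $\dot m=\tfrac12(\dot z_1+\dot z_2)$ and $z_i(t)-m(t)=\pm\tfrac12 z(t)$, while $\psi_\gamma'$ decays like $e^{-\frac{\sqrt\gamma}{2}|x-m|}$, the cross terms $R_i R_j\psi_\gamma'$ and $R_i^2\psi_\gamma'$ integrated over $\mathcal{C}_j$ ($i\neq j$) are all of size $e^{-c\sqrt{\underline c}\,z(t)}$, hence $\lesssim e^{-c\sqrt{\underline c}(Z+\sigma t)}$ by \eqref{eq:boundz}; these contribute only to the right-hand side of \eqref{eq:monotonic_mass}. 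On the exterior region (bounded away from both waves on the scale $\sqrt\gamma$), the nonlinear term $u^3\psi_\gamma'$ is controlled using Sobolev embedding together with the smallness of $\|\epsilon\|_{H^1}$ and the exponential decay of $R_i$ away from $\mathcal{C}_i$, so that $2u^3\le (3(\partial_x u)^2+(\partial_y u)^2)$ pointwise-in-average there — this is where the factor $\frac12$ in the subcritical exponent (and the fact that $Q$ is a small perturbation in the exterior) is used. Finally, on the region near $\mathcal{C}_i$, one writes $u=R_i+(\text{error}+R_j)$ and uses that for a pure solitary wave $R_i$ moving at speed $c_i$, the expression $-(c_i-\text{something})\int R_i^2\psi_\gamma'-\int(3(\partial_x R_i)^2+(\partial_y R_i)^2-2R_i^3)\psi_\gamma'$ has a sign: more precisely, since $\dot m\le \tfrac12(c_1+c_2)+\text{(small)}$ while $c_i\ge c_2\ge \tfrac{14}{15}\underline c$ and $\sqrt\gamma/2$ is chosen so that $\psi_\gamma'$ is slowly varying at the scale of $Q_{c_i}$, the transport term $-\dot m\int u^2\psi_\gamma'$ together with the virial-type gradient term gives a negative quantity of size $\gtrsim\gamma\int u^2\psi_\gamma'$, which dominates both the $\psi_\gamma^{(3)}$ term and all the error terms (the errors being $\lesssim\|\epsilon\|_{H^1}^2$-type quantities times $\psi_\gamma'$ plus exponentially small cross terms).

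Putting this together, one obtains a differential inequality of the form
\begin{equation*}
\frac{d}{dt}I(t)\le -\delta\,\gamma\int u^2\,\psi_\gamma'\big(x-m(t)\big)\,d\bx + C_{\underline c,\bar c}\,e^{-\frac1{8}\sqrt{\underline c}(Z+\sigma t)}\le C_{\underline c,\bar c}\,e^{-\frac1{8}\sqrt{\underline c}(Z+\sigma t)},
\end{equation*}
for some $\delta>0$, where the constants $k_4,K_4$ are chosen small/large enough (depending only on $\underline c,\bar c$) so that all of: the smallness of $\alpha$ via Proposition \ref{propo:choice_mod_param}; the lower bound \eqref{eq:boundz} on $z(t)$; and the bound \eqref{eq:ODE} on $|\dot z_i-c_i|,|\dot\omega_i|$ hold with enough room to absorb the error terms into the negative leading term. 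Integrating from $0$ to $t$ and using $\int_0^t e^{-\frac18\sqrt{\underline c}(Z+\sigma s)}\,ds\lesssim (\sigma\sqrt{\underline c})^{-1}e^{-\frac18\sqrt{\underline c}Z}$, one gets $I(t)-I(0)\le A_4 e^{-\frac1{16}\sqrt{\underline c}(Z+\sigma t)}$ after absorbing the $\sigma^{-1}$ loss by shrinking the rate from $\frac18$ to $\frac1{16}$ (and using $|\ln\sigma|$ in $Z_4^\star$ to beat any algebraic factor in $\sigma$). The main obstacle is the bookkeeping in the near-wave region: one must verify that the constant $\delta\gamma$ in front of $\int u^2\psi_\gamma'$ genuinely dominates the coefficient of the error terms, which forces the scaling $\sqrt\gamma/2$ in the weight $\psi_\gamma$ and the precise choice of the slope; getting the geometry right here (that the line $x=m(t)$ sits strictly between the two waves at a distance $\gtrsim z(t)$ from each, uniformly in $t$, which is exactly \eqref{eq:boundz} and \eqref{estimate_modulation:z}) is the crux.
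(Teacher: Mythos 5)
Your overall strategy is the paper's: differentiate $I$, use $\dot m(t)\geq\gamma/2$ together with $|\psi_\gamma^{(3)}|\leq\frac{\gamma}{4}\psi_\gamma'$ to produce the negative term $-\frac{\gamma}{4}\int u^2\psi_\gamma'$, control the cubic term by splitting off $R=R_1+R_2$, and integrate in time, absorbing the $\sigma^{-1}$ loss via $Z>K_4|\ln\sigma|$. The final integration step and the identification of the crux (that $x=m(t)$ stays at distance $\gtrsim z(t)/2\gtrsim (Z+\sigma t)/2$ from both wave centers) are exactly as in the paper.

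However, one step of your middle section would fail if carried out literally. You claim that near each wave one must check that
$-(c_i-\cdot)\int R_i^2\psi_\gamma'-\int\bigl(3(\partial_xR_i)^2+(\partial_yR_i)^2-2R_i^3\bigr)\psi_\gamma'$ ``has a sign''; it does not in general (the positive cubic contribution near the center of $R_i$ is not dominated by the gradient terms), and no such virial-type sign condition is needed here. The point --- which you state correctly elsewhere but then do not use --- is that $\psi_\gamma'(\cdot-m(t))$ is \emph{uniformly exponentially small on the supports of both} $R_1$ and $R_2$ (this is \eqref{est:Rpsi} of Lemma \ref{lemma:est:Rpsi}), so the entire term $\int Ru^2\psi_\gamma'\leq\|u\|_{L^2}^2\|R\psi_\gamma'\|_{L^\infty}\lesssim e^{-\frac18\sqrt{\underline c}(Z+\sigma t)}$ is an error, with no sign discussion and no three-region decomposition. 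The remaining piece $\int u^2\epsilon\,\psi_\gamma'$ is then absorbed into $-\frac{\gamma}{4}\int u^2\psi_\gamma'-\int|\nabla u|^2\psi_\gamma'$ via the weighted Sobolev bound $\|u\sqrt{\psi_\gamma'}\|_{L^3}^2\|\epsilon\|_{L^3}\lesssim\|\epsilon\|_{H^1}\int(u^2+|\nabla u|^2)\psi_\gamma'$ and the smallness of $\|\epsilon\|_{H^1}\leq A_1\alpha\leq A_1k_4\sigma$ --- not by a pointwise comparison of $2u^3$ with the gradient terms, which has no scaling-invariant meaning. With that correction your argument coincides with the paper's proof.
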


 Before giving the proof of Lemma \ref{lemm:monotonicity_mass}, we state some technical estimates for the interactions of the weight function $\psi_{\gamma}$ with the solitary waves $R_i$. 

 \begin{lemm} \label{lemma:est:Rpsi}
 Under the assumptions of Lemma \ref{lemm:monotonicity_mass}, we have
\begin{align} 
				&\| R_1 \psi_\gamma'\left( \cdot - m(t)\right) \|_{L^\infty} + \| R_2 \psi_\gamma'\left( \cdot - m(t)\right) \|_{L^\infty} \lesssim \bar{c} e^{-\frac18 \sqrt{\underline{c}} \left( Z + \sigma t\right)} \label{est:Rpsi} \\ 
				&\left\| R_1 \left(\psi_\gamma \left( \cdot - m\right) -1 \right) \right\|_{L^2} + \left\| R_2 \psi_\gamma\left( \cdot - m\right) \right\|_{L^2} \lesssim \bar{c}^\frac12 e^{- \frac1{16} \sqrt{\underline{c}}\left( Z + \sigma t\right)}. \label{ineq:R_psi_L2}
			\end{align}
 \end{lemm}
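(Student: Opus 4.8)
The plan is to deduce both inequalities from the exponential decay \eqref{asym:Q} of $Q$ together with the exponential localization of the transition of $\psi_\gamma$, exploiting that, by \eqref{eq:boundz} and \eqref{defi:m}, the line $\{x=m(t)\}$ sits at distance
\[
m(t)-z_2(t)=z_1(t)-m(t)=\frac12\,z(t)>\frac38\left(Z+\sigma t\right)
\]
from the center of each solitary wave. First I would collect the pointwise ingredients: from \eqref{asym:Q}, \eqref{defi:scaling}, \eqref{defi:R_n} and $|\bx-(z_i,\omega_i)|\ge|x-z_i|$ one has $R_i(t,\bx)\lesssim\bar{c}\,e^{-\sqrt{c_i(t)}\,|x-z_i(t)|}$; from \eqref{defi:psi}, \eqref{prop:psi} and \eqref{defi:orbital_stab}, $\psi_\gamma'(\xi)\lesssim\sqrt{\gamma}\,e^{-\frac{\sqrt{\gamma}}{2}|\xi|}$, $\psi_\gamma(\xi)\lesssim e^{\frac{\sqrt{\gamma}}{2}\xi}$ for $\xi\le0$, and $1-\psi_\gamma(\xi)\lesssim e^{-\frac{\sqrt{\gamma}}{2}\xi}$ for $\xi\ge0$. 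Finally, \eqref{eq:c_n}, \eqref{estimate_modulation:c:bis} and \eqref{def:sigma} give $\sqrt{\underline{c}}\lesssim\sqrt{c_i(t)}$, $\underline{c}<\gamma<\bar{c}$, and — the one place where the smallness of $\sigma$ is really used here — $\gamma<4\,c_i(t)$, hence $\frac{\sqrt{\gamma}}{2}<\sqrt{c_i(t)}$ and $\frac{\sqrt{\gamma}}{\sqrt{c_i(t)}}<2$, which is the exponential decay rate of $Q^2$.

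For \eqref{est:Rpsi} I would just multiply the pointwise bounds and use the triangle inequality: for all $\bx$,
\[
\big|R_i(t,\bx)\,\psi_\gamma'\!\left(x-m(t)\right)\big|\lesssim\bar{c}\,\sqrt{\gamma}\,e^{-\sqrt{c_i}|x-z_i|-\frac{\sqrt{\gamma}}{2}|x-m|}\le\bar{c}\,\sqrt{\gamma}\,e^{-\frac{\sqrt{\gamma}}{2}\left(|x-z_i|+|x-m|\right)}\le\bar{c}\,\sqrt{\gamma}\,e^{-\frac{\sqrt{\gamma}}{4}z(t)},
\]
using $\frac{\sqrt{\gamma}}{2}\le\sqrt{c_i}$ and $|x-z_i|+|x-m|\ge|z_i-m|=\frac12 z(t)$; since $\gamma<\bar{c}$ and, by \eqref{eq:boundz}, $\frac{\sqrt{\gamma}}{4}z(t)\ge\frac{3}{16}\sqrt{\underline{c}}(Z+\sigma t)\ge\frac18\sqrt{\underline{c}}(Z+\sigma t)$, this gives \eqref{est:Rpsi}.

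For \eqref{ineq:R_psi_L2} I would treat the two summands symmetrically; take $\|R_2\psi_\gamma(\cdot-m)\|_{L^2}$ (the $R_1$ term being identical after exchanging $\psi_\gamma$ with $1-\psi_\gamma$ and the half-spaces $\{x<m\}$ with $\{x>m\}$, since $R_1$ sits to the right of $\{x=m\}$). Splitting the square of the norm at $\{x=m(t)\}$: on $\{x\le m\}$ bound $\psi_\gamma^2(x-m)\lesssim e^{-\sqrt{\gamma}(m-x)}=e^{-\sqrt{\gamma}(m-z_2)}e^{\sqrt{\gamma}(x-z_2)}$ and change variables $\bx'=\sqrt{c_2}\,(\bx-(z_2,\omega_2))$ to get
\[
\int_{x\le m}R_2^2\,\psi_\gamma^2(x-m)\,d\bx\lesssim e^{-\sqrt{\gamma}(m(t)-z_2(t))}\,c_2\int_{\mathbb{R}^2}Q^2(\bx')\,e^{\frac{\sqrt{\gamma}}{\sqrt{c_2}}x'}\,d\bx'\lesssim\bar{c}\,e^{-\frac{\sqrt{\gamma}}{2}z(t)},
\]
the last integral being finite precisely because $\frac{\sqrt{\gamma}}{\sqrt{c_2}}<2$; on $\{x>m\}$ bound $\psi_\gamma\le1$ and use \eqref{asym:Q} after the same change of variables, $\int_{x>m}R_2^2\,d\bx=c_2\int_{x'>\frac{\sqrt{c_2}}{2}z(t)}Q^2(\bx')\,d\bx'\lesssim\bar{c}\,e^{-\frac{\sqrt{c_2}}{2}z(t)}$. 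Adding the two pieces, using $\sqrt{\gamma},\sqrt{c_2}\gtrsim\sqrt{\underline{c}}$ and $z(t)>\frac34(Z+\sigma t)$, and taking the square root yields $\|R_2\psi_\gamma(\cdot-m)\|_{L^2}\lesssim\bar{c}^{\frac12}e^{-\frac1{16}\sqrt{\underline{c}}(Z+\sigma t)}$, and similarly for the $R_1$ term; this is \eqref{ineq:R_psi_L2}.

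The computations are short; the only step that is not a soft estimate is the finiteness of $\int_{\mathbb{R}^2}Q^2(\bx')e^{\lambda x'}d\bx'$ for $\lambda=\sqrt{\gamma}/\sqrt{c_i(t)}$, which holds because $\lambda<2$ equals the decay rate of $Q^2$, i.e. because $\gamma<4\,c_i(t)$. This separation of speeds is exactly where the hypothesis $\sigma<\min\{1,\underline{c}\}$ from \eqref{def:sigma} is invoked in these estimates; everything else is bookkeeping of the numerical constant, for which the rates $\frac18$ and $\frac1{16}$ leave a wide margin.
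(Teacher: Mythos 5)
Your proof is correct and follows essentially the same route as the paper's: the pointwise product of the exponential decays of $Q$ and $\psi_\gamma'$ combined with $|x-z_i|+|x-m|\ge\frac12 z$ for \eqref{est:Rpsi}, and a split of the $L^2$ integral across the transition line for \eqref{ineq:R_psi_L2}. The only cosmetic difference is that you split at $x=m$ and invoke $\gamma<4c_i(t)$ (which does hold under \eqref{eq:c_n} and \eqref{def:sigma}, as you argue) to make $\int Q^2 e^{\lambda x'}\,d\bx'$ converge, whereas the paper rescales to the single estimate \eqref{ineq:Qc_psi}, splits at the midpoint $x=-z/2$, and thereby handles the cases $4c_2\ge\gamma$ and $4c_2<\gamma$ without needing that comparison.
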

 The proof of Lemma \ref{lemma:est:Rpsi} is given in Appendix \ref{app:R1R2}.

	\begin{proof}[Proof of Lemma \ref{lemm:monotonicity_mass}]
		First, we choose $k_4 \le k_2$ and $Z_4 \ge Z_2$ so that we are in the setting of Propositions \ref{propo:choice_mod_param} and \ref{propo:evol_mod_param}. Then, it follows
		from \eqref{estimate_modulation:c} and \eqref{eq:ODE}, that $\gamma < 2\dot{m}(t)$. From this and \eqref{eq:psi3}, we compute
		\begin{align*}
			\frac{d}{dt} I (t)
			& = \int \left( -3 (\partial_x u)^2 - (\partial_y u)^2 + \frac43 u^3 - u^2 \dot{m} \right)  \psi_\gamma'  \left( \cdot - m(t)\right) + \int u^2  \psi_\gamma^ {(3)} \left( \cdot - m(t)\right) \\ 
			& \quad \leq \int \left( -3 (\partial_x u)^2 - (\partial_y u)^2 + \frac{4}{3}u^3 - \frac{\gamma}{4} u^2\right)  \psi_\gamma' \left( \cdot - m(t)\right).
		\end{align*}
		
		We now split the cubic term into
		\begin{align*}
			\int u^3 \psi_\gamma'\left( \cdot - m(t)\right) =  \int R u^2 \psi_\gamma'\left( \cdot - m(t)\right) + \int u^2 (u-R) \psi_\gamma'\left( \cdot - m(t)\right).
		\end{align*}

		From \eqref{est:Rpsi}, the Sobolev embedding $H^1(\mathbb{R}^2) \hookrightarrow L^3(\mathbb{R}^2)$ and \eqref{prop:psi}, we obtain
		\begin{align}
			\int R u^2 \psi_\gamma'\left( \cdot - m(t)\right) & \leq \| u \|_{L^2}^2 \| R \psi_\gamma'\left( \cdot - m(t)\right) \|_{L^\infty} \lesssim \left( \bar{c} + \| \epsilon\|_{L^2}^2 \right) \bar{c}^\frac32 e^{-\frac18 \sqrt{\underline{c}}\left( Z + \sigma t\right)}, \label{eq:mono_nonlinear_1}\\
			\int u^2 (u-R) \psi_\gamma'\left( \cdot - m(t)\right) & \leq \| u \sqrt{\psi_\gamma'\left( \cdot - m(t)\right)} \|_{L^3}^2 \| \epsilon \|_{L^3} \nonumber \\
			& \lesssim \left( \int u^2 \psi_\gamma'\left( \cdot - m(t)\right)  + \int \vert \nabla u \vert^2 \psi_\gamma'\left( \cdot - m(t)\right) \right) \| \epsilon \|_{H^1}. \label{eq:mono_nonlinear_2}
		\end{align}
		Thus, we deduce by using \eqref{estimate_modulation:eps} and by choosing $k_4$ small enough that
		there exists a positive constant $C_{\underline{c},\overline{c}}$ such that
		\begin{align}\label{eq:bound_deriv_mass}
			\frac{d}{dt} I(t) \leq - \frac{\min(1,\gamma)}{8} \int \left( u^2 + \vert \nabla u \vert^2 \right) \psi_\gamma'\left( \cdot - m(t)\right) + C_{\underline{c},\overline{c}} e^{-\frac1{8} \sqrt{\underline{c}}\left( Z + \sigma t\right)}.
		\end{align}
		An integration from $0$ to $t$ concludes the proof of \eqref{eq:monotonic_mass} by taking $K_4$ larger if necessary.
	\end{proof}

\section{Quantified orbital stability} \label{Sec:orbital}
This section is dedicated to the proof of the orbital stability in Theorem \ref{theo:orbital} (i) oin the $2$-dimensional case. 

 \subsection{Bootstrap setting and proof of Theorem \ref{theo:orbital} (i)}
Let  $\underline{c}$, $\bar{c}$ be such that $0< \underline{c}< \bar{c}$. 
For $0<\alpha=\alpha(\underline{c},\bar{c})<\alpha_1^ {\star}(\underline{c},\bar{c})$, $Z=Z(\underline{c},\bar{c})>Z_1^ {\star}(\underline{c},\bar{c})$ and two velocities $c_1^ 0$, $c_2^0$ such that $\underline{c} < c_2^0 <c_1^0 < \bar{c}$ and $\sigma=c_1^0-c_2^0<\min\{1,\underline{c}\}$, let $u_0\in H^1(\mathbb{R}^2)$ and $(z_1^0,z_2^0,\omega_1^0,\omega_2^0) \in \mathbb R^4$ satisfy
		\begin{align} \label{est:u0}
			\left\| u_0 - \sum_{i=1}^2 Q_{c_i^0}\left( \cdot - (z_i^0, \omega_i^0) \right) \right\|_{H^1} < \alpha \quad \text{and} \quad z_1^0-z_2^0>Z.
		\end{align}
 We denote by $u \in C(\mathbb R_+ : H^ 1(\mathbb R^2 ))$ the solution of \eqref{ZK} evolving from $u(0)=u_0$. As long as $u(t) \in \mathcal{U}_{c_1^0,c_2^0,\tilde{Z},\tilde{\alpha}}$ defined in \eqref{def:tub}, for some $0<\tilde{\alpha}<\alpha_1^{\star}$ and $\tilde{Z}>Z_1^{\star}$, we consider the $C^1$ function $\Gamma=(z_1,z_2,\omega_1,\omega_2,c_1,c_2)$ constructed in Proposition \ref{propo:choice_mod_param}. With the notation \eqref{defi:R_n} in hand, we then introduce the bootstrap estimate 
\begin{align} 
&\|\epsilon(t)\|_{H^1} \leq A \left( \alpha + e^{-\frac{1}{32} \sqrt{\underline{c}}Z }  \right) ; \label{BS:eps} 
\end{align}
for some constant $A=A(\underline{c},\bar{c})>1$ to be chosen later, and where $\alpha$ and $Z$ are chosen so that $A \left( \alpha + \sigma^{-\frac12}e^{-\frac{1}{32} \sqrt{\underline{c}}Z }  \right)< \alpha_1^{\star}$ and $\frac12 (Z+\sigma t)>Z_1^{\star}$ (see the statement of Proposition \ref{propo:a_priori}).  

We define 
\begin{equation}
t^{\star}=\sup \left\{ \tilde{t}> 0 : \eqref{BS:eps} \ \text{is satisfied for all} \ t \in [0,\tilde{t}] \right\} .
\end{equation}
Note that by continuity of $u(t)$, there exists $\tau>0$ such that the bootstrap estimate \eqref{BS:eps} holds on $[0,\tau]$, so that $t^{\star}$ is well defined. 

The main goal of the rest of the section is to prove that $t^{\star}=+\infty$. The proof relies on the following key \textit{a priori} estimate. 

\begin{propo}\label{propo:a_priori}
Let  $0< \underline{c}< \bar{c}$ be fixed.
There exist $A(\underline{c}, \bar{c})>1$, $K_5(\underline{c}, \bar{c})>1$ and $k_5(\underline{c},\bar{c})>0$ such that the following is true. For two velocities $c_1^ 0$ and $c_2^0$ such that $\underline{c} < c_2^0 <c_1^0 < \bar{c}$, let $\sigma=c_1^0-c_2^0$. We define
\begin{align}\label{defi:alpha_5}
	\alpha_5^\star := k_5\sigma \quad \text{and} \quad Z_5^{\star}= K_5 |\ln \sigma|,
\end{align}
For any $0<\alpha<\alpha_5^\star$ and $Z>Z_5^{\star}$, if a function $u_0\in H^1(\mathbb{R}^2)$ satisfies \eqref{est:u0}
then, with the above notation in hand, we have, for all $t \in [0,t^{\star}]$,
\begin{align} 
&\|\epsilon(t)\|_{H^1} \leq \frac12 A \left( \alpha + e^{-\frac{1}{32} \sqrt{\underline{c}}Z }  \right) ; \label{a_priori.1} 
\end{align}
\end{propo}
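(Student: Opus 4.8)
We adapt the Lyapunov-functional strategy of \cite{MMT02,CMPS16}, the genuinely new point being a careful bookkeeping of the losses in $\sigma$ so that they fit the thresholds $\alpha_5^\star=k_5\sigma$, $Z_5^\star=K_5|\ln\sigma|$ of \eqref{defi:alpha_5}. We choose $k_5\le k_2,k_4$ and $K_5\ge K_2,K_4$ so that Propositions~\ref{propo:choice_mod_param} and \ref{propo:evol_mod_param} and Lemmas~\ref{lemm:energy}, \ref{lemma:est:Rpsi}, \ref{lemm:monotonicity_mass} all apply on $[0,t^\star]$; in particular $\Gamma(t)=(z_i,\omega_i,c_i)(t)$ satisfies \eqref{eps:ortho}, $\|\epsilon(0)\|_{H^1}\le A_1\alpha$, $\sum_i|c_i(t)-c_i^0|\le\frac1{32}\sigma$ and $z(t)\ge\frac34(Z+\sigma t)$. \emph{Step~1 (quadratic control of the speeds).} Inserting $u=R_1+R_2+\epsilon$ into $M(u(t))=M(u_0)$ and using \eqref{eps:ortho}, \eqref{id:Ri}, \eqref{est:R1R2.1} gives
\[
\big(c_1+c_2\big)(t)-\big(c_1+c_2\big)(0)=\tfrac1{\int Q^2}\big(\|\epsilon(0)\|_{L^2}^2-\|\epsilon(t)\|_{L^2}^2\big)+O\big(e^{-\frac12\sqrt{\underline c}Z}\big),
\]
while Lemma~\ref{lemm:energy} together with $E(R_i)=c_i^2E(Q)$ gives $\big(c_1^2+c_2^2\big)(t)-\big(c_1^2+c_2^2\big)(0)=O\big(\|\epsilon(0)\|_{H^1}^2+\|\epsilon(t)\|_{H^1}^2+e^{-\frac12\sqrt{\underline c}Z}\big)$. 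Reading this as a $2\times2$ near-linear system for $(c_1-c_1^0,c_2-c_2^0)$, whose two unknowns are separated since $c_1(0)-c_2(0)\ge\frac\sigma2$ by \eqref{eq:c_n}, and absorbing the quadratic remainders (which are $\lesssim\sigma\sum_i|c_i-c_i^0|$ by \eqref{eq:c_n}), we obtain
\[
\sum_{i=1}^2|c_i(t)-c_i^0|\lesssim_{\underline c,\bar c}\tfrac1\sigma\Big(\|\epsilon(0)\|_{H^1}^2+\|\epsilon(t)\|_{H^1}^2+e^{-\frac12\sqrt{\underline c}Z}\Big)+A_1\alpha .
\]
(For $N\ge3$ solitons one would instead combine the one-sided control of $c_1(t)$ coming from the almost-monotonicity Lemma~\ref{lemm:monotonicity_mass} with an Abel summation; for $N=2$ the two conservation laws already suffice.)

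\emph{Step~2 (the Lyapunov functional).} With $\gamma,\psi_\gamma,m,I$ as in \eqref{defi:orbital_stab}, set $\mathcal F(t):=E(u(t))+\tfrac{c_2^0}{2}M(u(t))+\tfrac{\sigma}{2}I(t)$, with the \emph{fixed} coefficients $c_2^0$ and $\sigma=c_1^0-c_2^0$. Since $E$ and $M$ are conserved, $\mathcal F(t)-\mathcal F(0)=\frac\sigma2(I(t)-I(0))$, so Lemma~\ref{lemm:monotonicity_mass} directly yields
\[
\mathcal F(t)\le\mathcal F(0)+\tfrac\sigma2A_4\,e^{-\frac1{16}\sqrt{\underline c}(Z+\sigma t)}\le\mathcal F(0)+A_4\,e^{-\frac1{16}\sqrt{\underline c}Z}.
\]
Expanding $\mathcal F$ around $R=R_1+R_2$ gives $\mathcal F(t)=\mathcal S(t)+\mathcal Q_t(\epsilon(t))+O(\|\epsilon(t)\|_{H^1}^3)$, with $\mathcal S(t)=E(R)+\tfrac{c_2^0}{2}M(R)+\tfrac\sigma2\int R^2\psi_\gamma(\cdot-m)$ and $\mathcal Q_t(\epsilon)=\tfrac12\int|\nabla\epsilon|^2-\int R\epsilon^2+\tfrac{c_2^0}2\int\epsilon^2+\tfrac\sigma2\int\epsilon^2\psi_\gamma(\cdot-m)$. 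The two coefficients are chosen so that: (a) the first-order term in $\epsilon$ is only $O(e^{-\frac1{16}\sqrt{\underline c}(Z+\sigma t)})$ — $Q_c$ being a critical point of $E+\frac c2M$, $c_2^0+\sigma=c_1^0$, and one uses \eqref{eps:ortho} with Lemma~\ref{lemma:est:Rpsi} and \eqref{est:R1R2.1}; and (b) by Lemmas~\ref{est:R1R2} and \ref{lemma:est:Rpsi}, $\mathcal S(t)=h(c_1(t),c_2(t))+O(e^{-\frac1{16}\sqrt{\underline c}(Z+\sigma t)})$ with
\[
h(c_1,c_2):=\sum_{i=1}^2\Big(E(Q_{c_i})+\tfrac{c_i^0}{2}M(Q_{c_i})\Big)=(c_1^2+c_2^2)E(Q)+\tfrac{c_2^0}{2}(c_1+c_2)\!\int Q^2+\tfrac\sigma2c_1\!\int Q^2 ,
\]
and since each $Q_{c_i^0}$ is a critical point of $E+\frac{c_i^0}2M$, the point $c^0=(c_1^0,c_2^0)$ is a \emph{critical point} of the quadratic polynomial $h$. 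Hence, with $c(t)=(c_1(t),c_2(t))$,
\[
\mathcal S(0)-\mathcal S(t)=h(c(0))-h(c(t))+O(e^{-\frac1{16}\sqrt{\underline c}Z})\lesssim_{\underline c,\bar c}|c(0)-c^0|^2+|c(t)-c^0|^2+e^{-\frac1{16}\sqrt{\underline c}Z},
\]
which by Step~1 and \eqref{eq:c_n} is $\lesssim_{\underline c,\bar c}\alpha^2+\sigma^{-2}\big(\|\epsilon(0)\|_{H^1}^2+\|\epsilon(t)\|_{H^1}^2+e^{-\frac12\sqrt{\underline c}Z}\big)^2+e^{-\frac1{16}\sqrt{\underline c}Z}$.

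\emph{Step~3 (coercivity and conclusion).} Since $|c_i(t)-c_i^0|$ is small, near $R_i$ the form $\mathcal Q_t$ is a small perturbation of the rescaled operator $c_iL$, while away from both solitons $\mathcal Q_t(\epsilon)\ge\frac12\int|\nabla\epsilon|^2+\frac{c_2^0}2\int\epsilon^2$; a standard localization argument based on Proposition~\ref{prop:L}~(iv) and \eqref{eps:ortho} then gives $\mathcal Q_t(\epsilon(t))\ge c_0\|\epsilon(t)\|_{H^1}^2-C_{\underline c,\bar c}e^{-\frac18\sqrt{\underline c}Z}$ for some $c_0=c_0(\underline c,\bar c)>0$. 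Combining this with $\mathcal F(t)\le\mathcal F(0)+A_4e^{-\frac1{16}\sqrt{\underline c}Z}$, with $\mathcal F(0)=\mathcal S(0)+\mathcal Q_0(\epsilon(0))+O(e^{-\frac1{16}\sqrt{\underline c}Z})$ and $\mathcal Q_0(\epsilon(0))\lesssim_{\bar c}\|\epsilon(0)\|_{H^1}^2\le A_1^2\alpha^2$, and with the expansion of $\mathcal F(t)$ and the estimate of $\mathcal S(0)-\mathcal S(t)$ from Step~2, we reach
\[
c_0\|\epsilon(t)\|_{H^1}^2\lesssim_{\underline c,\bar c}\alpha^2+\tfrac1{\sigma^2}\big(\|\epsilon(0)\|_{H^1}^2+\|\epsilon(t)\|_{H^1}^2+e^{-\frac12\sqrt{\underline c}Z}\big)^2+e^{-\frac18\sqrt{\underline c}Z}+\|\epsilon(t)\|_{H^1}^3 .
\]
We now fix $A$ (hence $c_0$ and the constant $C_{\underline c,\bar c}$) and then pick $k_5$ small and $K_5$ large depending on $A,\underline c,\bar c$: by the bootstrap hypothesis \eqref{BS:eps}, $\alpha<k_5\sigma$ and $Z>K_5|\ln\sigma|$ one has $\|\epsilon(t)\|_{H^1}\le\eta\sigma$ with $\eta$ as small as desired, so $\sigma^{-2}\|\epsilon(t)\|_{H^1}^4\le\eta^2\|\epsilon(t)\|_{H^1}^2$ and $\|\epsilon(t)\|_{H^1}^3\le\eta\sigma\|\epsilon(t)\|_{H^1}^2$, while $\sigma^{-2}\|\epsilon(0)\|_{H^1}^4\le A_1^4k_5^2\alpha^2$, $\sigma^{-2}\|\epsilon(0)\|_{H^1}^2\|\epsilon(t)\|_{H^1}^2\le A_1^2\eta^2\alpha^2$, and $\sigma^{-2}e^{-\sqrt{\underline c}Z}\le e^{-\frac1{16}\sqrt{\underline c}Z}$ for $K_5$ large. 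Absorbing the $\|\epsilon(t)\|_{H^1}^2$ terms on the left yields $\|\epsilon(t)\|_{H^1}^2\le C_{\underline c,\bar c}\big(\alpha^2+e^{-\frac1{16}\sqrt{\underline c}Z}\big)$, hence $\|\epsilon(t)\|_{H^1}\le\sqrt{C_{\underline c,\bar c}}\big(\alpha+e^{-\frac1{32}\sqrt{\underline c}Z}\big)$, which is \eqref{a_priori.1} as soon as $A\ge2\sqrt{C_{\underline c,\bar c}}$.

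The main obstacle is the interplay between Steps~1 and 3: one must produce a control of $|c_i(t)-c_i^0|$ that is \emph{quadratic} in $\epsilon$ — a merely linear bound of size $\sim\sigma$ would be useless for closing the bootstrap — and pay an unavoidable factor $\sigma^{-1}$ for decoupling the two speeds, then verify that squaring this bound and feeding it through the coercivity estimate remains compatible with the thresholds $\alpha_5^\star=k_5\sigma$ and $Z_5^\star=K_5|\ln\sigma|$. The fact that $h$ is critical at $(c_1^0,c_2^0)$ — precisely what forces the choice of the fixed coefficients $c_2^0$ and $\sigma$ in $\mathcal F$ — is what makes $\mathcal S(0)-\mathcal S(t)$ quadratic rather than linear in the speed deviations, and is the structural reason the whole scheme closes.
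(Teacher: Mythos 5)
Your proof is correct, and it uses the same three ingredients as the paper (conservation of $M$ and $E$, the almost-monotonicity of the localized mass $I$ from Lemma \ref{lemm:monotonicity_mass}, and the coercivity of the two-soliton quadratic form weighted by $\psi_\gamma$), but it organizes them differently. For the quadratic control of the speeds, the paper combines the expansion \eqref{development_M1} of $I(t)$ (hence the monotonicity) with mass and energy conservation and the Abel resummation identity \eqref{eq:Abel} for $D(t)$, whereas you invert the $2\times2$ system given by the two conservation laws alone, accepting the same $\sigma^{-1}$ loss through the near-degenerate Jacobian $2(c_2-c_1)$; as you note, this shortcut is specific to $N=2$, while the paper's monotonicity-plus-resummation scheme is the one that generalizes. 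For the energy step, the paper bounds $\tfrac12\int(|\nabla\epsilon|^2-2R\epsilon^2)+\tfrac12\int c(t,x)\epsilon^2$ by chaining \eqref{eq:third_step}, \eqref{eq:eps_quadra2} and \eqref{eq:eps_cn_t}, which is algebraically the same computation as your single Lyapunov functional $\mathcal F=E+\tfrac{c_2^0}{2}M+\tfrac\sigma2 I$ with frozen coefficients; your observation that $(c_1^0,c_2^0)$ is a critical point of the quadratic polynomial $h$ is exactly the structural content of the paper's identity \eqref{eq:Abel} (via $E(Q)=-\tfrac14\int Q^2$), and the quadratic form $\mathcal Q_t$ you extract agrees with the one in Lemma \ref{lemm:coercivity} up to an $O(\sum_i|c_i(t)-c_i^0|)\|\epsilon\|_{L^2}^2$ correction, which the paper handles in \eqref{eq:eps_cn_t}. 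Your final bookkeeping of the $\sigma^{-1}$ losses against the thresholds $k_5\sigma$ and $K_5|\ln\sigma|$ matches the paper's (and is arguably more explicit than the statement of Lemma \ref{lemm:quadra_control_error}, whose constant $A_8$ in fact absorbs a $\sigma^{-1}$ from the division by $\tfrac\sigma4$ in its fourth step). The only blemishes are cosmetic: the exponent $e^{-\frac18\sqrt{\underline c}Z}$ in your final display should be $e^{-\frac1{16}\sqrt{\underline c}Z}$ to be consistent with the sources you quote (this is what you in fact use in the conclusion), and the coercivity of $\mathcal Q_t$ is asserted rather than proved, but this is exactly Lemma \ref{lemm:coercivity}, which the paper also imports from \cite{MMT02} without proof.
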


Below we give the proof of Theorem \ref{theo:orbital} (i) by assuming Proposition \ref{propo:a_priori}. 

\begin{proof}[Proof of Theorem \ref{theo:orbital} (i)] We assume for the sake of contradiction that $t^{\star} < \infty$. Then, by Proposition \ref{propo:a_priori}, we see that \eqref{a_priori.1} holds on $[0,t^{\star}]$. Hence, we infer from a continuity argument that there exists $\tau>0$ such that, for all $t \in [0,t^{\star}+\tau],$
\begin{align*} 
&\|\epsilon(t)\|_{H^1} \leq \frac34 A \left( \alpha + e^{-\frac{1}{32} \sqrt{\underline{c}}Z }  \right) . 
\end{align*}
This contradicts the definition of $t^{\star}$. Therefore, we deduce that $t^{\star}=+\infty$, which concludes the proof of \eqref{eq:u_orbital}. 

Observe as explained above that with $A=A(\underline{c},\bar{c})>0$ fixed by Proposition \ref{propo:a_priori}, $k=k(\underline{c},\bar{c})>0$, respectively $K=K(\underline{c},\bar{c})>0$, is chosen small enough, respectively large enough, such that $k \le k_5$, $K>K_5$ ($k_5$ and $K_5$ are given by Proposition \ref{propo:a_priori}). If $\alpha<\alpha^{\star}=k\sigma$, $Z>Z^{\star}=K|\ln \sigma|$, then $A \left( \alpha^{\star} + e^{-\frac{1}{32} \sqrt{\underline{c}}Z^{\star} }  \right)< \min\{\alpha_1^{\star},\alpha_2^{\star}\}$ and $\frac12 (Z^{\star}+\sigma t)>\max\{Z_1^{\star},Z_2^{\star}\}$. In particular, thanks to \eqref{BS:eps}, Propositions \ref{propo:choice_mod_param} and \ref{propo:evol_mod_param} hold on $[0,+\infty)$. Therefore, we conclude from \eqref{estimate_modulation:c}, \eqref{eq:ODE} and \eqref{eq:boundz}, by taking $A$ larger if necessary, that \eqref{eq:bound_dot:z_omega_i_t}, \eqref{defi:z} and \eqref{eq:bound_c_i_t} hold for all $t \ge 0$. This concludes the proof of Theorem \ref{theo:orbital} (i). 
\end{proof}

 \subsection{Proof of Proposition \ref{propo:a_priori}}

Let $A=A(\underline{c},\bar{c})>1$ to be chosen later.  First, we take $k_5=k_5(\underline{c},\bar{c})>0$ small enough and $K_5=K_5(\underline{c},\bar{c})>1$ large enough such that $\alpha_5^{\star}$ and $Z_5^{\star}$ defined in \eqref{defi:alpha_5} satisfy 
\begin{equation} \label{cond:alpha5}
A \left( \alpha_5^{\star} + e^{-\frac{1}{32} \sqrt{\underline{c}}Z_5^{\star} }  \right)< \min\{\alpha_1^{\star},\alpha_2^{\star},\alpha_4^{\star}\} \quad \text{and} \quad \frac12 (Z_5^{\star}+\sigma t)>\max\{Z_1^{\star},Z_2^{\star},Z_4^{\star}\}. 
\end{equation}
Thus the decomposition in Proposition \ref{propo:choice_mod_param} applies, so that the bootstrap estimate \eqref{BS:eps} and the estimates in Proposition \ref{propo:evol_mod_param} and Lemmas \ref{lemm:energy} and \ref{lemm:monotonicity_mass} hold on $[0,t^{\star}]$.

We first prove a bound for the variation of the velocities $c_i(t)$ by a quadratic quantity in $\|\epsilon(t)\|_{H^1}$.

	\begin{lemm}[Quadratic control of the variation of $c_j(t)$]\label{lemm:quadra_control_error}
		There exists a positive constant $A_8=A_8(\underline{c}, \bar{c})$ such that for any $t\in [0, t^\star]$,
		\begin{align} \label{lemm:quadra_control_error.1}
			\sum_{i=1}^2 \left\vert c_i(t) -c_i(0) \right\vert \leq A_8  \left( \| \epsilon(t) \|_{H_1}^2 + \| \epsilon (0) \|_{H^1}^2 + e^{-\frac1{16} \sqrt{\underline{c}} Z}\right).
		\end{align}
	\end{lemm}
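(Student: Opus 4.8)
The plan is to follow the scheme of \cite{MMT02}: combine the two exact conservation laws \eqref{def:mass} and \eqref{def:energy}, which control the variations of $c_1+c_2$ and of $c_1^2+c_2^2$, with the almost monotonicity of the mass on the right (Lemma \ref{lemm:monotonicity_mass}), which gives a one-sided control of $c_1$, and then resum the three pieces of information (for two waves this ``Abel resummation'' is just the resolution of an approximate $2\times2$ system). Concretely, inserting $u=R_1+R_2+\epsilon$ into $M(u)(t)=M(u)(0)$ and using $\int R_i\epsilon=0$ from \eqref{eps:ortho}, the identities $\int R_i^2=c_i\int Q^2$ from \eqref{id:Ri}, the interaction bound \eqref{est:R1R2.1}, and $z(t),z(0)>\tfrac34 Z$ from \eqref{eq:boundz}, one first obtains
\[
\Big|\big(c_1(t)+c_2(t)-c_1(0)-c_2(0)\big)\int Q^2\Big|\le \|\epsilon(t)\|_{L^2}^2+\|\epsilon(0)\|_{L^2}^2+C_{\underline c,\bar c}\,e^{-\frac1{16}\sqrt{\underline c}\,Z}.
\]

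For the energy, the scaling $Q_c=cQ(\sqrt c\,\cdot)$ gives $E(R_i)=E(Q_{c_i})=c_i^{\,2}E(Q)$ in dimension $d=2$, with $E(Q)\neq0$ by the Pohozaev identity associated to \eqref{eq:ground_state}. Then Lemma \ref{lemm:energy}, the conservation of $E$, the crude bound $\big|\int(|\nabla\epsilon|^2-2R\epsilon^2)(t)\big|\lesssim_{\underline c,\bar c}\|\epsilon(t)\|_{H^1}^2$ (using $\|R\|_{L^\infty}\lesssim\bar c$), and $\|\epsilon(t)\|_{H^1}^3\lesssim_{\underline c,\bar c}\|\epsilon(t)\|_{H^1}^2$ (from \eqref{estimate_modulation:eps}) yield
\[
\big|c_1(t)^2+c_2(t)^2-c_1(0)^2-c_2(0)^2\big|\lesssim_{\underline c,\bar c}\|\epsilon(t)\|_{H^1}^2+\|\epsilon(0)\|_{H^1}^2+e^{-\frac1{16}\sqrt{\underline c}\,Z}.
\]

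For the third ingredient I would expand $I(t)=\int u^2\psi_\gamma(\cdot-m(t))$ using $0\le\psi_\gamma\le1$, the orthogonality \eqref{eps:ortho}, \eqref{est:R1R2.1}, the estimates \eqref{ineq:R_psi_L2} of Lemma \ref{lemma:est:Rpsi} (which quantify that $R_1$ sits essentially to the right and $R_2$ essentially to the left of the line $x=m(t)$), and Young's inequality on the terms $\int R_i\epsilon\,\psi_\gamma$, so that $I(t)=c_1(t)\int Q^2+\mathcal E(t)$ with $|\mathcal E(t)|\lesssim_{\underline c,\bar c}\|\epsilon(t)\|_{L^2}^2+e^{-\frac1{16}\sqrt{\underline c}(Z+\sigma t)}$, and similarly for $I(0)$. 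Lemma \ref{lemm:monotonicity_mass} then gives $I(t)-I(0)\le A_4\,e^{-\frac1{16}\sqrt{\underline c}(Z+\sigma t)}$, hence the one-sided estimate
\[
c_1(t)-c_1(0)\le C_{\underline c,\bar c}\big(\|\epsilon(t)\|_{H^1}^2+\|\epsilon(0)\|_{H^1}^2+e^{-\frac1{16}\sqrt{\underline c}\,Z}\big).
\]

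It remains to combine these. Writing $a_i=c_i(t)-c_i(0)$ and $O=\|\epsilon(t)\|_{H^1}^2+\|\epsilon(0)\|_{H^1}^2+e^{-\frac1{16}\sqrt{\underline c}\,Z}$, the mass and energy steps give $|a_1+a_2|\lesssim_{\underline c,\bar c}O$ and, expanding $c_i(t)^2-c_i(0)^2=a_i(c_i(t)+c_i(0))$ and summing by parts, $\big|a_1\,[(c_1(t)+c_1(0))-(c_2(t)+c_2(0))]\big|\lesssim_{\underline c,\bar c}O$, while by \eqref{eq:c_n} the bracket is $\ge\tfrac{15}{8}\sigma>0$. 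Feeding in the one-sided monotonicity bound (to fix the remaining sign), the a priori modulation bound $|a_i|\le 2A_1\alpha<c_1(0)-c_2(0)$ coming from \eqref{estimate_modulation:c}, and the calibration $\alpha<\alpha_5^\star=k_5\sigma$, $Z>Z_5^\star=K_5|\ln\sigma|$, one arrives at $|a_1|\lesssim_{\underline c,\bar c}O$ and then $|a_2|\le|a_1|+|a_1+a_2|\lesssim_{\underline c,\bar c}O$, which is the claim. This last step is where the difficulty lies: since the map $(c_1,c_2)\mapsto(c_1+c_2,c_1^2+c_2^2)$ becomes singular as $c_1-c_2\to0$, the conservation laws alone only determine the two well-ordered velocities up to an error of order $\sigma^{-1}\,O$; one must genuinely exploit the one-sided monotonicity bound, the modulation estimates, and the specific thresholds $\alpha_5^\star\simeq\sigma$ and $Z_5^\star\simeq|\ln\sigma|$ to absorb this spurious factor $\sigma^{-1}$ and keep the final constant $A_8$ dependent only on $\underline c$ and $\bar c$.
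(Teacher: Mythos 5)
Your proposal follows essentially the same route as the paper: mass conservation to control $c_1+c_2$, energy conservation (via Lemma \ref{lemm:energy} and $E(Q_c)=c^2E(Q)$) to control $c_1^2+c_2^2$, the localized mass $I(t)$ expanded to first order together with the almost monotonicity of Lemma \ref{lemm:monotonicity_mass} for a one-sided control of $c_1(t)-c_1(0)$, and a final algebraic combination that divides by the velocity gap. The paper packages that last step through the Abel-resummed quantity $D(t)=\sum_i c_i(0)\,(c_i(t)-c_i(0))$ and the elementary inequality ``$a\le b$, $b\ge0$ $\Rightarrow$ $|a|\le -a+2b$'', and absorbs the quadratic remainder $\tfrac12\sum_i|c_i(t)-c_i(0)|^2$ into the left-hand side using $|c_i(t)-c_i(0)|\le\tfrac1{16}\sigma$ from \eqref{eq:c_n}; your version, which isolates $a_1\bigl[(c_1(t)+c_1(0))-(c_2(t)+c_2(0))\bigr]$ and divides by the bracket $\gtrsim\sigma$, is algebraically equivalent. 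One remark on your closing caveat: the ``spurious factor $\sigma^{-1}$'' you worry about is not actually removed in the paper either — its proof terminates with a bound on $\tfrac14\sigma\sum_i|c_i(t)-c_i(0)|$ by a constant depending only on $\underline{c},\bar{c}$ times the right-hand side of \eqref{lemm:quadra_control_error.1}, so the constant literally delivered carries the same $\sigma^{-1}$, which is rendered harmless downstream by the calibrations $\alpha\lesssim\sigma$ and $Z\gtrsim|\ln\sigma|$ exactly as you suggest. So you need not (and cannot, by these arguments) upgrade to a $\sigma$-independent constant at this stage; the genuine extra payoff of the one-sided monotonicity in the paper's combination is rather that the one-sided bounds on $c_1(t)-c_1(0)$ and on the sum involve only $\|\epsilon(0)\|_{H^1}^2$ and the exponential (no $\|\epsilon(t)\|_{H^1}^2$), a refinement exploited in Lemma \ref{lemm:control_error} rather than here.
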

	
	\begin{proof}
		
		\textit{First step: first order approximation of the localized mass.} Recall the definition of $I$ in \eqref{defi:orbital_stab}. We claim that there exists $A_6 = A_6(\underline{c}, \bar{c})>0$ such that 
		\begin{align}\label{development_M1}
			\left\vert I(t) - I(0) - \left( c_1(t) -c_1(0) \right) \int Q^2 - \int \epsilon^2(t)\psi_\gamma(\cdot-m(t)) \right\vert \leq A_6 \left( \| \epsilon(0) \|_{L^2}^2 + e^{-\frac1{16} \sqrt{\underline{c}} Z} \right)
		\end{align}
   and
   \begin{align}\label{development_M2}
			\left\vert \left( c_2(t) -c_2(0) + c_1(t) -c_1(0) \right) \int Q^2 + \int \epsilon^2(t) \right\vert \leq A_6 \left( \| \epsilon (0) \|_{L^2}^2 + e^{- \frac18 \sqrt{\underline{c}} Z} \right).
		\end{align}
		
		We expand the localized mass $I=I(t)$ as
		\begin{align}\label{eq:M1_developed}
			I = I_1 + I_2 + I_3 + I_4, 
		\end{align}
		with
		\begin{align*}
			 I_1 &:= \int R_1^2+\int \epsilon^2 \psi_\gamma (x-m), \\  
             I_2 &:= 2\int R\epsilon \psi_\gamma (x-m), \\ 
             I_3 &:= \int R_1^2 \left( \psi_\gamma(x-m)-1\right), \\
			 I_4 &:= \int (R-R_1)(R+R_1) \psi_\gamma(x -m).
		\end{align*}
		
		We now estimate $I_2$, $I_3$ and $I_4$ separately. By using \eqref{ineq:R_psi_L2} and  the orthogonality relation \eqref{eps:ortho}, we have
		\begin{align*}
			\left\vert I_2 \right\vert 
			& \lesssim \left\vert  \int \epsilon R_1 \left( \psi_\gamma(\cdot -m) -1\right) \right\vert + \left\vert  \int \epsilon R_2 \psi_\gamma(\cdot-m) \right\vert  \lesssim \| \epsilon \|_{L^2} \bar{c}^\frac12 e^{-\frac1{16}\sqrt{\underline{c}}\left( Z+ \sigma t\right)}; \\
			\left\vert I_3 \right\vert & \lesssim \| R_1 \|_{L^2} \left\| R_1 \left( \psi_\gamma \left( \cdot -m_1\right)-1 \right) \right\|_{L^2} \lesssim \bar{c}^{\frac12} e^{- \frac1{16} \sqrt{\underline{c}}\left( Z + \sigma t\right)}; \\
			\left\vert I_4 \right\vert &\leq \left\| R_2 \psi_\gamma \left( \cdot -m_1 \right) \right\|_ {L^2} \| 2R_1 + R_2 \|_{L^2} \lesssim \bar{c}^{\frac12} e^{- \frac1{16} \sqrt{\underline{c}}\left( Z + \sigma t\right)}.
		\end{align*}
		Taking the difference of \eqref{eq:M1_developed} between the times $t$ and $0$, using \eqref{id:Ri} and the previous bounds conclude the proof of \eqref{development_M1}. 
   
        Similarly, the proof of \eqref{development_M2} follows from the conservation of the mass between the times $t$ and $0$, \eqref{eps:ortho}, \eqref{id:Ri} and \eqref{est:R1R2.1}. 

\medskip
\noindent \textit{Second step: control of the variation of the quadratic speed.}
 	Since $E(Q_c) = c^2 E(Q)$, it follows from Lemma \ref{lemm:energy} and \eqref{eq:boundz} that 
		\begin{align}\label{eq:third_step}
			\left\vert E(Q) \sum_{i=1}^2 \left( c_i(t)^2 - c_i(0)^2 \right) + \frac12 \int \left( \vert \nabla \epsilon \vert^2 - 2 R \epsilon^2 \right)(t) \right\vert \leq A_3 \left( \| \epsilon (0) \|_{H^1}^2 + \| \epsilon (t) \|_{H^1}^3 + e^{- \frac12 \sqrt{\underline{c}}Z} \right).
		\end{align}

\medskip
\noindent	\textit{Third step: Abel resummation argument.} Let us define the quantity
\begin{equation} \label{defi:D}
D(t)=(c_1(0) - c_2(0)) \left( c_1(t) - c_1(0) \right) + c_2(0) \left( c_1(t) -c_1(0) + c_2(t) - c_2(0)  \right).
\end{equation}
Then, we claim that there exists a positive constant $A_7=A_7(\underline{c}, \bar{c})$ such that
\begin{align}
	\left\vert D(t)  \right\vert - \frac12 \sum_{i=1}^2 \vert c_i(t) - c_i(0) \vert^2 \le A_7 \left( \| \epsilon(0) \|_{H^1}^2 +\| \epsilon (t) \|_{H^1}^2 + e^{-\frac18 \sqrt{\underline{c} }Z} \right) .\label{eq:resummation}
\end{align}

Indeed, by using a resummation argument, we have
			$D(t)=\sum_{i=1}^2 c_i(0) \left( c_i(t) -c_i(0) \right)$,
which combined with the triangle inequality and \eqref{eq:third_step} implies
\begin{align}
	\left\vert D(t) \right\vert
	& = \frac12 \left\vert \sum_{i=1}^2 \left( c_i(t)^2 - c_i(0)^2 \right) -\sum_{i=1}^2(c_i(t) -c_i(0))^2 \right\vert \label{eq:Abel}\\
	& \leq \frac{A_3}{2 E(Q)} \left( \| \epsilon (0) \|_{H^1}^2 + \| \epsilon(t) \|_{H^1}^3 + e^{-\frac12 \sqrt{\underline{c}} Z}\right) +\frac{(1+\bar{c})}{2E(Q)} \| \epsilon (t) \|_{H^1}^2 + \frac12 \sum_{i=1}^2 \vert c_i(t) - c_i(0) \vert^2 \nonumber
\end{align}
Then, we conclude the proof of \eqref{eq:resummation} by using $\| \epsilon (t) \|_{H^1} \leq \alpha_5^\star<1$.

\medskip
\noindent \textit{Fourth step: proof of \eqref{lemm:quadra_control_error.1}.} Note that  $\sigma \leq 2(c_1(0)-c_2(0))$ (see\eqref{eq:c_n}) and $\underline{c} < c_2^0$. Combining the statement that if $a \leq b$ with $b \geq 0$, then $\vert a \vert \leq -a +2b$ with  \eqref{eq:monotonic_mass}  \eqref{development_M1}, \eqref{development_M2} and \eqref{eq:resummation} yields
		\begin{align*}
			\MoveEqLeft
			\frac12 \left(\sigma \left\vert c_1(t) -c_1(0) \right\vert + \underline{c} \left\vert c_1(t) - c_1(0) + c_2(t) - c_2(0) \right\vert \right)\\
			& \leq \left( c_1(0) -c_2(0) \right) \left\vert c_1(t) - c_1(0) \right\vert + c_2(0) \left\vert c_1(t) - c_1(0) + c_2(t) - c_2(0) \right\vert \\
			& \leq  -D(t) + 4 \bar{c} \frac{A_6}{\int Q^2}  \| \epsilon (0) \|_{H^1}^2 + \bar{c} \left( 2 A_4 + 3 \frac{A_6}{\int Q^2} \right) e^{- \frac1{16} \sqrt{\underline{c}} Z} \\
			& \leq \left( 4 \bar{c} \frac{A_6}{\int Q^2} + A_7 \right) \left( \| \epsilon (t) \|_{H^1}^2 + \| \epsilon (0) \|_{H^1}^2 \right) + \left( 2 A_4 + 4 \frac{A_6}{\int Q^2} + A_7 \right) e^{- \frac1{16} \sqrt{\underline{c}} Z} \\
   & \quad + \frac12 \sum_{i=1}^2 \vert c_i(t) - c_i(0) \vert^2.
		\end{align*}
		Thus, we deduce from the previous inequality, \eqref{eq:c_n} and $\sigma <\underline{c}$ that
		\begin{align*}
			\MoveEqLeft
			\frac14 \sigma \left[ \vert c_1(t)-c_1(0) \vert + \vert c_2(t)- c_2(0) \vert \right] \\
			& \leq \frac12 \left( \sigma \left\vert c_1(t) - c_1(0) \right\vert + \underline{c} \left\vert c_1(t)- c_1(0) + c_2(t) -c_2(0) \right\vert \right) \\
			& \leq 2\left( 4 \bar{c} \frac{A_6}{\int Q^2} +A_7 \right) \left( \| \epsilon (t) \|_{H^1}^2 + \| \epsilon (0) \|_{H^1}^2 \right) + 2 \left( 2 A_4 + 4 \frac{A_6}{\int Q^2} + A_7 \right) e^{- \frac1{16} \sqrt{\underline{c}} Z},
		\end{align*}
		which concludes the proof of Lemma \ref{lemm:quadra_control_error}.
	\end{proof}
	
	Next, we give a bound for the error term $\|\epsilon(t)\|_{H^1}$. 
	\begin{lemm}\label{lemm:control_error}
		There exist positive constants $k_9=k_9(\underline{c}, \bar{c})$, $K_9=K_9(\underline{c}, \bar{c})$ and $A_9=A_9(\underline{c}, \bar{c})$ such that if
  \begin{align}\label{defi:alpha_9}
	\alpha_9^\star := k_9\sigma \quad \text{and} \quad Z_9^{\star}= K_9 |\ln \sigma|,
\end{align}
if $0<\alpha<\alpha_9^\star$ and $Z>Z_9^{\star}$, it holds on $[0,t^\star]$,
		\begin{align}\label{eq:control_eps}
			\left\| \epsilon(t) \right\|_{H^1}^2 \leq A_9 \left( \| \epsilon(0) \|_{H^1}^2 +  e^{-\frac1{16} \sqrt{\underline{c}}Z} \right).
		\end{align}
	\end{lemm}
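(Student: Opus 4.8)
The plan is to combine the energy expansion from Lemma \ref{lemm:energy} with the coercivity of the linearized operator $L$ (Proposition \ref{prop:L} (iv)) applied near each solitary wave, using the orthogonality relations \eqref{eps:ortho} and the quadratic control of the velocity variations from Lemma \ref{lemm:quadra_control_error} to absorb the bad terms. First I would recall from Lemma \ref{lemm:energy} that
\begin{align*}
\left\vert \sum_{i=1}^2 \left( E(R_i(t)) - E(R_i(0)) \right) + \tfrac12 \int \left( \vert \nabla \epsilon \vert^2 - 2R\epsilon^2 \right)(t) \right\vert \leq A_3 \left( \| \epsilon(0) \|_{H^1}^2 + \| \epsilon(t) \|_{H^1}^3 + e^{-\frac12 \sqrt{\underline{c}} Z} \right),
\end{align*}
and since $E(R_i) = c_i^2 E(Q)$, the left-hand energy difference is controlled by $\sum_i \vert c_i(t)^2 - c_i(0)^2 \vert \lesssim_{\underline{c},\bar{c}} \sum_i \vert c_i(t) - c_i(0) \vert$, which by Lemma \ref{lemm:quadra_control_error} is itself $\lesssim_{\underline{c},\bar{c}} \| \epsilon(t) \|_{H^1}^2 + \| \epsilon(0) \|_{H^1}^2 + e^{-\frac1{16}\sqrt{\underline{c}} Z}$. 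Hence the full quadratic form $\int \left( \vert \nabla \epsilon \vert^2 - 2R\epsilon^2 \right)(t)$ is bounded above by $C_{\underline{c},\bar{c}}\big( \| \epsilon(t) \|_{H^1}^2 + \| \epsilon(0) \|_{H^1}^2 + \| \epsilon(t) \|_{H^1}^3 + e^{-\frac1{16}\sqrt{\underline{c}} Z} \big)$ — but this alone is the wrong sign to conclude; the point of the next step is to show the quadratic form is instead bounded \emph{below} by a positive multiple of $\| \epsilon(t) \|_{H^1}^2$ minus error terms.

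The core step is a localized coercivity estimate. Introduce a partition of unity $\phi_1 + \phi_2 = 1$ adapted to the two solitary waves (with $\phi_i$ localized in a neighborhood of $(z_i,\omega_i)$ of size comparable to $z/2$, say via $\phi_1(x) = \psi_\gamma(x - m)$ and $\phi_2 = 1 - \phi_1$ with the weight $\psi_\gamma$ from \eqref{defi:orbital_stab}), and write $\epsilon = \epsilon_1 + \epsilon_2$ with $\epsilon_i$ the appropriately localized piece, or more directly estimate $\int (\vert \nabla \epsilon \vert^2 - 2 R \epsilon^2) = \sum_i \int (\vert \nabla \epsilon \vert^2 - 2 R_i \epsilon^2) \phi_i + (\text{cross terms})$. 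Near $R_i$, the rescaling $\bx \mapsto c_i^{-1/2}(\bx - (z_i,\omega_i))$ reduces $-\Delta + c_i - 2R_i$ to (a scaled copy of) $L = -\Delta + 1 - 2Q$, so the coercivity Proposition \ref{prop:L} (iv) applies to $\epsilon$ localized near $R_i$ provided its projections onto $R_i, \partial_x R_i, \partial_y R_i$ vanish — which is exactly \eqref{eps:ortho}. The localization of $R_i$ away from its center is exponentially small by \eqref{est:R1R2.1}–\eqref{est:R1R2.7} and \eqref{asym:Q}, so the errors from cutting off, from the $c_i$ versus $1$ discrepancy (controlled by \eqref{estimate_modulation:c:bis}), from the mass term $\int \epsilon^2 = \int \epsilon^2(\phi_1 + \phi_2)$ needed to upgrade the $L^2$-part of coercivity to the full $H^1$-norm via the second orthogonality in each group, and from the cross terms, are all absorbed into $C_{\underline{c},\bar{c}}\big( \delta(\alpha,Z) \| \epsilon(t) \|_{H^1}^2 + e^{-\frac1{16}\sqrt{\underline{c}}Z} \big)$ where $\delta \to 0$ as $\alpha \to 0$, $Z \to \infty$. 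This yields $\int (\vert \nabla \epsilon \vert^2 - 2 R \epsilon^2)(t) \geq \kappa \| \epsilon(t) \|_{H^1}^2 - C_{\underline{c},\bar{c}}\big( \delta \| \epsilon(t) \|_{H^1}^2 + e^{-\frac1{16}\sqrt{\underline{c}}Z} \big)$ for a fixed $\kappa = \kappa(\underline{c},\bar{c}) > 0$.

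Combining the upper and lower bounds on the quadratic form gives, for $\| \epsilon(t) \|_{H^1} \leq \alpha_9^\star < 1$ so that $\| \epsilon(t) \|_{H^1}^3 \leq \| \epsilon(t) \|_{H^1}^2$,
\begin{align*}
\kappa \| \epsilon(t) \|_{H^1}^2 \leq C_{\underline{c},\bar{c}} \left( \delta(\alpha,Z)\, \| \epsilon(t) \|_{H^1}^2 + \| \epsilon(0) \|_{H^1}^2 + e^{-\frac1{16}\sqrt{\underline{c}}Z} \right),
\end{align*}
and choosing $k_9$ small and $K_9$ large (depending only on $\underline{c},\bar{c}$) so that $C_{\underline{c},\bar{c}}\, \delta(\alpha_9^\star, Z_9^\star) \leq \tfrac{\kappa}{2}$ lets us absorb the first term on the right into the left, producing \eqref{eq:control_eps} with $A_9 = 2 C_{\underline{c},\bar{c}}/\kappa$. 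I expect the main obstacle to be the bookkeeping in the localized coercivity step: one must check that the cut-off functions can be chosen so that the commutator/localization errors genuinely carry the small factor $\delta$ or the exponential $e^{-c\sqrt{\underline{c}} z}$ (using \eqref{estimate_modulation:z} to convert $z$-decay into $Z$-decay), that the orthogonality conditions survive localization up to such errors, and that no term is lost when passing between $\int \epsilon^2 \phi_i$ and the genuine $L^2$-norms entering Proposition \ref{prop:L} (iv); the algebraic identities among the $c_i$'s and the Abel-type cancellation have already been handled in Lemma \ref{lemm:quadra_control_error}, so that part is purely a citation.
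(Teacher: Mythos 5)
There is a genuine gap, and it sits exactly at the point you flag as "the core step". The lower bound you claim,
\begin{equation*}
\int \left( \vert \nabla \epsilon \vert^2 - 2 R \epsilon^2 \right)(t) \geq \kappa \, \| \epsilon(t) \|_{H^1}^2 - C_{\underline{c},\bar{c}}\left( \delta \, \| \epsilon(t) \|_{H^1}^2 + e^{-\frac1{16}\sqrt{\underline{c}}Z} \right),
\end{equation*}
is false: the quadratic form on the left has no zeroth-order positive term, so it cannot control $\|\epsilon\|_{L^2}^2$ (take $\epsilon$ a low-frequency bump supported far from both solitary waves; the form is negligible while $\|\epsilon\|_{L^2}$ is of order one, and the orthogonality relations \eqref{eps:ortho} do not exclude this). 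The coercivity you invoke is for $L=-\Delta+1-2Q$, whose "$+1$" is essential; after rescaling near $R_i$ it becomes $-\Delta+c_i-2R_i$, and the mass term $c_i\epsilon^2$ must actually appear in the functional. This is precisely why the paper uses Lemma \ref{lemm:coercivity}, whose quadratic form is $\int(\vert\nabla\epsilon\vert^2-2R\epsilon^2+c(t,x)\epsilon^2)$ with $c(t,x)=c_2(t)+(c_1(t)-c_2(t))\psi_\gamma(x-m(t))$.

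The second, related, problem is where that mass term comes from on the other side of the inequality. The paper does \emph{not} bound $D(t)$ crudely by $\sum_i|c_i(t)-c_i(0)|$ as you do; it keeps $-2E(Q)D(t)$ intact and converts it, via the first-order expansion of the localized mass \eqref{development_M1}--\eqref{development_M2} and the almost-monotonicity \eqref{eq:monotonic_mass}, into $-\frac12\int\epsilon^2(t)\,c(t,x)$ plus errors that involve only $\|\epsilon(0)\|_{H^1}^2$, exponentials in $Z$, and higher-order terms in $\|\epsilon(t)\|_{H^1}$. That negative term is exactly the missing mass term for the coercivity. Your shortcut through Lemma \ref{lemm:quadra_control_error} destroys this structure and leaves a term $C_{\underline{c},\bar{c}}\|\epsilon(t)\|_{H^1}^2$ on the right-hand side with a constant of order $A_8$ (not a small $\delta$), which cannot be absorbed by the fixed coercivity constant $\kappa$; your final display silently drops it. To repair the argument you must (i) use the coercivity with the weighted mass term included, and (ii) produce that mass term from the monotonicity of $I(t)$ combined with the Abel resummation, rather than from Lemma \ref{lemm:quadra_control_error}, which should only be used to replace $c_i(0)$ by $c_i(t)$ inside the weight (as in \eqref{eq:eps_cn_t}).
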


 Before giving the proof of Lemma \ref{lemm:control_error}, we state a coercivity lemma whose proof is obtained arguing as in Lemma 4 \cite{MMT02} with Proposition \ref{prop:L} in hand.
		
		\begin{lemm}[Positivity of the quadratic form \cite{MMT02}]\label{lemm:coercivity}
			Let $\frac{14}{15}\underline{c} \leq c_2(t) \leq c_1(t) \leq \frac{16}{15}\bar{c}$. There exists $\lambda= \lambda (\underline{c})$ and $Z_{10}^{\star}=Z_{10}^{\star}(\underline{c})$ such that, if for any $t\in[0,t^\star]$, $z(t)=z_1(t)-z_2(t)>Z_{10}^{\star}$, then
			\begin{align} \label{lemm:coercivity.1}
				\int  \left( \vert \nabla \epsilon \vert^2 - 2R(t) \epsilon^2(t) + c(t,x) \epsilon^2(t) \right) \geq \lambda \| \epsilon (t) \|_{H^1}^2
			\end{align}
			where $c(t,x):= c_2(t) + (c_1(t)-c_2(t)) \psi_{\gamma}(x-m(t))$.
		\end{lemm}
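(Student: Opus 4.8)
The proof follows the partition-of-unity scheme of Lemma 4 in \cite{MMT02}, reducing the global quadratic form to the sum of two single-soliton quadratic forms, to which the coercivity of $L$ in Proposition \ref{prop:L}(iv) applies after rescaling. Throughout, we write $z=z(t)$, $m=m(t)$, $c_i=c_i(t)$, and use that $z>Z_{10}^{\star}$ is large while, by \eqref{def:sigma} and $\gamma=\frac12(c_1^0+c_2^0)$, the speeds $c_1(t),c_2(t)$ are both comparable to $\gamma$.

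\emph{Step 1: localization adapted to the weight.} Set $\eta_1:=\sqrt{\psi_\gamma(\cdot-m)}$ and $\eta_2:=\sqrt{1-\psi_\gamma(\cdot-m)}$, so that $\eta_1^2+\eta_2^2\equiv1$, $\eta_1\nabla\eta_1+\eta_2\nabla\eta_2\equiv0$, and, by the definition of $c(t,x)$, $c_1\eta_1^2+c_2\eta_2^2=c(t,x)$ identically. Writing $\epsilon_i:=\epsilon\,\eta_i$, a direct computation gives $|\nabla\epsilon|^2=|\nabla\epsilon_1|^2+|\nabla\epsilon_2|^2-\epsilon^2\big(|\nabla\eta_1|^2+|\nabla\eta_2|^2\big)$, with $|\nabla\eta_1|^2+|\nabla\eta_2|^2=\dfrac{\big(\psi_\gamma'(\cdot-m)\big)^2}{4\psi_\gamma(\cdot-m)\big(1-\psi_\gamma(\cdot-m)\big)}$, which by \eqref{prop:psi}–\eqref{eq:psi3} and the elementary bound $(\psi')^2\lesssim \psi(1-\psi)\,\psi'$ is bounded by $C\psi_\gamma'(\cdot-m)$ and is concentrated in the transition zone $\{|x-m|\lesssim\gamma^{-1/2}\}$, far from both solitons since $z>Z_{10}^{\star}$. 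Hence
\[
\int\!\big(|\nabla\epsilon|^2-2R\epsilon^2+c(t,x)\epsilon^2\big)=\sum_{i=1}^{2}\int\!\big(|\nabla\epsilon_i|^2+c_i\epsilon_i^2-2Q_{c_i}(\cdot-(z_i,\omega_i))\epsilon_i^2\big)-\int\epsilon^2\big(|\nabla\eta_1|^2+|\nabla\eta_2|^2\big)+\mathrm{Err},
\]
where $\mathrm{Err}$ gathers the terms obtained by replacing $R$ by $R_i$ on $\operatorname{supp}\eta_i^2$; by \eqref{asym:Q} and the separation $z>Z_{10}^{\star}$ these are $O\big(e^{-c\sqrt{\underline c}\,z}\|\epsilon\|_{L^2}^2\big)$.

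\emph{Step 2: single-soliton coercivity and approximate orthogonality.} For each $i$, rescale to the fixed frame of $Q$ by setting $h_i(\by):=\epsilon_i\big((z_i,\omega_i)+c_i^{-1/2}\by\big)$, so that $\int\!\big(|\nabla\epsilon_i|^2+c_i\epsilon_i^2-2Q_{c_i}(\cdot-(z_i,\omega_i))\epsilon_i^2\big)=\langle Lh_i,h_i\rangle$. The orthogonality relations \eqref{eps:ortho}, combined with $\eta_i^2=1-\eta_j^2$ and the exponential decay of $R_i$ on $\operatorname{supp}\eta_j^2$, give $|\langle Q,h_i\rangle|+|\langle\partial_xQ,h_i\rangle|+|\langle\partial_yQ,h_i\rangle|\lesssim e^{-c\sqrt{\underline c}\,z}\|h_i\|_{L^2}$. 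Decomposing $h_i=h_i^{\perp}+h_i^{\parallel}$ with $h_i^{\parallel}$ the $L^2$-projection onto $\operatorname{span}\{Q,\partial_xQ,\partial_yQ\}$ (so $\|h_i^{\parallel}\|\lesssim e^{-c\sqrt{\underline c}\,z}\|h_i\|_{L^2}$), Proposition \ref{prop:L}(iv) yields $\langle Lh_i^{\perp},h_i^{\perp}\rangle\ge c_0\|h_i^{\perp}\|_{L^2}^2$, whence $\langle Lh_i,h_i\rangle\ge\frac{c_0}{2}\|h_i\|_{L^2}^2$ for $Z_{10}^{\star}$ large. Interpolating with $\langle Lh_i,h_i\rangle\ge\|\nabla h_i\|_{L^2}^2-C\|h_i\|_{L^2}^2$ upgrades this to an $H^1$ bound, and undoing the rescaling gives $\int\!\big(|\nabla\epsilon_i|^2+c_i\epsilon_i^2-2Q_{c_i}(\cdot-(z_i,\omega_i))\epsilon_i^2\big)\ge\tilde\lambda\,\|\epsilon_i\|_{H^1}^2$ with $\tilde\lambda=\tilde\lambda(\underline c)>0$ (the coercivity constant of $\tilde L_{c_i}$ scales like $c_0 c_i\gtrsim\underline c$).

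\emph{Step 3: conclusion.} Summing over $i$ and using $\sum_i\|\epsilon_i\|_{H^1}^2=\|\epsilon\|_{H^1}^2+\int\epsilon^2\big(|\nabla\eta_1|^2+|\nabla\eta_2|^2\big)$, the two cutoff-derivative contributions partly cancel, so that $\int\!\big(|\nabla\epsilon|^2-2R\epsilon^2+c(t,x)\epsilon^2\big)\ge\tilde\lambda\|\epsilon\|_{H^1}^2-(1-\tilde\lambda)\int\epsilon^2\big(|\nabla\eta_1|^2+|\nabla\eta_2|^2\big)-Ce^{-c\sqrt{\underline c}\,z}\|\epsilon\|_{L^2}^2$. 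The main obstacle is to absorb the remaining cutoff-derivative term. For this, rather than feeding all of $\int c(t,x)\epsilon^2$ into the two $\tilde L_{c_i}$-forms in Step 1, one keeps a small fixed fraction $\mu\int c(t,x)\epsilon^2$ in reserve; since $c(t,x)\ge c_2(t)\gtrsim\gamma$ on $\operatorname{supp}\psi_\gamma'(\cdot-m)$, while by \eqref{eq:psi3} the cutoff-derivative term is $\le C\int\epsilon^2\psi_\gamma'(\cdot-m)\lesssim\gamma\,\|\epsilon\|_{L^2}^2$, the reserve dominates it, and the $(1-\mu)$-modified operators $-\Delta+(1-\mu)c_i-2Q_{c_i}$ remain coercive with constant still of order $\tilde\lambda(\underline c)$ provided $\mu$ is chosen small. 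Taking $Z_{10}^{\star}=Z_{10}^{\star}(\underline c)$ large enough to make all the $e^{-c\sqrt{\underline c}\,z}$ corrections negligible then yields \eqref{lemm:coercivity.1} with $\lambda=\lambda(\underline c)>0$.
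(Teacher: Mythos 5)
Your overall route --- a partition of unity adapted to $\psi_\gamma(\cdot-m)$, reduction to two single--soliton quadratic forms, rescaled coercivity from Proposition~\ref{prop:L}(iv), and almost--orthogonality of the localized pieces with $O(e^{-c\sqrt{\underline c}\,z})$ errors coming from \eqref{eps:ortho} --- is exactly the argument the paper invokes (it simply defers to Lemma~4 of \cite{MMT02} together with Proposition~\ref{prop:L}), and Steps~1 and~2 are essentially correct; note in particular that in $d=2$ the scaling factor $c_i^{1-d/2}$ equals $1$, so your identity $\int(|\nabla\epsilon_i|^2+c_i\epsilon_i^2-2R_i\epsilon_i^2)=\langle Lh_i,h_i\rangle$ is exact.

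The genuine weak point is the absorption of the cutoff--derivative term in Step~3, where the logic as written is circular. That term is bounded by $\sup\frac{(\psi_\gamma')^2}{4\psi_\gamma(1-\psi_\gamma)}\,\|\epsilon\|_{L^2}^2=\frac{\gamma}{4\pi^2}\|\epsilon\|_{L^2}^2$, and since \eqref{def:sigma} gives $c_1^0<2c_2^0$, hence $\gamma\le\frac{48}{31}c_2(t)$, this is a \emph{fixed} numerical multiple ($\approx \frac{12}{31\pi^2}$) of $c_2\|\epsilon\|_{L^2}^2$ --- the same order as the coercivity gain $C\,c_2\|\epsilon\|_{L^2}^2$ produced by Step~2. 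Your reserve fraction $\mu$ must therefore satisfy $\mu\gtrsim\frac{\gamma}{4\pi^2 c_2}$ to dominate the cutoff term \emph{and} $\mu<C$ for the shifted operator $L-\mu$ to remain coercive; these are compatible only if the (non-explicit) constant $C$ of Proposition~\ref{prop:L}(iv) exceeds roughly $0.04$, which neither you nor the paper establishes. Saying ``provided $\mu$ is chosen small'' points in the wrong direction: shrinking $\mu$ destroys the domination. The standard way to remove this quantitative dependence is a \emph{three}-region partition $\eta_2^2+\eta_0^2+\eta_1^2=1$ with transition zones of width $\ell$ located near $x=m\pm\ell$, $1\ll\ell\ll z$: the cutoff--derivative terms become $O(\ell^{-2})\|\epsilon\|_{L^2}^2$; on the middle piece $R$ is exponentially small and $c(t,x)\ge c_2$, so the form is coercive there with constant $\min(1,c_2)/2$ without invoking $L$ at all; and on the outer pieces $|c(t,x)-c_i|\lesssim (c_1-c_2)e^{-\sqrt{\gamma}\ell/2}$ is exponentially small, so the exact matching $c_1\eta_1^2+c_2\eta_2^2=c(t,x)$ is not needed. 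All error terms are then $o(1)$ as $\ell$ and $Z_{10}^{\star}$ grow, and the conclusion holds with $\lambda=\lambda(\underline c)$ independently of the size of the constant in Proposition~\ref{prop:L}(iv).
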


	\begin{proof}[Proof of Lemma \ref{lemm:control_error}]
		On the one hand, we obtain from the energy bound \eqref{eq:third_step}, the identity \eqref{eq:Abel} and the control of the variation of the velocities \eqref{lemm:quadra_control_error.1} that
		\begin{align}
			\frac12 \int \left( \vert \nabla \epsilon \vert^2 - 2 R \epsilon^2 \right)(t) & \leq - 2 E(Q) D(t) + A_3 \left( \| \epsilon (0) \|_{H^1}^2 + \| \epsilon(t) \|_{H^1}^3 + e^{-\frac12 \sqrt{\underline{c}} Z} \right) \nonumber\\
                & \quad + \left\vert E(Q) \right\vert A_8^2 \left( \| \epsilon(t) \|_{H^1}^2 + \| \epsilon (0) \|_{H^1}^2 +  e^{-\frac1{16} \sqrt{\underline{c}} Z} \right)^2. \label{eq:eps_quadra}
		\end{align}
		On the other hand, by using the identity $-2E(Q)= \frac12\int Q^2= \frac12 M(Q)>0$ (see Appendix B in \cite{CMPS16}), $c_1(0)-c_2(0)>\frac12 \sigma >0$ from \eqref{eq:c_n}, and combining \eqref{development_M1}, \eqref{development_M2} and \eqref{eq:monotonic_mass}, we deduce
		\begin{align}
			- 2 E(Q)D(t) & = \frac12 M(Q) \left(c_1(t)-c_1(0) \right) \left(c_1(0)-c_2(0)\right)+ \frac12 M(Q)\left(c_1(t)-c_1(0)+c_2(t)-c_2(0) \right)  c_2(0) \nonumber \\
			& \leq - \frac12 \int \epsilon^2(t) \left( (c_1(0) -c_2(0))  \psi_{\delta}(\cdot - m(t)) +c_2(0) \right) +  A_4  \bar{c} e^{-\frac1{16} \sqrt{\underline{c}} Z} \nonumber \\
            & \quad + A_6 \bar{c} \left( \| \epsilon(0) \|_{L^2}^2 + e^{-\frac1{16} \sqrt{\underline{c}} Z} \right) \label{eq:eps_quadra2}.
		\end{align}
		Moreover, by Lemma \ref{lemm:quadra_control_error}, it holds
		\begin{align}
			\MoveEqLeft
			\left\vert \int \epsilon^2(t) \left[ \left( \left( c_1(0)-c_2(0)\right) \psi_s \left( \cdot- m_1(t) \right) + c_2(0) \right) - \left( \left( c_1(t)-c_2(t)\right) \psi_s \left( \cdot- m_1(t) \right) + c_2(t) \right) \right] \right\vert \nonumber \\
			& \leq \| \epsilon(t) \|_{L^2}^2 A_8  \left( \| \epsilon(t) \|_{H^1}^2 + \| \epsilon (0) \|_{H^1}^2 + e^{-\frac1{16} \sqrt{\underline{c}} Z}\right).\label{eq:eps_cn_t}
		\end{align}
		Therefore, we deduce by choosing $k_9=k_9(\underline{c},\bar{c})$ small enough and $K_9=K_9(\underline{c},\bar{c})$ large enough, gathering \eqref{lemm:coercivity.1}, \eqref{eq:eps_quadra}, \eqref{eq:eps_quadra2}, \eqref{eq:eps_cn_t} and recalling that $\|\epsilon(t)\|_{H^1}<\alpha_9^{\star}=k_9\sigma$ and $Z>Z_9^{\star}=K_9|\ln \sigma|$   that, for all $t \in [0,t^{\star}]$, 
  \begin{equation*}
\lambda \|\epsilon(t) \|_{H^1}^2 \lesssim_{\underline{c},\bar{c}} \left(\|\epsilon(t)\|_{H^1}^3+\|\epsilon(0)\|_{H^1}^2+e^{-\frac1{16}\sqrt{\underline{c}}Z} \right), 
  \end{equation*}
  which concludes the proof of \eqref{eq:control_eps} by a continuity argument.
	\end{proof}
	
	We now conclude the proof of Proposition \ref{propo:a_priori}. We choose $A(\underline{c},\bar{c})=4\sqrt{A_9(\underline{c},\bar{c})}$ and $k_5=k_5(\underline{c},\bar{c})$, $K_5=K_5(\underline{c},\bar{c})$ such that \eqref{cond:alpha5} holds and $k_5<k_9$, $K_5>K_9$. Then, \eqref{a_priori.1} follows directly from \eqref{eq:control_eps}.  
	
\section{Asymptotic stability} \label{Sec:asympt}

\subsection{Proof of Theorem \ref{theo:orbital} (ii)}

In this section, we prove the asymptotic stability part of Theorem \ref{theo:orbital} (ii) in the $2$-dimensional case. 
	
	Suppose that the initial condition $u_0$ satisfies \eqref{eq:IC}. By using the orbital stability in Theorem \ref{theo:orbital} (i), there exist functions $(z_i,\omega_i,c_i)\in \mathcal{C}^1(\mathbb{R}_+: \mathbb{R}^2 \times(0,\infty))$, $i\in (1,2)$ such that, by recalling that $R_i$ is defined in \eqref{defi:R_n},
	\begin{align} \label{bound:u(t)}
		\left\| u(t) - \sum_{i=1}^2 R_i(t) \right\|_{H^1} \leq \beta.
	\end{align}
	where
	\begin{align}\label{defi:alpha_star}
		\beta = \beta(\alpha, Z) := A \left( \alpha +e^{-\frac{1}{32} \sqrt{\underline{c}} Z}\right)
	\end{align}
 and $\bz_i := (z_i,\omega_i)$. We also denote by $\bz$ the function $\bz = \bz_1 - \bz_2 = (z,\omega)$. Along the proof, $\beta$ can be chosen as small as desired by increasing $K=K(\underline{c},\bar{c})$ and decreasing $k=k(\underline{c},\bar{c})$. 
 
Following \cite{MMT02}, the strategy is to prove inductively that $u$ converges on a half-plane containing each solitary wave. To achieve this goal, two important results will be needed. The first one is the classical orbital stability result around one solitary wave proved in \cite{deB96}. 
	
	\begin{theo}[Orbital stability of a solitary wave \cite{deB96}]\label{theo:orbital_stab}
		Let $0<\underline{c}<\tilde{c}<\bar{c}$. There exist two positive constants $\alpha_{10}=\alpha_{10}(\underline{c},\bar{c})$ and $A_{10}=A_{10}(\underline{c},\bar{c})$ such that if $\tilde{u}_0\in H^1(\mathbb{R}^2)$ satisfies $\left\| \tilde{u}_0 -Q_{\tilde{c}} \right\|_{H^1} \leq \alpha < \alpha_{10}$, the solution $ \tilde{u}$ of \ref{ZK} with $\tilde{u}(0)=\tilde{u}_0$ satisfies
		\begin{align*}
			\sup_{t \in \mathbb{R}} \inf_{\bz \in \mathbb{R}^2} \left\| \tilde{u}(t) - Q_{\tilde{c}} \left( \cdot- \bz \right) \right\|_{H^1} \leq A_{10} \alpha.
		\end{align*}
	\end{theo}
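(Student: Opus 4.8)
The statement to prove is Theorem \ref{theo:orbital_stab}, the orbital stability of a single solitary wave $Q_{\tilde c}$ for the ZK equation, attributed to de Bouard \cite{deB96}. Let me sketch a proof plan.

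\textbf{Overall approach.} The plan is to use the classical Cazenave--Lions / Weinstein variational argument adapted to the ZK setting, exploiting the fact that $Q_{\tilde c}$ is, up to symmetries, the unique minimizer of the energy $E$ at fixed mass $M$ among functions with the appropriate mass level, together with the coercivity of the linearized operator $L$ recorded in Proposition \ref{prop:L}(iv). Concretely, I would combine the conservation laws for $M$ and $E$ with a modulation/decomposition of the solution near the soliton manifold, and then close the estimate via a Taylor expansion of the conserved functionals around $Q_{\tilde c}$ controlled from below by the coercivity of $L$.

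\textbf{Key steps in order.} First, I would recall that $Q_{\tilde c}$ solves $-\Delta Q_{\tilde c} + \tilde c\, Q_{\tilde c} - Q_{\tilde c}^2 = 0$, so it is a critical point of the action $S_{\tilde c}(v) := E(v) + \frac{\tilde c}{2} M(v)$; hence for any $v$ with $v = Q_{\tilde c} + \eta$,
\begin{equation*}
S_{\tilde c}(v) - S_{\tilde c}(Q_{\tilde c}) = \frac12 \langle L_{\tilde c}\, \eta, \eta\rangle - \frac13 \int \eta^3,
\end{equation*}
where $L_{\tilde c} = -\Delta + \tilde c - 2 Q_{\tilde c}$ is the scaled linearized operator (which by scaling inherits the spectral properties of $L$ in Proposition \ref{prop:L}). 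Second, given $\tilde u_0$ with $\|\tilde u_0 - Q_{\tilde c}\|_{H^1} \le \alpha$, I would argue by a standard continuity/bootstrap argument: as long as $\inf_{\bz} \|\tilde u(t) - Q_{\tilde c}(\cdot - \bz)\|_{H^1}$ stays below a small fixed threshold $\delta_0$, modulation theory (the implicit function theorem, exactly as invoked in the proof of Proposition \ref{propo:choice_mod_param}) produces $\bz(t)$ and a decomposition $\tilde u(t) = Q_{\tilde c}(\cdot - \bz(t)) + \eta(t)$ with $\eta(t)$ orthogonal to $\partial_x Q_{\tilde c}(\cdot-\bz)$ and $\partial_y Q_{\tilde c}(\cdot-\bz)$ (the kernel directions) and with $\|\eta(t)\|_{H^1}$ comparable to the infimum. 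Third, the $Q$-direction: since $M$ is not exactly $M(Q_{\tilde c})$, I would adjust by choosing the modulation of the scaling parameter as well, or more simply use that the component of $\eta$ along $Q_{\tilde c}$ is quadratically controlled: $\langle \eta, Q_{\tilde c}\rangle = \frac12(M(\tilde u) - M(Q_{\tilde c})) - \frac12\|\eta\|_{L^2}^2$, and $|M(\tilde u) - M(Q_{\tilde c})| = |M(\tilde u_0) - M(Q_{\tilde c})| \lesssim \alpha$, so the "bad" direction contributes only at order $\alpha + \|\eta\|_{H^1}^2$. Fourth, I would invoke coercivity: Proposition \ref{prop:L}(iv) gives $\langle L_{\tilde c}\,\eta,\eta\rangle \ge C\|\eta\|_{L^2}^2$ for $\eta$ orthogonal to $\partial_x Q_{\tilde c}, \partial_y Q_{\tilde c}, Q_{\tilde c}$; after projecting off the small $Q_{\tilde c}$-component and using $\|\eta\|_{L^2}^2 + \|\nabla\eta\|_{L^2}^2 \lesssim \langle L_{\tilde c}\eta,\eta\rangle + \|\eta\|_{L^2}^2$ (trivially, since $L_{\tilde c} = -\Delta + \tilde c - 2Q_{\tilde c}$ and $Q_{\tilde c}\in L^\infty$) one upgrades to an $H^1$ coercivity estimate. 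Fifth, combining: by conservation of $E$ and $M$,
\begin{equation*}
S_{\tilde c}(\tilde u(t)) - S_{\tilde c}(Q_{\tilde c}) = S_{\tilde c}(\tilde u_0) - S_{\tilde c}(Q_{\tilde c}) + \tfrac{\tilde c}{2}\big(M(\tilde u_0) - M(\tilde u(t))\big) = O(\alpha),
\end{equation*}
and the left side equals $\frac12\langle L_{\tilde c}\eta(t),\eta(t)\rangle - \frac13\int\eta(t)^3 \gtrsim \|\eta(t)\|_{H^1}^2 - C(\alpha + \|\eta(t)\|_{H^1}^3)$ using the Sobolev embedding $H^1(\mathbb R^2)\hookrightarrow L^3$ (already used elsewhere in the paper) to bound the cubic term. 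This yields $\|\eta(t)\|_{H^1}^2 \lesssim \alpha + \|\eta(t)\|_{H^1}^3$, hence $\|\eta(t)\|_{H^1} \lesssim \sqrt\alpha$ by the bootstrap, and finally $\inf_{\bz}\|\tilde u(t) - Q_{\tilde c}(\cdot-\bz)\|_{H^1} \le A_{10}\sqrt\alpha$ uniformly in $t$. A slightly sharper bookkeeping — tracking that the orthogonality to $Q_{\tilde c}$ makes the linear-in-$\alpha$ contamination enter only through $M(\tilde u_0)-M(Q_{\tilde c})$ which is itself $O(\alpha)$, not $O(\sqrt\alpha)$ — gives the linear bound $A_{10}\alpha$ claimed in the statement, provided one also allows the modulation to adjust the speed $c$ (so that the effective soliton is $Q_{c(t)}$ with $|c(t)-\tilde c|\lesssim\alpha$); if one insists on comparing with the \emph{fixed} $Q_{\tilde c}$ as written, the orbital distance is still $O(\alpha)$ because $\|Q_{c} - Q_{\tilde c}\|_{H^1}\lesssim|c-\tilde c|$.

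\textbf{Main obstacle.} The delicate point is obtaining the \emph{linear} rate $A_{10}\alpha$ rather than the crude $\sqrt\alpha$. The naive energy argument only gives $\|\eta\|_{H^1}^2 \lesssim \alpha$. To get $\|\eta\|_{H^1}\lesssim \alpha$ one must be careful that the $O(\alpha)$ terms on the right-hand side of the coercivity inequality are actually \emph{quadratic} in the relevant small quantities: this requires choosing the modulation parameters — including the scaling parameter $c(t)$, and translations — so that $\eta$ is orthogonal to all of $Q_{\tilde c}, \partial_x Q_{\tilde c}, \partial_y Q_{\tilde c}$, whence the first-order terms in the expansion of $S$ vanish and the contamination enters only through the a priori $O(\alpha)$ discrepancies in the conserved quantities evaluated at the \emph{initial} data, which one then has to propagate. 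Managing this — and in dimension $d=3$ verifying that the spectral facts in Proposition \ref{prop:L} hold verbatim, which they do by the same references — is where the real work lies; everything else is the standard Weinstein--Grillakis--Shatah--Strauss machinery, and de Bouard's paper \cite{deB96} carries it out in detail, so here it suffices to cite it after recording the structure of the argument.
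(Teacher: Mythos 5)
The paper does not prove this theorem: it is imported verbatim from de Bouard \cite{deB96}, so there is no internal proof to compare against, and your closing remark that one may simply cite \cite{deB96} is exactly what the authors do. Your sketch is a correct account of the standard Weinstein--Grillakis--Shatah--Strauss argument underlying de Bouard's proof (modulation, conservation of $E$ and $M$, coercivity of $L_{\tilde c}$ under the three orthogonality conditions); the only slip is in your fifth step, where $S_{\tilde c}(\tilde u_0)-S_{\tilde c}(Q_{\tilde c})=O(\alpha^2)$ rather than $O(\alpha)$, precisely because $Q_{\tilde c}$ is a critical point of $S_{\tilde c}$ --- this is what yields the linear rate $A_{10}\alpha$ directly, as you in effect observe in your final paragraph.
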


 The second one is a rigidity property classifying the $L^2$-compact solution around a solitary wave, which is obtained by a direct rescaling of Theorem 1.2 in \cite{CMPS16}. 
	
	\begin{theo}[Nonlinear Liouville property around $Q$ \cite{CMPS16}]\label{theo:nonlinear_liouville}
      Let $0<\underline{c}<\tilde{c}<\bar{c}$. There exist two positive constants $\alpha_{11}=\alpha_{11}(\underline{c},\bar{c})$ and $A_{11}=A_{11}(\underline{c},\bar{c})$ such that the following is true. 
		If $0<\alpha<\alpha_{11}$, $\tilde{v} \in \mathcal{C}\left( \mathbb{R}:H^1(\mathbb{R}^2) \right)$ is a solution to \ref{ZK} satisfying for some function $(z_{\tilde{v}}(t),\omega_{\tilde{v}}(t))= \bz_{\tilde{v}}(t)$, 
		\begin{align*}
			\forall t \in \mathbb{R}, \quad \left\| \tilde{v}(\cdot+\bz_{\tilde{v}}(t) ) - Q_{\tilde{c}} \right\|_{H^1(\mathbb{R}^2)} \leq \alpha,
		\end{align*}
		and for any $\delta>0$, there exists $B>0$ such that
		\begin{align} \label{decay:2d}
			\sup_{t \in \mathbb{R}} \int_{\vert x \vert>B} \tilde{v}^2 (t, \bx + \bz_{\tilde{v}} (t)) dx dy  \leq \delta,
		\end{align}
		then there exists $\breve{c}>0$ satisfying $\left|\breve{c}-\tilde{c}\right| \le A_{11} \alpha$ and $(\breve{z},\breve{\omega}) \in \mathbb{R}^2$ such that
		\begin{align*}
			\tilde{v}(t,x,y) = Q_{\breve{c}} \left( x- \breve{c} t - \breve{z}, y - \breve{\omega} \right).
		\end{align*}
	\end{theo}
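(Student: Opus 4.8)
The plan is to reduce the statement, via the scaling invariance of \eqref{ZK} recalled in the introduction, to the corresponding Liouville property for the ground state itself (the case $\tilde c=1$), namely Theorem 1.2 in \cite{CMPS16}, and then to keep track of how all the relevant constants transform under rescaling.

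\emph{Setting up the rescaling and transferring the hypotheses.} Given $\tilde v$ and its translation path $\bz_{\tilde v}(t)$ as in the statement, set
\[ w(t,\bx) := \tilde c^{-1}\, \tilde v\big( \tilde c^{-3/2}t,\, \tilde c^{-1/2}\bx \big), \qquad \bz_w(t):=\tilde c^{1/2}\, \bz_{\tilde v}\big( \tilde c^{-3/2}t \big) . \]
By the scaling symmetry (with parameter $c=\tilde c^{-1}$ and zero translation), $w\in \mathcal C(\mathbb R:H^1(\mathbb R^2))$ is again a solution of \eqref{ZK}. Using $Q_{\tilde c}(\bx)=\tilde c\,Q(\tilde c^{1/2}\bx)$ from \eqref{defi:scaling} together with the change of variables $\bx\mapsto \tilde c^{-1/2}\bx$, one checks that for every $t\in\mathbb R$,
\[ \big\| w( t,\cdot+\bz_w(t)) - Q \big\|_{H^1(\mathbb R^2)} \le C(\tilde c)\, \big\| \tilde v( \tilde c^{-3/2}t,\cdot+\bz_{\tilde v}(\tilde c^{-3/2}t)) - Q_{\tilde c} \big\|_{H^1(\mathbb R^2)} \le C(\tilde c)\,\alpha , \]
where $C(\tilde c)$ is a finite product of powers of $\tilde c$ (appearing because the $H^1$ norm is not scale invariant), hence bounded by some $\bar C=\bar C(\underline c,\bar c)$ for $\tilde c\in(\underline c,\bar c)$. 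Similarly, substituting $\bx\mapsto \tilde c^{-1/2}\bx$ in \eqref{decay:2d} and using that $s=\tilde c^{-3/2}t$ ranges over $\mathbb R$, the compactness condition transfers to $w$: for every $\delta'>0$, applying \eqref{decay:2d} to $\tilde v$ with $\delta=\tilde c\,\delta'$ yields some $B>0$, and then $B':=\tilde c^{1/2}B$ satisfies
\[ \sup_{t\in\mathbb R}\int_{|x|>B'} w^2( t,\bx+\bz_w(t))\, d\bx \le \delta' . \]

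\emph{Applying the Liouville property for $Q$ and rescaling back.} Let $\alpha_0$ be the smallness threshold in Theorem 1.2 of \cite{CMPS16}, and fix $\alpha_{11}=\alpha_{11}(\underline c,\bar c)>0$ so small that $\bar C\,\alpha_{11}<\alpha_0$. For $0<\alpha<\alpha_{11}$ the rescaled solution $w$ satisfies all the hypotheses of that theorem, so there exist $c^\sharp>0$ with $|c^\sharp-1|\le C_0\,\bar C\,\alpha$ — the quantitative bound coming from \cite{CMPS16}, or a posteriori from the smallness of $\inf_{\bz}\|Q_{c^\sharp}(\cdot-\bz)-Q\|_{H^1}$ and the fact that $\partial_c Q_c|_{c=1}\neq 0$ in $H^1$ — and $(z^\sharp,\omega^\sharp)\in\mathbb R^2$ such that $w(t,x,y)=Q_{c^\sharp}(x-c^\sharp t-z^\sharp,\, y-\omega^\sharp)$. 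Setting $\breve c:=\tilde c\, c^\sharp$, $\breve z:=\tilde c^{-1/2}z^\sharp$, $\breve\omega:=\tilde c^{-1/2}\omega^\sharp$ and undoing the change of variables (again via $Q_{\tilde c c^\sharp}(\bx)=\tilde c\,Q_{c^\sharp}(\tilde c^{1/2}\bx)$) gives exactly $\tilde v(t,x,y)=Q_{\breve c}(x-\breve c t-\breve z,\,y-\breve\omega)$, together with $|\breve c-\tilde c|=\tilde c\,|c^\sharp-1|\le \bar c\,C_0\,\bar C\,\alpha=:A_{11}\alpha$, with $A_{11}=A_{11}(\underline c,\bar c)$.

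\emph{Main difficulty.} There is no genuine analytic obstacle in this argument: the substance of the result — the classification of the $L^2$-compact solutions of \eqref{ZK} near a ground state, via the dual-problem virial estimate and the spectral analysis of the associated linear operator — is entirely contained in \cite{CMPS16}. The only point requiring care is the bookkeeping above, in particular preserving the quantifier structure ``for every $\delta$ there is $B$'' of the compactness condition \eqref{decay:2d} under the anisotropic scaling, and verifying that the resulting thresholds $\alpha_{11}$ and $A_{11}$ depend only on $\underline c$ and $\bar c$ and not on the particular value of $\tilde c$, which holds because every scaling factor that appears is a fixed power of $\tilde c\in(\underline c,\bar c)$.
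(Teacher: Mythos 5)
Your proposal is correct and matches the paper's own treatment: the paper does not reprove this theorem but obtains it precisely as ``a direct rescaling of Theorem 1.2 in \cite{CMPS16}'', which is exactly the reduction you carry out. Your bookkeeping of how the $H^1$-closeness, the compactness condition \eqref{decay:2d}, and the bound $|\breve{c}-\tilde{c}|\le A_{11}\alpha$ transform under the scaling $u_{c,\bz}$ is sound and uniform in $\tilde{c}\in(\underline{c},\bar{c})$, so nothing is missing.
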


 We first prove the asymptotic stability on a half-plane containing the first solitary wave. 
 \begin{propo} \label{propo:asymp:1soliton} Under the assumptions of Theorem \ref{theo:orbital}, by taking $k=k(\underline{c},\bar{c})>0$ smaller, $K=K(\underline{c},\bar{c})>0$ and $A=A(\underline{c},\bar{c})>0$ larger if necessary, the limit $c_1^+=\lim_{+\infty}c_1(t)$ exists and
 \begin{align}
 & |c_1^+-c_1^0|<A \left( \alpha +e^{-\frac{1}{32} \sqrt{\underline{c}} Z}\right), \label{propo:asymp:1soliton.1} \\
 & u \left( t, \cdot + \bz_1(t) \right) \underset{t\rightarrow +\infty}{\longrightarrow} Q_{c_1^+} \quad \text{in} \quad H^1(x >-B), \label{propo:asymp:1soliton.3} \\
 &(\dot{z}_1(t),\dot{\omega}_1(t)) \underset{t \rightarrow + \infty}{\longrightarrow} (c_1^+,0), \label{propo:asymp:1soliton.2}
 \end{align} 
 where $(z_1,\omega_1,c_1) \in \mathcal{C}^1(\mathbb R_+ : \mathbb R^2 \times (0,+\infty))$ are the modulation parameters defined in Theorem \ref{theo:orbital} (i).
 \end{propo}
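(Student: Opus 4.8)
The plan is to follow the Martel--Merle strategy for asymptotic stability around one solitary wave, as carried out in \cite{MM01,MM05,MM08} in one dimension and in \cite{CMPS16} in two dimensions, but now applied to the \emph{first} (fastest, rightmost) solitary wave in the orbitally stable decomposition provided by Theorem \ref{theo:orbital} (i). The first step is to establish that the modulation parameters converge: since $c_1$ is of class $\mathcal C^1$ and the monotonicity argument (Lemma \ref{lemm:monotonicity_mass}, applied to a portion of mass sitting to the left of $R_1$ but to the right of $R_2$, so that the weight is localized strictly between the two waves) shows that an almost-conserved localized mass around the first solitary wave is essentially monotone, one deduces that $\dot c_1$ is integrable and that $c_1(t)$ has a limit $c_1^+$; the bound $|c_1^+-c_1^0|\le A(\alpha+e^{-\frac1{32}\sqrt{\underline c}Z})$ then follows from \eqref{eq:bound_c_i_t} and the smallness of the variation over $[0,\infty)$. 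The convergence $(\dot z_1,\dot\omega_1)\to(c_1^+,0)$ follows from \eqref{eq:ODE} once one knows that the right-hand side of the modulation ODE tends to $0$, which in turn comes from the local $L^2$ decay established below together with $z(t)\to+\infty$.

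The heart of the argument is \eqref{propo:asymp:1soliton.3}, the convergence of $u(t,\cdot+\bz_1(t))$ to $Q_{c_1^+}$ in $H^1(x>-B)$. I would argue by contradiction / compactness: suppose the convergence fails; then there is a sequence $t_n\to+\infty$ along which $u(t_n,\cdot+\bz_1(t_n))$ stays away from $Q_{c_1^+}$ in $H^1(x>-B)$. By the orbital stability around one solitary wave (Theorem \ref{theo:orbital_stab}), the translated sequence is bounded in $H^1$, so up to a subsequence it converges weakly to some limit profile $\tilde u_{0,1}$ with $\|\tilde u_{0,1}-Q_{c_1^+}\|_{H^1}\lesssim\beta$. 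Let $\tilde u_1$ be the solution of \eqref{ZK} emanating from $\tilde u_{0,1}$; by well-posedness and continuous dependence, $\tilde u_1(t)$ is, for each $t$, the weak limit of $u(t_n+t,\cdot+\bz_1(t_n))$ and stays orbitally close to $Q_{c_1^+}$ uniformly in $t\in\mathbb R$ (here one uses that $z(t)\to\infty$ so the second wave $R_2$ escapes to $-\infty$ and does not contaminate the limit). The crucial point is then to verify that $\tilde u_1$ satisfies the $L^2$-compactness/decay hypothesis \eqref{decay:2d} of the nonlinear Liouville theorem (Theorem \ref{theo:nonlinear_liouville}): this is where the monotonicity properties on oblique half-spaces of angle $|\theta_0|<\frac\pi3$ around $\{x=0\}$ enter, exactly as in \cite{CMPS16}. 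One shows, using monotonicity of portions of mass and energy of $u$ on these oblique half-planes — with the oblique lines carefully arranged to stay on the right of the second solitary wave (here $z(t)\ge\frac12(Z+\sigma t)\to\infty$ from \eqref{defi:z} is what makes this possible) — that the mass of $u$ at time $t_n$ outside a large ball centered at $\bz_1(t_n)$ is uniformly small; passing to the (weak) limit, $\tilde u_1$ has uniformly (in $t\in\mathbb R$) small mass outside large balls, i.e.\ \eqref{decay:2d} holds. Theorem \ref{theo:nonlinear_liouville} then forces $\tilde u_1(t,x,y)=Q_{\breve c}(x-\breve c t-\breve z,y-\breve\omega)$ for some $\breve c$ with $|\breve c-c_1^+|\lesssim\beta$; by the orthogonality conditions \eqref{eps:ortho} inherited in the limit (the limit error is orthogonal to $Q_{\breve c}$, $\partial_xQ_{\breve c}$, $\partial_yQ_{\breve c}$) one gets $\breve c=c_1^+$, $\breve z=\breve\omega=0$, hence $\tilde u_{0,1}=Q_{c_1^+}$, contradicting the assumption. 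A separate, standard argument (using that the weak $H^1$ limit together with local strong $L^2$ convergence and the almost-conservation of the localized mass pins the local norm) upgrades weak convergence to strong $H^1(x>-B)$ convergence and shows the limit is independent of the sequence $t_n$, yielding \eqref{propo:asymp:1soliton.3}.

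The main obstacle I anticipate is the geometric bookkeeping in the monotonicity step: unlike the one-dimensional case or the single-soliton case, one must simultaneously keep the oblique weight lines to the right of the slow wave $R_2$ and handle the fact that the center $m(t)$ or $z_1(t)$ moves, so the times, angles, and offsets have to be chosen in a coupled way (as the authors themselves flag in the outline). Concretely, for each target compactness scale one must produce a monotonicity estimate valid for all large $t$, which requires that the exponential gains $e^{-\frac1{16}\sqrt{\underline c}(Z+\sigma t)}$ from the interaction terms and the monotonicity defects dominate the errors — this is exactly where the quantitative lower bound $z(t)\ge\frac12(Z+\sigma t)$ and the smallness of $\beta$ (adjustable by taking $k$ smaller and $K$ larger) are used, and it is why the statement allows these constants to be shrunk/enlarged "if necessary". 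Everything else — the compactness extraction, the passage to the limit in the equation, and the application of the Liouville theorem — is by now standard and follows \cite{CMPS16} essentially verbatim.
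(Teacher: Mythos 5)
Your overall route coincides with the paper's: extract a limit profile along a subsequence, verify the decay hypothesis of the Liouville theorem by monotonicity of mass and energy on oblique half-planes kept to the right of the second wave (using $z(t)\gtrsim Z+\sigma t$), apply Theorem \ref{theo:nonlinear_liouville}, identify the parameters of the resulting solitary wave through the orthogonality conditions inherited in the limit, and upgrade weak convergence to strong convergence in $H^1(x>-B)$. This is exactly the content of Proposition \ref{propo:limit_object_1} and its use in the paper.

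There is, however, one genuine gap, located in your first step. You claim that the almost-monotonicity of a localized mass around the first wave yields that ``$\dot c_1$ is integrable and that $c_1(t)$ has a limit $c_1^+$'' \emph{before} any local convergence of the profile is known, and your subsequent contradiction argument presupposes this $c_1^+$ (the sequence is assumed to stay away from $Q_{c_1^+}$). This deduction does not go through: the localized mass $\int u^2\,\psi_{\underline{c}}(x-z_1(t)+x_0)\,d\bx$ equals $c_1(t)\int Q^2+\int\epsilon^2\psi+O(e^{-c\sqrt{\underline{c}}Z})$, so its almost-monotonicity (hence convergence) says nothing about $c_1(t)$ alone until one knows what the localized mass of $\epsilon$ converges to; and the only available bound on $\dot c_1$, namely \eqref{eq:ODE}, controls it by a local norm of $\epsilon$ that is not known to be integrable in time, so integrability of $\dot c_1$ is unjustified. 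The paper resolves this by reversing the order: the compactness/Liouville argument is run with the subsequential limits $\tilde c_{0,1}=\lim_k c_1(t_{n_k})$, which exist by boundedness and require no uniqueness, yielding first $u(t,\cdot+\bz_1(t))-Q_{c_1(t)}\to 0$ in $H^1(x>-B)$ (this is \eqref{eq:conv_u-Q_c1}); only then is the convergence of $c_1(t)$ obtained, by combining the almost-monotonicity of $\int u^2\psi_{\underline{c}}(x-z_1(t)+x_0)\,d\bx$ with its identification, up to $\gamma$, with $\int Q_{c_1(t)}^2=c_1(t)\int Q^2$, which gives $c_1(t)\le c_1(t')+3\gamma/\!\int Q^2$ for $t>t'>T(\gamma)$ and hence the existence of the limit. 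Reordering your plan in this way closes the gap; the remaining steps are then the paper's.
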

	
The proof of Proposition \ref{propo:asymp:1soliton} relies on the following key result.

	\begin{propo}\label{propo:limit_object_1}
		 Under the assumptions of Theorem \ref{theo:orbital}, by taking $k=k(\underline{c},\bar{c})>0$ smaller and $K=K(\underline{c},\bar{c})>0$ larger if necessary, for any increasing sequence $\{t_n\}_n\rightarrow +\infty$, there exists a subsequence $\{t_{n_k}\}_k$ and $\tilde{u}_{0,1}\in H^1(\mathbb{R}^2)$ such that, for any $B>0$,
		\begin{align*}
			u \left( t_{n_k}, \cdot + \bz_1(t_{n_k}) \right) \underset{k\rightarrow +\infty}{\rightarrow} \tilde{u}_{0,1} \quad \text{in} \quad H^1(x >-B).
		\end{align*}
		
		Moreover, the solution $\tilde{u}_1$ of \ref{ZK} with initial condition $\tilde{u}_1(0)= \tilde{u}_{0,1}$ satisfies for any $t \in \mathbb{R}$
		\begin{align}\label{eq:u_tilde1_bound}
			\left\| \tilde{u}_1 \left(t,\cdot + \tilde{\bz}_1(t) \right) - Q_{c_1^0} \right\|_{H^1} \lesssim \beta
		\end{align}
		with $\beta$ defined in \eqref{defi:alpha_star} and for any $(t,x)\in \mathbb{R}^2$
		\begin{align}\label{eq:u_tilde1_decay}
			\int_y \tilde{u}_1^2 \left( t, \bx + \tilde{\bz}_1(t) \right) dy \lesssim e^{-\tilde{\kappa}_1 \vert x \vert}
		\end{align}
		for some positive constant $\tilde{\kappa}_1$, where $\tilde{\bz}_1=\left( \tilde{z}_1,\tilde{\omega}_1\right)$ corresponds to the function given by the modulation theory around one solitary wave. 
	\end{propo}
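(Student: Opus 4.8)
The strategy is the classical concentration-compactness plus monotonicity argument, adapted to oblique half-spaces as in \cite{CMPS16}. We shall establish the three conclusions in order: (a) convergence of $u(t_{n_k},\cdot+\bz_1(t_{n_k}))$ in $H^1(x>-B)$ to a limit profile $\tilde u_{0,1}$; (b) the orbital bound \eqref{eq:u_tilde1_bound} for the emanating solution; (c) the exponential decay \eqref{eq:u_tilde1_decay} on the right. The heart of the matter, and what I expect to be the main obstacle, is (c): one must produce uniform-in-time decay of the limit object on \emph{both} sides $x\to\pm\infty$, and while the $x\to+\infty$ direction is cheap (it only uses local smallness of $\epsilon$ plus the exponential tail \eqref{asym:Q} of $Q$), the $x\to-\infty$ direction is where the dispersive structure of ZK must be exploited via monotonicity on tilted half-planes; getting the angles and the advected lines to stay on the correct side of the first wave for all relevant times is the delicate bookkeeping.

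\textbf{Step 1: compactness and extraction.} Fix an increasing sequence $t_n\to+\infty$. By the orbital stability \eqref{bound:u(t)} and \eqref{estimate_modulation:c:bis}, the family $\{u(t_n,\cdot+\bz_1(t_n))\}_n$ is bounded in $H^1(\mathbb R^2)$, hence (up to a subsequence, which I relabel $t_{n_k}$) it converges weakly in $H^1$ and strongly in $L^2_{loc}$ to some $\tilde u_{0,1}\in H^1(\mathbb R^2)$. To upgrade $L^2_{loc}$ convergence to $H^1(x>-B)$ convergence for every $B$, I would invoke a monotonicity argument to rule out loss of mass escaping to the right and to control the $H^1$ tail: for fixed $B$ and large $k$, the quantity $\int_{x>-B} |u(t_{n_k},\cdot+\bz_1(t_{n_k}))|^2$ converges (this follows from almost-monotonicity of a localized mass for $u$ translated by $\bz_1$, using $\dot z_1\approx c_1\geq \frac{14}{15}\underline c>0$, the analogue of Lemma \ref{lemm:monotonicity_mass} centered on the first wave), which together with weak $H^1$ convergence and a standard argument gives strong $L^2(x>-B)$ convergence; the $\dot H^1$ part is recovered from the energy together with the equation, again localized. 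This is the part of the argument already carried out in \cite{CMPS16,FHRY23} around one wave, and I would follow it essentially verbatim.

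\textbf{Step 2: the limit solution is orbitally close to $Q_{c_1^0}$.} Let $\tilde u_1$ be the solution of \eqref{ZK} with $\tilde u_1(0)=\tilde u_{0,1}$. Since $u(t_{n_k},\cdot+\bz_1(t_{n_k}))\to \tilde u_{0,1}$ in $H^1(x>-B)$ and, by the one-wave modulation theory, $u(t,\cdot)-R_2(t)$ is $O(\beta)$-close to $R_1(t)$ in $H^1$ for all $t\geq 0$ while $R_2$ becomes negligible on $\{x>-B\}$ translated by $\bz_1$ (using $z=z_1-z_2\geq \frac12(Z+\sigma t)\to+\infty$ from \eqref{defi:z} and the exponential tail \eqref{asym:Q}), we get $\|\tilde u_{0,1}-Q_{c_1^+}\|_{H^1(x>-B)}\lesssim \beta$ for each $B$, where $c_1^+=\lim c_1(t_{n_k})$ exists along the subsequence by \eqref{eq:bound_c_i_t} and compactness. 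A bound for the $x<-B$ part of $\tilde u_{0,1}$ follows from a monotonicity estimate: the localized mass of $u$ to the left of $m(t)$ (or, better, to the left of $z_1(t)-B'$) is controlled by $\beta^2+e^{-c Z}$ for all $t$, hence $\|\tilde u_{0,1}\|_{L^2(x<-B)}\lesssim \beta$ for $B$ large; combined with the $H^1$-boundedness this yields $\|\tilde u_{0,1}-Q_{c_1^+}\|_{H^1(\mathbb R^2)}\lesssim \beta$. Finally, one runs Theorem \ref{theo:orbital_stab} (orbital stability around one wave) on $\tilde u_1$: since $\|\tilde u_{0,1}-Q_{c_1^+}\|_{H^1}\lesssim\beta<\alpha_{10}$, there exist modulation parameters $\tilde\bz_1(t)$ with $\|\tilde u_1(t,\cdot+\tilde\bz_1(t))-Q_{c_1^0}\|_{H^1}\lesssim\beta$ for all $t\in\mathbb R$, which is \eqref{eq:u_tilde1_bound} (absorbing $|c_1^+-c_1^0|\lesssim\beta$ into the constant).

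\textbf{Step 3: decay of the limit object, and main obstacle.} It remains to prove \eqref{eq:u_tilde1_decay}, i.e.\ uniform-in-$t$ exponential decay of $\int_y \tilde u_1^2(t,\bx+\tilde\bz_1(t))\,dy$ in $|x|$. For $x\to+\infty$ this is immediate from \eqref{eq:u_tilde1_bound} and the exponential tail \eqref{asym:Q} of $Q$, since the error is uniformly $O(\beta)$ in $H^1\hookrightarrow L^\infty$-in-$x$-$L^2$-in-$y$. The serious point is $x\to-\infty$. Here I would transfer the decay from $u$ via the limit: for fixed large $B$ and $\delta>0$, the almost-monotonicity of the mass on oblique half-planes of angle $|\theta_0|<\frac\pi3$ around $\{x=0\}$, applied to $u$ retranslated by $\bz_1$ and exploiting $\dot z_1-$(velocity of the line$)>0$ for a suitable choice of line and of comparison times $t_{n_k}-\tau$, gives $\int_{x<-B} u^2(t_{n_k},\cdot+\bz_1(t_{n_k}))\leq \delta + o_k(1)$ uniformly; passing to the limit $k\to\infty$ on the region $\{x>-B'\}\cap\{x<-B\}$ (which is compact in $x$) and using the time-translation structure (the solution $\tilde u_1$ at time $t$ is the limit of $u$ at times $t_{n_k}+t$, whose modulation parameters $\bz_1(t_{n_k}+t)$ differ from $\bz_1(t_{n_k})$ by roughly $c_1^+ t$), one obtains $\sup_{t}\int_{x<-B}\tilde u_1^2(t,\cdot+\tilde\bz_1(t))\leq C\delta$, and a dyadic/iteration argument on the size of $B$ upgrades this to the exponential rate $e^{-\tilde\kappa_1|x|}$. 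The technical crux — and this is the genuinely new difficulty compared with the one-wave case — is choosing the family of oblique lines and the associated time shifts so that the weight's derivative stays supported strictly to the left of the first wave (so that no mass from the first wave leaks into the estimate) while still sweeping the whole region $x\to-\infty$; here one uses that $z_1(t)\gtrsim \sqrt{\underline c}\,t$ grows linearly and that the radiation travels strictly slower, exactly as in the outline of the proof. Once \eqref{eq:u_tilde1_bound} and \eqref{eq:u_tilde1_decay} are in hand, Proposition \ref{propo:limit_object_1} is proved; the uniqueness of the limit $\tilde u_{0,1}$ (independence of the sequence), needed for Proposition \ref{propo:asymp:1soliton}, is then obtained by applying the Liouville theorem \ref{theo:nonlinear_liouville} to $\tilde u_1$ and comparing, but that is the content of the next step rather than of this proposition.
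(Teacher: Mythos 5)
Your road map (compactness and extraction, orbital stability of the emanating solution, decay via monotonicity on oblique half-planes) is the same as the paper's, and Steps 1 and 2 are essentially correct sketches of the paper's first, second and sixth steps and of its use of Theorem \ref{theo:orbital_stab}. However, Step 3 contains a genuine error: you claim that the decay \eqref{eq:u_tilde1_decay} as $x\to+\infty$ "is immediate from \eqref{eq:u_tilde1_bound} and the exponential tail \eqref{asym:Q} of $Q$". It is not. Writing $\tilde u_1(t,\cdot+\tilde\bz_1(t))=Q_{c_1^0}+\tilde\epsilon(t)$ with $\|\tilde\epsilon(t)\|_{H^1}\lesssim\beta$, the embedding $H^1(\mathbb R^2)\hookrightarrow L^\infty_xL^2_y$ only gives
\begin{equation*}
\int_y \tilde u_1^2\left(t,\bx+\tilde\bz_1(t)\right)dy \;\lesssim\; e^{-c|x|}+\beta^2 ,
\end{equation*}
and the constant $\beta^2$ does not decay in $x$; for $|x|\gg|\ln\beta|$ this is far weaker than $e^{-\tilde\kappa_1|x|}$. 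The right-side decay is precisely what the paper's first through fifth steps are for: one needs the almost-monotonicity of the weighted mass $\tilde I_{x_0,t_0,\theta_0}$ on oblique lines kept at distance $\ge x_0$ to the \emph{right} of both waves (Lemma \ref{lemm:monotonicity_2}), combined with $\lim_{t_0\to+\infty}\tilde I_{x_0,t_0,\theta_0}(t)=0$ (the line escapes to $+\infty$ at any fixed earlier time because it travels at speed $\underline{c}/2<\dot z_1$), and then this decay must be transferred to $\tilde u_{0,1}$ by weak lower semicontinuity and to $\tilde u_1(t)$ for all $t$ by the Gronwall estimates of the third through fifth steps. Your proposal inverts the difficulty: both directions require monotonicity, and the $x\to+\infty$ direction is in fact the bulk of the paper's proof.

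Two secondary points. First, your left-decay sketch points in the right direction (monotonicity with a line staying between the two waves), but the passage from "mass at $x<-B$ is $\le C\delta$" to the exponential rate is not a "dyadic iteration on $B$": in the paper the rate comes directly from the exponential localization of the weight $\phi$ and from quantifying $\delta$ as $e^{-\frac14\sqrt{\underline c}x_0}$ (seventh step, following Lemma 4.10 of \cite{CMPS16}). Second, every transfer of information from $u$ at times $t_{n_k}$ to $\tilde u_1$ at an arbitrary time $t$ relies on the convergence $u(t_{n_k}+t,\cdot+\bz_1(t_{n_k}))\to\tilde u_1(t)$ on half-spaces, which is not automatic: it is proved in the paper's fourth and sixth steps by a weighted Gronwall argument on $v_{k,1}$, and your proof uses it implicitly without establishing it.
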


	We give the proof of Proposition \ref{propo:asymp:1soliton} assuming that Proposition \ref{propo:limit_object_1} holds. 
 
 \begin{proof}[Proof of Proposition \ref{propo:asymp:1soliton}]
We deduce from Proposition \ref{propo:limit_object_1} and compactness that, for any sequence $\{t_n\}_n\rightarrow +\infty$, there exist a subsequence $\{t_{n_k}\}_k$, a function $\tilde{u}_{0,1} \in H^1(\mathbb R^2)$ and a constant $\tilde{c}_{0,1}>0$ such, that for any $B>0$,
\begin{align} \label{conv.tilde_u_01.1}
	c_1(t_{n_k}) \rightarrow \tilde{c}_{0,1} \quad \text{and} \quad u\left(t_{n_k}, \cdot + \bz_1(t_{n_k}) \right) \rightarrow \tilde{u}_{0,1} \quad \text{in} \quad H^1(x>-B).
\end{align}
Moreover, the solution $\tilde{u}_1$ to \ref{ZK} with initial condition $\tilde{u}_1(0)= \tilde{u}_{1,0}$ satisfies \eqref{eq:u_tilde1_bound} and \eqref{eq:u_tilde1_decay}. 

By applying modulation theory around one solitary wave, (see Lemma 3.1 in \cite{CMPS16}), there exists a unique function $\left( \tilde{\bz}_1,\tilde{c}_1 \right)= (\tilde{z}_1,\tilde{\omega}_1,\tilde{c}_1) \in \mathcal{C}^1(\mathbb{R} : \mathbb{R}^2 \times(0,+\infty))$ such that, by denoting $\tilde{R}_1 (t,\bx) := Q_{\tilde{c}_1(t)} \left( \bx - \tilde{\bz}_1(t)\right)$, it holds for any $t\in \mathbb{R}$
	\begin{align*}
 \begin{cases}
		& \left\| \tilde{u}_1(t) - \tilde{R}_1(t) \right\|_{H^1} \lesssim \beta, \\
		& \int \left( \tilde{u}_1(t) -\tilde{R}_1(t) \right) \tilde{R}_1(t)=\int \tilde{u}_1(t) \partial_x \tilde{R}_1(t)=\int \tilde{u}_1(t) \partial_y \tilde{R}_1(t)=0.
  \end{cases}
	\end{align*}
 Moreover, we infer from the uniqueness of the decomposition around one solitary wave and passing to the limit when $k \to \infty$ in \eqref{eps:ortho} that $\tilde{\bz}_1(0)=0$ and $\tilde{c}_1(0)=\tilde{c}_{0,1}$.
 
	Therefore, we deduce from Theorem \ref{theo:nonlinear_liouville} that there exists $c_1^+$ satisfying $|c_1^+-c_1^0| \lesssim \beta$ and $\bz^+_1=(z_1^+,\omega_1^+)\in \mathbb{R}^2$ such that  
	\begin{align} \label{conv.tilde_u_01.2}
		\tilde{u}_{1}(t,\bx) = Q_{c_1^+} \left( x-c_1^+t-z_1^+,y-\omega_1^+ \right).
    \end{align}
	Notice that the speed $c_1^+$ depends on the sequence $\{t_{n_k}\}_k$. The uniqueness of the decomposition provides that $\bz_1^+=\tilde{\bz}_1(0)=0$ and $c_1^+=\tilde{c}_1(0)=\tilde{c}_{0,1}$. Thus for any $B>0$, the sequence $u\left(t_{n_k},\cdot + \bz_1(t_{n_k}) \right)- Q_{c_1(t_{n_k})}$ goes to $0$ in $H^1(x>-B)$ as $k$ goes to $\infty$. Since it is true for any sequence $\{t_n\}_n$ that tends to $+\infty$, we conclude that, for any $B>0$,
	\begin{align}\label{eq:conv_u-Q_c1}
		u \left( t, \cdot + \bz_1(t) \right) - Q_{c_1(t)} \underset{t \to +\infty}{\longrightarrow} 0 \quad \text{in} \quad H^1(x>-B).
	\end{align}

   We now prove the convergence of the scaling parameter $c_1(t)$ as $t\rightarrow +\infty$. Let us define the function  $\psi_{\underline{c}}(x) := \psi(\frac{\sqrt{\underline{c}}}2 x)$, where $\psi$ is defined in \eqref{defi:psi}. For any $\gamma>0$, there exist a time $T=T(\gamma)>0$ and a distance $x_1=x_1(\gamma)>0$ such that, arguing as in the proof of Lemma \ref{lemm:monotonicity_2}, we have, for any $t>t'>T$ and $x_0>x_1$,
	\begin{align*}
		\int u^2(t,\bx ) \psi_{\underline{c}} (x-z_1(t)+x_0) d\bx \leq \int u^2(t', \bx) \psi_{\underline{c}} (x- z_1(t') + x_0)d\bx + \gamma.
	\end{align*}
		Furthermore, increasing $T$ and $x_1$ if necessary, we get from \eqref{eq:conv_u-Q_c1} and \eqref{asym:Q}, that for any $t>T$,
	\begin{align*}
		\left\vert \int u^2(t, \bx) \psi_{\underline{c}} (x-z_1(t)+x_0) d\bx -\int Q_{c_1(t)}^2(\bx) d \bx \right\vert \leq \gamma.
	\end{align*}
	Gathering the previous inequalities at times $t$ and $t'$ with $t>t'>T$, we obtain
	\begin{align*}
		\int Q_{c_1(t)}^2 \leq \int Q_{c_1(t')}^2 +3 \gamma. 
	\end{align*}
	Since $\int Q_{c}^2= c\int Q^2$, we conclude that there exists $c_1^+>0$ such that
	\begin{align*}
		\left\vert c_1^+ - c_1^0\right\vert \lesssim \beta \quad \text{and} \quad c_1(t) \rightarrow c_1^+.
	\end{align*}

 Finally, the proof of \eqref{propo:asymp:1soliton.2} follows from the modulation estimate around one solitary wave (see (3.5) in \cite{CMPS16}) and \eqref{eq:conv_u-Q_c1}. This concludes the proof of Proposition \ref{propo:asymp:1soliton}. 
 \end{proof}

We continue with the proof of the asymptotic stability on a half-plane containing the two solitary waves. Recall that $\bz$ is defined at the beginning of this section.

\begin{propo} \label{propo:asymp:2soliton} Under the assumptions of Theorem \ref{theo:orbital}, by taking $k=k(\underline{c},\bar{c})>0$ smaller, $K=K(\underline{c},\bar{c})>0$ and $A=A(\underline{c},\bar{c})>0$ larger if necessary, the limit $c_2^+=\lim_{+\infty}c_2(t)$ exists and
 \begin{align}
 & |c_2^+-c_2^0|<A \left( \alpha +e^{-\frac{1}{32} \sqrt{\underline{c}} Z}\right), \label{propo:asymp:2soliton.1} \\
 & \left(u -R_1 \right) (t, \cdot + \bz_2(t))\underset{t\rightarrow +\infty}{\rightarrow} Q_{c_2^+} \quad \text{in} \quad H^1(x >-B), \label{propo:asymp:2soliton.3} \\
 &(\dot{z}_2(t),\dot{\omega}_2(t)) \underset{t \rightarrow + \infty}{\longrightarrow} (c_2^+,0), \label{propo:asymp:2soliton.2}
 \end{align} 
 where $(z_2,\omega_2,c_2) \in \mathcal{C}^1(\mathbb R_+ : \mathbb R^2 \times (0,+\infty))$ are the modulation parameters defined in Theorem \ref{theo:orbital} (i).
 \end{propo}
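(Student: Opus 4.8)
The plan is to mimic the argument used for the first solitary wave (Propositions \ref{propo:asymp:1soliton} and \ref{propo:limit_object_1}), but now applied to the \emph{difference} $u - R_1$, which, by Proposition \ref{propo:asymp:1soliton}, is already controlled to the right of the first wave. First I would record that by Theorem \ref{theo:orbital}(i) and the orbital stability around the first soliton just established, the function $w(t,\bx) := u(t,\bx) - R_1(t,\bx)$ satisfies $\|w(t,\cdot+\bz_2(t)) - Q_{c_2(t)}\|_{H^1} \lesssim \beta$, and that $R_1$ is exponentially localized around $\bz_1(t)$, with $z_1(t) - z_2(t) \geq \tfrac12(Z+\sigma t) \to +\infty$ by \eqref{defi:z}. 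Hence near the second wave, $w$ behaves like a genuine solution close to a single solitary wave, up to error terms coming from $R_1$ and from the fact that $w$ does not solve \eqref{ZK} exactly (the equation for $w$ has a source term $\partial_x(2R_1 w + \dots)$, but this is supported near the first wave and is therefore exponentially small in the relevant region).

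Next I would establish the analogue of Proposition \ref{propo:limit_object_1}: for any sequence $t_n \to +\infty$, extract a subsequence along which $w(t_{n_k}, \cdot + \bz_2(t_{n_k})) \rightharpoonup \tilde u_{0,2}$ weakly in $H^1$ and strongly in $H^1(x>-B)$ for every $B$, using the compactness furnished by the monotonicity estimates. The key point, as emphasized in the outline of the proof, is the decay of the limit profile: here one must use monotonicity of portions of the mass and energy \emph{between} the two solitary waves — on the right of the second wave and on the left of the first. Concretely, one uses weight functions $\psi$ localized on oblique lines $x - a(t) + (\tan\theta) y = 0$ with $|\theta| < \tfrac\pi3$, chosen so that $a(t)$ stays in the window $(z_2(t) + O(1), z_1(t) - O(1))$; since this window has width $\gtrsim \tfrac12(Z+\sigma t)$, such a choice is possible for all large $t$, and the monotonicity yields that $\tilde u_{0,2}$ — hence the solution $\tilde u_2$ it generates — satisfies a uniform exponential decay $\int_y \tilde u_2^2(t,\bx+\tilde\bz_2(t))\,dy \lesssim e^{-\tilde\kappa_2|x|}$. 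One also needs to check that the solution $\tilde u_2$ emanating from $\tilde u_{0,2}$ is an \emph{exact} solution of \eqref{ZK} (not of a perturbed equation): this follows because the source term in the $w$-equation is supported where $R_1$ lives, i.e.\ at horizontal distance $\gtrsim \tfrac12(Z+\sigma t)$ from the second wave, so it vanishes in the limit.

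Then I would invoke the nonlinear Liouville property (Theorem \ref{theo:nonlinear_liouville}) applied to $\tilde u_2$: combined with the orbital closeness $\|\tilde u_2(t,\cdot+\tilde\bz_2(t)) - Q_{c_2^0}\|_{H^1}\lesssim\beta$ and the decay estimate, it forces $\tilde u_2(t,\bx) = Q_{\breve c}(x - \breve c t - \breve z, y - \breve\omega)$ for some $\breve c$ with $|\breve c - c_2^0|\lesssim\beta$. Uniqueness of the modulation decomposition around one solitary wave pins down $\breve\bz = 0$ and $\breve c = \tilde c_{0,2} = \lim_k c_2(t_{n_k})$, so that $w(t_{n_k},\cdot+\bz_2(t_{n_k})) - Q_{c_2(t_{n_k})} \to 0$ in $H^1(x>-B)$. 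Since the subsequential limit is independent of the sequence, $(u-R_1)(t,\cdot+\bz_2(t)) - Q_{c_2(t)} \to 0$ in $H^1(x>-B)$, which gives \eqref{propo:asymp:2soliton.3}. Convergence of $c_2(t)$ to some $c_2^+$ with $|c_2^+ - c_2^0|\lesssim\beta$ follows exactly as for $c_1$, by comparing the localized mass of $w$ at two large times using a one-directional monotonicity estimate and the identity $\int Q_c^2 = c\int Q^2$; and \eqref{propo:asymp:2soliton.2} follows from the one-soliton modulation estimates together with \eqref{propo:asymp:2soliton.3}.

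The main obstacle I anticipate is the geometric bookkeeping in the monotonicity step: unlike the one-soliton case \cite{CMPS16, FHRY23} or the one-dimensional case \cite{MMT02, MM08}, here the oblique half-planes carrying the weight's derivative must be threaded \emph{strictly between} the two moving solitary waves for \emph{all} large times, which requires carefully coupling the choice of angle $\theta$, the center $a(t)$ of the weight, and the time range, and exploiting the quantitative separation $z_1(t) - z_2(t) \geq \tfrac12(Z+\sigma t)$ from \eqref{defi:z}. A secondary technical point is ensuring that the $R_1$-induced source term in the evolution equation for $w = u - R_1$ genuinely disappears in the limit; this is where the exponential localization \eqref{asym:Q} of $Q$ together with the growing separation is used.
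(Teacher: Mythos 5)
Your proposal is correct and follows essentially the same route as the paper: extract a subsequential limit profile $\tilde u_{0,2}$ of $(u-R_1)(t_{n_k},\cdot+\bz_2(t_{n_k}))$, prove its exponential decay via mass/energy monotonicity on oblique half-planes threaded between the two waves (the paper's Lemmas \ref{lemm:decay_u_2} and \ref{lemm:limsup2_mass}, with the times $T_0$, $T_1$ and distances $\hat d_1$, $\hat d_2$ playing exactly the bookkeeping role you anticipate), apply the Liouville theorem and uniqueness of the modulation decomposition, and recover $c_2^+$ from the localized mass. The only cosmetic difference is that the paper runs the monotonicity on $u$ itself and then subtracts $\int R_1^2$ using the already-established convergence around the first wave, rather than working with the perturbed equation for $w=u-R_1$ throughout, but this is the same mechanism you describe for killing the $R_1$-induced source term.
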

 
In order to prove Proposition \ref{propo:asymp:2soliton}, we use the following key result.
	
	\begin{propo}\label{propo:limit_object_2}
		Under the assumptions of Theorem \ref{theo:orbital}, by taking $k=k(\underline{c},\bar{c})>0$ smaller and $K=K(\underline{c},\bar{c})>0$ larger if necessary, for any increasing sequence $\{t_n\}_n\rightarrow +\infty$, there exists a subsequence $\{t_{n_k}\}_k$ and $\tilde{u}_{0,1}\in H^1(\mathbb{R}^2)$ such that, for any $B>0$,
		\begin{align}\label{eq:conv_u-Q_c2}
			 \left( u -R_1 \right) \left( t_{n_k}, \cdot+ \bz_2(t_{n_k}) \right) - \tilde{u}_{0,2} \rightarrow 0 \quad \text{in} \quad H^1(x>-B).
		\end{align}
	Moreover, $\tilde{u}_2$ solution to \eqref{ZK} with initial condition $\tilde{u}_2(0)= \tilde{u}_{0,2}$ satisfies for any $t \in \mathbb{R}$
	\begin{align*}
		\left\| \tilde{u}_2 \left( t, \cdot + \tilde{\bz}_2(t) \right) -Q_{c_2^0} \right\|_{H^1} \lesssim \beta
	\end{align*}
with $\beta$ defined in \eqref{defi:alpha_star} and for any $(t,x)\in \mathbb{R}^2$
\begin{align*}
	\int_y \tilde{u}_2^2 \left(t, \bx + \tilde{\bz}_2(t) \right) dy \lesssim e^{-\tilde{\kappa}_2 \vert x \vert}
\end{align*}
for some positive constant $\tilde{\kappa}_2$, where $\tilde{\bz}_2$ corresponds to the function given by modulation theory around one solitary wave.
	\end{propo}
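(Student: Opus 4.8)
The plan is to run, for the second solitary wave, the scheme used for the first one in Proposition \ref{propo:limit_object_1} (itself adapted from \cite{MMT02}), keeping track only of the modifications caused by the presence of $R_1$ on the right. \textbf{Extraction of the limit profile.} Writing $u-R_1=R_2+\epsilon$, the bound \eqref{bound:u(t)} and Lemma \ref{est:R1R2} imply that $(u-R_1)(t_n,\cdot+\bz_2(t_n))$ is bounded in $H^1(\mathbb R^2)$; hence, up to a subsequence, it converges weakly in $H^1$ and strongly in $L^2_{loc}(\mathbb R^2)$ to some $\tilde u_{0,2}\in H^1(\mathbb R^2)$. To upgrade this to strong convergence in $H^1(x>-B)$ for every $B>0$, I would rule out any loss of $L^2$-mass or of $\|\nabla\cdot\|_{L^2}$ towards $x=+\infty$. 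This is where the geometry enters: I would invoke almost monotonicity of the mass and of the energy (in the spirit of Lemmas \ref{lemm:monotonicity_mass} and \ref{lemm:energy}) on \emph{oblique} half-planes whose transition hyperplane $\{\pm((x-z_2(t)-x_0)+(\tan\theta)y)=0\}$, $|\theta|<\frac{\pi}{3}$, sits strictly between $\bz_2(t)$ and $\bz_1(t)$. Since $z(t)=z_1(t)-z_2(t)\ge\frac12(Z+\sigma t)$ by \eqref{defi:z}, such a transition can be kept trapped between the two waves on arbitrarily long time intervals, and the cross terms of Lemma \ref{est:R1R2} are exponentially small; this yields $\limsup_n\int_{x>x_0}\big(|u-R_1|^2+|\nabla(u-R_1)|^2\big)(t_n,\cdot+\bz_2(t_n))\,d\bx\to0$ as $x_0\to+\infty$, hence the strong $H^1(x>-B)$ convergence.

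\textbf{The limit solution and its orbital closeness.} By continuous dependence of the ZK flow, $(u-R_1)(t_n+\cdot\,,\,\cdot+\bz_2(t_n))$ converges to the solution $\tilde u_2$ of \eqref{ZK} with $\tilde u_2(0)=\tilde u_{0,2}$, locally uniformly in time on right half-planes. For each fixed $s\in\mathbb R$ one still has $z(t_n+s)\to+\infty$, so near $\bz_2(t_n+s)$ the first wave contributes only $O\big(e^{-c\sqrt{\underline{c}}\,\sigma t_n}\big)$ in $H^1$ by \eqref{asym:Q}; letting $n\to\infty$ in the decomposition of Proposition \ref{propo:choice_mod_param} shows that $\tilde u_{0,2}$ is $O(\beta)$-close in $H^1$ to a translate of $Q_{\tilde c_{0,2}}$ with $\tilde c_{0,2}=\lim_k c_2(t_{n_k})$ satisfying $|\tilde c_{0,2}-c_2^0|\lesssim\beta$, $\beta$ as in \eqref{defi:alpha_star}. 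Applying modulation theory around a single solitary wave (Lemma 3.1 in \cite{CMPS16}) together with the orbital stability Theorem \ref{theo:orbital_stab}, valid for all $t\in\mathbb R$ (this requires $\beta$ small, hence $K$ large and $k$ small), gives $\|\tilde u_2(t,\cdot+\tilde\bz_2(t))-Q_{c_2^0}\|_{H^1}\lesssim\beta$ for all $t\in\mathbb R$; uniqueness of the decomposition fixes $\tilde\bz_2(0)$ and $\tilde c_2(0)=\tilde c_{0,2}$ as the limits of the corresponding quantities for $u$.

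\textbf{Decay of the profile $\tilde u_{0,2}$.} To the \emph{left} of $\bz_2$ this is standard: by almost monotonicity of the (oblique) mass and energy of $u$, the localized quantities $\int u^2(t)\,\psi\big(\pm\tfrac{\sqrt{\underline{c}}}{2}((x-z_2(t)+x_0)+(\tan\theta)y)\big)\,d\bx$ are almost non-increasing, and since the radiation escapes leftward while $\bz_2$ travels rightward, their value at $t_n\to+\infty$ is $\le Ce^{-\kappa x_0}$; passing to the limit yields $\int_{x<-x_0}\big(|\tilde u_{0,2}|^2+|\nabla\tilde u_{0,2}|^2\big)\lesssim e^{-\kappa x_0}$. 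To the \emph{right} of $\bz_2$, I would combine the asymptotic stability of the first wave (Proposition \ref{propo:asymp:1soliton}) — which forces the mass of $u$ just to the left of $\bz_1(t)$, outside a fixed ball around it, to be asymptotically negligible — with the conservation of mass and with a further almost monotonicity on an oblique weight whose transition line is trapped between $\bz_2(t)$ and $\bz_1(t)$, so as to show that the mass of $u-R_1$ in $\{x_0<x-z_2(t)<\tfrac12 z(t)\}$ tends to $0$; in the limit this gives $\int_{x>x_0}\big(|\tilde u_{0,2}|^2+|\nabla\tilde u_{0,2}|^2\big)\to0$ as $x_0\to+\infty$.

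\textbf{Propagation in time and the main obstacle.} Since $\tilde u_2$ is orbitally close to the single wave $Q_{c_2^0}$ for all $t\in\mathbb R$, I would propagate the decay of $\tilde u_{0,2}$ to every time using the one–solitary–wave monotonicity of mass and energy (as in \cite{CMPS16,FHRY23}, run both forward and backward in time), exactly as for Proposition \ref{propo:limit_object_1}, obtaining $\int_y\tilde u_2^2(t,\bx+\tilde\bz_2(t))\,dy\lesssim e^{-\tilde\kappa_2|x|}$ for all $(t,x)$. The genuinely delicate step, absent in the one–soliton case \cite{CMPS16,FHRY23} and in the one–dimensional case \cite{MMT02}, is the decay to the right of $\bz_2$: the transition region of every weight used there must remain between the two waves for all times, which forces a coupled choice of the offset $x_0$, the angles $|\theta|<\frac{\pi}{3}$ and the time windows, exploiting that the waves separate at linear rate $\sigma$ while the monotonicity errors only decay like $e^{-\kappa(Z+\sigma t)}$; the constants $k,K$ defining $\alpha,Z$ must then be tuned so that these windows exhaust $\mathbb R_+$.
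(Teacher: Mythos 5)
Your strategy coincides with the paper's: extract the limit profile by weak compactness, establish decay on the right of the second wave via almost monotonicity on oblique half-planes whose transition line is kept between $\bz_2(t)$ and $\bz_1(t)$, evaluate the resulting boundary term at an intermediate time using the convergence around the first wave (Proposition \ref{propo:asymp:1soliton}) together with \eqref{eq:limsup_T1}, and then run the Martel--Merle multi-step scheme to transfer the decay to $\tilde{u}_2$. You also correctly single out the genuinely new difficulty, namely trapping the oblique line between the two waves; in the paper this is encoded in the intermediate times $T_1(t_0)$ of Lemma \ref{lemm:limsup2_mass}, defined by $\hat{d}_1(t_0,T_1)=x_0$ and satisfying $T_1\ge\kappa t_0$, so that the first-wave asymptotics can be invoked at $T_1$.

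Two steps are too quick as written. First, you pass from convergence of the initial data to convergence of the evolutions by ``continuous dependence of the ZK flow''. This does not apply here: the convergence of $(u-R_1)(t_{n_k},\cdot+\bz_2(t_{n_k}))$ to $\tilde{u}_{0,2}$ is only weak in $H^1$ (strong only on right half-planes), and $u-R_1$ is not a solution of \eqref{ZK} but of a perturbed equation. The paper replaces this by a localized Gronwall estimate on $v_{k,2}=(u-R_1)(\cdot+t_{n_k},\cdot+\bz_2(t_{n_k}))-\tilde{u}_2$ with the weight $\phi$, as in \eqref{eq:kato_v_k1} and Lemma \ref{lemm:step4_2}, which is what actually yields convergence on $\{x>-B\}$ for each fixed $t$. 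Second, you propose to propagate the right-decay to all $t\in\mathbb{R}$ by running the mass/energy monotonicity on $\tilde{u}_2$ ``both forward and backward in time''. The monotonicity is a one-sided inequality (it controls a later time by an earlier one), so starting only from the decay of $\tilde{u}_{0,2}$ at $t=0$ it gives nothing on the right for $t<0$. The paper instead obtains the decay at every fixed $t$ by applying the monotonicity to $u$ itself on $[0,t_{n_k}+t]$ (with $t_{n_k}+t\to+\infty$) and transferring it to $\tilde{u}_2(t)$ through the weak convergence of the time-translates, as in \eqref{eq:step4.1.2}--\eqref{eq:step5.1.1} for the first wave. With these two corrections your outline matches the paper's proof.
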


	Assuming that Proposition \ref{propo:limit_object_2} holds, we prove Proposition \ref{propo:asymp:2soliton}.
 
 \begin{proof}[Proof of Proposition \ref{propo:asymp:2soliton}]

 For any increasing sequence $\{t_n\}_n \rightarrow +\infty$, there exist a subsequence $\{t_{n_k}\}_k$, $\tilde{c}_{0,2} >0$ and $\tilde{u}_{0,2} \in H^1(\mathbb R^2)$ such that for any $B>0$, \eqref{eq:conv_u-Q_c2} holds and $\{c_2(t_{n_k})\}_k$ converges to $\tilde{c}_{0,2}$. By using the same arguments as in the proof of Proposition \ref{propo:asymp:1soliton}, we obtain that
	\begin{align}\label{eq:conv_u-Q_c2bis}
	\left( u - R_1 \right) (t, \cdot + \bz_2(t)) - Q_{c_2(t)} \rightarrow 0 \quad \text{in} \quad H^1(x>-B).
	\end{align}
	The convergence of $c_2(t)$ to a speed $c_2^+$ is obtained proceeding as for $c_1^+$. For any $\gamma>0$ there exist $T$ and $x_1$ large enough such that for any $t>t'>T$ and $x_0>x_1$, it holds
	\begin{align*}
		\int u^2(t,\bx) \psi_{\underline{c}} \left( x- z_2(t) +x_0 \right) d \bx \leq \int u^2(t',\bx) \psi_{\underline{c}}(x -z_2(t')+x_0) d\bx + \gamma.
	\end{align*}
 Moreover, by using \eqref{eq:conv_u-Q_c2bis}, we have, by taking $T$ and $x_1$ larger if necessary,
\begin{align*}
	\left\vert \int u^2(t,\bx) \psi_{\underline{c}} \left( x-z_2(t)+x_0 \right) d\bx - \int Q_{c_1(t)}^2 - \int Q^2_{c_2(t)} \right\vert \leq \gamma,
\end{align*}
which yields the existence of the limit $c_2^+$ of $c_2(t)$ satisfying
\begin{align*}
	\left\vert c_2^+ -c_2^0 \right\vert \lesssim \beta.
\end{align*}
 \end{proof}

We are now in position to finish the proof of Theorem \ref{theo:orbital} (ii). 
\begin{proof}[Proof of Theorem \ref{theo:orbital} (ii)] From \eqref{propo:asymp:2soliton.3},
and the convergence of $c_2(t)$ to $c_2^+$, we have the convergence 
\begin{align} \label{conv:left:sol2}
\left( u - R_1 -R_2 \right) (t) \underset{t\rightarrow +\infty}{\rightarrow} 0 \quad \text{in} \quad H^1(x >z_2(t)-B) .
\end{align}
To conclude the proof of Theorem \ref{theo:orbital} (ii), it remains to extend the convergence above to the region $\{x >\frac1{100}\underline{c}t\}$. 

We first prove the following claim
\begin{claim}\label{claim:existence_t'}
	Let $x_0>0$ and define the time
	\begin{align*}
		T=T(x_0):= \inf \left\{ t>0 : \ \frac12 \frac{1}{100} \underline{c} t > z_2(0)-x_0 \ \text{and} \ z_2(0)-x_0+(\frac12 - \frac{1}{100} ) \underline{c}t>0 \right\}.
	\end{align*}
	There exists a constant $\kappa>0$ such that, for any $t>T$, there exists a unique time $t'$ satisfying $ z_2(t') + \frac{1}{200} \underline{c} (t-t') -x_0 = \frac{1}{100} \underline{c} t$.
	Furthermore, it holds $t'>\kappa t$.
\end{claim}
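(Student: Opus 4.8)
This claim is elementary once the orbital stability bounds on $z_2$ are in hand; the plan is to phrase it as an intermediate value argument for the single‑variable function $F(s) := z_2(s) + \frac{1}{200}\underline{c}\,(t-s) - x_0$ on $[0,t]$, for which the equation defining $t'$ is exactly $F(t') = \frac{1}{100}\underline{c}\,t$.

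First I would invoke Theorem \ref{theo:orbital} (i), in particular \eqref{eq:bound_dot:z_omega_i_t} and \eqref{eq:bound_c_i_t}: these give $z_2 \in \mathcal{C}^1(\mathbb{R}_+)$ with $|\dot z_2(s) - c_2(s)| \lesssim \beta$ and $|c_2(s) - c_2^0| \le A\alpha$ for all $s \ge 0$, where $\beta$ is as in \eqref{defi:alpha_star}. Since $\underline{c} < c_2^0 < \bar{c}$ and $\beta$ may be made as small as desired by decreasing $k$ and increasing $K$, I may assume $\frac12\underline{c} \le \dot z_2(s) \le 2\bar{c}$ for all $s \ge 0$. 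Consequently $F$ is $\mathcal{C}^1$ on $[0,t]$, strictly increasing with $F'(s) = \dot z_2(s) - \frac{1}{200}\underline{c} \ge \frac{99}{200}\underline{c} > 0$, and $2\bar{c}$‑Lipschitz.

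Next I would evaluate $F$ at the two endpoints, which is where the two conditions defining $T$ play their role. On the one hand $F(0) - \frac{1}{100}\underline{c}\,t = z_2(0) - x_0 - \frac{1}{200}\underline{c}\,t < 0$, precisely by the first condition $\frac{1}{2}\cdot\frac{1}{100}\underline{c}\,t > z_2(0) - x_0$. On the other hand, integrating $\dot z_2 \ge \frac12\underline{c}$ gives $z_2(t) \ge z_2(0) + \frac12\underline{c}\,t$, whence $F(t) - \frac{1}{100}\underline{c}\,t \ge z_2(0) - x_0 + \big(\frac12 - \frac1{100}\big)\underline{c}\,t > 0$ by the second condition. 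So $F(0) < \frac{1}{100}\underline{c}\,t < F(t)$, and by the intermediate value theorem together with strict monotonicity of $F$ there is a unique $t' \in (0,t)$ with $F(t') = \frac{1}{100}\underline{c}\,t$.

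Finally, for the lower bound $t' > \kappa t$, I would use that $F$ is $2\bar{c}$‑Lipschitz and $F(t') > F(0)$, so that $\frac{1}{100}\underline{c}\,t - F(0) = F(t') - F(0) \le 2\bar{c}\,t'$, i.e. $t' \ge \frac{1}{2\bar{c}}\big(\frac{1}{200}\underline{c}\,t + x_0 - z_2(0)\big)$. When $z_2(0) \le x_0$ this gives $t' \ge \kappa t$ with $\kappa = \frac{\underline{c}}{400\bar{c}}$ outright; when $z_2(0) > x_0$ the first condition in the definition of $T$ only forces $\frac{1}{200}\underline{c}\,t + x_0 - z_2(0) > 0$, so here I would enlarge the threshold $T$ by a fixed multiplicative factor (harmless in the application, where moreover $x_0$ can be taken large) to ensure $\frac{1}{200}\underline{c}\,t - (z_2(0)-x_0) \ge \frac{1}{400}\underline{c}\,t$ and conclude again. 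I expect the only step requiring genuine care to be this uniformity of $\kappa$, since the displayed lower bound degenerates as $t \downarrow T$ in the regime $z_2(0) > x_0$; everything else is a routine continuity argument, whose sole nontrivial input is the bound $\frac12\underline{c} \le \dot z_2 \le 2\bar{c}$, which is where the smallness of $\beta$ is used.
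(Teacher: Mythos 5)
Your proof is correct and follows essentially the same route as the paper's: the paper introduces $f(t')=z_2(t')+\tfrac{1}{200}\underline{c}(t-t')-x_0-\tfrac{1}{100}\underline{c}t$ (your $F$ shifted by the constant $\tfrac{1}{100}\underline{c}t$), derives $\tfrac12\underline{c}<f'<2\underline{c}$ from the modulation estimates, checks the signs of $f(0)$ and $f(t)$ via the two conditions defining $T$ exactly as you do, and concludes by the intermediate value theorem together with the integrated bound $-f(0)<2\underline{c}\,t'$. Your two refinements --- using the Lipschitz constant $2\bar{c}$ in place of the paper's $2\underline{c}$ (which is indeed the correct upper bound when $\bar{c}>2\underline{c}$), and observing that the lower bound $t'>\kappa t$ degenerates as $t\downarrow T$ in the regime $z_2(0)>x_0$ --- address points the paper passes over silently and are harmless in the application, where only the behaviour as $t\to+\infty$ for fixed $x_0$ is used.
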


\begin{proof}[Proof of Claim \ref{claim:existence_t'}]
	Consider, for a fixed $t>T$, the function $f(t')= z_2(t')+ \frac12 \frac{1}{100} \underline{c} (t-t') - x_0 - \frac{1}{100} \underline{c} t$ in $\mathcal{C}^1(\mathbb{R})$. It satisfies $\left\vert f'(t')- c_2(t') + \frac12 \frac{1}{100} \underline{c} \right\vert \lesssim \alpha \leq \frac18 \underline{c}$ and thus $ \frac12 \underline{c} < f'(t') < 2 \underline{c}$, which shows that $f$ is an increasing function. 
	
	Moreover, it holds $f(0) =z_2(0)-\frac12 \frac{1}{100} \underline{c} t-x_0<0$. On the other hand, with similar computations to \eqref{eq:encad_z}, it holds $z_2(t)> z_2(0) + \frac12 \underline{c}t$, and by definition of $T$, $f(t)= z_2(t)-x_0 - \frac1{100} \underline{c} t >z_2(0)-x_0+(\frac12 - \frac1{100}) \underline{c}t>0$. By the mean value theorem, there exists a unique time $t'\in (0,t)$ such that $f(t')=0$. Integrating $f'$ from $0$ to $t'$ provides
	\begin{align*}
		0<-z_2(0) + \frac12 \frac1{100} \underline{c} t +x_0 =-f(0) < 2 \underline{c} t',
	\end{align*} 
	which concludes the lower bound on $t'$.
\end{proof}

We introduce the weight function
        \begin{align}\label{defi:phi}
            \phi(x) := \psi \left( \frac{\sqrt{\underline{c}}}{4} x \right) \quad \text{satisfying} \quad \forall x \in \mathbb{R}, \quad \left\vert \phi^{(3)}(x) \right\vert \leq \frac{\underline{c}}{16} \phi'(x),
        \end{align}
where $\psi$ is defined in \eqref{defi:psi}. Let $x_0>0$ and define the function
\begin{align*}
	\check{x}(t)= \check{x}(x_0,t_1,t,\bx) := x-z_2(t_1) - \frac1{200} \underline{c} (t-t_1) +x_0
\end{align*}
and the localized mass
\begin{align*}
	\check{I}_{x_0,t_1}(t) := \int_{\mathbb{R}^2} u(t,\bx)^2 \phi(\check{x}(t)) d\bx.
\end{align*}
Let $t>T(x_0)$ with $T(x_0)$ defined in Claim \ref{claim:existence_t'} and define $t_1$ as the unique time such that $z_2(t)+ \frac1{200} \underline{c} (t-t_1)-x_0=\frac1{100} \underline{c} t$. Following the almost monotonicity in Lemma \ref{lemm:monotonicity_2} away from the two solitary waves, we infer, decreasing $k=k(\underline{c},\bar{c})$ if necessary,
\begin{align*}
	\check{I}_{x_0,t_1}(t)- \check{I}_{x_0,t_1}(t_1) \lesssim e^{-\frac{1}{8}\sqrt{\underline{c}} x_0}.
\end{align*}
Furthermore, recalling the definition of $\epsilon$ in \eqref{defi:R_n} and using the orthogonality relations \eqref{eps:ortho}, it holds for $i\in \{1,2\}$ at time $t$ and $t_1$
\begin{align*}
	\left\vert \int \epsilon(t,\bx)R_i(t,\bx) \phi(\check{x}) d\bx \right\vert = \left\vert \int \epsilon(t,\bx)R_i(t,\bx) \left( 1- \phi(\check{x}) \right) d\bx \right\vert \lesssim e^{-\frac{1}{8} \sqrt{\underline{c}}x_0} .
\end{align*}
Hence, with the decomposition $\epsilon^2 = u^2-2\epsilon(R_1+R_2) - (R_1+R_2)^2$, we obtain
\begin{align*}
	& \int \epsilon^2(t,\bx) \phi(\check{x}(t)) d\bx - \int u^2(t,\bx) \phi(\check{x}(t)) d\bx+ \int \left( R_1(t,\bx) + R_2(t,\bx) \right)^2 \phi(\check{x}(t)) d\bx \lesssim e^{-\frac{1}{8}\sqrt{\underline{c}}x_0}, \\
	& \int \epsilon^2(t,\bx) \phi(\check{x}(t)) d\bx -\int \epsilon^2(t_1,\bx) \phi(\check{x}(t_1)) d\bx \lesssim e^{-\frac{1}{8}\sqrt{\underline{c}}x_0} + \sum_i \left\vert c_i(t)-c_i(t_1) \right\vert +e^{-\frac{1}{32}\sqrt{\underline{c}}(Z+ \sigma t_1)}.
\end{align*}
Passing to the limit as $t$ tends to $+\infty$, recalling that $t_1 \geq \kappa t$, $c_1(t)$ and $c_2(t)$ have finite limits and using the convergence on the left of $R_2$ in \eqref{conv:left:sol2}, we deduce
\begin{align*}
	\limsup_{t\rightarrow + \infty} \int \epsilon^2(t,\bx) \phi(\check{x}(t)) d\bx \lesssim e^{-\frac{1}{8}\sqrt{\underline{c}}x_0}.
\end{align*}
Since the bound holds for any $x_0$ and $\check{x}(t)=x-\frac{1}{100} \underline{c} t$, we obtain
\begin{align*}
	\lim_{t\rightarrow +\infty} \int \epsilon^2(t, \bx) \phi(x-\frac1{100} \underline{c} t) d\bx =0.
\end{align*}
Arguing similarly for the derivatives of $\epsilon$, we infer
\begin{align*}
	\lim_{t\rightarrow +\infty} \int \vert \nabla \epsilon \vert^2 (t, \bx) \phi(x-\frac1{100} \underline{c} t) d\bx =0,
\end{align*}
which concludes \eqref{conv_asymp}.

\end{proof}
	
\subsection{Proof of Proposition \ref{propo:limit_object_1}}

		We recall that the definition of the modulated parameters $(\bz_1,c_1)=(z_1,\omega_1,c_1)$ and $(\bz_2,c_2)=(z_2,\omega_2,c_2)$ in Proposition \ref{propo:evol_mod_param}.
		
		For any increasing sequence $(t_n)_n\rightarrow +\infty$, since the function $u(t, \cdot + \bz_1(t))$ is bounded in $H^1$  and $c_1(t)$ is bounded by $\bar{c}+K_1\alpha$ (see \eqref{estimate_modulation:c}), there exists a subsequence $\{t_{n_k}\}_k$, a function $\tilde{u}_{0,1}$ and a constant $\tilde{c}_{0,1}>0$ such that
		\begin{align}\label{eq:weak_conv_1}
			u\left(t_{n_k}, \cdot + \bz_1(t_{n_k}) \right) \underset{k \rightarrow +\infty}{\rightharpoonup} \tilde{u}_{0,1} \quad \text{in} \quad H^1  \quad \text{and} \quad c_1(t_{n_k}) \rightarrow \tilde{c}_{0,1}.
		\end{align}
		Moreover, since $z(t) \to +\infty$ as $t \to +\infty$ (see \eqref{defi:z}), we have $Q_{c_2(t_{n_k})}(\cdot+\bz(t_{n_k})) \rightharpoonup 0$ in $H^1$ as $k \to \infty$, which combined with \eqref{bound:u(t)} yields
		\begin{align*}
			\left\| \tilde{u}_{0,1} - Q_{\tilde{c}_{0,1}} \right\|_{H^1} 
			& \leq  \liminf_{k \to +\infty} \left\| \left( u -R_1 - R_2 \right) \left( t_{n_k}, \cdot + \bz_1(t_{n_k}) \right) \right\|_{H^1} \leq \beta.
		\end{align*}
		
		Let us denote by $\tilde{u}_1$ the solution to \ref{ZK} with initial condition $\tilde{u}_1(0)=\tilde{u}_{0,1}$. From the global well-posedness result in \cite{Fam} of \ref{ZK} in $H^1$ and the orbital stability result in Theorem \ref{theo:orbital_stab}, $\tilde{u}_1$ is defined in $\mathcal{C}(\mathbb{R}: H^1(\mathbb{R}^2))$ and satisfies
		\begin{align*}
			\left\| \tilde{u}_1 \left( t, \cdot - \tilde{\bz}_1(t) \right) - Q_{\tilde{c}_{0,1}} \right\|_{H^1} \lesssim \beta,
		\end{align*}
		where $(\tilde{z}_1,\tilde{\omega}_1)$ corresponds to the modulation function around one solitary wave, see for instance Lemma 3.1 of \cite{CMPS16}. Since the ideas to prove Proposition \ref{propo:limit_object_1} are similar to the proof of Proposition 4.1 of \cite{CMPS16}, we split the proof of Proposition \ref{propo:limit_object_1} into several steps, summarize each of the steps and underline the differences with \cite{CMPS16}.
		
		\vspace{0.5cm}
  \noindent \textit{First step: Monotonicity property of $u$ on the right.}
To deal with the mass around the first scaled solitary wave, recall that the function $\phi$ is defined in \eqref{defi:phi}.
        
        \begin{lemm}\label{lemm:decay_u_1}
		For any $x_0>0$, we have that
			\begin{align*}
				\limsup_{t\rightarrow + \infty} \int \left( \vert \nabla u \vert^2 + u^2 \right) \left(t, \bx + \bz_1(t) \right) \phi \left( x- x_0 \right) d\bx \lesssim e^{- \frac14 \sqrt{\underline{c}} x_0}.
			\end{align*}
		\end{lemm}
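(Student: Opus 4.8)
The plan is to derive a monotonicity formula for a mass functional localized to the right of the first solitary wave, then pass to the limit using the convergence of $c_1(t)$ and the fact that the radiation is ejected to the left. Concretely, I would fix $x_0>0$, pick two times $t>t'>0$, and study the quantity $J_{t',x_0}(t) := \int u^2(t,\bx)\,\phi\big(x - z_1(t') - \tfrac{1}{2}(\dot z_1\text{-average})(t-t') - x_0\big)\,d\bx$; more precisely, following the construction used later in Claim \ref{claim:existence_t'} and in the proof of Theorem \ref{theo:orbital}(ii), the correct translation speed for the weight should be a small fraction of $\underline{c}$ (say $\tfrac{1}{200}\underline{c}$ or similar) that is strictly less than $c_1(t)\ge \tfrac{14}{15}\underline c$ by \eqref{estimate_modulation:c:bis}, but large enough to dominate the speed of the shed radiation. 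The point of translating the center of $\phi$ is that the line where $\phi'$ is concentrated always stays strictly to the left of the first solitary wave (by \eqref{estimate_modulation:z}, \eqref{eq:boundz}, since $z_1-z_2\gtrsim Z+\sigma t$, this line also stays to the right of the second wave once $x_0$ is reasonably small, so only $R_1$ contributes).

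The key computation is to differentiate $J_{t',x_0}$ in $t$. Using the equation \eqref{ZK}, one gets an expression of the form $\frac{d}{dt}J = \int\big(-3(\partial_x u)^2 - (\partial_y u)^2 + \tfrac43 u^3\big)\phi' - (\text{translation speed})\int u^2\phi' + \int u^2\phi^{(3)}$, exactly as in the proof of Lemma \ref{lemm:monotonicity_mass}. The term $\int u^2\phi^{(3)}$ is absorbed by $\int u^2\phi'$ thanks to \eqref{defi:phi}, and $\tfrac43\int u^3\phi'$ is split into $\tfrac43\int R_1 u^2\phi' + \tfrac43\int(u-R_1)u^2\phi'$; the first is bounded using $\|R_1\phi'(\cdot - \text{center})\|_{L^\infty}\lesssim \bar c\, e^{-c\sqrt{\underline c}\,x_0}$ (a variant of \eqref{est:Rpsi}, since the weight is centered strictly to the left of the first wave and $x_0$ enters as an extra shift), and the second by the Sobolev embedding $H^1(\mathbb R^2)\hookrightarrow L^3(\mathbb R^2)$ together with $\|\epsilon\|_{L^3}\lesssim\|\epsilon\|_{H^1}\lesssim\beta$, which for $\beta=\beta(\alpha,Z)$ small enough lets this term be absorbed into the good negative term $-c\int(u^2+|\nabla u|^2)\phi'$. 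This yields
\begin{equation*}
\frac{d}{dt}J_{t',x_0}(t) \leq -\,\frac{\min(1,\underline c)}{C}\int\big(u^2 + |\nabla u|^2\big)\phi'\big(\check x(t)\big)\,d\bx + C_{\underline c,\bar c}\, e^{-\frac14\sqrt{\underline c}\,x_0},
\end{equation*}
and integrating from $t'$ to $t$ gives, after controlling the small exponential error, the near-monotonicity $J_{t',x_0}(t) \leq J_{t',x_0}(t') + C\, e^{-\frac14\sqrt{\underline c}\,x_0}$.

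To conclude, I would evaluate the monotonicity bound with $t'$ chosen so that at time $t'$ the weight $\phi$ is essentially supported far to the right of where the mass of $u(t')$ concentrates — more precisely, choosing the geometry as in Claim \ref{claim:existence_t'} so that the center of $\phi$ at time $t'$ sits to the right of $z_1(t')$ by a large amount; then $J_{t',x_0}(t')\lesssim \|Q\|^2_{H^1}\text{(tail)} + \beta \lesssim e^{-c\sqrt{\underline c}\,x_0}$ using the orbital stability bound \eqref{bound:u(t)} and the exponential decay \eqref{asym:Q} of $Q$. Taking $\limsup_{t\to+\infty}$ and recalling that the weight recentered around $z_1(t)$ reads $\phi(x-x_0)$ up to the already-controlled drift (here one uses that $z_1(t) - z_1(t') - (\text{translation speed})(t-t')$ stays bounded below, again as in Claim \ref{claim:existence_t'}, so the window $\{x > x_0\}$ around $\bz_1(t)$ is contained in the monotone region), one obtains $\limsup_{t\to+\infty}\int(|\nabla u|^2+u^2)(t,\bx+\bz_1(t))\phi(x-x_0)\,d\bx \lesssim e^{-\frac14\sqrt{\underline c}\,x_0}$. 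The main obstacle I expect is the bookkeeping of the geometry: one must verify that with the chosen translation speed the oblique/vertical line where $\phi'$ lives remains trapped strictly between the two modulated solitary waves for all relevant times, which requires combining \eqref{eq:boundz} (lower bound on $z_1-z_2$) with the lower bound $c_1(t)\ge\tfrac{14}{15}\underline c$ and the upper bound $\dot z_2 \lesssim \bar c$ — this is routine but must be done carefully to ensure only $R_1$ contributes to the error terms and no cross term with $R_2$ spoils the sign.
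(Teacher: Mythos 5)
There is a genuine gap, and it is structural rather than a matter of bookkeeping. Lemma \ref{lemm:decay_u_1} concerns the mass and energy to the \emph{right} of the first solitary wave (the weight $\phi(x-x_0)$ in the frame of $\bz_1(t)$ charges the region $x>z_1(t)+x_0$). For the monotonicity argument to produce an $e^{-\frac14\sqrt{\underline c}x_0}$ error, the line where $\phi'$ concentrates must stay to the right of $z_1(t)$, at distance at least $x_0$, \emph{for all times in the interval of integration}. Your line is launched at time $t'$ to the right of $z_1(t')$ and translates at speed $\tfrac1{200}\underline c<c_1(t)$, so the first solitary wave overtakes it; you even state that the line should remain ``trapped strictly between the two modulated solitary waves,'' which is the geometry of the seventh step (decay on the \emph{left} of the first wave, Lemma \ref{lemm:mono_left}), not of this lemma. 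During the crossing, $\|R_1\phi'\|_{L^\infty}=O(\bar c)$ rather than $O(e^{-c\sqrt{\underline c}x_0})$, so the claimed differential inequality fails on a time interval of length $O(1)$ --- and it must fail, since after the crossing $J_{t',x_0}(t)\ge \int R_1^2(t)\approx c_1\int Q^2$, which is incompatible with $J_{t',x_0}(t)\le J_{t',x_0}(t')+Ce^{-\frac14\sqrt{\underline c}x_0}$ when the right-hand side is small.

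The second gap is independent of the geometry: anchoring the weight at a finite initial time $t'$ forces you to estimate $J_{t',x_0}(t')$, and this quantity contains $\int \epsilon^2(t')\phi(\cdot)$, for which the only available bound is $\|\epsilon(t')\|_{L^2}^2\le\beta^2$. There is no a priori spatial localization of $\epsilon(t')$, so your claim $J_{t',x_0}(t')\lesssim e^{-c\sqrt{\underline c}x_0}$ is unjustified; the best one gets is $e^{-c\sqrt{\underline c}x_0}+\beta^2$, which does not decay in $x_0$ and is useless both for the statement (which must hold for arbitrarily large $x_0$) and for the subsequent compactness/Liouville step, where \eqref{decay:2d} requires the tail to be smaller than any $\delta>0$. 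The paper avoids both problems at once (see Lemma \ref{lemm:monotonicity_2} with $\theta_0=0$, following Lemma 4.2 of \cite{CMPS16}): the weight is anchored at the \emph{final} time $t_0$ at $z_1(t_0)+x_0$ and recedes at speed $\underline c/2$ backward in time, so that $\tilde d_1(t_0,t)\ge x_0+\frac14\underline c\,(t_0-t)$ keeps the line to the right of both waves on all of $[0,t_0]$; one integrates the monotonicity from $t$ to $t_0$ and then sends $t_0\to+\infty$ with $t$ fixed, whereupon $\tilde I_{x_0,t_0,\theta_0}(t)\to 0$ exactly because the weight escapes to $+\infty$ relative to the fixed $H^1$ function $u(t)$ --- no spatial decay of $\epsilon$ is ever invoked.
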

		The proof is based on the monotonicity property of a weighted mass on the right of the first solitary wave at two different times, $t_0$ and $t$ with $t_0>t$, as shown in Figure \ref{fig:decay_u_1}. The detailed proof can be found in Lemma 4.2 in \cite{CMPS16} where the weight function is chosen to be parallel to the vertical axis.
  
		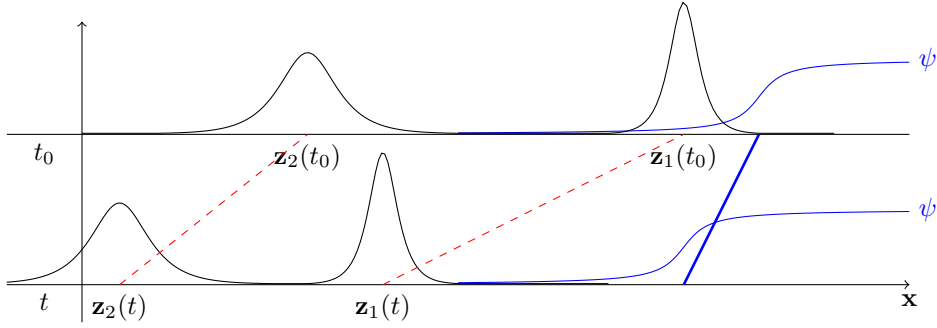
\begin{figure}[ht]
			\centering
			\begin{tikzpicture}[declare function = {
					solwave(\c,\z,\t) =(\c^(0.7))/ (exp(-max(-6,min(\c*(\t-\z),6))) +exp(max(-6,min(\c*(\t-\z),6))));
					weight(\z,\t) = 0.5+atan((\t-\z)*4)/180;}]
				
				\draw [->] (-1,0) -- (11,0);
				\draw (11,0) node[below]{$\bx$};
				\draw [->] (0,-0.5) -- (0,3.5);
				
				\draw (-0.5,0) node[below]{$t$};
				\draw (-1,2) -- (11,2);
				\draw (-0.5,2) node[below]{$t_0$};
				
				\draw [samples=\Num,blue,domain=5:11] plot [variable=\t] ({\t},{weight(8,\t)});
				\draw[blue] (11,1) node[right]{$\psi$};
				\draw [samples=\Num,blue,domain=5:11] plot [variable=\t] ({\t},{2+weight(9,\t)});
				\draw[blue] (11,3) node[right]{$\psi$};
				\draw[blue,line width=1] (8,0) -- (9,2);
				
				\draw [samples=\Num,domain=-1:7] plot [variable=\t] ({\t},{solwave(3,0.5,\t)+solwave(6,4,\t)});
				\draw (0.5,0) node[below]{$\bz_2(t)$};
				\draw (4,0) node[below]{$\bz_1(t)$};
				
				\draw [samples=\Num,domain=0:10] plot [variable=\t] ({\t},{2+solwave(3,3,\t)+solwave(6,8,\t)});
				\draw (3,2) node[below]{$\bz_2(t_0)$};
				\draw (8,2) node[below]{$\bz_1(t_0)$};
				
				\draw[red,dashed] (0.5,0) -- (3,2);
				\draw[red,dashed] (4,0) -- (8,2);
				
			\end{tikzpicture}
		\caption{Evolution of the position of the weight function from the time $t$ to $0$.}
  \label{fig:decay_u_1}
		\end{figure}

  Instead of recalling the proof of Lemma \ref{lemm:decay_u_1}, we introduce the next more general lemma as in Lemma 4.3 in \cite{CMPS16} for which the weight function is based on a non-vertical line. We thus introduce the following notations. Recall that $\phi$ is defined in \eqref{defi:phi}. For $x_0>0$, $t_0>0$, $\vert \theta_0 \vert < \frac{\pi}{3}$, $t \in \mathbb{R}_+$ and $x \in \mathbb{R}$ we define
	\begin{align*}
		\tilde{x}:= \tilde{x}(x_0,\theta_0,t_0,t,\bx)= x-z_1(t_0)-x_0 +\frac{\underline{c}}{2}(t_0-t) + \tan(\theta_0)(y-\omega_1(t_0)) 
   \end{align*}
   and
   \begin{align}
   \tilde{I}_{x_0, t_0, \theta_0} (t) := \int u^2(t,\bx) \phi (\tilde{x}) d\bx, \quad \tilde{J}_{x_0,t_0,\theta_0}(t) := \int \left( \left\vert \nabla u \right\vert^2 (t,\bx) - \frac23 u^3(t,\bx) \right) \phi(\tilde{x}) d\bx. \label{defi:asympt_stab1}
    \end{align}

Notice that $\theta_0$ corresponds to the angle between the vertical axis and the line $\tilde{x}(\bx)=0$, see Figure \ref{fig:evolution_u_right}.

\begin{figure}[h]
    \centering
			\begin{tikzpicture}
				
				\draw[->] (-5,0) -- (-1,0);
				\draw[->] (-4,-1) -- (-4,1);
				\draw (-1,0) node[right]{$x$};
				\draw (-4,1) node[above]{$y$};
				
				\fill[opacity=0.5,black] (-5,0.5) circle(0.3);
				\fill[opacity=0.8,black] (-3.5,-0.5) circle(0.2);
				
				\draw[color=red] (-1,-1) -- (-2.5,1);
				\draw[color=red] (-1,-1) node[below]{$\tilde{x}(t=0)=0$};
    
                \draw[color=blue] (-5,0.5)--(-2.1,0.5);
				\draw[color=blue] (-3.2,0.5) node[above]{$\tilde{d}_2(t_0,0)$};
                \draw[color=blue] (-3.5,-0.5)--(-1.4,-0.5);
				\draw[color=blue] (-2.3,-0.5) node[below]{$\tilde{d}_1(t_0,0)$};
    
				
				\draw[->] (1,0) -- (5,0);
				\draw (5,0) node[right]{$x$};
				\draw (2,1) node[above]{$y$};
				\draw[->] (2,-1) -- (2,1);
				
				\fill[opacity=0.5,black] (2,0.5) circle(0.3);
				\fill[opacity=0.8,black] (4.5,-0.5) circle(0.2);
				
				\draw[color=red] (5.5,-1) -- (4,1);
				\draw[color=red] (5.5,-1) node[below]{$\tilde{x}(t=t_0)=0$};
				\draw[color=red,->] (4.75,0.6) arc (90:135:0.5);
				\draw[color=red] (4.75,0.5) node[above]{$\theta_0$};

                \draw[color=blue] (2,0.5)--(4.4,0.5);
				\draw[color=blue] (3,0.5) node[above]{$\tilde{d}_2(t_0,t_0)$};
                \draw[color=blue] (4.5,-0.5)--(5.1,-0.5);
				\draw[color=blue] (4.5,-0.7) node[left]{$\tilde{d}_1(t_0,t_0)$};
				
			\end{tikzpicture}
    \caption{Evolution of the position of the line $\tilde{x}=0$ at two different times : $t=0$ on the left and $t=t_0>0$ on the right.}
    \label{fig:evolution_u_right}
\end{figure}
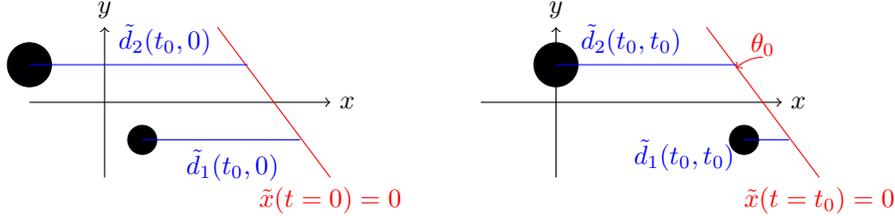
  
  We next compare the weighted mass at two different times, as in \cite{MMT02,MM05,CMPS16}.

		\begin{lemm}\label{lemm:monotonicity_2}
			Let $\vert \theta_0 \vert < \frac{\pi}{3}$, there exists a constant $C=C (\theta_0)$ such that for any $x_0>0$ the following holds. There exists a time $T_0=T_0(\theta_0,x_0)$ such that for any $t_0>T_0$ and $t\in [0,t_0]$, it holds
			\begin{align}\label{eq:monotonic_mass2}
				\tilde{I}_{x_0,t_0,\theta_0}(t_0) - \tilde{I}_{x_0,t_0,\theta_0}(t) \leq C e^{-\frac14 \sqrt{\underline{c}} x_0}, \quad \tilde{J}_{x_0,t_0,\theta_0}(t_0) -\tilde{J}_{x_0,t_0,\theta_0}(t) \leq C e^{-\frac14 \sqrt{\underline{c}}x_0}
			\end{align}
			and 
			\begin{align}\label{eq:limsup_T1}
				\limsup_{t\rightarrow +\infty} \int \left( \vert \nabla u \vert^2 + u^2 \right) \left( t, \bx + \bz_1(t) \right) \phi (x + \tan(\theta_0)y - x_0) d\bx \leq C e^{-\frac14\sqrt{\underline{c}} x_0}.
			\end{align}
		\end{lemm}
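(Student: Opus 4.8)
The strategy is the classical monotonicity argument of Martel--Merle, adapted to an oblique weight as in \cite{MMT02,MM05,CMPS16}, with the extra care needed in dimension $d=2$ because the weight now depends on $y$ through $\tan(\theta_0)(y-\omega_1(t_0))$. Fix $\vert\theta_0\vert<\frac\pi3$. First I would compute the time derivative of $\tilde{I}_{x_0,t_0,\theta_0}(t)$ along the flow of \eqref{ZK}. Writing $\tilde x = x - z_1(t_0) - x_0 + \frac{\underline c}{2}(t_0-t) + \tan(\theta_0)(y-\omega_1(t_0))$, so that $\partial_t \tilde x = -\frac{\underline c}{2}$ and $\partial_x \tilde x = 1$, an integration by parts gives
\begin{align*}
\frac{d}{dt}\tilde{I}_{x_0,t_0,\theta_0}(t) = -\int\Big( (3+\tan^2\theta_0)(\partial_x u)^2 + (\partial_y u)^2 + 2\tan\theta_0\,\partial_x u\,\partial_y u - \tfrac43 u^3 + \tfrac{\underline c}{2}u^2\Big)\phi'(\tilde x)\, d\bx + \int u^2\,\Delta_{\text{weight}}\phi^{(3)}(\tilde x)\,d\bx,
\end{align*}
where the factor multiplying $\phi^{(3)}$ is $(1+\tan^2\theta_0)^{?}$ times a bounded constant depending only on $\theta_0$ (this is why one needs $\vert\theta_0\vert<\frac\pi3$: the quadratic form in $(\partial_x u,\partial_y u)$ must stay coercive and the third-derivative correction must stay controllable). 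The key algebraic point is that the quadratic form $(3+\tan^2\theta_0)a^2 + b^2 + 2\tan\theta_0\, ab$ is positive definite precisely because its discriminant $4\tan^2\theta_0 - 4(3+\tan^2\theta_0) = -12<0$, uniformly in $\theta_0$; hence it dominates $c(\theta_0)(a^2+b^2)$.

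Next I would handle the cubic term by splitting $u = R_1 + (u-R_1)$ on the support of $\phi'(\tilde x)$, which by construction is localized near the oblique line $\tilde x = 0$, i.e.\ at horizontal distance roughly $x_0 - \frac{\underline c}{2}(t_0-t)$ to the right of $z_1(t_0)$; combined with \eqref{defi:z}, \eqref{eq:bound_dot:z_omega_i_t} and \eqref{asym:Q} this line stays, for $t_0$ large and $t\le t_0$, to the right of the first solitary wave $R_1(t)$, so $\|R_1\phi'(\tilde x-\cdot)\|_{L^\infty}$ and $\|R_2\phi'(\tilde x-\cdot)\|_{L^\infty}$ are $\lesssim e^{-\frac14\sqrt{\underline c}x_0}$ (decay transported from Lemma \ref{lemma:est:Rpsi}). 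The contribution of $u-R_1-R_2=\epsilon$ is absorbed using the Sobolev embedding $H^1\hookrightarrow L^3$ and the smallness $\|\epsilon\|_{H^1}\le\beta$, exactly as in \eqref{eq:mono_nonlinear_1}--\eqref{eq:mono_nonlinear_2}; choosing $k$ smaller and $K$ larger if necessary makes the coefficient of $\int(u^2+|\nabla u|^2)\phi'(\tilde x)$ strictly negative. Using \eqref{prop:psi}/\eqref{defi:phi} ($|\phi^{(3)}|\le\frac{\underline c}{16}\phi'$, up to the $\theta_0$-dependent constant) absorbs the third-derivative term into the same negative quadratic term. This yields $\frac{d}{dt}\tilde I_{x_0,t_0,\theta_0}(t)\le -c(\theta_0)\int(u^2+|\nabla u|^2)\phi'(\tilde x)d\bx + C(\theta_0)e^{-\frac14\sqrt{\underline c}x_0}$ for $t_0>T_0(\theta_0,x_0)$; integrating from $t$ to $t_0$ gives the first inequality in \eqref{eq:monotonic_mass2} and, by monotonicity, the $\limsup$ bound \eqref{eq:limsup_T1} after recognizing that at $t=t_0$ the weight $\phi(\tilde x)$ equals $\phi(x-z_1(t_0)-x_0+\tan\theta_0(y-\omega_1(t_0)))$ and translating by $\bz_1(t_0)$.

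The functional $\tilde J$ is treated the same way: compute $\frac{d}{dt}\tilde J_{x_0,t_0,\theta_0}(t)$, which produces (after integrations by parts) a leading term of the same coercive-quadratic-form type in the second derivatives of $u$ plus lower-order terms controlled by $\int(u^2+|\nabla u|^2)\phi'(\tilde x)$, which is in turn bounded by the already-established decay of $\tilde I$; here one also uses $|\phi^{(3)}|\lesssim_{\theta_0}\phi'$ and $|\phi^{(5)}|\lesssim_{\theta_0}\phi'$. I expect the main obstacle to be the bookkeeping ensuring the oblique line $\tilde x=0$ genuinely stays to the right of $R_1(t)$ for all $t\in[0,t_0]$: one must verify $z_1(t_0)+x_0-\frac{\underline c}{2}(t_0-t)-\tan\theta_0(y-\omega_1(t_0)) > z_1(t) + (\text{width of }R_1)$ on the effective support, which requires that $z_1$ moves at speed $\ge c_1(t)>\frac{14}{15}\underline c > \frac{\underline c}{2}$ (from \eqref{estimate_modulation:c:bis} and \eqref{eq:bound_dot:z_omega_i_t}) so the $\frac{\underline c}{2}(t_0-t)$ backward shift never catches up, uniformly in $|\theta_0|<\frac\pi3$; this is exactly the geometric subtlety flagged in the outline that is absent in one dimension and in the one-soliton case. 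Once this localization is secured, the rest is the standard monotonicity computation.
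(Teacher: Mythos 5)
Your proposal follows essentially the same route as the paper: the Martel--Merle monotonicity computation for the oblique weight, coercivity of the quadratic form in $(\partial_x u,\partial_y u)$ under $\vert\theta_0\vert<\frac{\pi}{3}$, absorption of the $\phi^{(3)}$ term via \eqref{defi:phi}, exponential smallness of $R_i\phi'(\tilde x)$ to control the cubic term, and integration from $t$ to $t_0$; the paper then obtains \eqref{eq:limsup_T1} by letting $t_0\to+\infty$ at fixed $t$, where $\tilde I_{x_0,t_0,\theta_0}(t)\to 0$ because the weight recedes to the right. Two points need correction. First, the coefficient of $(\partial_x u)^2$ produced by the integration by parts is $3$, not $3+\tan^2\theta_0$, so the quadratic form is $3a^2+b^2+2\tan\theta_0\,ab$ with discriminant $4\tan^2\theta_0-12$; it is \emph{not} positive definite uniformly in $\theta_0$, and the restriction $\tan^2\theta_0<3$, i.e.\ $\vert\theta_0\vert<\frac{\pi}{3}$, is exactly what makes it coercive (the paper implements this via a Young inequality with $\kappa_0^2\in(1,3\tan^{-2}\theta_0)$). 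Your ``uniformly in $\theta_0$'' claim rests on the miscomputed coefficient and would wrongly suggest the angular restriction is unnecessary. Second, your geometric discussion of $T_0$ focuses on the line staying to the right of $R_1$, but that condition is automatic for all $t_0$ (the distance $\tilde d_1(t_0,t)$ equals $x_0$ at $t=t_0$ and is decreasing in $t$ since $\dot z_1\approx c_1>\frac{\underline c}{2}$); the binding constraint, and the reason $T_0$ must depend on $x_0$, is that the oblique line must also sit at distance at least $x_0$ to the right of the \emph{second} solitary wave at time $t_0$ (the paper's condition $\tilde d_2(t_0,t_0)\ge x_0$, achievable only once $z(t_0)=z_1(t_0)-z_2(t_0)$ is large enough), which is needed to make $\|R_2\phi'(\tilde x)\|_{L^\infty}$ exponentially small in the cubic term. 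Neither issue derails the argument, but both should be fixed in a complete write-up.
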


	\begin{proof}[Proof of Lemma \ref{lemm:monotonicity_2}]
		We first explain the need of $T_0$. Since $\bz_i(t)$ stands for the center of the $i^{\text{th}}$-solitary wave, we expect that the line $\tilde{x}=0$ at $y=\omega_i(t)$ is at the right of $z_i(t)$ and at a distance larger than $x_0$. We thus define for fixed $\theta_0$ and $x_0$,
  \begin{align*}
      \tilde{d}_1(t_0,t) & = \left( z_1(t_0) + x_0 - \frac12 \underline{c} (t_0-t) - \tan(\theta_0) \left( \omega_1(t) - \omega_1(t_0) \right) \right) - z_1(t)  \\
      \tilde{d}_2(t_0,t) & = \left( z_1(t_0) + x_0 - \frac12 \underline{c} (t_0-t) - \tan(\theta_0) \left( \omega_2(t)- \omega_1(t_0) \right) \right)- z_2(t).
  \end{align*}
  We define the time
		\begin{align*}
			T_0:= \inf \left\{ t_0 \in \mathbb{R}_+ ; \quad \tilde{d}_1(t_0,t_0)\geq x_0 \ \text{and} \ \tilde{d}_2(t_0,t_0) \geq x_0 \right\}.
		\end{align*}
  First observe that $T_0$ is well-defined. Indeed, $\tilde{d}_1(t_0,t_0)=x_0$ and  the function $f(t_0) := \tilde{d}_2(t_0,t_0)$ is increasing on $\mathbb R_+$ and tends to $+\infty$, since $\left\vert f'(t_0)- \sigma \right\vert \leq \frac14 \sigma$.
  
  We claim that for any $t_0>T_0$ and $t \in [0,t_0]$, the distances are larger than $x_0$, \textit{i.e.} $\tilde{d}_1(t_0, t)\geq x_0$ and $\tilde{d}_2(t_0,t) \geq x_0$. Indeed, by \eqref{eq:bound_dot:z_omega_i_t} and \eqref{eq:bound_c_i_t}, for $i\in \{1,2 \}$, it holds $\left\vert \partial_t \tilde{d}_i(t_0,t) - \frac12 \underline{c} + c_i^0 \right\vert \lesssim \alpha \leq \frac14 \underline{c}$, so that  $:t \mapsto \partial_t\tilde{d}_i(t_0,t)$ decreases on $[0,t_0]$. From the definition of $T_0$, we obtain that $\tilde{d}_i(t_0,t)\geq x_0$ for any $t \in [0,t_0]$. 
  
		We continue with the proof of \eqref{eq:monotonic_mass2} following \cite{CMPS16}. We detail the estimate for $\tilde{I}_{x_0,t_0,\theta_0}$ and combine the same reasoning with the one in the proof of Lemma 3.4 in \cite{CMPS16} for the estimate on $\tilde{J}_{x_0,t_0,\theta_0}$. By \eqref{defi:phi}, we have
	\begin{align*}
		\frac{d}{dt} \tilde{I}_{x_0,t_0,\theta_0} (t) 
		& = - \int \left( 3 (\partial_x u) ^2 + ( \partial_y u)^2 +2 \partial_x u \partial_y u \tan(\theta_0) + \frac{\underline{c}}{2} u^2 +  \frac43 u^3  \right) \phi'(\tilde{x}) \\
		& \quad + \int u^2 \left( 1+ \tan^2(\theta_0) \right) \phi^{(3)}(\tilde{x}).
	\end{align*}
	
	Using Young inequality with a coefficient $\kappa_0^2 \in (1, 3 \tan(\theta_0)^{-2})$, with $\vert \theta_0 \vert < \frac{\pi}{3}$, we have
	\begin{align*}
		\left\vert \int 2 \partial_x u \partial_y u \tan(\theta_0) \phi'(\tilde{x}) \right\vert \leq \kappa_0^2 \tan^2(\theta_0) \int (\partial_x u)^2 \phi'(\tilde{x})  + \frac{1}{\kappa_0^2} \int (\partial_y u)^2 \phi'(\tilde{x}) 
	\end{align*}
	and thus for a constant $C= C(\theta_0)$, 
	\begin{align*}
		-\int \left( 3 (\partial_x u )^2 + (\partial_y u)^2 + 2 \tan(\theta_0) \partial_x u \partial_y u \right) \phi'(\tilde{x}) \leq - C \int \vert \nabla u \vert ^2 \phi'(\tilde{x}).
	\end{align*}
	From $\tan(\theta_0)^2 < 3$ and \eqref{defi:phi}, we have
	\begin{align*}
		-\int \frac{\underline{c}}{2} u^2 \phi'(\tilde{x}) + \int u^2 (1+\tan^2(\theta_0)) \phi^{(3)}(\tilde{x}) \leq - \frac{\underline{c}}{4} \int u^2 \phi'(\tilde{x}).
	\end{align*}
	To deal with the non-linear term, we proceed as in \eqref{eq:mono_nonlinear_1}-\eqref{eq:mono_nonlinear_2}. First, notice that an integration of $\partial_t \tilde{d}_i(t_0,t) \leq - \frac14 \underline{c}$ from $t$ to $t_0$ provides  $\tilde{d}_i(t_0,t) \geq x_0 + \frac14 \underline{c} (t_0-t)$. Since $Q$ is rotationally invariant and by performing the change of variable $(x',y')= (1+\tan^2(\theta_0))^{-\frac12} \left( x+ \tan(\theta_0)y, \tan(\theta_0)x -y \right)$, it holds using similar arguments as in the proof of estimate \eqref{est:Rpsi},
	\begin{align}
		\left\| R_i(t) \phi'(\tilde{x}) \right\|_{L^\infty} 
		& = \frac{\sqrt{\underline{c}}}{4} \left\| c_i(t) Q(\sqrt{c_i(t)}\bx) \psi' \left(\frac{\sqrt{\underline{c}}}{4} \left( \sqrt{1+\tan^2(\theta_0)} x + \tilde{d_i}(t_0,t) \right) \right) \right\|_{L^\infty} \nonumber \\
		& \lesssim \exp \left( - \frac14 \sqrt{\underline{c}} \left(x_0 + \frac{\underline{c}}{4} (t_0-t)\right) \right).\label{sol_wave_weight}
	\end{align}
 We obtain the estimate for the non-linear terms
	\begin{align}\label{eq:cubic_term_2}
		\left\vert  \int u^3 \phi'(\tilde{x}) \right\vert \lesssim \| \epsilon \|_{H^1} \left( \int \left( \vert \nabla u \vert^2 + u^2 \right) \phi'(\tilde{x}) \right) + e^{-\frac14 \sqrt{\underline{c}} \left( x_0 + \frac14 \underline{c} (t_0-t)\right)}.
	\end{align}
 
	Gathering the previous inequalities and \eqref{eq:boundz} we get the bound
	\begin{align*}
		\frac{d}{dt} \tilde{I}_{x_0,t_0,\theta_0}(t) \lesssim e^{-\frac14 \sqrt{\underline{c}} \left( x_0 + \frac14 \underline{c} (t_0-t)\right)}.
	\end{align*}
	An integration from $t$ to $t_0$ concludes \eqref{eq:monotonic_mass2}.
	
	The proof of \eqref{eq:limsup_T1} relies on estimate \eqref{eq:monotonic_mass2} and $\lim_{t_0 \rightarrow +\infty} \tilde{I}_{x_0,t_0,\theta_0}(t) =0$.
	\end{proof}
		
		\vspace{0.5cm}
  \noindent \textit{Second step : Strong $L^2$ convergence of $u(t_{n_k}, \cdot + \bz_1(t_{n_k}))$ to $\tilde{u}_{0,1}$ on the right.}
		
		\begin{lemm}\label{lemm:step2}
			It holds, for any $B>0$,
			\begin{align*}
				v_{k,0,1}:= u \left(t_{n_k}, \cdot + \bz_1(t_{n_k}) \right)- \tilde{u}_{0,1} \underset{k\rightarrow +\infty}{\rightarrow} 0 \quad \text{in} \quad L^2(x>-B).
			\end{align*}
		\end{lemm}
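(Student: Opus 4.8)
The plan is to upgrade the weak $H^1$-convergence in \eqref{eq:weak_conv_1} to strong $L^2$-convergence on the half-space $\{x>-B\}$ through a standard splitting: on a large bounded box the difference $v_{k,0,1}$ tends to $0$ by the compact Sobolev embedding, while on the complementary ``tails'' the $L^2$-mass of both $u(t_{n_k},\cdot+\bz_1(t_{n_k}))$ and $\tilde u_{0,1}$ is made uniformly small by the monotonicity estimate \eqref{eq:limsup_T1} of Lemma \ref{lemm:monotonicity_2}. Fix $\delta>0$. Recalling from \eqref{defi:phi} that $\phi$ is increasing with $\phi(s)\ge\phi(0)=\frac12$ for $s\ge0$, I would cover $\{x>-B\}$ by the region $\{x\ge x_0\}$, the two lateral regions $\{-B<x<x_0,\ \pm y\ge B+x_0\}$ and the bounded box $\mathcal{K}=\{-B<x<x_0,\ |y|<B+x_0\}$, for a parameter $x_0>0$ to be chosen large depending only on $\delta$, $\underline{c}$ and $\bar{c}$; this is indeed a covering since on $\{-B<x<x_0,\ y\ge B+x_0\}$ one has $x+y-x_0>0$ (and symmetrically for $-y$).

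For the tails, I would apply \eqref{eq:limsup_T1} with $\theta_0=0$ on $\{x\ge x_0\}$, with $\theta_0=\frac{\pi}{4}$ (so that $\tan\theta_0=1$ and $x+\tan\theta_0\,y-x_0>0$ on the upper lateral region) on $\{-B<x<x_0,\ y\ge B+x_0\}$, and with $\theta_0=-\frac{\pi}{4}$ on $\{-B<x<x_0,\ y\le -B-x_0\}$. Since $t_{n_k}\to+\infty$, restricting the $\limsup$ over $t$ in \eqref{eq:limsup_T1} to the sequence $\{t_{n_k}\}_k$ and using $\phi\ge\frac12$ on the corresponding oblique half-spaces yields
\[
\limsup_{k\to+\infty}\int_{\{x>-B\}\setminus\mathcal{K}} u^2\bigl(t_{n_k},\bx+\bz_1(t_{n_k})\bigr)\,d\bx \lesssim e^{-\frac14\sqrt{\underline{c}}\,x_0}.
\]
Passing to the weak $H^1$-limit in \eqref{eq:weak_conv_1} — multiplication by the bounded weights $\sqrt{\phi}$ preserves weak $L^2$-convergence, and $\|\cdot\|_{L^2}$ is weakly lower semicontinuous — gives the same bound with $u(t_{n_k},\cdot+\bz_1(t_{n_k}))$ replaced by $\tilde u_{0,1}$. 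Using $(a-b)^2\le2a^2+2b^2$, one obtains $\limsup_k\|v_{k,0,1}\|_{L^2(\{x>-B\}\setminus\mathcal{K})}^2\lesssim e^{-\frac14\sqrt{\underline{c}}\,x_0}$, which is $<\frac\delta2$ once $x_0$ is fixed large enough.

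Finally, on the bounded box $\mathcal{K}$, the sequence $v_{k,0,1}$ is bounded in $H^1(\mathbb{R}^2)$ and converges weakly to $0$, so by the compact embedding $H^1(\mathcal{K})\hookrightarrow L^2(\mathcal{K})$ we get $\|v_{k,0,1}\|_{L^2(\mathcal{K})}\to0$, hence $<\frac\delta2$ for $k$ large. Adding the two contributions gives $\|v_{k,0,1}\|_{L^2(x>-B)}^2<\delta$ for $k$ large, and since $\delta>0$ is arbitrary the lemma follows. The only step that is genuinely higher-dimensional — and the reason Lemma \ref{lemm:monotonicity_2} is stated for oblique lines rather than just vertical ones as in \cite{MMT02} — is the control of the two lateral tails in the $y$-direction; in one space dimension these regions are absent and only the vertical monotonicity (the case $\theta_0=0$, i.e. Lemma \ref{lemm:decay_u_1}) is required.
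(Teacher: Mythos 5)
Your argument is correct and follows essentially the same route as the paper (which itself refers to Lemma 4.4 of \cite{CMPS16}): control the tails of the half-space by the oblique monotonicity estimate \eqref{eq:limsup_T1} with $\theta_0=0,\pm\frac{\pi}{4}$, transfer the bound to $\tilde{u}_{0,1}$ by weak lower semicontinuity, and apply Rellich--Kondrachov on the bounded remainder. The only difference is cosmetic: you cover $\{x>-B\}$ by a box plus three tail regions, whereas the paper uses the two oblique half-planes $\{x\pm y\ge R\}$ plus a truncated triangle.
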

		
		The proof of this lemma is the same as the one of Lemma 4.4 in \cite{CMPS16}. It relies on the covering of the half-space ${x>-B}$ by three regions. For any $\varepsilon>0$, the half-plane is covered by the three (overlapping) following regions. By choosing $x_0$ large enough and $\theta_0\in (0,\frac{\pi}{3})$, we obtain the existence of $R=R_{\theta_0,x_0}$ such that by the first step and the exponential decay of $\tilde{u}_{1,0}$,  the mass of $v_{k,1,0}$ on each half-plane $\left\{ x\pm y\geq R \right\} $ is lower than $\frac{\varepsilon}{3}$. On the remaining triangle $\left\{ \bx \in \mathbb{R}^2 : x+y \leq R, x-y \leq R, x >-B \right\}$, the error $v_{k,1,0}$ tends to $0$ as $k\rightarrow +\infty$ by the Rellich-Kondrachov theorem. Choosing $k$ large enough concludes the proof of the lemma.
		
		\vspace{0.5cm}
  \noindent \textit{Third step : Exponential decay of $\tilde{u}_{0,1}$ on the right on finite time intervals.}
		
		\begin{lemm}\label{lemm:step3_1}
		      For any $x_0>0$,
			\begin{align} \label{lemm:step3_1.1}
				\int \left( \vert \nabla \tilde{u}_{0,1} \vert ^2 + \tilde{u}_{0,1}^2 \right) (\bx) \phi (x-x_0) d\bx \lesssim e^{-\frac14 \sqrt{\underline{c}} x_0}.
			\end{align}
			Moreover, for all $t_0\geq 0$, there exists $K(t_0)>0$ such that
			\begin{align} \label{lemm:step3_1.2}
				\sup_{t\in [0,t_0]} \int \left(\vert \nabla \tilde{u}_1\vert^2 + \tilde{u}_1^2 \right) \left(t, \bx \right) e^{\frac14 \sqrt{\underline{c}}x} dx \leq K(t_0).
			\end{align}
		\end{lemm}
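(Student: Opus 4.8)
\textit{Proof strategy for Lemma \ref{lemm:step3_1}.} For \eqref{lemm:step3_1.1} the plan is to pass to the limit along the subsequence $\{t_{n_k}\}_k$ in the monotonicity bound \eqref{eq:limsup_T1} of Lemma \ref{lemm:monotonicity_2} taken with $\theta_0=0$, that is, in the inequality $\limsup_{t\to+\infty}\int\left(|\nabla u|^2+u^2\right)(t,\bx+\bz_1(t))\,\phi(x-x_0)\,d\bx\le Ce^{-\frac14\sqrt{\underline{c}}x_0}$. For the $L^2$-part I would use the strong convergence of Lemma \ref{lemm:step2} on half-spaces $\{x>-B\}$ together with the exponential decay of $\phi(\cdot-x_0)$ as $x\to-\infty$ and the uniform $L^2$-bounds on $u(t)$ and $\tilde{u}_{0,1}$, to obtain $\int u^2(t_{n_k},\bx+\bz_1(t_{n_k}))\,\phi(x-x_0)\,d\bx\to\int\tilde{u}_{0,1}^2\,\phi(x-x_0)\,d\bx$. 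For the gradient part, the weak $H^1$-convergence \eqref{eq:weak_conv_1} gives $\sqrt{\phi(\cdot-x_0)}\,\nabla u(t_{n_k},\cdot+\bz_1(t_{n_k}))\rightharpoonup\sqrt{\phi(\cdot-x_0)}\,\nabla\tilde{u}_{0,1}$ in $L^2$, hence $\int|\nabla\tilde{u}_{0,1}|^2\phi(x-x_0)\le\liminf_k\int|\nabla u(t_{n_k},\cdot+\bz_1(t_{n_k}))|^2\phi(x-x_0)$ by weak lower semicontinuity. Adding the two contributions and bounding $\liminf_k$ by $\limsup_{t\to+\infty}$ yields \eqref{lemm:step3_1.1}.

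For \eqref{lemm:step3_1.2} I would run a weighted energy estimate on the solution $\tilde{u}_1$. Recall from Theorem \ref{theo:orbital_stab} and modulation around one solitary wave that $\tilde{u}_1$ stays orbitally close to $\tilde{R}_1(t)=Q_{\tilde{c}_1(t)}(\cdot-\tilde{\bz}_1(t))$ with $\|\tilde{u}_1(t)-\tilde{R}_1(t)\|_{H^1}\lesssim\beta$, with $\frac{14}{15}\underline{c}\le\tilde{c}_1(t)\le\frac{16}{15}\bar{c}$ (after shrinking $k$ and enlarging $K$), and, from the modulation ODEs, $\sup_{[0,t_0]}|\tilde{z}_1(t)|\le M(t_0)$. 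Fix $\lambda\le\frac14\sqrt{\underline{c}}$ and let $\varphi$ be a smooth, positive, increasing and \emph{bounded} approximation of $x\mapsto e^{\lambda x}$ (agreeing with it for $x\le n$ and constant for $x\ge n+1$, with $0\le\varphi'\le\lambda\varphi$ and $|\varphi''|+|\varphi'''|\lesssim\lambda^2\varphi'$), the final bound being obtained by letting $n\to+\infty$. Multiplying \eqref{ZK} for $\tilde{u}_1$ by $2\tilde{u}_1\varphi$ and integrating gives
\[
\frac{d}{dt}\int\tilde{u}_1^2\,\varphi\,d\bx=-\int\big(3(\partial_x\tilde{u}_1)^2+(\partial_y\tilde{u}_1)^2\big)\varphi'+\int\tilde{u}_1^2\,\varphi'''+\frac{4}{3}\int\tilde{u}_1^3\,\varphi'.
\]
The $\varphi'''$-term is harmless since $|\varphi'''|\lesssim\lambda^2\varphi'\lesssim\underline{c}\,\varphi'\lesssim\sqrt{\underline{c}}\,\varphi$. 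For the cubic term I would split $\tilde{u}_1=\tilde{R}_1+\tilde{\epsilon}_1$: the contribution of $\tilde{R}_1$ is $\le\|\tilde{R}_1\varphi'\|_{L^\infty}\|\tilde{u}_1\|_{L^2}^2\lesssim_{t_0}1$, because $\tilde{R}_1$ decays around $\tilde{z}_1(t)$ at the rate $\sqrt{\tilde{c}_1(t)}>\lambda$ while $\tilde{z}_1$ stays bounded on $[0,t_0]$; the contribution of $\tilde{\epsilon}_1$ is estimated exactly as in \eqref{eq:mono_nonlinear_2} by $\|\tilde{\epsilon}_1\|_{H^1}\int\big(|\nabla\tilde{u}_1|^2+\tilde{u}_1^2\big)\varphi'$ up to lower order terms, hence absorbed into the good terms since $\|\tilde{\epsilon}_1\|_{H^1}\lesssim\beta$ is small. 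A parallel computation for the energy-type weighted quantity $\int\big(|\nabla\tilde{u}_1|^2-\tfrac23\tilde{u}_1^3\big)\varphi$ controls $\int|\nabla\tilde{u}_1|^2\varphi$. Altogether this yields $\frac{d}{dt}\int(|\nabla\tilde{u}_1|^2+\tilde{u}_1^2)\varphi\lesssim_{\underline{c},\bar{c}}\int(|\nabla\tilde{u}_1|^2+\tilde{u}_1^2)\varphi+C(t_0)$, and Gronwall's lemma on $[0,t_0]$, with the initial value controlled by $\int e^{\lambda x}\tilde{u}_{0,1}^2\,d\bx<\infty$ thanks to \eqref{lemm:step3_1.1}, gives \eqref{lemm:step3_1.2}.

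The main obstacle is the weighted energy estimate in the second part: one has to absorb the nonlinear term relying only on the orbital smallness $\beta$ and on the good-signed weighted dispersive terms, while working with bounded truncations $\varphi$ of a weight that is not integrable and then passing to the limit; this is precisely what forces the exponent $\lambda$ to be taken small relative to $\sqrt{\underline{c}}$ and to the decay rate $\sqrt{\tilde{c}_1}$ of the limiting solitary wave, and explains the appearance of the rate $\tfrac14\sqrt{\underline{c}}$. By contrast, the first part is comparatively soft, the only care required being the interplay between the strong $L^2$-convergence (valid only away from $x=-\infty$) and the exponential decay of the weight.
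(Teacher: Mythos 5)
Your proposal is correct and follows essentially the same route as the paper: for \eqref{lemm:step3_1.1} the paper simply combines weak lower semicontinuity of the weighted $H^1$-norm $\|\cdot\sqrt{\phi(\cdot-x_0)}\|_{H^1}$ along the weakly convergent subsequence with the monotonicity-based decay of Lemma \ref{lemm:decay_u_1} (your use of the strong $L^2$-convergence of Lemma \ref{lemm:step2} for the $L^2$-part is not needed but harmless), and for \eqref{lemm:step3_1.2} it invokes exactly the weighted mass/energy Gr\"onwall argument (Lemma 4.5 of \cite{CMPS16}) that you carry out in detail with the truncated exponential weight. The details you supply (the $\varphi'''$ absorption, the $\tilde{R}_1+\tilde{\epsilon}_1$ splitting of the cubic term, and the passage $x_0\to+\infty$ to control the initial weighted norm) are the right ones.
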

		By weak convergence \eqref{eq:weak_conv_1}, it holds for any $x_0>0$
		\begin{align*}
			\left\| \tilde{u}_{0,1} \sqrt{\phi (\cdot-x_0)} \right\|_{H^1} \leq \liminf_{k} \left\| u \left( t_{n_k}, \cdot +  \bz_1(t_{n_k}) \right) \sqrt{\phi (\cdot-x_0)} \right\|_{H^1},
		\end{align*}
		which combined to Lemma \ref{fig:decay_u_1} concludes the proof of \eqref{lemm:step3_1.1}.
		
		The proof of \eqref{lemm:step3_1.2} is similar to the one of Lemma 4.5 in \cite{CMPS16}. The time derivative of the mass and the energy provides an estimate which can be integrated by Gr\"{o}nwall's inequality. In particular, the constant of integration depends on the size of the interval $t_0$.
		
		\vspace{0.5cm}
  \noindent \textit{Fourth step: Strong $L^2$ convergence of $u \left(t_{n_k}+t, \cdot + \bz_1(t_{n_k}) \right)$ to $\tilde{u}_1(t)$ on the right.}
		
		\begin{lemm}\label{lemm:step4.1.1}
			For any $B>0$ and $t \in \mathbb{R}$, it holds
			\begin{align}\label{eq:step4.1.1}
				u \left( t_{n_k}+t, \cdot + \bz_1(t_{n_k}) \right) - \tilde{u}_1(t) \underset{k \rightarrow + \infty}{\rightarrow} 0 \quad \text{in} \quad L^2(x>-B)
			\end{align}
			and for any $t\in \mathbb{R}$
			\begin{align}\label{eq:step4.1.2}
				u \left( t_{n_k}+t, \cdot + \bz_1(t_{n_k}) \right) - \tilde{u}_{1}(t) \underset{k \rightarrow +\infty}{\rightharpoonup} 0 \quad \text{in} \quad H^1(\mathbb{R}^2).
			\end{align}
			Moreover, with $\tilde{\bz}_1=(\tilde{z}_1, \tilde{\omega}_1)$ the $\mathcal{C}^1$-function associated to the modulation parameters of $\tilde{u}_1$, it holds
			\begin{align}\label{eq:step4.1.3}
				\bz_1(t+t_{n_k}) - \bz_1(t_{n_k}) \underset{k \to +\infty}{\rightarrow} \tilde{\bz}_1(t) \quad \text{and} \quad \tilde{\bz}_1(0)=0.
			\end{align}
		\end{lemm}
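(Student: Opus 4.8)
Set $v_k(s,\bx):=u\bigl(t_{n_k}+s,\bx+\bz_1(t_{n_k})\bigr)$; by the translation invariance of \eqref{ZK}, $v_k$ is a solution in $\mathcal{C}(\mathbb R:H^1(\mathbb R^2))$, bounded in $H^1$ uniformly in $k$ and $s$ by Theorem \ref{theo:orbital}~(i). By Lemma \ref{lemm:step2} and \eqref{eq:weak_conv_1}, $v_k(0)\to \tilde{u}_{0,1}$ in $L^2(x>-B)$ for every $B>0$ and $v_k(0)\rightharpoonup \tilde{u}_{0,1}$ in $H^1$. Following \cite{MMT02,CMPS16}, the plan is to propagate this convergence to all times $t$ by a compactness--uniqueness argument, and then to read off the convergence of the modulation parameters.

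\textit{Compactness and identification of the limit.} Using the equation and the embedding $H^1(\mathbb R^2)\hookrightarrow L^4(\mathbb R^2)$, one has $\|\partial_s v_k\|_{H^{-2}}\lesssim \|v_k\|_{H^1}+\|v_k\|_{H^1}^2\lesssim 1$ uniformly in $k$ and $s$. Hence on each interval $[-T,T]$ the family $\{v_k\}$ is bounded in $\mathcal{C}([-T,T]:H^1)$ and uniformly Lipschitz with values in $H^{-2}$; by the compactness of $H^1(\mathbb R^2)\hookrightarrow L^2_{\mathrm{loc}}(\mathbb R^2)$, an Aubin--Lions argument together with a diagonal extraction produces a subsequence converging in $\mathcal{C}([-T,T]:L^2(K))$ for every compact $K$ and weakly in $H^1$ at every fixed time, with some limit $w$. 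The local strong $L^2$ convergence lets one pass to the limit in the nonlinear term $\partial_x(v_k^2)$ in the distributional sense, so $w$ solves \eqref{ZK} with $w(0)=\tilde{u}_{0,1}$; by the $H^1$-well-posedness of \cite{Fam}, $w=\tilde{u}_1$. The limit being independent of the subsequence, the full sequence converges: for every $t$, $v_k(t)\to \tilde{u}_1(t)$ in $L^2_{\mathrm{loc}}$ and $v_k(t)\rightharpoonup \tilde{u}_1(t)$ in $H^1$, which is \eqref{eq:step4.1.2}.

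\textit{Upgrade to $\{x>-B\}$.} Split $\{x>-B\}$ into a compact part, where the convergence comes from the previous step, and $\{x>R\}$. On $\{x>R\}$, Lemma \ref{lemm:monotonicity_2} (used as in Lemma \ref{lemm:decay_u_1}) bounds $\int_{x>R}(|\nabla v_k(t)|^2+v_k(t)^2)$ by $C e^{-\frac14\sqrt{\underline c}R}$ for $k$ large, while Lemma \ref{lemm:step3_1} bounds $\int_{x>R}(|\nabla \tilde{u}_1(t)|^2+\tilde{u}_1(t)^2)$ similarly. Letting $R\to\infty$ and then $k\to\infty$ yields \eqref{eq:step4.1.1}.

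\textit{Convergence of the modulation parameters.} Decompose $u=R_1+R_2+\epsilon$ at time $t+t_{n_k}$ and translate by $\bz_1(t_{n_k})$: the $R_2$-contribution converges weakly to $0$ in $H^1$ and strongly to $0$ in $L^2(x>-B)$ because $z(t+t_{n_k})\to+\infty$ by \eqref{defi:z}, and its overlap with $Q_{c_1(t+t_{n_k})}(\cdot-a_k)$, where $a_k:=\bz_1(t+t_{n_k})-\bz_1(t_{n_k})$, is $O\bigl(e^{-c\sqrt{\underline c}\,z(t+t_{n_k})}\bigr)$; moreover $\|\epsilon(t+t_{n_k})\|_{H^1}\lesssim \beta$. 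Thus, after translation, \eqref{eps:ortho} shows that $v_k(t)$ admits a one-solitary-wave decomposition with parameters $(c_1(t+t_{n_k}),a_k)$ for which the orthogonality relations hold up to an exponentially small error. Since $a_k$ is bounded for fixed $t$ (by \eqref{eq:bound_dot:z_omega_i_t} and \eqref{eq:bound_c_i_t}) and $c_1(t+t_{n_k})$ is bounded, and since $v_k(t)\rightharpoonup \tilde{u}_1(t)$ in $H^1$ with $\tilde{u}_1(t)$ being $O(\beta)$-close to the solitary wave family, the uniqueness and continuity of the modulation decomposition of $\tilde{u}_1(t)$ around $Q$ force $c_1(t+t_{n_k})\to \tilde{c}_1(t)$ and $a_k\to \tilde{\bz}_1(t)$. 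Taking $t=0$ gives $\tilde{\bz}_1(0)=\lim_k a_0=0$, which completes \eqref{eq:step4.1.3}. The delicate point is the compactness step: passing to the limit in the nonlinear equation requires strong $L^2_{\mathrm{loc}}$ convergence, for which the monotonicity of Lemma \ref{lemm:monotonicity_2} --- compensating the lack of compactness as $x\to+\infty$ --- and the uniform $H^{-2}$ equicontinuity in time are essential; no control is available as $x\to-\infty$, but since the ZK nonlinearity is a perfect $x$-derivative, local convergence suffices to identify the limit.
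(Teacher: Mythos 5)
Your proposal replaces the paper's argument with an Aubin--Lions compactness scheme, and the critical step --- identifying the local-in-space limit $w$ with the solution $\tilde{u}_1$ --- is not justified as written. You pass to the limit in the equation in the sense of distributions and then invoke ``the $H^1$-well-posedness of \cite{Fam}'' to conclude $w=\tilde{u}_1$. But, as the paper itself points out, uniqueness for \eqref{ZK} is only known in a \emph{subspace} of $C(\mathbb R:H^1)$ (the resolution space of the fixed-point argument), and a function obtained as a weak/distributional limit of solutions is not a priori known to lie in that subspace. So ``$w$ is a distributional solution in $L^\infty_tH^1_x$ with $w(0)=\tilde u_{0,1}$'' does not by itself give $w=\tilde u_1$; you would need either an unconditional uniqueness statement at $H^1$ regularity (not invoked in the paper) or the weak continuity of the flow map, which is a separate nontrivial lemma. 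This is precisely why the paper proceeds differently: it sets $v_{k,1}(t):=u(t+t_{n_k},\cdot+\bz_1(t_{n_k}))-\tilde u_1(t)$ and proves the Gronwall-type estimate \eqref{eq:kato_v_k1} for the weighted mass $\int v_{k,1}^2\phi$, using the exponential bound \eqref{lemm:step3_1.2} on $\tilde u_1$ to control the cross terms. Together with Lemma \ref{lemm:step2} at $t=0$, this propagates the strong convergence forward in time, identifies the limit and yields \eqref{eq:step4.1.1} in one stroke, with \eqref{eq:step4.1.2} then following from uniqueness of the weak limit; negative times are treated by rerunning the argument from a shifted initial time and invoking uniqueness of the Cauchy problem for $\tilde u_1$ itself (which \emph{is} in the uniqueness class).

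Two further points. First, your upgrade from $L^2_{\mathrm{loc}}$ to $L^2(x>-B)$ only splits off the region $\{x>R\}$; the half-plane also contains the strip $\{-B<x<R,\ |y|>M\}$, and to control the mass there uniformly in $k$ you need the \emph{oblique} monotonicity of Lemma \ref{lemm:monotonicity_2} with angles $\pm\theta_0$ (the three-region covering used in Lemma \ref{lemm:step2}), not just the vertical weight; the corresponding decay of $\tilde u_1$ on oblique half-planes must then be deduced from the weak convergence before it can be used. Second, your treatment of \eqref{eq:step4.1.3} (passing to the limit in the orthogonality relations and using uniqueness of the modulation decomposition) matches the paper's and is fine once the convergence statements are secured.
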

		
		The proof is similar to the proof of Lemma 4.6 in \cite{CMPS16}. Let us define
  \begin{align}\label{defi:v_k1}
      v_{k,1}(t,\bx) := u\left(t+t_{n_k},\bx+ \bz_1(t_{n_k}) \right) - \tilde{u}_1(t,\bx).
  \end{align}
  Computing the time derivative of the mass of the error yields
  \begin{align}\label{eq:kato_v_k1}
      \frac{d}{dt} \int v_{k,1}^2(t,\bx) \phi(x) d\bx + \int \vert \nabla v_{k,1}(t,\bx) \vert ^2 \phi' (x) d\bx \leq \tilde{K}(t_0) \int v_{k,1}(t,\bx)^2 \phi (x) d\bx. 
  \end{align}
  By a Gronwall's inequality on any interval $[0,t_0]$ and \eqref{lemm:step3_1.2}, we obtain a bound on the weighted mass
		\begin{align*}
			\sup_{t \in [0,t_0]}\int_{\mathbb{R}^2} v_{k,1}(t,\bx)^2 \phi (x) d\bx \leq \tilde{K}(t_0) \int_{\mathbb{R}^2} v_{k,1}(0,\bx)^2 \phi (x) d\bx.
		\end{align*} 
		This inequality combined with Lemma \ref{lemm:step2} provides \eqref{eq:step4.1.1} for $t_0\geq0$, whereas \eqref{eq:step4.1.2} is deduced by uniqueness of the weak limit. The result for the negative interval $[\tilde{t}_1;0]$ is obtained by applying the same arguments to the sequence of functions $ u\left(t_{n_k}+\tilde{t}_1, \cdot + \bz_1(t_{n_k}) \right)$ and conclude by uniqueness of the Cauchy problem.
		
		The convergence result \eqref{eq:step4.1.3} is deduced by passing to the limit in the orthogonality relations and the uniqueness of the modulation parameters.
		
		\vspace{0.5cm}
  \noindent \textit{Fifth step: Exponential decay of $\tilde{u}_{1}$ on the right.}
		
		\begin{lemm}\label{lemm:step5_1}
			For any $t \in \mathbb{R}$, $x_0>0$
			\begin{align}\label{eq:step5.1.1}
				\int \left( \vert \nabla \tilde{u}_1 \vert^2 + \tilde{u}_1^2 \right) \left(t, \bx +\tilde{\bz}_1(t) \right) \phi (x-x_0) d\bx \lesssim e^{-\frac14 \sqrt{\underline{c}} x_0}.
			\end{align}
			Furthermore, for any $t\in \mathbb{R}$ and $x>0$,
			\begin{align}\label{eq:step5.1.2}
				\int_y \tilde{u}_1^2 \left(t, \bx+ \tilde{\bz}_1(t) \right) dy \lesssim e^{-\frac14 \sqrt{\underline{c}} x}.
			\end{align}
		\end{lemm}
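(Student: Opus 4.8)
The plan is to upgrade the finite--time decay of Lemma~\ref{lemm:step3_1} into a decay estimate uniform in $t\in\mathbb{R}$, by transferring the uniform--in--time decay of $u$ on the right (Lemma~\ref{lemm:decay_u_1}) to the limit profile $\tilde{u}_1$; this is the analogue of Lemma~4.7 in \cite{CMPS16}. Fix $t\in\mathbb{R}$ and $x_0>0$. First I would combine the weak $H^1$--convergence \eqref{eq:step4.1.2} with the convergence of the shifts \eqref{eq:step4.1.3}: setting $\mathbf{d}_k:=\bz_1(t+t_{n_k})-\bz_1(t_{n_k})\to\tilde{\bz}_1(t)$ and using that translation by a convergent shift preserves weak $H^1$--convergence (for $\varphi\in H^{-1}(\mathbb{R}^2)$, $\varphi(\cdot-\mathbf{d}_k)\to\varphi(\cdot-\tilde{\bz}_1(t))$ strongly in $H^{-1}$ while the sequence stays $H^1$--bounded), one obtains
\[
u\big(t+t_{n_k},\cdot+\bz_1(t+t_{n_k})\big)\ \rightharpoonup\ \tilde{u}_1\big(t,\cdot+\tilde{\bz}_1(t)\big)\quad\text{in}\quad H^1(\mathbb{R}^2).
\]

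Next I would use weak lower semicontinuity of the weighted quadratic form $f\mapsto\int\big(|\nabla f|^2+f^2\big)\,\phi(\cdot-x_0)$, which is legitimate since $\phi$ and its derivatives are bounded, so this is a continuous nonnegative (hence convex) quadratic form on $H^1(\mathbb{R}^2)$, together with Lemma~\ref{lemm:decay_u_1} applied along the times $s_k:=t+t_{n_k}\to+\infty$. This gives
\[
\int\big(|\nabla\tilde{u}_1|^2+\tilde{u}_1^2\big)\big(t,\bx+\tilde{\bz}_1(t)\big)\,\phi(x-x_0)\,d\bx\ \le\ \liminf_{k\to\infty}\int\big(|\nabla u|^2+u^2\big)\big(s_k,\bx+\bz_1(s_k)\big)\,\phi(x-x_0)\,d\bx\ \lesssim\ e^{-\frac14\sqrt{\underline{c}}\,x_0},
\]
which is \eqref{eq:step5.1.1}.

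To deduce the pointwise estimate \eqref{eq:step5.1.2}, I would use that $\phi=\psi(\tfrac{\sqrt{\underline{c}}}{4}\cdot)$ is increasing with $\phi(0)=\psi(0)=\tfrac12$, so $\phi(x-x_0)\ge\tfrac12$ for $x\ge x_0$; hence \eqref{eq:step5.1.1} yields $\int_{x>x_0}\!\int_y\big(|\nabla\tilde{u}_1|^2+\tilde{u}_1^2\big)(t,\bx+\tilde{\bz}_1(t))\,dy\,dx\lesssim e^{-\frac14\sqrt{\underline{c}}\,x_0}$ for every $x_0>0$. Setting $p(x):=\int_y\tilde{u}_1^2(t,\bx+\tilde{\bz}_1(t))\,dy$, one has $p\in W^{1,1}(\mathbb{R})$ with $|p'(x)|\le\int_y\big(\tilde{u}_1^2+(\partial_x\tilde{u}_1)^2\big)(t,\bx+\tilde{\bz}_1(t))\,dy$, so $p(x)\to0$ as $x\to+\infty$ and
\[
p(x)\ \le\ \int_x^{+\infty}|p'(s)|\,ds\ \le\ \int_x^{+\infty}\!\int_y\big(\tilde{u}_1^2+|\nabla\tilde{u}_1|^2\big)(t,\bx+\tilde{\bz}_1(t))\,dy\,ds\ \lesssim\ e^{-\frac14\sqrt{\underline{c}}\,x},
\]
which is \eqref{eq:step5.1.2}.

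The step I expect to require the most care is not conceptual but bookkeeping: the decay in Lemma~\ref{lemm:decay_u_1} is centred at $\bz_1(s)$ whereas the limit object is naturally recentred at $\tilde{\bz}_1(t)$, so the argument hinges on tracking precisely which translation acts on which sequence and on verifying that the extra shift $\mathbf{d}_k$ is harmless both for the weak convergence and for the lower--semicontinuity step; apart from that, the proof is a routine combination of the four preceding steps.
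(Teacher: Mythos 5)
Your proposal is correct and follows essentially the same route as the paper: \eqref{eq:step5.1.1} is obtained from the uniform-in-time decay of $u$ on the right (Lemma \ref{lemm:decay_u_1}, i.e.\ the $\theta_0=0$ case of \eqref{eq:limsup_T1}) together with the weak convergence \eqref{eq:step4.1.2}--\eqref{eq:step4.1.3} and weak lower semicontinuity of the weighted $H^1$-form, and \eqref{eq:step5.1.2} then follows by integrating $\partial_x\int_y\tilde{u}_1^2\,dy$ from $x$ to $+\infty$, which is precisely the Sobolev-embedding/Cauchy--Schwarz argument the paper invokes from Lemma 4.7 of \cite{CMPS16}. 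Your explicit treatment of the recentering shift $\mathbf{d}_k\to\tilde{\bz}_1(t)$ is a detail the paper leaves implicit, and it is handled correctly.
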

		The exponential decay in \eqref{eq:limsup_T1} and the weak convergence in \eqref{eq:step4.1.2} provide \eqref{eq:step5.1.1}. Then, \eqref{eq:step5.1.2} is a consequence of the Sobolev embedding $H^1(\mathbb{R}) \hookrightarrow L^\infty(\mathbb{R})$ and the Cauchy-Schwarz inequality applied to \eqref{eq:step5.1.1} (see the proof of Lemma 4.7 of \cite{CMPS16}).
		
		\vspace{0.5cm}
  \noindent\textit{Sixth step: Strong $H^1$-convergence of $u\left(t_{n_k}+t, \cdot + \bz_1(t_{n_k}) \right)$ to $\tilde{u}_{1}(t)$ on the right.}
		
		\begin{lemm}
			For any $t\in \mathbb{R}$ and $B>0$, it holds
			\begin{align*}
				u \left( t+t_{n_k}, \cdot + \bz_1(t_{n_k}) \right) - \tilde{u}_{1}(t) \underset{n \rightarrow + \infty}{\rightarrow} 0 \quad \text{in} \quad H^1(x>-B).
			\end{align*}
		\end{lemm}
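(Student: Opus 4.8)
The plan is to follow the scheme of Lemma 4.8 in \cite{CMPS16}, assembling the previous steps. Since Lemma \ref{lemm:step4.1.1} already provides the $L^2$ convergence on $\{x>-B\}$, it remains only to control the gradient: writing $u_k(t,\bx):=u(t+t_{n_k},\bx+\bz_1(t_{n_k}))$ and keeping the notation \eqref{defi:v_k1} for $v_{k,1}$, it suffices to show that for every $B>0$,
\[ \int \vert\nabla v_{k,1}(t)\vert^2\,\phi(x+B)\,d\bx \underset{k\to+\infty}{\longrightarrow} 0, \]
$\phi$ being the weight of \eqref{defi:phi}, since $\phi(x+B)\geq\phi(0)>0$ on $\{x>-B\}$. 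Expanding $\vert\nabla v_{k,1}\vert^2=\vert\nabla u_k\vert^2-2\,\nabla u_k\cdot\nabla\tilde{u}_1+\vert\nabla\tilde{u}_1\vert^2$ and passing the cross term to the limit via the weak $H^1$ convergence \eqref{eq:step4.1.2} (using that $\phi(\cdot+B)\,\nabla\tilde{u}_1(t)\in L^2$), the problem reduces to the single upper bound
\[ \limsup_{k\to+\infty}\int\vert\nabla u_k(t)\vert^2\phi(x+B)\,d\bx\;\leq\;\int\vert\nabla\tilde{u}_1(t)\vert^2\phi(x+B)\,d\bx+\omega(B), \]
where $\omega(B)\to0$ as $B\to+\infty$; the reverse inequality is weak lower semicontinuity of the localized $\dot H^1$ seminorm, and letting $B\to+\infty$ afterwards absorbs $\omega(B)$ and concludes.

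To establish this bound I would first remove the cubic term, writing $\int\vert\nabla u_k\vert^2\phi(x+B)=\tilde{J}_k(t)+\tfrac23\int u_k^3\,\phi(x+B)$, where $\tilde{J}_k$ is the localized energy \eqref{defi:asympt_stab1} re-centered at $\bz_1(t_{n_k})$. The cubic part converges, $\int u_k^3\,\phi(x+B)\to\int\tilde{u}_1^3(t)\,\phi(x+B)$: on any bounded set this follows from Lemma \ref{lemm:step4.1.1} and a Gagliardo--Nirenberg interpolation, while its contribution far to the right is $\lesssim\omega(B)$ uniformly in $k$, thanks to the exponential decay of $\tilde{u}_1$ from Lemma \ref{lemm:step5_1} and of $u_k$ from Lemma \ref{lemm:decay_u_1} and \eqref{eq:limsup_T1}. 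So it remains to bound $\limsup_k\tilde{J}_k(t)$ by the localized energy of $\tilde{u}_1(t)$ up to $\omega(B)$. Next I would reduce the arbitrary time $t$ to $t=0$: testing the equation satisfied by $v_{k,1}$ against $(-\Delta v_{k,1}+v_{k,1})\,\phi(\cdot+B)$ (up to a lower-order correction involving $\phi'$) gives the $\dot H^1$ analogue of the Kato-type estimate \eqref{eq:kato_v_k1}, after which a Gr\"onwall argument on $[0,t]$ as in the proof of Lemma \ref{lemm:step4.1.1} (the nonlinear terms being controlled by $\|v_{k,1}\|_{H^1}\lesssim\beta$, and the second solitary wave lying, for $k$ large, far to the left of $\{x>-B\}$ over the whole interval $[t_{n_k},t_{n_k}+t]$) reduces the claim to showing $\int\bigl(\vert\nabla v_{k,1}(0)\vert^2+v_{k,1}^2(0)\bigr)\phi(x+B)\to0$.

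The static estimate at $t=0$, where $v_{k,1}(0)=u(t_{n_k},\cdot+\bz_1(t_{n_k}))-\tilde{u}_{0,1}$, is the heart of the matter. It relies on the almost-monotonicity of the localized energy of $u$ along the sequence $\{t_{n_k}\}$: running the computation of Lemma \ref{lemm:monotonicity_2} with a weight $\phi$ whose derivative $\phi'$ is localized at distance $\simeq B$ behind the first solitary wave (hence, for $k$ large, far from both solitary waves), the interaction errors are of size $\|R\,\phi'\|_{L^\infty}+\|\epsilon\|_{H^1}\lesssim\omega(B)+\beta$, so that $t\mapsto\int\bigl(\vert\nabla u\vert^2-\tfrac23u^3\bigr)(t)\,\phi\,d\bx$ is non-increasing up to $\omega(B)$. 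Comparing its values at the times $t_{n_k}$, combining with the strong convergence of Lemma \ref{lemm:step2}, the cubic-term convergence and the weak lower semicontinuity of the $\dot H^1$ seminorm, and covering $\{x>-B\}$ as in the proof of Lemma \ref{lemm:step2} by a bounded central part (treated by Rellich--Kondrachov) together with two oblique half-spaces $\{x\pm y>R\}$ (treated using Lemma \ref{lemm:step5_1} and \eqref{eq:limsup_T1}), one obtains $\limsup_k\int\bigl(\vert\nabla v_{k,1}(0)\vert^2+v_{k,1}^2(0)\bigr)\phi(x+B)\lesssim\omega(B)$, which is exactly what was needed.

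The main obstacle is precisely this last, static step. The localized energy is not coercive, both because of the sign-indefinite cubic term and, on the oblique half-spaces, because of the cross derivative $\partial_xu\,\partial_yu$, which forces the Young-inequality argument with the parameters $\kappa_0$ and $\tan\theta_0$ already used in the proof of Lemma \ref{lemm:monotonicity_2}; consequently one must choose the angles, the times, and the placement of the weight so that the support of $\phi'$ stays, throughout the relevant time interval, to the right of the second solitary wave and to the left of the first one, a geometric difficulty absent both in the one-soliton setting \cite{CMPS16,FHRY23} and in the one-dimensional case \cite{MMT02,MM08}.
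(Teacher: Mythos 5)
Your overall strategy --- prove convergence of the localized Dirichlet energy of the translated sequence and combine it with the weak convergence \eqref{eq:step4.1.2} --- is genuinely different from the paper's, and it has a gap exactly at the point you yourself call the heart of the matter: the static estimate $\int\bigl(|\nabla v_{k,1}(0)|^2+v_{k,1}^2(0)\bigr)\phi(x+B)\to0$. Your covering argument treats the bounded central region by Rellich--Kondrachov, but that theorem upgrades weak $H^1$ convergence to strong $L^2$ convergence on bounded sets; it says nothing about the gradients. The almost-monotonicity of the localized energy along $\{t_{n_k}\}$ shows that the sequence of localized energies is almost non-increasing, hence almost Cauchy, but it does not identify its limit with the localized energy of $\tilde u_{0,1}$: weak lower semicontinuity of the Dirichlet integral gives $\liminf_k\int|\nabla u(t_{n_k},\cdot+\bz_1(t_{n_k}))|^2\phi\ge\int|\nabla\tilde u_{0,1}|^2\phi$, which is the inequality you do \emph{not} need, and nothing in your argument excludes a strict loss of gradient energy in the weak limit on the bounded region. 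The same missing piece reappears in your Gr\"onwall reduction from time $t$ to time $0$: Lemma \ref{lemm:step2} only provides $L^2$ convergence of $v_{k,1}(0)$, not $H^1$, so reducing to $t=0$ does not help.

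The paper's proof avoids this issue entirely with a time-averaging device applied to the \emph{difference} $v_{k,1}$. Integrating the Kato-type identity \eqref{eq:kato_v_k1} over $[t_0-1,t_0]$ and using that $\sup_{[0,t_0]}\int v_{k,1}^2\phi\to0$ (fourth step) yields \eqref{eq:step6_small_int}, i.e.\ the time average of the weighted Dirichlet energy of $v_{k,1}$ tends to $0$; no strong $H^1$ convergence at any fixed time is needed as input. The localized energy identity for $v_{k,1}$ is then integrated once from $t\in[t_0-1,t_0]$ to $t_0$ and a second time in $t$ over $[t_0-1,t_0]$; this mean-value argument transfers the time-averaged gradient smallness to the fixed time $t_0$. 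If you wish to keep your ``convergence of norms'' architecture, you would still need some version of this double integration to produce a good time at which $\int|\nabla v_{k,1}|^2\phi$ is small, so the trick is not optional.
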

		
		Let us paint the proof with a broad brush, the details may be found in Lemma 4.8 of \cite{CMPS16}. As in the fourth step, it suffices to prove the convergence for a fixed positive time $t_0$. First, an integration of the time derivative of the mass of the error term as in \eqref{eq:kato_v_k1} gives
		\begin{align}\label{eq:step6_small_int}
			\int_{t_0-1}^{t_0} \int \vert \nabla v_{k,1} \vert^2(t, \bx) \phi (x) d\bx \underset{k \rightarrow +\infty}{\rightarrow} 0.
		\end{align}
		Then, computing the time derivative of the energy of the error yields
		\begin{align*}
			\frac{d}{dt} \int \left( \frac12 \vert \nabla v_{k,1} \vert^2 + \frac13 v_{k,1}^3 \right) (t,\bx) \phi(x) d\bx \leq K_2(t_0) \int \left( \vert \nabla v_{k,1} \vert ^2 + v_{k,1} \right)(t,\bx) \phi(x) d\bx.
		\end{align*}
		A first integration from $t \in [t_0-1,t_0]$ to $t_0$ provides
		\begin{align*}
			\MoveEqLeft
			\int \vert \nabla v_{k,1} \vert^2 (t_0,\bx) \phi(x) d\bx \lesssim_{t_0} \int \vert \nabla v_{k,1} \vert^2 (t, \bx) \phi(x) d\bx \\
			& + \int_{t_0-1}^{t_0} \int v_{k,1} (t',\bx) \phi(x) d\bx dt' + \sup_{t' \in [t_0-1,t_0]} \int v_{k,1} (t',\bx)^2 \phi(x) d\bx 
		\end{align*}
		and a second integration from $t_0-1$ to $t_0$ combined with \eqref{eq:step6_small_int} concludes the proof of the lemma.

		\vspace{0.5cm}
  \noindent\textit{Seventh step: Exponential decay of $\tilde{u}_1$ on the left.}
    \begin{lemm} \label{lemm:mono_left}
			We have that, for any $t\in \mathbb{R}$ and $x_0>0$,
			\begin{align*}
				\int \tilde{u}_1^2\left(t,\bx + \tilde{\bz}_1(t) \right) \left( 1- \phi(x+x_0) \right) d\bx \lesssim e^{-\frac14 \sqrt{\underline{c}} x_0}.
			\end{align*}
			Furthermore, for any $t\in \mathbb{R}$ and $x<0$,
			\begin{align*}
				\int_y \tilde{u}_1^2\left(t,\bx+ \tilde{\bz}_1(t) \right) dy \lesssim e^{-\frac14 \sqrt{\underline{c}} x}.
			\end{align*}
		\end{lemm}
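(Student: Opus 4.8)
The plan is to obtain the decay of the limit object $\tilde u_1$ on the left by the same mechanism used on the right in Steps 1--6, namely a monotonicity argument for a weighted portion of the mass of $\tilde u_1$ localised behind (to the left of) the first solitary wave. This is the exact analogue of Lemma 4.9 in \cite{CMPS16}, itself modelled on the corresponding step in \cite{MM08} in dimension one, so I would only indicate the structure and the points where the geometry differs.

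First I would reduce the weighted-mass inequality to a statement about the modulated error. By the modulation around one solitary wave and \eqref{eq:u_tilde1_bound} we may write $\tilde u_1(t,\cdot+\tilde\bz_1(t))=Q_{\tilde c_1(t)}+\tilde\epsilon_1(t)$, with $\|\tilde\epsilon_1(t)\|_{H^1}\lesssim\beta$ and, among the orthogonality relations, $\langle\tilde\epsilon_1(t),Q_{\tilde c_1(t)}\rangle=0$. Expanding $\tilde u_1^2=Q_{\tilde c_1}^2+2Q_{\tilde c_1}\tilde\epsilon_1+\tilde\epsilon_1^2$ and using that $1-\phi(x+x_0)$ is essentially supported in $\{x\lesssim -x_0\}$ together with the exponential decay \eqref{asym:Q} of $Q$, one gets
\begin{equation*}
\int \tilde u_1^2\big(t,\bx+\tilde\bz_1(t)\big)\big(1-\phi(x+x_0)\big)\,d\bx=\int \tilde\epsilon_1^2\big(t,\bx+\tilde\bz_1(t)\big)\big(1-\phi(x+x_0)\big)\,d\bx+O\Big(e^{-\frac14\sqrt{\underline c}\,x_0}\Big),
\end{equation*}
so it suffices to control the mass of $\tilde u_1$ on $\{x<-x_0\}$ in the frame of the solitary wave. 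The second (pointwise) estimate will then follow from the embedding $H^1(\mathbb R)\hookrightarrow L^\infty(\mathbb R)$ and Cauchy--Schwarz, exactly as in the passage from \eqref{eq:step5.1.1} to \eqref{eq:step5.1.2}.

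To bound that mass I would run the monotonicity argument of Lemma \ref{lemm:monotonicity_2} transported to the left: for fixed $x_0>0$ and $t_0\in\mathbb R$, consider a weight of the form $1-\phi(\check x)$ with $\check x=\check x(x_0,\theta_0,t_0,t,\bx)$ chosen so that the line $\check x=0$ lies $x_0$ to the left of the solitary wave at time $t_0$ and recedes to the left on $(-\infty,t_0]$ at a speed slightly above $\bar c$, so as to remain strictly to the left of $Q_{\tilde c_1(t)}$ throughout while its position at the reference time runs off to $-\infty$ along the approximating sequence. Differentiating $\int\tilde u_1^2(t,\bx)\big(1-\phi(\check x)\big)\,d\bx$ as in the proof of Lemma \ref{lemm:monotonicity_2} — the gradient and mass terms being coercive, the $\phi^{(3)}$ term absorbed via \eqref{defi:phi}, and the cubic term handled as in \eqref{eq:cubic_term_2}, using $\|R_1(t)\phi'(\check x)\|_{L^\infty}\lesssim e^{-\frac14\sqrt{\underline c}(x_0+\frac{\underline c}{4}(t_0-t))}$ since the line stays far to the left of the solitary wave, and absorbing the $\tilde\epsilon_1$-contribution into the coercive part thanks to $\beta$ being small — gives an almost-monotonicity estimate. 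Combined with the strong $L^2$-convergence of $u(t_{n_k}+\cdot,\cdot+\bz_1(t_{n_k}))$ to $\tilde u_1$ from Steps 4 and 6, the uniform decay on the right of Lemma \ref{lemm:step5_1}, and the almost-monotonicity of the mass of $u$ in the region between the two solitary waves (Lemma \ref{lemm:monotonicity_mass} and its oblique variant), this yields $\int\tilde\epsilon_1^2(t,\bx+\tilde\bz_1(t))(1-\phi(x+x_0))\,d\bx\lesssim e^{-\frac14\sqrt{\underline c}\,x_0}$, and hence the lemma. The main obstacle is precisely the geometric bookkeeping: one must tune the speed and, if needed, the angle $|\theta_0|<\frac\pi3$ of the line on which $\phi'(\check x)$ concentrates so that it stays strictly behind the (single) solitary wave of $\tilde u_1$ over the whole relevant time interval while still exhausting the half-space $\{x<0\}$ in the limit, and one must check that back in the original variables no mass coming from the second solitary wave, which sits on the far left, is captured — this last point being where the monotonicity of the mass between the two solitary waves is used. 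The remaining analytic estimates (coercivity of the flux, control of the nonlinear term, absorption of $\tilde\epsilon_1$) are routine given the lemmas already established.
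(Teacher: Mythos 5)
Your reduction to the modulated error, the use of the strong $L^2$-convergence \eqref{eq:step4.1.1} on strips, and the final passage from the integrated bound to the pointwise one via $H^1(\mathbb{R})\hookrightarrow L^\infty(\mathbb{R})$ are all consistent with what the paper does. The gap is in the central monotonicity step. You propose to control $\int\tilde u_1^2\,(1-\phi(\check x))$ by an almost-monotonicity along a line that sits at distance $x_0$ to the left of the solitary wave at time $t_0$ and recedes to the left on $(-\infty,t_0]$ at a speed slightly above $\bar c$. For ZK (as for gKdV) the only available monotonicity is that the mass to the \emph{right} of a rightward-moving line is almost non-increasing, because radiation crosses such a line only from right to left; equivalently, the mass to the \emph{left} of your line is almost non-\emph{decreasing} in $t$. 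Integrating from $t<t_0$ up to $t_0$ therefore bounds your target quantity from \emph{below} by the mass to the left of the line at earlier times, not from above; and there is no admissible initialization for an upper bound, since the mass of $\tilde u_1$ far to the left at other times is exactly what the lemma is trying to control (circular), while for $u$ the region to the left of the line contains the second solitary wave and all the radiation already shed, so it is not $O(e^{-\frac14\sqrt{\underline c}\,x_0})$-small. In addition, a line travelling at speed above $\bar c$ crosses $R_2$ after a backward time of order $z(t_0)/(\bar c-c_2)$, so the ``stay to the right of the second wave'' constraint you correctly flag cannot be met with that speed.

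The paper's argument (following Lemma 7 of \cite{MMT02} and Lemma 4.10 of \cite{CMPS16}) is oriented the other way. One works with the rightward masses $h_{x_0}(t)=\int u^2(t)\,\psi_\gamma(x-z_1(t)+x_0)$, each almost non-increasing in $t$ and hence admitting an almost-limit $\ell(x_0)$, and then introduces the weight $\psi_\gamma(x-z_1(t)+\gamma(t-t_0)+x_0)$: its zero line starts at $z_1(t_0)-x_0$ and, \emph{forward} in time, still moves to the right at the small positive speed $\dot z_1-\gamma\approx\sigma/2$ (so the monotonicity applies), while receding from the first solitary wave at speed $\gamma$ and remaining to the right of the second one on a time window of length of order $z(t_0)/\gamma\to\infty$. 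Comparing this functional at $t_0$ and at a later time in that window yields $\ell(x_1)\le\ell(x_0)+Ce^{-\frac14\sqrt{\underline c}\,x_0}$ for every $x_1>x_0$, i.e.\ the asymptotic mass of $u$ in the moving strip $\{-x_1<x-z_1(t)<-x_0\}$ is $O(e^{-\frac14\sqrt{\underline c}\,x_0})$ uniformly in $x_1$; this is what is transferred to $\tilde u_1$ through \eqref{eq:step4.1.1}. Your write-up assembles the right ingredients, but the inequality you would actually prove points in the wrong direction, so the proof as proposed does not close.
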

		
		Notice that in \eqref{eq:monotonic_mass2}, the bound was obtained by using the monotonicity of the mass on the right of the first solitary wave. A similar monotonicity result holds for the mass of $u$ on the left of the first solitary wave with a weight going further from the first solitary wave forward in time but always staying on the right of the second wave (see Picture \eqref{fig:evol_left_first}). We refer to the proof of Lemma \ref{lemm:monotonicity_mass} with a weight $\psi_\gamma( x- z_1(t)+\gamma (t-t_0) +x_0)$. 
		
		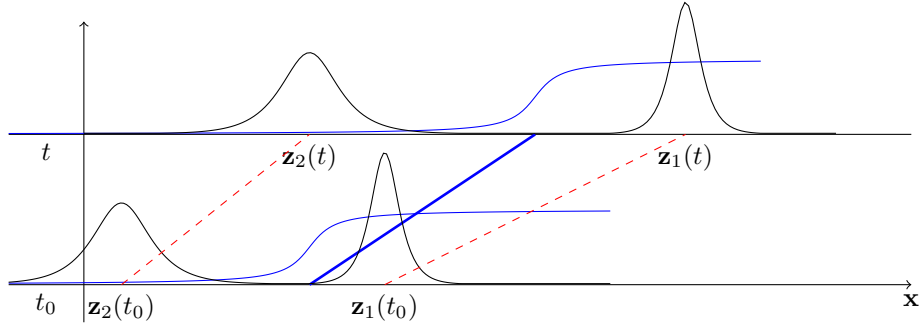
\begin{figure}[ht]
			\centering
			\begin{tikzpicture}[declare function = {
					solwave(\c,\z,\t) =(\c^(0.7))/ (exp(-max(-6,min(\c*(\t-\z),6))) +exp(max(-6,min(\c*(\t-\z),6))));
					weight(\z,\t) = 0.5+atan((\t-\z)*4)/180;}]
				
				\draw [->] (-1,0) -- (11,0);
				\draw (11,0) node[below]{$\bx$};
				\draw [->] (0,-0.5) -- (0,3.5);
				
				\draw (-0.5,0) node[below]{$t_0$};
				\draw (-1,2) -- (11,2);
				\draw (-0.5,2) node[below]{$t$};
				
				\draw [samples=\Num,blue,domain=-1:7] plot [variable=\t] ({\t},{weight(3,\t)});
				\draw [samples=\Num,blue,domain=-1:9] plot [variable=\t] ({\t},{2+weight(6,\t)});
				\draw[blue,line width=1] (3,0) -- (6,2);
				
				\draw [samples=\Num,domain=-1:7] plot [variable=\t] ({\t},{solwave(3,0.5,\t)+solwave(6,4,\t)});
				\draw (0.5,0) node[below]{$\bz_2(t_0)$};
				\draw (4,0) node[below]{$\bz_1(t_0)$};
				
				\draw [samples=\Num,domain=0:10] plot [variable=\t] ({\t},{2+solwave(3,3,\t)+solwave(6,8,\t)});
				\draw (3,2) node[below]{$\bz_2(t)$};
				\draw (8,2) node[below]{$\bz_1(t)$};
				
				\draw[red,dashed] (0.5,0) -- (3,2);
				\draw[red,dashed] (4,0) -- (8,2);
				
			\end{tikzpicture}
		    \caption{Evolution of the localization of the weighted mass on the left of the first solitary wave.}
		    \label{fig:evol_left_first}
		\end{figure}
		
		With this monotonicity result in hand, we conclude the proof of Lemma \eqref{lemm:mono_left} by using \eqref{eq:step4.1.1}and  arguing exactly as in the proof of Lemma 4.10 in \cite{CMPS16}.
		
		We conclude the proof of Proposition \ref{propo:limit_object_1} by gathering the seven intermediate steps.

	\subsection{Proof of Proposition \ref{propo:limit_object_2}}
		The proof of this proposition follows the same ideas of the proof of Proposition \ref{propo:limit_object_1}. We paint the proof in broad brush detailing the differences. 
		
		Let $\{t_n\}_n$ a sequence which tends to $+\infty$. Since $ \left(u -R_1 \right)(t,\cdot +\bz_2(t))$ is bounded in $H^1$ and $c_2(t)$ is bounded in $\mathbb{R}$, there exist a subsequence $\{t_n\}_{n_k}$, a limit object $\tilde{u}_{0,2}$ and a speed $\tilde{c}_{0,2}$ such that
		\begin{align*}
			\left( u -R_1 \right) \left(t_{n_k}, \cdot + \bz_2(t_{n_k}) \right)- \tilde{u}_{0,2} \underset{H^1}{\rightharpoonup} 0 \quad \text{and} \quad c_2(t_{n_k}) \rightarrow \tilde{c}_{0,2}.
		\end{align*}
	Furthermore, by using the orbital stability (see Theorem \ref{theo:orbital} (i)), it follows that
	\begin{align*}
		\left\| \tilde{u}_{0,2} - Q_{\tilde{c}_{0,2}} \right\|_{H^1} \leq \liminf_{k \to +\infty} \left\| \left( u -R_1 -R_2 \right) \left(t_{n_k}, \cdot + \bz_2(t_{n_k}) \right) \right\|_{H^1} \leq \beta. 
	\end{align*}
Let us denote  by $\tilde{u}_2$ the solution to \ref{ZK} with initial condition $\tilde{u}_{2}(0) = \tilde{u}_{0,2}$. By the local well-posedness $\tilde{u}_{2}\in \mathcal{C}\left( \mathbb{R} : H^1(\mathbb{R}^2) \right)$ and by the orbital stability result around one solitary wave in Theorem \ref{theo:orbital_stab}, it holds
\begin{align*}
	\sup_{t \in \mathbb{R}} \left\| \tilde{u}_2(t, \cdot + \tilde{\bz}_2(t)) -Q_{\tilde{c}_{0,2}}\right\|_{H^1} \lesssim \beta, 
\end{align*}
where $\tilde{\bz}_2=(\tilde{z}_2,\tilde{\omega}_2)$ the modulation parameter of translation. 

As before, we split the proof into different steps.

\vspace{0.5cm}
\noindent\textit{First step: Monotonicity property of $u$ on the right of the second solitary wave.}
We state a lemma on the decay of the $H^1$-norm on the right of the second solitary wave, using the function $\phi$ defined in \eqref{defi:phi}.

\begin{lemm}\label{lemm:decay_u_2}
	Let $x_0>0$ and $\vert \theta_1 \vert < \frac{\pi}{3}$. It holds
	\begin{align*}
		\limsup_{t \rightarrow +\infty} \int \left[ \left\vert \nabla \left( u - R_1 \right) \right\vert^2 + ( u - R_1)^2 \right] \left(t, \bx + \bz_2(t) \right)  \phi (x+\tan(\theta_1)y-x_0) dx \lesssim e^{-\frac14 \sqrt{\underline{c}} x_0}.
	\end{align*}
\end{lemm}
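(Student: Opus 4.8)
The plan is to prove Lemma~\ref{lemm:decay_u_2} by the monotonicity scheme of Lemma~\ref{lemm:monotonicity_2} (equivalently Lemma~4.3 in \cite{CMPS16}) — a weighted mass and a weighted energy, compared at two times — the two novelties being that the steep part of the weight must now be kept \emph{strictly between} the two solitary waves, travelling (as $t$ increases) from just behind the second wave up toward the first, and that one converts from $u$ to $u-R_1=R_2+\epsilon$ only at the very end, using the asymptotic stability around the first wave that has already been established. For $x_0>0$, $|\theta_1|<\frac{\pi}{3}$ and a reference time $t_0>0$ I would set
\begin{align*}
\check x:=\check x(x_0,\theta_1,t_0,t,\bx)=x-z_2(t_0)-x_0+v\,(t_0-t)+\tan(\theta_1)\bigl(y-\omega_2(t_0)\bigr),
\end{align*}
with the fixed drift speed $v:=c_2^0-\frac18\sigma$ (so that $\frac{\underline c}{4}<v<c_2^0<c_1^0$ and, by \eqref{eq:ODE}, \eqref{eq:bound_c_i_t} and since $\alpha<k\sigma$ with $k$ small, $v<\dot z_2(t)$ for all $t$), and consider $\hat I_{x_0,t_0,\theta_1}(t):=\int u^2(t,\bx)\phi(\check x)\,d\bx$ and $\hat J_{x_0,t_0,\theta_1}(t):=\int\bigl(|\nabla u|^2-\frac23u^3\bigr)(t,\bx)\phi(\check x)\,d\bx$, with $\phi$ from \eqref{defi:phi}. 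The geometric preliminary — mirroring the $T_0$, $\tilde d_1$, $\tilde d_2$ discussion of Lemma~\ref{lemm:monotonicity_2} — is to exhibit $T_0=T_0(\theta_1,x_0)$ such that for $t_0>T_0$ there is a time $t'=t'(t_0)\in(0,t_0)$ with $t'\to+\infty$ as $t_0\to+\infty$, at which $\{\check x=0\}$ reaches distance $x_0$ \emph{to the left} of $z_1(t')$ (this is where $c_1^0-v=\tfrac98\sigma$ bigger than $\sigma$ is used, via \eqref{eq:boundz} and \eqref{eq:encad_z}), and such that for every $t\in[t',t_0]$ the hyperplane $\{\check x=0\}$ stays at distance $\ge x_0+c\,(t_0-t)$ to the right of $z_2(t)$ (because $v<c_2^0$) and at distance $\ge x_0$ to the left of $z_1(t)$ (because $v<c_1^0$). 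It is this two-sided confinement, which has no counterpart in the one-wave situation, that keeps the whole computation under control.

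Granting the confinement, the computation of $\frac{d}{dt}\hat I_{x_0,t_0,\theta_1}$ is identical to the one in Lemma~\ref{lemm:monotonicity_2}, since $u$ solves \eqref{ZK} exactly, so there are no modulation source terms: integration by parts gives
\begin{align*}
\frac{d}{dt}\hat I_{x_0,t_0,\theta_1}(t)=-\!\int\!\bigl(3(\partial_x u)^2+(\partial_y u)^2+2\tan\theta_1\,\partial_x u\,\partial_y u\bigr)\phi'(\check x)+(1+\tan^2\theta_1)\!\int\! u^2\phi^{(3)}(\check x)-v\!\int\! u^2\phi'(\check x)+\frac43\!\int\! u^3\phi'(\check x),
\end{align*}
the gradient block is $\le-C(\theta_1)\int|\nabla u|^2\phi'(\check x)$ by the Young inequality with $\kappa_0^2\in(1,3(\tan\theta_1)^{-2})$, the $\phi^{(3)}$ term is absorbed by $-v\int u^2\phi'(\check x)$ using $|\phi^{(3)}|\le\frac{\underline c}{16}\phi'$, $\tan^2\theta_1<3$ and $v>\frac{\underline c}{4}$, and the cubic term is split as $\int Ru^2\phi'(\check x)+\int\epsilon u^2\phi'(\check x)$ and treated as in \eqref{eq:mono_nonlinear_1}–\eqref{eq:mono_nonlinear_2}: by the two-sided confinement one has $\|R\phi'(\check x)\|_{L^\infty}\lesssim_{\underline c,\bar c}e^{-\frac18\sqrt{\underline c}\,(x_0+c(t_0-t))}+e^{-\frac18\sqrt{\underline c}\,d_1(t_0,t)}$ with $\int_{t'}^{t_0}e^{-\frac18\sqrt{\underline c}\,d_1(t_0,t)}\,dt\lesssim_{\underline c,\bar c}e^{-\frac18\sqrt{\underline c}x_0}$ (because $d_1$ grows linearly from $x_0$ to $z(t_0)-x_0$), while the $\epsilon$-piece is absorbed using $\|\epsilon\|_{H^1}\le\beta$. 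Integrating on $[t',t_0]$ gives $\hat I_{x_0,t_0,\theta_1}(t_0)-\hat I_{x_0,t_0,\theta_1}(t')\le C_{\underline c,\bar c}e^{-\frac14\sqrt{\underline c}x_0}$, and the twin estimate for $\hat J_{x_0,t_0,\theta_1}$ follows exactly as the estimate on $\tilde J$ in Lemma~\ref{lemm:monotonicity_2}.

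It remains to convert and take $t_0\to+\infty$. At $t=t_0$ the hyperplane is at distance $x_0$ to the right of $z_2(t_0)$, so $z_1(t_0)$ is far to its right and $\phi(\check x(t_0))\approx1$ on the support of $R_1$; using the orthogonality $\int R_1\epsilon=0$ from \eqref{eps:ortho}, the bound \eqref{est:R1R2.1}, and $z(t_0)\to+\infty$, one obtains $\hat I_{x_0,t_0,\theta_1}(t_0)=c_1(t_0)\int Q^2+\int(u-R_1)^2(t_0,\bx+\bz_2(t_0))\phi(x-x_0+\tan\theta_1 y)\,d\bx+o(1)$, where the last integral is the quantity in the statement. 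At $t=t'$, splitting the weight at $\check x=-R$ gives $\hat I_{x_0,t_0,\theta_1}(t')\le\int_{\{x>z_1(t')-x_0-R\}}u^2(t')+e^{-\frac18\sqrt{\underline c}R}\|u\|_{L^2}^2$; the first term equals $\int u^2(t',\bx+\bz_1(t'))$ restricted to an oblique half-space at distance $x_0+R$ to the left of $z_1(t')$, which by the asymptotic stability around the first solitary wave already proved (Proposition~\ref{propo:asymp:1soliton} and its oblique-half-space version, cf.\ the remarks following Theorem~\ref{theo:orbital} and Remark~1.1 in \cite{CMPS16}) converges to $\int_{x>-x_0-R}Q_{c_1^+}^2\le c_1^+\int Q^2$, so $\limsup_{t'\to+\infty}\hat I_{x_0,t_0,\theta_1}(t')\le c_1^+\int Q^2$. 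Since $c_1$ has a limit ($|c_1(t_0)-c_1(t')|\to0$), combining these two facts with the monotonicity bound yields $\limsup_{t_0\to+\infty}\int(u-R_1)^2(t_0,\bx+\bz_2(t_0))\phi(x-x_0+\tan\theta_1 y)\,d\bx\lesssim_{\underline c,\bar c}e^{-\frac14\sqrt{\underline c}x_0}$; the gradient part follows from the analogue for $\hat J_{x_0,t_0,\theta_1}$ together with this mass bound.

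I expect the genuine difficulty to be the geometry: one must tune $v$ (with a definite gap below the slow wave) and the matching times $t'(t_0)$, and allow the tilt $\theta_1$, so that the localization hyperplane sweeps ``from the right of the second solitary wave to the left of the first'' while staying an appropriate, growing distance from \emph{both} waves over the entire window $[t',t_0]$ — which, unlike around a single wave, faces an obstacle on each side and relies crucially on $\sigma=c_1^0-c_2^0>0$ and $z(t)\gtrsim Z+\sigma t$ — and then to dispose of the first-wave mass $c_1(t)\int Q^2$ in the limit, which is precisely what forces the induction to run from right to left and to feed Proposition~\ref{propo:asymp:1soliton} into the treatment of the second wave. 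The rest is a routine transcription of Lemma~\ref{lemm:monotonicity_2} and of Lemma~4.3 in \cite{CMPS16}.
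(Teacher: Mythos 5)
This is essentially the paper's argument: a tilted weight confined strictly between the two waves on a window $[t',t_0]$ with $t'\ge\kappa t_0$, almost-monotonicity of the localized mass and energy along that window, conversion to $u-R_1$ by expanding the square (using the orthogonality/strong convergence around the first wave so that the cross term is negligible), and removal of the first-wave mass $c_1\int Q^2$ in the limit via the convergence already established around the first solitary wave. The only deviations are minor: the paper drifts the line at speed $\underline{c}/2$ rather than your $c_2^0-\tfrac18\sigma$ (your choice makes the relative speeds $c_i^0-v$ of order $\sigma$, so the time-integrated interaction terms cost an extra, here harmless, factor $\sigma^{-1}$ in the constant), and where you invoke an ``oblique-half-space version'' of Proposition \ref{propo:asymp:1soliton} to control the localized mass at time $t'$, the paper carries that step out explicitly by covering the oblique half-plane by a compact triangle (where the convergence \eqref{propo:asymp:1soliton.3} applies) and two steeper oblique half-planes (where the decay \eqref{eq:limsup_T1} applies).
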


When working on the right of the second solitary wave, we have to take into account the mass of the first solitary wave (observe the difference with Lemma \ref{lemm:decay_u_1}).

To prove Lemma \ref{lemm:decay_u_2}, we state a monotonicity property on the right of the second solitary wave (see also Section 6 in \cite{Mol18} for an alternative definition of the lines between two peakons).

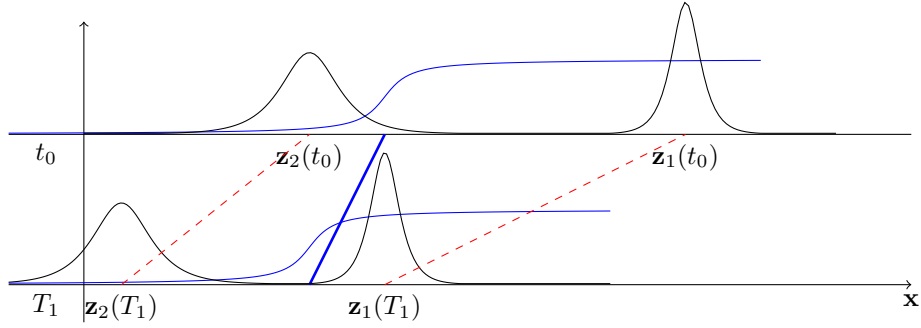
\begin{figure}[ht]
	\centering
	\begin{tikzpicture}[declare function = {
			solwave(\c,\z,\t) =(\c^(0.7))/ (exp(-max(-6,min(\c*(\t-\z),6))) +exp(max(-6,min(\c*(\t-\z),6))));
			weight(\z,\t) = 0.5+atan((\t-\z)*4)/180;}]
		
		\draw [->] (-1,0) -- (11,0);
		\draw (11,0) node[below]{$\bx$};
		\draw [->] (0,-0.5) -- (0,3.5);
		
		\draw (-0.5,0) node[below]{$T_1$};
		\draw (-1,2) -- (11,2);
		\draw (-0.5,2) node[below]{$t_0$};
		
		\draw [samples=\Num,blue,domain=-1:7] plot [variable=\t] ({\t},{weight(3,\t)});
		\draw [samples=\Num,blue,domain=-1:9] plot [variable=\t] ({\t},{2+weight(4,\t)});
		\draw[blue,line width=1] (3,0) -- (4,2);
		
		\draw [samples=\Num,domain=-1:7] plot [variable=\t] ({\t},{solwave(3,0.5,\t)+solwave(6,4,\t)});
		\draw (0.5,0) node[below]{$\bz_2(T_1)$};
		\draw (4,0) node[below]{$\bz_1(T_1)$};
		
		\draw [samples=\Num,domain=0:10] plot [variable=\t] ({\t},{2+solwave(3,3,\t)+solwave(6,8,\t)});
		\draw (3,2) node[below]{$\bz_2(t_0)$};
		\draw (8,2) node[below]{$\bz_1(t_0)$};
		
		\draw[red,dashed] (0.5,0) -- (3,2);
		\draw[red,dashed] (4,0) -- (8,2);
		
	\end{tikzpicture}
	\caption{Evolution of the weight function at times $t_0$ and $T_1$ }
\end{figure}

We define
    \begin{align*}
		\hat{x} =\hat{x}\left( x_0,\theta_1, t_0,t,\bx \right) := x-z_2(t_0)-x_0 +\frac{\underline{c}}{2}(t_0-t) +\tan(\theta_1)(y-\omega_2(t_0)),
  \end{align*}
and
  \begin{align}
  \hat{I}_{x_0, t_0,\theta_1} (t) := \int u^2(t,\bx) \phi (\hat{x}) d\bx, \quad \hat{J}_{x_0,t_0,\theta_1}(t) := \int \left( \left\vert \nabla u \right\vert^2 (t,\bx) - \frac23 u^3(t,\bx) \right)\phi(\hat{x}) d\bx.\label{defi:asympt_stab2}
	\end{align}

Inspired from the proof of Lemma \ref{lemm:monotonicity_2}, we define the following distances
\begin{align*}
    \hat{d}_1(t_0,t) & := z_1(t) - z_2(t_0)-x_0 + \frac{\underline{c}}{2}(t_0-t) + \tan(\theta_1) \left( \omega_1(t)- \omega_2(t_0) \right), \\
    \hat{d}_2(t_0,t) & := - z_2(t) + z_2(t_0) + x_0 - \frac{\underline{c}}{2} \left( t_0-t \right) - \tan (\theta_1) \left( \omega_2(t) - \omega_2(t_0) \right).
\end{align*}

\begin{lemm}\label{lemm:limsup2_mass}
    Let $\left\vert \theta_1 \right\vert < \frac{\pi}{3}$. There exist $C=C(\theta_1)$ and $\kappa>0$ such that, for any $x_0>0$ the following holds. There exists $T_0= T_0(\theta_1,x_0)$ such that for any $t_0>T_0$, by defining $T_1$ the unique time such that $\hat{d}_1(t_0,T_1)=x_0$, it holds $ T_1\geq \kappa t_0$ and for any $t\in [T_1, t_0]$,
	\begin{align}\label{eq:mono_mass_in_between}
		\hat{I}_{x_0, t_0,\theta_1} (t_0) - \hat{I}_{x_0, t_0,\theta_1} (t) \leq C e^{-\frac14 \sqrt{\underline{c}} x_0}, \quad \hat{J}_{x_0,t_0,\theta_1}(t_0) - \hat{J}_{x_0,t_0,\theta_1}(t) \leq C e^{-\frac14 \sqrt{\underline{c}} x_0} .
	\end{align}
\end{lemm}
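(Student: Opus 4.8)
The plan is to reproduce the argument of the proof of Lemma \ref{lemm:monotonicity_2}, the only genuinely new ingredient being the geometric bookkeeping that keeps the oblique line $\{\hat{x}=0\}$ strictly to the right of $R_2$ and strictly to the left of $R_1$ on the whole interval $[T_1,t_0]$.

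\emph{Step 1: the geometric setup.} Taking $k$ smaller and $K$ larger if necessary, so that $\beta$ is small, I would first use \eqref{eq:bound_dot:z_omega_i_t}, \eqref{eq:bound_c_i_t} and \eqref{estimate_modulation:c:bis} to check that $\left| \partial_t \hat{d}_1(t_0,t) - c_1(t) + \tfrac{\underline{c}}{2} \right| \lesssim \beta$ and $\left| \partial_t \hat{d}_2(t_0,t) + c_2(t) - \tfrac{\underline{c}}{2} \right| \lesssim \beta$, whence, since $c_i(t)>\tfrac{14}{15}\underline{c}$,
\begin{align*}
\partial_t \hat{d}_1(t_0,t) \geq \tfrac14 \underline{c} \quad \text{and} \quad \partial_t \hat{d}_2(t_0,t) \leq -\tfrac14 \underline{c}, \qquad t \in [0,t_0].
\end{align*}
Thus $\hat{d}_1(t_0,\cdot)$ is strictly increasing; since $\hat{d}_1(t_0,t_0)=z(t_0)-x_0+\tan(\theta_1)(\omega_1(t_0)-\omega_2(t_0))$, which by \eqref{defi:z} and \eqref{eq:bound_dot:z_omega_i_t} exceeds $x_0$ once $t_0>T_0(\theta_1,x_0)$, while $\hat{d}_1(t_0,0)<x_0$ for $t_0$ large, there is a unique $T_1\in(0,t_0)$ with $\hat{d}_1(t_0,T_1)=x_0$. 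Integrating the two inequalities above from $T_1$, respectively to $t_0$ (using $\hat{d}_2(t_0,t_0)=x_0$), gives, for $t\in[T_1,t_0]$,
\begin{align*}
\hat{d}_1(t_0,t) \geq x_0 + \tfrac{\underline{c}}{4}(t-T_1), \qquad \hat{d}_2(t_0,t) \geq x_0 + \tfrac{\underline{c}}{4}(t_0-t),
\end{align*}
so in particular both distances are $\geq x_0$. Finally $x_0-\hat{d}_1(t_0,0)=\int_0^{T_1}\partial_s\hat{d}_1 \leq \tfrac{16}{15}\bar{c}\, T_1$, and bounding $z_2(t_0)\geq z_2(0)+\tfrac{13}{15}\underline{c}\,t_0$ and $|\omega_i(t_0)|\lesssim 1+\beta t_0$ yields $x_0-\hat{d}_1(t_0,0)\gtrsim \underline{c}\,t_0$ for $t_0>T_0$ large, hence $T_1\geq\kappa t_0$ with $\kappa=\kappa(\underline{c},\bar{c})>0$.

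\emph{Step 2: almost monotonicity of $\hat{I}_{x_0,t_0,\theta_1}$.} Using \eqref{ZK} and integrating by parts exactly as in the proof of Lemma \ref{lemm:monotonicity_2}, one has
\begin{align*}
\frac{d}{dt}\hat{I}_{x_0,t_0,\theta_1}(t) = -\int \Big(3(\partial_x u)^2 + (\partial_y u)^2 + 2\tan(\theta_1)\partial_x u\,\partial_y u + \tfrac{\underline{c}}{2}u^2 + \tfrac43 u^3\Big)\phi'(\hat{x}) + \int u^2\big(1+\tan^2(\theta_1)\big)\phi^{(3)}(\hat{x}).
\end{align*}
Young's inequality on the cross term together with $\tan^2(\theta_1)<3$ and the bound $|\phi^{(3)}|\leq\tfrac{\underline{c}}{16}\phi'$ from \eqref{defi:phi} control the quadratic contribution by $-C(\theta_1)\int|\nabla u|^2\phi'(\hat{x}) - \tfrac{\underline{c}}{4}\int u^2\phi'(\hat{x})$. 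Splitting $u^3=(R_1+R_2)u^2 + u^2\epsilon$, I would treat the first piece using, as for \eqref{sol_wave_weight}, the overlap estimate
\begin{align*}
\|R_1\phi'(\hat{x})\|_{L^\infty} + \|R_2\phi'(\hat{x})\|_{L^\infty} \lesssim e^{-\frac14\sqrt{\underline{c}}\left(x_0 + \frac{\underline{c}}{4}(t-T_1)\right)} + e^{-\frac14\sqrt{\underline{c}}\left(x_0 + \frac{\underline{c}}{4}(t_0-t)\right)},
\end{align*}
which follows from the distance bounds of Step 1 and the rotational invariance of $Q$; the second piece is handled as in \eqref{eq:mono_nonlinear_1}--\eqref{eq:cubic_term_2} and absorbed into the coercive terms using $\|\epsilon\|_{H^1}\leq\beta$ small. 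This gives $\frac{d}{dt}\hat{I}_{x_0,t_0,\theta_1}(t)\leq C(\theta_1)$ times the right-hand side above, and integrating from $t$ to $t_0$ (both exponentials integrate to $\lesssim e^{-\frac14\sqrt{\underline{c}}x_0}$) yields the first inequality in \eqref{eq:mono_mass_in_between}. The estimate for $\hat{J}_{x_0,t_0,\theta_1}$ is obtained along the same lines by computing $\frac{d}{dt}\hat{J}$ from the Hamiltonian structure of \eqref{ZK} as in Lemma 3.4 of \cite{CMPS16} and in the treatment of $\tilde{J}_{x_0,t_0,\theta_0}$ in Lemma \ref{lemm:monotonicity_2}: the leading terms are coercive and negative after Young's inequality and \eqref{defi:phi}, the soliton--weight interactions obey the same exponential bounds, and the $\epsilon$-dependent errors are absorbed since $\|\epsilon\|_{H^1}\leq\beta$.

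I expect the main obstacle to be Step 1, namely pinning down $T_1$, establishing the sandwich property $\hat{d}_1(t_0,t),\hat{d}_2(t_0,t)\geq x_0$ \emph{with} the additional linear-in-time slack on $[T_1,t_0]$, and proving $T_1\geq\kappa t_0$. This is exactly the higher-dimensional geometric complication that is absent from the one-soliton monotonicity in \cite{CMPS16,FHRY23} and from the one-dimensional case in \cite{MMT02,MM08}: one must choose the angle-dependent times so that the oblique weight line never crosses either solitary wave. Once this bookkeeping is in place, Steps 2 is a routine re-run of the single-weight virial computation.
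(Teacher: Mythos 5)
Your proposal is correct and follows essentially the same route as the paper: the same monotonicity of $\hat{d}_1,\hat{d}_2$ via the modulation estimates, the same sandwich property with linear-in-time slack feeding into the $L^\infty$ bounds on $R_i\phi'(\hat{x})$, and the same virial computation with Young's inequality on the cross term. The only (immaterial) variation is in proving $T_1\geq\kappa t_0$ — you integrate $\partial_t\hat{d}_1$ from $0$ to $T_1$ and lower-bound $x_0-\hat{d}_1(t_0,0)$, whereas the paper integrates from $T_1$ to $t_0$ and upper-bounds $\hat{d}_1(t_0,t_0)$ via \eqref{eq:encad_z}; note also that your bound $\hat{d}_2(t_0,t)\geq x_0+\tfrac{\underline{c}}{4}(t_0-t)$ is the correct sign (the paper's displayed $x_0-\tfrac{\underline{c}}{4}(t_0-t)$ is a typo, as its subsequent cubic-term estimate confirms).
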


\begin{proof}
	Let us fix $\left\vert \theta_1 \right\vert < \frac{\pi}{3}$ and $x_0>0$. First, for $t_0>T_0$ to be fixed later, we define a positive time $T_1$ such that on a time interval $[T_1,t_0]$, the weight function is located between the centers $\bz_i(t)$ of the two scaled solitary waves and at least at a distance $x_0$, see Figure \ref{fig:position_D_t0_T1}.

\begin{figure}[h]
	\centering
	\begin{minipage}{.57\textwidth}
	\begin{tikzpicture}		
		
		\draw (9,2.3) node[left]{at time $t_0$};
		\draw [->] (-0.5,2) -- (9,2);
		\draw (9,2) node[below]{$x$};
		\draw [->] (0,1.25) -- (0,3);
		\draw (0,3) node[left]{$y$};
		
		\draw (3,2.6) circle(0.5);
		\draw (2.5,2.8) node[left]{$R_2$};
		\draw (8,1.5) circle(0.2);
		\draw (8.3,1.5) node[right]{$R_1$};
		
		\draw[blue] (3.8,3) -- (4.8,1);
		\draw[dashed] (4.3,2) -- (4.3,3);
		\draw[->] (4.3,2.7) arc (90:115:0.7);
		\draw (4.4,2.9) node[left]{$\theta_1$};
		\draw[red] (3,2.6) -- (4,2.6);
		\draw[red] (3.8,2.6) node[below]{$x_0$};	
		
		\draw (9,0.3) node[left]{at time $T_1(t_0)$};
		\draw [->] (-0.5,0) -- (9,0);
		\draw (9,0) node[below]{$x$};
		\draw [->] (0,-0.75) -- (0,1);
		\draw (0,1) node[left]{$y$};
		
		\draw (0.4,0.6) circle(0.5);
		\draw (0.8,0.8) node[right]{$R_2$};
		\draw (4.4,-0.5) circle(0.2);
		\draw (4.7,-0.5) node[right]{$R_1$};
		
		\draw[blue] (3.5,-1) -- (2.5,1);
		\draw[dashed] (3,0) -- (3,1);
		\draw[->] (3,0.7) arc (90:115:0.7);
		\draw (3.1,0.9) node[left]{$\theta_1$};
		\draw[red] (4.4,-0.5) -- (3.2,-0.5);
		\draw[red] (3.6,-0.5) node[above]{$x_0$};				
	\end{tikzpicture}
	\caption{Localization of the weight function at times $t_0$ and $T_1$ with respect to the two solitary waves}
	\label{fig:position_D_t0_T1}
	\end{minipage}%
	\begin{minipage}{0.45\textwidth}
		\centering
		\begin{tikzpicture}		
			
			\draw (0,0) circle(0.2);
			\draw (0.2,0) node[right]{$R_1$};
			
			\draw (1,0) -- (-2,1.5);
			\draw (-2,1.75) node[left]{$\mathcal{D}_-$};
			\draw[dashed] (1,0) -- (1,1);
			\draw[->] (1,0.7) arc (90:155:0.7);
			\draw (0.8,0.7) node[above]{$\theta_0$};
			\draw (1,0) -- (-2,-1.5);
			\draw (-2,-1.5) node[left]{$\mathcal{D}_+$};
			\draw[dashed] (1,0) -- (1,-1);
			\draw[->] (1,-0.7) arc (-90:-155:0.7);
			\draw (0.8,-0.7) node[below]{$\theta_0$};	
			\draw[blue] (0.5,-1.5) -- (-1.7,2);
			\draw[blue] (0.5,-1.5) node[right]{$\mathcal{D}$};
			\draw[blue,dashed] (-1.3,1.4) -- (-1.3,2.1);
			\draw[blue,->] (-1.3,1.9) arc (90:115:0.7);
			\draw[blue] (-1.5,1.9) node[above]{$\theta_1$};
			\draw[red] (0,0) -- (-1.7,0);
			\draw[red] (-1,0) node[below]{$B$};		
			\draw[red] (-1.7,-1.35) -- (-1.7,1.35);	
			\draw (-1.5,-1.25) node[above]{$\mathcal{T}$};
			\draw (-1.8,1) node[left]{$\textbf{a}$};
			\draw (-1.7,1.2) node[left]{$\times$};
		\end{tikzpicture}
	\caption{Positions of $\mathcal{D}$, $\mathcal{D}_-$ and $\mathcal{D}_+$ with respect to the first solitary wave.}
	\label{fig:position_D_D-_D+}
	\end{minipage}
\end{figure}

First, we want that the line $\hat{x}=0$ remains located between the two solitary waves with distances $\hat{d}_1$ and $\hat{d}_2$ larger or equal to $x_0$. Since $\hat{d}_2(t_0,t_0)=x_0$, we define $\tilde{T_0} := \inf \big\{ t_0\in \mathbb{R}_+ :  \hat{d}_1(t_0,t_0) >x_0 \big\}$.
Observe that $g(t_0) := \hat{d}_1(t_0,t_0)$ satisfies $\left\vert g'(t_0) - \sigma \right\vert \lesssim \alpha$, thus $g$ is increasing and tends to $+\infty$ by \eqref{eq:boundz}, which justifies the definition of $\tilde{T}_0$.

 We now define the time $T_0:= \inf \{t_0>\tilde{T}_0 : \ \hat{d}_1(t_0,0)<0\}$. By \eqref{eq:ODE} and \eqref{eq:c_n}, computing $\frac{d}{dt_0} \hat{d}_1(t_0,0) \leq -c_2^0 + \frac14 \sigma + \frac12 \underline{c} < -\frac14 \underline{c} $, one obtains that $\hat{d}_1(t_0,0)<0$ for any $t_0$ large enough, which justifies the definition of $T_0$.
 
Next, for $t_0 > T_0$, we define $ T_1 =T_1(t_0)$ such that $\hat{d}_1(t_0,T_1) =x_0$. Then, it follows that, for any $t \in [T_1,t_0]$,
 \begin{align}\label{cond:distances}
     \hat{d}_1(t_0,t)  \ge x_0 \quad \text{and} \quad \hat{d}_2(t_0,t) \ge x_0.
 \end{align}
 Indeed, from \eqref{eq:bound_dot:z_omega_i_t} and $\beta \leq \kappa \underline{c}$ with an adequate $\kappa$, the two distance functions are monotonic
 \begin{align}\label{eq:derivee_hat_d_1_d_2}
     \left\vert \partial_t \hat{d}_1(t_0,t) - \left( c_1^0  - \frac{\underline{c}}{2} \right) \right\vert \leq \frac{\underline{c}}{16} \quad \text{and} \quad \left\vert  \partial_t \hat{d}_2(t_0,t) + c_2^0 - \frac{\underline{c}}{2} \right\vert \leq \frac{\underline{c}}{16},
 \end{align}
 which implies \eqref{cond:distances}.

 To prove the lower bound on $T_1$, an integration of \eqref{eq:derivee_hat_d_1_d_2} from $T_1$ to $t_0$ gives
 \begin{align*}
     \left\vert \hat{d}_1(t_0,t_0) - x_0 - \left( c_1^0 - \frac{\underline{c}}{2}\right) (t_0-T_1) \right\vert \leq \frac{\underline{c}}{16} (t_0-T_1)
 \end{align*}
 and since $\hat{d}_1(t_0,t_0) \leq Z + (1+\frac{1}{16}) \sigma t_0$ by \eqref{eq:encad_z}, we obtain, recalling that $\sigma<\underline{c}$, that $T_1 \geq \kappa t_0$ for some $\kappa>0$.

We prove the monotonocity formula \eqref{eq:mono_mass_in_between} for $\hat{I}_{x_0,t_0,\theta_1}$. The computations are similar to the ones in the proof of \eqref{eq:monotonic_mass2}, the only difference relies in estimating the cubic term. Observe that an integration of $\partial_t \hat{d}_1(t_0,t) > \frac14 \underline{c}$ (see \eqref{eq:derivee_hat_d_1_d_2}) from $T_1$ to $t$ provides $\hat{d}_1(t_0,t) > x_0 + \frac14 \underline{c} (t-T_1)$ and an integration of $\partial_t \hat{d}_2(t_0,t) < \frac14 \underline{c}$ from $t$ to $t_0$ provides $\hat{d}_2(t_0,t)> x_0 - \frac14 \underline{c} (t_0-t)$. Then, inspired from \eqref{sol_wave_weight}, we have
\begin{align*}
		\left\| R_1(t) \phi'(\hat{x}) \right\|_{L^\infty} \lesssim e^{-\frac14 \sqrt{\underline{c}} \left(x_0 +\frac{\underline{c}}{4} (t-T_1)\right)} \ \text{and} \ \left\| R_2(t) \phi'(\hat{x}) \right\|_{L^\infty} \lesssim e^{-\frac14 \sqrt{\underline{c}} \left(x_0 -\frac{\underline{c}}{4} (t_0-t)\right)}.
\end{align*}
This implies, arguing as in \eqref{eq:cubic_term_2}, the following bound on the non-linear term
\begin{align*}
		\left\vert \frac43 \int u^3 \phi'(\hat{x}) \right\vert \lesssim \| \epsilon \|_{H^1} \left( \int \left( \vert \nabla u \vert^2 + u^2 \right) \phi'(\hat{x}) \right) + e^{-\frac14 \sqrt{\underline{c}} \left( x_0 + \frac14 \underline{c} (t_0-t)\right)} + e^{-\frac14\sqrt{\underline{c}} \left( x_0 + \frac14 \underline{c} (t-T_1)\right)} .
	\end{align*}
The rest of the proof of \eqref{eq:mono_mass_in_between} is similar to the one of \eqref{eq:monotonic_mass}.

The proof of the formula for $\hat{J}_{x_0,t_0,\theta_1}$ is obtained by combining the ideas explained above with the computations in Lemma 3.4 in \cite{CMPS16}.
\end{proof}

\begin{proof}[Proof of Lemma \ref{lemm:decay_u_2}]
We only deal with the $L^2$-weighted norm, the case of the gradient is similar. We first claim that, for $t_0>T_0$ large enough,
\begin{align}
    \MoveEqLeft
    \int \left( u - R_1 \right)^2 \left( t_0, \bx + \bz_2(t_0) \right) \phi( x- \tan \theta_1 y -x_0) d \bx \nonumber \\
    & \lesssim \int u^2(T_1, \bx + \bz_1(T_1) ) \phi( x- \tan \theta_1 y + x_0) d\bx - \int R_1^2(T_1,\bx) d \bx + e^{- \frac14 \sqrt{\underline{c}}x_0}. \label{eq:limsup_t0_T1}
\end{align}

Indeed, it holds
\begin{align}
\MoveEqLeft
	\int \left[ \left( u - R_1 \right)^2 -u^2 + R_1^2 \right](t_0, \bx + \bz_2(t_0)) \phi \left(x- \tan(\theta_1)y -x_0 \right) d\bx \nonumber \\
	& =2 \int \left[ (R_1 - u )R_1 \right](t_0, \bx + \bz_2(t_0)) \phi (x- \tan(\theta_1)y -x_0) d\bx . \label{eq:dvt_square} 
\end{align} 

Since $Q_{c_1(t_0)}$ decays exponentially and $u(t_0, \cdot + \bz_1(t_0)) - Q_{c_1(t_0)}$ converges to $0$ in $L^2(x>-B)$ by \eqref{propo:asymp:1soliton.3}, the right-hand side of \eqref{eq:dvt_square} can be chosen for $t_0$ large enough lower than $e^{- \frac14 \sqrt{\underline{c}} x_0}$.
Next, we notice as in \eqref{ineq:R_psi_L2} that
\begin{align}\label{eq:Q_c1_approx}
    \left\vert \int R_1^2(t_0,\bx + \bz_2(t_0)) \phi \left( x - \tan(\theta_1)y -x_0 \right) - \int R_1^2(t_0) \right\vert \lesssim e^{- \frac14 \sqrt{\underline{c}}x_0}.
\end{align}
On the other, by recalling $\hat{d}_1(t_0,T_1)=x_0$, \eqref{eq:mono_mass_in_between} yields
\begin{align}
    \MoveEqLeft
    \int u^2(t_0, \bx + \bz_2(t_0) ) \phi \left( x- \tan (\theta_1) y -x_0 \right) d\bx - \int u^2(T_1, \bx + \bz_1(T_1) ) \phi(x- \tan(\theta_1) y + x_0) d\bx \nonumber \\
    & \lesssim e^{- \frac14 \sqrt{\underline{c}} x_0}. \label{eq:monotonic_t0_T1}
\end{align}
Gathering \eqref{eq:dvt_square}, \eqref{eq:Q_c1_approx} and \eqref{eq:monotonic_t0_T1} and using that $c_1(t)$ converges at infinity conclude the proof of \eqref{eq:limsup_t0_T1}. 

To conclude the proof, it suffices to prove that the $\limsup$ as $t_0$ (and thus $T_1$) goes to $\infty$ of the right-hand side of \eqref{eq:limsup_t0_T1} is bounded by $Ce^{-\frac18 \sqrt{\underline{c}}x_0}$. Let $\vert \theta_0 \vert \in (\vert \theta_1 \vert ; \frac{\pi}{3})$ and define the three lines $\mathcal{D}$, $\mathcal{D}_+$, $\mathcal{D}_-$ at time $T_1(t_0)$ around the first solitary wave (see Figure \ref{fig:position_D_D-_D+}) by
\begin{align*}
	& \mathcal{D}_\pm:= \left\{ x- z_1(T_1) \pm \tan(\theta_0) \left( y - \omega_1(T_1) \right) - x_0 =0\right\}, \\
	& \mathcal{D} := \left\{ x- z_1(T_1) -\tan(\theta_1) \left( y - \omega_1(T_1) \right)- x_0 =0 \right\}.
\end{align*}
Now, as explained in Figure \ref{fig:position_D_D-_D+}, choose $B=B(x_0,\theta_0,\theta_1)>0$ such that any point ${\bf a}=(a,b)$ below $\mathcal{D}_-$ and above $\mathcal{D}_+$ with $a<-B$ is closer to $\mathcal{D}_-$ or $\mathcal{D}_+$ than to $\mathcal{D}$ and denote the triangle $$\mathcal{T}=\left\{\bx : x>-B; x< z_1(t) \pm \tan(\theta_1) \left( y - \omega_1(t) \right) + x_0 \right\}. $$ 
Since $\phi$ is strictly increasing, it follows from this construction that 
\begin{align*}
	\MoveEqLeft
	\int u^2\left(T_1, \bx + \bz_1(T_1) \right) \phi (x -\tan(\theta_1)y+ x_0) d\bx \leq \int_{\mathcal{T}} u^2 \left(T_1, \bx + \bz_1(T_1) \right) d\bx \\
	& \quad + \int u^2\left(T_1, \bx + \bz_1(T_1) \right) \left( \phi \left( x + \tan(\theta_0) y- x_0 \right) +\phi \left( x - \tan(\theta_0) y- x_0 \right) \right) d\bx.
\end{align*}
Therefore, we conclude the proof of Lemma \ref{lemm:decay_u_2} gathering \eqref{eq:limsup_t0_T1}, \eqref{eq:limsup_T1} and the strong convergence in \eqref{propo:asymp:1soliton.3}, and recalling the lower bound on $T_1\geq \kappa t_0$.
\end{proof}

\vspace{0.5cm}
\noindent \textit{Second step: Strong $L^2$ convergence of $\left( u - R_1 \right) (t_{n_k}, \cdot + \bz_2(t_{n_k}))$ to $\tilde{u}_{0,2}$ on the right.}

\begin{lemm}
	It holds, for any $B>0$,
	\begin{align*}
		v_{k,2,0}:= \left( u - R_1 \right) (t_{n_k}, \cdot + \bz_2(t_{n_k}) ) - \tilde{u}_{0,2 } \underset{k\rightarrow +\infty}{\rightarrow} 0 \quad \text{in} \quad L^2(x>-B).
	\end{align*}
\end{lemm}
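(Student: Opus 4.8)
The plan is to follow verbatim the strategy used for Lemma~\ref{lemm:step2} (equivalently, Lemma~4.4 in \cite{CMPS16}), now with the second solitary wave playing the role of the first. Set $v_k := v_{k,2,0}=(u-R_1)(t_{n_k},\cdot+\bz_2(t_{n_k}))-\tilde{u}_{0,2}$. Since $(u-R_1)(t_{n_k},\cdot+\bz_2(t_{n_k}))\rightharpoonup\tilde{u}_{0,2}$ in $H^1(\mathbb{R}^2)$ (established above), we have $v_k\rightharpoonup 0$ in $H^1$. Fixing $B>0$, it suffices to prove that for every $\varepsilon>0$ there exists $K$ such that $\|v_k\|_{L^2(x>-B)}<\varepsilon$ for all $k>K$.

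First I would fix an auxiliary angle $\theta_0\in(0,\tfrac{\pi}{3})$ and, for $x_0>0$, cover $\{x>-B\}$ by the two oblique half-planes $A_\pm:=\{x\pm\tan(\theta_0)y\ge x_0\}$ together with the region $\mathcal{T}:=\{x>-B\}\setminus(A_+\cup A_-)$, which is contained in the bounded box $\{-B<x<x_0,\ |y|<(x_0+B)/\tan(\theta_0)\}$. On $A_+$, using that $\phi$ is increasing so that $\phi(x+\tan(\theta_0)y-x_0)\ge\phi(0)=\tfrac12$ there, I would estimate
\[
\|v_k\|_{L^2(A_+)}^2\lesssim\int (u-R_1)^2(t_{n_k},\bx+\bz_2(t_{n_k}))\,\phi(x+\tan(\theta_0)y-x_0)\,d\bx+\int\tilde{u}_{0,2}^2\,\phi(x+\tan(\theta_0)y-x_0)\,d\bx .
\]
The first term has $\limsup_{k\to+\infty}$ bounded by $Ce^{-\frac14\sqrt{\underline{c}}x_0}$ by Lemma~\ref{lemm:decay_u_2} applied with $\theta_1=\theta_0$ (recall $t_{n_k}\to+\infty$). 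For the second term, since multiplication by $\sqrt{\phi(x+\tan(\theta_0)y-x_0)}$ is a bounded operator on $H^1$, weak convergence is preserved, and weak lower semicontinuity of the $L^2$-norm combined again with Lemma~\ref{lemm:decay_u_2} yields $\int\tilde{u}_{0,2}^2\,\phi(x+\tan(\theta_0)y-x_0)\,d\bx\lesssim e^{-\frac14\sqrt{\underline{c}}x_0}$ (the oblique analogue of \eqref{lemm:step3_1.1}). The same two bounds hold on $A_-$ with $\theta_1=-\theta_0$.

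It then remains to treat $\mathcal{T}$: it is bounded and $v_k\rightharpoonup 0$ in $H^1$, so by the Rellich--Kondrachov theorem $\|v_k\|_{L^2(\mathcal{T})}\to 0$ as $k\to+\infty$. Given $\varepsilon>0$, I would first choose $x_0$ so large that $Ce^{-\frac14\sqrt{\underline{c}}x_0}<\varepsilon^2/6$; then for $k$ large the contributions of $A_+$ and $A_-$ are each $<\varepsilon^2/3$. With $x_0$, hence $\mathcal{T}$, now fixed, choose $k$ larger if necessary so that $\|v_k\|_{L^2(\mathcal{T})}^2<\varepsilon^2/3$. Summing the three pieces gives $\|v_k\|_{L^2(x>-B)}^2<\varepsilon^2$, which is the claim.

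The proof is a routine adaptation, so I do not expect a genuine obstacle; the only point that requires care is the book-keeping. One must invoke the first-step monotonicity precisely in the form of Lemma~\ref{lemm:decay_u_2}, namely for $u-R_1$ retranslated around $\bz_2(t)$ and along \emph{oblique} lines of angle $|\theta_1|<\tfrac{\pi}{3}$ --- it is exactly this oblique geometry (introduced in \cite{CMPS16}) that reduces the full control to the bounded set $\mathcal{T}$, where compactness applies. Replacing $R_1$ by $0$ and $\bz_2$ by $\bz_1$ recovers word for word the proof of Lemma~\ref{lemm:step2}.
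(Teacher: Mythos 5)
Your proposal is correct and follows essentially the same route as the paper: the paper's proof simply says it is the same as Lemma \ref{lemm:step2} (i.e.\ the three-region covering of $\{x>-B\}$ by two oblique half-planes plus a bounded set, as in Lemma 4.4 of \cite{CMPS16}) with Lemma \ref{lemm:decay_u_2} replacing Lemma \ref{lemm:decay_u_1}, which is exactly the substitution you make. Your write-up merely spells out the details (weak lower semicontinuity for the $\tilde{u}_{0,2}$ term, Rellich--Kondrachov on the bounded region) that the paper leaves implicit.
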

The proof is the same as in Lemma \ref{lemm:step2} by using Lemma \ref{lemm:decay_u_2} instead of Lemma \ref{lemm:decay_u_1}.

\vspace{0.5cm}
\noindent \textit{Third step: Exponential decay of $\tilde{u}_{0,2}$ on the right on finite time intervals.}

\begin{lemm} \label{lemm:step3_2}
	For any $x_0>0$, it holds
	\begin{align*}
		\int \left( \vert \nabla \tilde{u}_{0,2} \vert ^2 + \tilde{u}_{0,2}^2 \right) (\bx) \phi (x-x_0) d\bx \lesssim e^{-\frac14 \sqrt{\underline{c}}x_0}.
	\end{align*}
	Moreover, for all $t_0\geq 0$, there exists $K(t_0)>0$ such that
	\begin{align*}
		\sup_{t\in [0,t_0]} \int \left(\vert \nabla \tilde{u}_2\vert^2 + \tilde{u}_2^2 \right) \left(t, \bx \right) e^{\frac14 \sqrt{\underline{c}}x} dx \leq K(t_0).
	\end{align*}
\end{lemm}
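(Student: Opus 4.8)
The plan is to follow the argument used for the first solitary wave in Lemma~\ref{lemm:step3_1}, that is, the scheme of Lemma~4.5 in \cite{CMPS16}, replacing the monotonicity input around the first wave (Lemma~\ref{lemm:decay_u_1}) by its analogue around the second wave (Lemma~\ref{lemm:decay_u_2}). There are two things to prove: the exponentially weighted $H^1$ bound on the limit profile $\tilde{u}_{0,2}$ itself, and its propagation along the flow of \eqref{ZK} on finite time intervals.

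For the first estimate I would use that $(u-R_1)(t_{n_k},\cdot+\bz_2(t_{n_k})) \rightharpoonup \tilde{u}_{0,2}$ in $H^1(\mathbb{R}^2)$. Since $\phi$ is smooth, positive and $\phi,\phi',\phi''$ are bounded, multiplication by $\sqrt{\phi(\cdot-x_0)}$ is a bounded, hence weakly continuous, map on $H^1(\mathbb{R}^2)$, so by weak lower semicontinuity of the $H^1$-norm
\[
\big\|\sqrt{\phi(\cdot-x_0)}\,\tilde{u}_{0,2}\big\|_{H^1}^2 \le \liminf_{k\to+\infty}\big\|\sqrt{\phi(\cdot-x_0)}\,(u-R_1)(t_{n_k},\cdot+\bz_2(t_{n_k}))\big\|_{H^1}^2 ,
\]
and Lemma~\ref{lemm:decay_u_2} with $\theta_1=0$ bounds the right-hand side by $Ce^{-\frac14\sqrt{\underline{c}}x_0}$, which is the claim.

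For the second estimate, the first observation is that the constant in the estimate just proved does not depend on $x_0$; since $\phi(s)\ge \frac{1}{\pi} e^{\frac14\sqrt{\underline{c}}s}$ for $s\le 0$, this gives $\int_{\{x<x_0\}}(|\nabla\tilde{u}_{0,2}|^2+\tilde{u}_{0,2}^2)\,e^{\frac14\sqrt{\underline{c}}x}\,d\bx \lesssim e^{\frac14\sqrt{\underline{c}}x_0}\int(|\nabla\tilde{u}_{0,2}|^2+\tilde{u}_{0,2}^2)\,\phi(x-x_0)\,d\bx \lesssim 1$ uniformly in $x_0>0$, and letting $x_0\to+\infty$ (monotone convergence) shows that $K(0):=\int(|\nabla\tilde{u}_{0,2}|^2+\tilde{u}_{0,2}^2)\,e^{\frac14\sqrt{\underline{c}}x}\,d\bx$ is finite, i.e.\ the bound holds at $t=0$. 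To propagate it, I would introduce for $N\ge1$ a smooth increasing truncation $\Phi_N$ equal to $e^{\frac14\sqrt{\underline{c}}x}$ for $x\le N$, constant for $x\ge N+1$, with $0\le\Phi_N'\le\frac{\sqrt{\underline{c}}}{4}\Phi_N$ and $|\Phi_N'''|\le \underline{c}\,\Phi_N'$ uniformly in $N$, and then differentiate the weighted mass and energy of $\tilde{u}_2$ along the flow (on smooth solutions first, then by approximation). Using the Sobolev embedding $H^1(\mathbb{R}^2)\hookrightarrow L^3(\mathbb{R}^2)$ to control the cubic terms, the bounds on $\Phi_N',\Phi_N'''$, and the orbital bound $\|\tilde{u}_2(t)\|_{H^1}\le C(\underline{c},\bar{c})$ — the computation being the one carried out in detail in Lemma~4.5 of \cite{CMPS16} — one obtains
\[
\frac{d}{dt}\int\big(|\nabla\tilde{u}_2|^2+\tilde{u}_2^2\big)(t)\,\Phi_N\,d\bx \le C\Big(\int\big(|\nabla\tilde{u}_2|^2+\tilde{u}_2^2\big)(t)\,\Phi_N\,d\bx+1\Big)
\]
with $C=C(\underline{c},\bar{c})$ independent of $N$. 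Grönwall's inequality then gives $\int(|\nabla\tilde{u}_2|^2+\tilde{u}_2^2)(t)\,\Phi_N\,d\bx\le e^{Ct_0}(K(0)+1)$ for all $t\in[0,t_0]$, and letting $N\to+\infty$ concludes with $K(t_0):=e^{Ct_0}(K(0)+1)$. I do not expect a genuine obstacle here; the only slightly delicate points — both already handled in \cite{CMPS16} — are keeping the constant in the first estimate uniform in $x_0$ (which is what upgrades the polynomially-in-$x_0$ weighted bound to the exponentially weighted $H^1$ bound at $t=0$) and keeping the Grönwall constant independent of the truncation parameter $N$, the resulting $K(t_0)$ necessarily blowing up as $t_0\to+\infty$, which is why the estimate is stated only on finite time intervals.
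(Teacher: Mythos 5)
Your proposal is correct and follows essentially the same route as the paper: the first bound is obtained by weak lower semicontinuity of the weighted $H^1$-norm combined with Lemma \ref{lemm:decay_u_2} (taken at $\theta_1=0$ along the subsequence $t_{n_k}\to+\infty$), and the second by the Grönwall argument on the weighted mass and energy as in Lemma 4.5 of \cite{CMPS16}. The extra details you supply (truncated weights $\Phi_N$, uniformity in $x_0$ to get the $t=0$ bound) are exactly what the paper's citation of \cite{CMPS16} is standing in for.
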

By weak convergence \eqref{eq:weak_conv_1}, it holds for any $x_0>0$
\begin{align*}
\left\| \tilde{u}_{0,2} \sqrt{\phi (\cdot-x_0)} \right\|_{H^1} \leq \liminf_{k} \left\| \left( u - R_1 \right) \left(  t_{n_k}, \cdot +  \bz_2(t_{n_k}) \right)  \sqrt{\phi (\cdot-x_0)} \right\|_{H^1}.
\end{align*} 
We conclude the first part of the lemma gathering the previous inequality with Lemma \ref{lemm:decay_u_2}. 

The second estimate in Lemma \ref{lemm:step3_2} is obtained by a Gronwall argument arguing exactly as in Lemma \ref{lemm:step3_1}.

\vspace{0.5cm}
\noindent \textit{Fourth step: Strong $L^2$ convergence of $\left( u - R_1 \right) (t_{n_k}+t, \cdot+ \bz_2(t_{n_k}))$ to $\tilde{u}_2(t)$ on the right.}

\begin{lemm}\label{lemm:step4_2}
	For any $B>0$ and $t \in \mathbb{R}$, it holds
	\begin{align*}
		\left( u -R_1 \right) \left( t_{n_k}+t, \cdot + \bz_2(t_{n_k}) \right) - \tilde{u}_2(t) \underset{k \rightarrow + \infty}{\rightarrow} 0 \quad \text{in} \quad L^2(x>-B)
	\end{align*}
	and for any $t\in \mathbb{R}$
	\begin{align*}
		\left( u -R_1 \right) \left( t_{n_k}+t, \cdot + \bz_2(t_{n_k}) \right) - \tilde{u}_2(t) \underset{k \rightarrow + \infty}{\rightharpoonup} 0 \quad \text{in} \quad H^1(\mathbb{R}^2).
	\end{align*}
	Moreover, with $\tilde{\bz}_2=(\tilde{z}_2, \tilde{\omega}_2)$ the $\mathcal{C}^1$-function associated to the modulation parameters of $\tilde{u}_2$, it holds
	\begin{align*}
		\bz_2(t+t_{n_k}) - \bz_2(t_{n_k}) \underset{k \to +\infty}{\rightarrow}\tilde{\bz}_2(t) \quad \text{and} \quad \tilde{\bz}_2(0)=0.
	\end{align*}
\end{lemm}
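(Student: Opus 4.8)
The plan is to run the argument of Lemma \ref{lemm:step4.1.1} (its analogue around the first solitary wave), the only genuinely new feature being that the first solitary wave $R_1$ is not an exact solution of \eqref{ZK} and must be tracked. For $k$ large and $t$ in a fixed interval $[0,t_0]$, set $S_k(t,\bx):=R_1\big(t+t_{n_k},\bx+\bz_2(t_{n_k})\big)$ and
\begin{align*}
v_{k,2}(t,\bx):=\big(u-R_1\big)\big(t+t_{n_k},\bx+\bz_2(t_{n_k})\big)-\tilde u_2(t,\bx),
\end{align*}
so that $v_{k,2}(0)=v_{k,2,0}\to 0$ in $L^2(x>-B)$ by the second step. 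Since $u(\cdot+t_{n_k},\cdot+\bz_2(t_{n_k}))$ and $\tilde u_2$ solve \eqref{ZK} while, by \eqref{eq:ground_state}, the modulated wave satisfies $\partial_tR_1+\partial_x(\Delta R_1+R_1^2)=\dot c_1\Lambda_1R_1+(c_1-\dot z_1)\partial_xR_1-\dot\omega_1\partial_yR_1=:\mathcal E_1$, the function $v_{k,2}$ obeys
\begin{align*}
\partial_tv_{k,2}+\partial_x\Delta v_{k,2}+\partial_x\big((v_{k,2}+2\tilde u_2)v_{k,2}\big)=-2\partial_x(S_kv_{k,2})-2\partial_x(S_k\tilde u_2)-\mathcal E_1\big(\cdot+t_{n_k},\cdot+\bz_2(t_{n_k})\big),
\end{align*}
i.e.\ the ZK difference operator for a solution close to $\tilde u_2$, with a right-hand side made only of terms carrying $S_k$.

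Next I would compute the time derivative of the weighted mass with the fixed weight $\phi$ of \eqref{defi:phi}, handling the cubic and commutator terms involving $v_{k,2}$ and $\tilde u_2$ exactly as in \eqref{eq:kato_v_k1} and in Lemmas \ref{lemm:monotonicity_2}--\ref{lemm:decay_u_2}, which gives a Kato-type inequality
\begin{align*}
\frac{d}{dt}\int v_{k,2}^2(t,\bx)\phi(x)\,d\bx+\int|\nabla v_{k,2}(t,\bx)|^2\phi'(x)\,d\bx\le\hat K(t_0)\int v_{k,2}^2(t,\bx)\phi(x)\,d\bx+\rho_k(t),
\end{align*}
where $\rho_k(t)$ gathers all the contributions of the terms carrying $S_k$. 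Granting that $\int_0^{t_0}\rho_k(t)\,dt\to 0$ as $k\to+\infty$ (see below), a Gr\"onwall argument on $[0,t_0]$ together with the weighted $H^1$-bound for $\tilde u_2$ from Lemma \ref{lemm:step3_2} and the second step give $v_{k,2}(t)\to 0$ in $L^2(x>-B)$ for every $t\ge0$; the case $t<0$ follows by running the same argument from a negative reference time and invoking uniqueness for \eqref{ZK}, and the weak $H^1$ convergence then follows from uniqueness of the weak limit and the boundedness of $(u-R_1)(t+t_{n_k},\cdot+\bz_2(t_{n_k}))$ in $H^1$.

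The core of the argument, and the step I expect to be the main obstacle, is the claim $\int_0^{t_0}\rho_k(t)\,dt\to0$, i.e.\ that the first solitary wave decouples from the window around the second one. The key fact is that $S_k$ is concentrated, in the $x$-variable, near $d_k(t):=z_1(t+t_{n_k})-z_2(t_{n_k})$, and $d_k(t)\to+\infty$ uniformly for $t\in[0,t_0]$ by \eqref{defi:z} and \eqref{eq:bound_dot:z_omega_i_t}. Then: the interaction $\|S_k\tilde u_2\|_{L^2}$ is controlled by combining the exponential decay \eqref{asym:Q} of $R_1$ away from $d_k(t)$ with the exponential decay of $\tilde u_2$ on the right from Lemma \ref{lemm:step3_2}, hence tends to $0$; the transport term $2\partial_x(S_kv_{k,2})$, tested against $v_{k,2}\phi$, is bounded, after splitting at $x=\tfrac12d_k(t)$, either by the exponentially small sup of $S_k$ on $\{x<\tfrac12d_k(t)\}$ or by $\|v_{k,2}(t)\|_{L^2(x>\frac12d_k(t))}$ (times bounded quantities), and the same splitting handles $\int v_{k,2}\,\mathcal E_1(\cdot+\bz_2(t_{n_k}))\phi$ once one notes $\|\mathcal E_1(t+t_{n_k},\cdot)\|_{L^2}\lesssim|\dot c_1|+|c_1-\dot z_1|+|\dot\omega_1|\lesssim\beta$ by \eqref{eq:ODE}. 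Thus it remains to check that $\|v_{k,2}(t)\|_{L^2(x>\frac12d_k(t))}\to0$, which is exactly the smallness of $u-R_1$ far to the right of the second solitary wave — the content of Lemma \ref{lemm:decay_u_2}, used with a cut-off $x_0$ sent to $+\infty$ via a standard $\varepsilon$-argument — together with the decay of $\tilde u_2$ from Lemma \ref{lemm:step3_2}.

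Finally, the convergence $\bz_2(t+t_{n_k})-\bz_2(t_{n_k})\to\tilde\bz_2(t)$ and the normalization $\tilde\bz_2(0)=0$ are obtained, exactly as \eqref{eq:step4.1.3}, by passing to the limit $k\to+\infty$ in the orthogonality relations \eqref{eps:ortho} for $R_2$ rewritten in the frame translated by $\bz_2(t_{n_k})$: the $L^2(x>-B)$ convergence just proved, the local uniform boundedness of $\bz_2(\cdot+t_{n_k})-\bz_2(t_{n_k})$ and of $c_2(\cdot+t_{n_k})$, and the uniqueness of the modulation decomposition of $\tilde u_2$ around one solitary wave identify the limit as $(\tilde\bz_2(t),\tilde c_2(t))$; since it is independent of the subsequence, the whole family converges, and at $t=0$ the relation $\bz_2(t_{n_k})-\bz_2(t_{n_k})=0$ gives $\tilde\bz_2(0)=0$.
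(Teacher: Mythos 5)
Your proposal is correct and follows essentially the same route as the paper: the paper defines the same error $v_{k,2}$, invokes a Gr\"onwall argument ``as in Theorem 3 (b) in \cite{MM08}'' to get the weighted-mass bound, and then refers back to the proof of Lemma \ref{lemm:step4.1.1}. You have simply spelled out the content of that Gr\"onwall step --- the source terms coming from the fact that $R_1$ is only an approximate solution, and their decoupling via the growing separation $d_k(t)$, Lemma \ref{lemm:decay_u_2} and the decay of $\tilde u_2$ --- which is precisely what the paper leaves implicit.
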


Let us define
  \begin{align}\label{defi:v_k2}
      v_{k,2}(t,\bx) := \left( u - R_1 \right) \left(t+t_{n_k},\bx+ \bz_2(t_{n_k}) \right) -\tilde{u}_2(t,\bx).
  \end{align}
  We argue by using a Gronwall argument as in the proof of Theorem 3 (b) in \cite{MM08} to obtain that
  \begin{align*}
			\sup_{t \in [0,t_0]}\int_{\mathbb{R}^2} v_{k,2}(t,\bx)^2 \phi (x) d\bx \leq \tilde{K}(t_0) \int_{\mathbb{R}^2} v_{k,2}(0,\bx)^2 \phi (x) d\bx.
		\end{align*} 
  The rest of the proof is then similar to the proof of Lemma \ref{lemm:step4.1.1}.

\vspace{0.5cm}
Since the next steps are similar to the ones around the first rescaled limit object, we state the lemmas around the second limit object without giving the proofs. 

\vspace{0.5cm}
\noindent \textit{Fifth step: Exponential decay of $\tilde{u}_{2}$ on the right.}

\begin{lemm}
	For all $t \in \mathbb{R}$ and $x_0>0$, it holds
	\begin{align*}
		\int \left( \vert \nabla \tilde{u}_2 \vert^2 + \tilde{u}_2^2 \right) \left(t, \bx +\tilde{\bz}_2(t) \right) \phi (x-x_0) d\bx \lesssim e^{-\frac14 \sqrt{\underline{c}}x_0}.
	\end{align*}
	Furthermore, for any $t>0$ and $x>0$,
	\begin{align*}
		\int_y \tilde{u}_2 \left(t, \bx+ \tilde{\bz}_2(t) \right) ^2 dy \lesssim e^{-\frac14 \sqrt{\underline{c}}x}.
	\end{align*}
\end{lemm}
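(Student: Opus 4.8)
The plan is to adapt the proof of the fifth step around the first solitary wave (Lemma~\ref{lemm:step5_1}), replacing the first-wave ingredients by their second-wave counterparts: the monotonicity estimate of Lemma~\ref{lemm:decay_u_2} plays the role of \eqref{eq:limsup_T1}, while the weak $H^1$-convergence and the translation convergence of Lemma~\ref{lemm:step4_2} play the role of \eqref{eq:step4.1.2}.

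For the first estimate, I would fix $t \in \mathbb{R}$ and $x_0 > 0$ and apply Lemma~\ref{lemm:decay_u_2} with $\theta_1 = 0$, so that the weight reduces to $\phi(\cdot - x_0)$, which depends on $x$ only. Evaluating along the times $t + t_{n_k} \to +\infty$ and rewriting $(u - R_1)(t + t_{n_k}, \cdot + \bz_2(t+t_{n_k}))$ in terms of the recentering by $\bz_2(t_{n_k})$ — which, by Lemma~\ref{lemm:step4_2}, amounts in the limit $k \to +\infty$ to a translation by $\tilde{\bz}_2(t) = \lim_k \left( \bz_2(t+t_{n_k}) - \bz_2(t_{n_k}) \right)$ — one passes to the $\liminf$ in the corresponding weighted $H^1$-norms. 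Using that multiplication by $\sqrt{\phi(\cdot - a)}$ is a bounded operator on $H^1(\mathbb{R}^2)$ (as $\phi$ and $\phi'$ are bounded), the weak convergence $(u - R_1)(t + t_{n_k}, \cdot + \bz_2(t_{n_k})) \rightharpoonup \tilde{u}_2(t)$ in $H^1$ and weak lower semicontinuity of the $H^1$-norm then yield
\begin{align*}
\int \left( |\nabla \tilde{u}_2|^2 + \tilde{u}_2^2 \right)\left(t, \bx + \tilde{\bz}_2(t)\right) \phi(x - x_0)\, d\bx \lesssim e^{-\frac14 \sqrt{\underline{c}}\, x_0},
\end{align*}
which is the first bound.

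The second estimate follows from the first one by the one-dimensional Sobolev argument of Lemma~4.7 in \cite{CMPS16}. I would fix $t$ and set $h(x) := \big( \int_y \tilde{u}_2^2(t, \bx + \tilde{\bz}_2(t))\, dy \big)^{1/2}$, noting that $h \in H^1(\mathbb{R})$ with $|h'(x)| \le \big( \int_y |\partial_x \tilde{u}_2|^2(t, \bx + \tilde{\bz}_2(t))\, dy \big)^{1/2}$ almost everywhere, so that the first estimate gives $\int_{\mathbb{R}} (h^2 + |h'|^2)(x)\, \phi(x - a)\, dx \lesssim e^{-\frac14 \sqrt{\underline{c}}\, a}$ for every $a > 0$. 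Since $\phi$ is increasing with $\phi(0) = \tfrac12$ and $h$ vanishes at $+\infty$, for $x > 0$ the fundamental theorem of calculus gives
\begin{align*}
\int_y \tilde{u}_2^2\left(t, \bx + \tilde{\bz}_2(t)\right) dy = h^2(x) \le \int_x^{+\infty} (h^2 + |h'|^2)(x')\, dx' \le 2 \int_{\mathbb{R}} (h^2 + |h'|^2)(x')\, \phi(x' - x)\, dx' \lesssim e^{-\frac14 \sqrt{\underline{c}}\, x},
\end{align*}
as claimed. The only point requiring care is the bookkeeping in the first step: tracking the shift between the two recenterings, verifying via Lemma~\ref{lemm:step4_2} that it converges to $\tilde{\bz}_2(t)$, and checking the lower semicontinuity of the weighted $H^1$-seminorm along the weak convergence provided there. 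Everything else is a direct adaptation of the argument for the first solitary wave, so no genuinely new idea is needed.
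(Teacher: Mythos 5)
Your proposal is correct and follows essentially the same route as the paper, which omits this proof precisely because it is the second-wave analogue of the fifth step around the first solitary wave: the first bound comes from the monotonicity/decay estimate of Lemma \ref{lemm:decay_u_2} combined with the weak $H^1$-convergence and translation convergence of Lemma \ref{lemm:step4_2} via weak lower semicontinuity of the weighted norm, and the second bound is the one-dimensional Sobolev/Cauchy--Schwarz argument of Lemma 4.7 in \cite{CMPS16}. Your bookkeeping of the shift $\bz_2(t+t_{n_k})-\bz_2(t_{n_k})\to\tilde{\bz}_2(t)$ is exactly the point that needs care, and you handle it correctly.
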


\vspace{0.5cm}
\noindent \textit{Sixth step: Strong $H^1$-convergence of $\left( u - R_1 \right) \left(t_{n_k}+t, \cdot + \bz_2(t_{n_k}) \right)$ to $\tilde{u}_{2}(t)$ on the right.}

\begin{lemm}
	For any $t\in \mathbb{R}$ and $B>0$, it holds
	\begin{align*}
		\left(u -R_1 \right) \left( t_{n_k}+t, \cdot + \bz_2(t_{n_k}) \right) - \tilde{u}_{2}(t) \underset{n \rightarrow + \infty}{\rightarrow} 0 \quad \text{in} \quad H^1(x>-B).
	\end{align*}
\end{lemm}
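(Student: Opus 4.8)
The plan is to mimic the sixth step in the proof of Proposition~\ref{propo:limit_object_1} (which itself follows Lemma~4.8 in \cite{CMPS16}), the only genuinely new feature being the presence of the first solitary wave $R_1$, whose contributions must be shown to be exponentially small thanks to the separation $z(t)=z_1(t)-z_2(t)\ge\tfrac12(Z+\sigma t)$ guaranteed by \eqref{defi:z}. As in the fourth step, it suffices to establish the strong $H^1(x>-B)$ convergence for an arbitrary fixed positive time $t_0$; the case $t<0$ then follows by running the same argument backward in time on the sequence $(u-R_1)(t_{n_k}+\tilde t,\cdot+\bz_2(t_{n_k}))$ and invoking uniqueness for the Cauchy problem. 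Throughout one works with $v_{k,2}$ from \eqref{defi:v_k2} and uses that Lemma~\ref{lemm:step4_2} already provides $\sup_{t\in[0,t_0]}\int v_{k,2}^2(t,\bx)\,\phi(x)\,d\bx\to0$ as $k\to\infty$.

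First I would write down the equation satisfied by $v_{k,2}$, obtained by subtracting from the ZK equation for $u(t+t_{n_k},\cdot+\bz_2(t_{n_k}))$ the equation for $\tilde u_2$ and the equation for the retranslated modulated wave $R_1$. The resulting forcing terms involve either the small modulation factors $|\dot c_1|+|\dot z_1-c_1|+|\dot\omega_1|$ controlled by \eqref{eq:ODE}, or bilinear interactions between $R_1$ and $v_{k,2}$, $\tilde u_2$; since the weight $\phi(x)$ and its derivative $\phi'(x)$ localize near $x=0$, that is near the center $\bz_2$ of the second wave, while $R_1$ sits at distance $z(t+t_{n_k})\ge\tfrac12(Z+\sigma t_{n_k})$, the exponential decay \eqref{asym:Q} of $Q$ bounds all of them, uniformly on $[0,t_0]$, by a quantity tending to $0$ as $k\to\infty$. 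Integrating the Kato-type identity for the weighted mass of $v_{k,2}$ (the analogue of \eqref{eq:kato_v_k1}) over $[t_0-1,t_0]$ and combining with the $L^2$ convergence above then yields, exactly as in \eqref{eq:step6_small_int}, that $\int_{t_0-1}^{t_0}\int|\nabla v_{k,2}|^2(t,\bx)\,\phi'(x)\,d\bx\,dt\to0$.

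Next I would differentiate in time the weighted energy $\int\big(\tfrac12|\nabla v_{k,2}|^2+\tfrac13 v_{k,2}^3\big)(t,\bx)\,\phi(x)\,d\bx$, which produces a bound of the form $K_2(t_0)\int\big(|\nabla v_{k,2}|^2+v_{k,2}^2\big)(t,\bx)\,\phi(x)\,d\bx$ plus an $R_1$-contribution $\eta_k(t_0)\to0$ estimated as in the previous paragraph. A first integration from $t\in[t_0-1,t_0]$ up to $t_0$ controls $\int|\nabla v_{k,2}|^2(t_0,\bx)\,\phi(x)\,d\bx$ in terms of $\int|\nabla v_{k,2}|^2(t,\bx)\,\phi(x)\,d\bx$ plus lower-order terms, and a second integration in $t$ over $[t_0-1,t_0]$, fed by the two convergences already obtained together with Lemma~\ref{lemm:step4_2}, forces $\int|\nabla v_{k,2}|^2(t_0,\bx)\,\phi(x)\,d\bx\to0$. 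Combined with the $L^2$ convergence this gives $v_{k,2}(t_0)\to0$ in $H^1(x>-B)$, and since $t_0>0$ is arbitrary the lemma follows.

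The main obstacle I anticipate is precisely the bookkeeping of the extra terms arising from working with $u-R_1$ rather than $u$: both the mass and the energy identities acquire interaction terms between $R_1$ and $v_{k,2}$ (and between $R_1$ and $\tilde u_2$) that are absent in the one-soliton situation, and the whole argument hinges on showing these are negligible via the lower bound $z(t)\ge\tfrac12(Z+\sigma t)$ together with \eqref{asym:Q}, in the spirit of the one-dimensional two-soliton analysis of \cite{MMT02,MM08}.
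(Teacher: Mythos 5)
Your proposal follows essentially the same route as the paper, which states this lemma without proof and refers to the sixth step around the first solitary wave (the Kato identity for the weighted mass of $v_{k,2}$, the weighted energy identity, and the two successive time integrations on $[t_0-1,t_0]$), with the additional $R_1$-interaction terms absorbed exactly as you describe via \eqref{asym:Q}, \eqref{eq:ODE} and the separation \eqref{defi:z}, in the spirit of the Gronwall argument of \cite{MM08} already invoked in the fourth step. No substantive discrepancy.
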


\vspace{0.5cm}
\noindent \textit{Seventh step: Exponential decay of $\tilde{u}_2$ on the left.}

\begin{lemm}
	We have that, for any $t\in \mathbb{R}$ and $x_0>0$,
	\begin{align*}
		\int \tilde{u}_2^2\left(t,\bx + \tilde{\bz}_2(t) \right) \left( 1- \phi (x+x_0) \right) d\bx \lesssim e^{-\frac14 \sqrt{\underline{c}}x_0}.
	\end{align*}
	Furthermore, for any $t\in \mathbb{R}$ and $x<0$,
	\begin{align*}
		\int_y \tilde{u}_2^2\left(t,\bx+ \tilde{\bz}_2(t) \right) dy \lesssim e^{- \frac14 \sqrt{\underline{c}}x}.
	\end{align*}
\end{lemm}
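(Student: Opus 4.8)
The plan is to mirror the seventh step in the proof of Proposition~\ref{propo:limit_object_1} (namely Lemma~\ref{lemm:mono_left}), which itself follows Lemma~4.10 in~\cite{CMPS16}: one combines a monotonicity estimate for a weighted mass of $u$ localized to the \emph{left} of the second solitary wave with the strong $H^1(x>-B)$-convergence of $(u-R_1)(t_{n_k}+t,\cdot+\bz_2(t_{n_k}))$ towards $\tilde{u}_2(t)$ established in the sixth step, together with the convergence $\bz_2(t+t_{n_k})-\bz_2(t_{n_k})\to\tilde{\bz}_2(t)$. Since $\tilde{u}_2$ is extracted from $u-R_1$ while $z(t)=z_1(t)-z_2(t)\to+\infty$ by~\eqref{eq:boundz}, the contribution of $R_1$ on the relevant region is always $O(e^{-\frac14\sqrt{\underline{c}}x_0})$; in particular \emph{no} further solitary wave lies to the left of the second one, so the geometry here is strictly simpler than in Lemma~\ref{lemm:limsup2_mass}.

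Concretely, I would first establish, for any $x_0>0$, the bound
\[
\limsup_{t\to+\infty}\int\big(|\nabla u|^2+u^2\big)\big(t,\bx+\bz_2(t)\big)\,\big(1-\phi(x+x_0)\big)\,d\bx\lesssim e^{-\frac14\sqrt{\underline{c}}\,x_0},
\]
with $\phi$ as in~\eqref{defi:phi}, by running the monotonicity of Lemma~\ref{lemm:monotonicity_mass} (and of the associated energy functional, as in Lemma~\ref{lemm:monotonicity_2}) on the weighted mass $\int u^2(t,\bx)\,\phi\big(x-z_2(t)+\mu(t-t_0)+x_0\big)\,d\bx$: for a well-chosen small drift $\mu>0$ (of size comparable to $\underline{c}$) the level line of this weight sits at distance $x_0$ to the left of $z_2(t_0)$ at the reference time $t_0$ and moves, as time increases, further to the left of the second solitary wave while staying ahead of the slow region $\{x<\frac1{100}\underline{c}t\}$ where the radiation lives; the admissibility of this choice follows from~\eqref{eq:bound_dot:z_omega_i_t},~\eqref{eq:bound_c_i_t} and~\eqref{eq:boundz} exactly as in the earlier monotonicity lemmas. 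On the support of the derivative of the weight both $R_1$ and $R_2$ are exponentially small, so the interaction terms decay like $e^{-\frac14\sqrt{\underline{c}}x_0}$ as in~\eqref{sol_wave_weight} and the cubic term is controlled as in~\eqref{eq:mono_nonlinear_1}--\eqref{eq:mono_nonlinear_2} and~\eqref{eq:cubic_term_2}, yielding the desired almost-monotonicity.

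Passing to the limit along $\{t_{n_k}\}$ in this estimate and using the sixth step, together with the already available right-side decay of $\tilde{u}_2$ (fifth step) and the orbital closeness $\|\tilde{u}_2(t,\cdot+\tilde{\bz}_2(t))-Q_{c_2^0}\|_{H^1}\lesssim\beta$, one obtains $\int\tilde{u}_2^2(t,\bx+\tilde{\bz}_2(t))\,(1-\phi(x+x_0))\,d\bx\lesssim e^{-\frac14\sqrt{\underline{c}}x_0}$ for all $t\in\mathbb{R}$, and similarly with $|\nabla\tilde{u}_2|^2$ in place of $\tilde{u}_2^2$; the pointwise-in-$x$ estimate then follows from the embedding $H^1(\mathbb{R})\hookrightarrow L^\infty(\mathbb{R})$ and the Cauchy--Schwarz inequality exactly as in the fifth step. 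The main technical point, as in~\cite{CMPS16}, is to fix the drift $\mu$ and the time interval on which the monotonicity is run so that the level line of the weight simultaneously stays at distance $\gtrsim x_0$ to the left of $z_2(t)$ and ahead of the radiation, with the interaction terms with both $R_1$ and $R_2$ decaying at the stated exponential rate and the $\|\epsilon\|^2$-type errors coming from the initial excess mass not spoiling it (handled, as in~\cite{CMPS16}, by taking the reference time large so that the soliton region lies well inside the ``right mass'' and the right-side decay applies there); in the absence of a solitary wave to the left of the second one this bookkeeping is lighter than the one carried out in the proof of Lemma~\ref{lemm:limsup2_mass}.
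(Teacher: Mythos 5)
Your high-level plan (mirror the seventh step around the first wave, i.e.\ Lemma \ref{lemm:mono_left} and Lemma 4.10 of \cite{CMPS16}) is the right one, but the concrete intermediate claim you propose to establish is false, and the monotonicity you invoke cannot produce it. You claim
\begin{equation*}
\limsup_{t\to+\infty}\int\big(|\nabla u|^2+u^2\big)\big(t,\bx+\bz_2(t)\big)\,\big(1-\phi(x+x_0)\big)\,d\bx\lesssim e^{-\frac14\sqrt{\underline{c}}\,x_0}.
\end{equation*}
This is a statement about the \emph{solution} $u$ to the left of the second solitary wave, and it fails: the region $\{x<z_2(t)-x_0\}$ eventually contains all the dispersive radiation shed by the solution, whose $L^2$ mass is of order $\alpha$ uniformly in $x_0$, so the left-hand side cannot be $\lesssim e^{-\frac14\sqrt{\underline{c}}x_0}$ once $x_0\gg|\ln\alpha|$. (This is precisely why the right-side analogue, Lemma \ref{lemm:decay_u_1}, holds for $u$ but no left-side analogue does.) Moreover, the almost monotonicity of $J(t):=\int u^2\,\phi(x-z_2(t)+\mu(t-t_0)+x_0)\,d\bx$ gives $J(t)\le J(t_0)+Ce^{-\frac14\sqrt{\underline{c}}x_0}$ for $t\ge t_0$, i.e.\ an \emph{upper} bound on the mass to the right of the drifting line and hence only a \emph{lower} bound on the complementary left mass; it points in the wrong direction for what you want to prove about $u$.

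What decays on the left is not $u$ but the limit object $\tilde{u}_2$, and the argument must be run between two large times along the extraction sequence. Concretely: anchor the line at $z_2(t_{n_k}+t)-x_0$ at time $t_{n_k}+t$ and let it drift leftward relative to the second wave (while staying to the right of the radiation region, as you correctly arrange); the almost monotonicity between $t_{n_k}+t$ and $t_{n_{k'}}+t$, $k'\gg k$, bounds the mass of $u$ to the right of the line at the later time — where the line sits at distance $D_{k,k'}\to\infty$ to the left of $\bz_2(t_{n_{k'}}+t)$ — by $\int u^2(t_{n_k}+t,\cdot+\bz_2(t_{n_k}+t))\,\phi(x+x_0)\,d\bx+Ce^{-\frac14\sqrt{\underline{c}}x_0}$. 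Letting $k'\to\infty$ with weak lower semicontinuity of the weighted $L^2$ norm (using \eqref{eq:boundz} to discard $R_1$) yields $\int\tilde{u}_2^2(t,\cdot+\tilde{\bz}_2(t))\,\phi(x+D)\,d\bx\le \int (u-R_1)^2(t_{n_k}+t,\cdot+\bz_2(t_{n_k}+t))\,\phi(x+x_0)\,d\bx+Ce^{-\frac14\sqrt{\underline{c}}x_0}$ for every $D$, hence the same bound for the full mass $\int\tilde{u}_2^2$; the strong local convergence of Step 6 (Lemma \ref{lemm:step4_2} upgraded to $H^1(x>-B)$) then identifies the right-hand integral with $\int\tilde{u}_2^2(t,\cdot+\tilde{\bz}_2(t))\,\phi(x+x_0)\,d\bx$ up to $o_k(1)+Ce^{-\frac14\sqrt{\underline{c}}x_0}$, and subtracting gives the stated left decay of $\tilde{u}_2$. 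The crucial point your write-up misses is that the radiation, already far to the left of $\bz_2(t_{n_k}+t)$ at the reference time, contributes nothing to $\int u^2\phi(x+x_0)$ there and can never re-enter by monotonicity, whereas it does contribute massively to $\int u^2(1-\phi(x+x_0))$ at any single large time. The remaining ingredients of your proposal (choice of the drift, control of the interaction and cubic terms, and the passage from the $L^2$ left decay to the pointwise-in-$x$ bound via Cauchy--Schwarz and the global $H^1$ bound) are fine once this restructuring is made.
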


\section{The three dimensional case} \label{Sec:3d_case}

In this section, we explain how to prove Theorem \ref{theo:orbital_stab} in the case of the dimension $d=3$. The proof of the orbital stability is exactly the same as in the $2$-dimensional. Hence we will focus on the proof of the asymptotic stability. We introduce the notation $\bx=(x,\by)=(x,y_1,y_2) \in \mathbb R^3$. 

The proof is similar to the one in dimension $d=2$ in the former section.  The main difference is that the Liouville theorem proved in \cite{FHRY23} requires a slightly stronger decay assumption than the one in the $2$-dimensional case stated in  Theorem \ref{theo:nonlinear_liouville}. Let us recall the rigidity result of Proposition 2.7 in \cite{FHRY23}.

\begin{theo}
Let $0<\underline{c}< \tilde{c} < \bar{c}$. There exists $\alpha_{12}=\alpha_{12}(\underline{c}, \bar{c})$ and $A_{12}=A_{12}(\underline{c}, \bar{c})$ such that the following is true. Suppose that for $0< \alpha <\alpha_{12}$, $\tilde{v}\in \mathcal{C}(\mathbb{R}: H^1(\mathbb{R}^3))$ is a solution to \ref{ZK}$_{d=3}$ satisfying for some function $\bz_{\tilde{v}}(t) \in \mathbb{R}^3$,
\begin{align*}
    \forall t \in \mathbb{R}, \quad \left\| \tilde{v}\left( \cdot + \bz_{\tilde{v}}(t) \right) - Q_{\tilde{c}} \right\|_{H^1(\mathbb{R}^3)} < \alpha.
\end{align*}
By denoting $\tilde{c},\tilde{\bz}$ the modulation parameters, if $\tilde{v}$ satisfies the spatial decay property: for each $k \ge 0$, 
\begin{align} \label{decay:3d}
    \sup_{t \in \mathbb{R}} \int_{\bx \in \mathbb{R}^3} \langle \bx \rangle^{2k} \tilde{v}^2(t, \bx + \tilde{\bz}(t)) d\bx <\infty,
\end{align}
then there exists $\left\vert \check{c} -\tilde{c} \right\vert <A_{12} \alpha $ and $\check{\bz} \in \mathbb{R}^3$ such that
\begin{align*}
    \tilde{v}(t,\bx) = Q_{\check{c}} \left( \bx - \check{\bz} - \check{c} t \beone \right).
\end{align*}
\end{theo}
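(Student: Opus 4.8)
The statement is a rigidity (Liouville) property, and the plan is to follow the strategy of Martel and Merle as implemented in dimension three in \cite{FHRY23}. First I would set up modulation theory around $Q_{\tilde c}$: writing $\tilde v(t,\cdot+\tilde{\bz}(t))=Q_{\tilde c(t)}+\eta(t)$ with orthogonality conditions imposed on $\eta(t)$ against a well-chosen finite family of functions (in \cite{FHRY23} these conditions differ from the ones used in \cite{CMPS16} in dimension two), one obtains $\|\eta(t)\|_{H^1}\lesssim\alpha$ uniformly in $t$, together with differential bounds showing that $|\dot{\tilde c}(t)|$ and $|\dot{\tilde{\bz}}(t)-\check c\,\beone|$ are controlled by a localized $L^2$-norm of $\eta(t)$, quadratically up to exponentially small terms.

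Second, I would record that the spatial decay hypothesis \eqref{decay:3d}, being uniform in time, makes the trajectory $\{\tilde v(t,\cdot+\tilde{\bz}(t))\}_{t\in\mathbb R}$ relatively compact in $L^2(\mathbb R^3)$; this is the additional input, stronger than in dimension two, that is needed in order to handle the polynomially growing weights appearing in the virial argument below.

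Third --- and this is the heart of the argument --- I would establish a virial/monotonicity estimate. One introduces a quadratic functional of the form $\mathcal J(t)=\langle\mathcal M\eta(t),\eta(t)\rangle$, where $\mathcal M$ involves a weight adapted to the transport direction $\beone$, and shows that along the flow
\begin{equation*}
\frac{d}{dt}\mathcal J(t)\le-\mu\int_{\mathbb R^3}\big(|\nabla\eta|^2+\eta^2\big)(t,\bx)\,e^{-|\bx|}\,d\bx+(\text{error terms}),
\end{equation*}
for some $\mu>0$, the error terms being absorbable by $\|\eta(t)\|_{H^1}^3$ and by contributions that \eqref{decay:3d} renders integrable in time. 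The delicate point is the coercivity of the quadratic form produced by the virial computation: in dimension three the sign condition exploited in \cite{CMPS16} breaks down, so one must use the alternative set of orthogonality conditions above and verify numerically the required spectral positivity of the associated linearized operator, exactly as in \cite{FHRY23}. I expect this step to be the main obstacle.

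Finally, integrating the differential inequality over $\mathbb R$ and using that $\mathcal J$ is bounded by orbital closeness, I would obtain $\int_{\mathbb R}\int_{\mathbb R^3}(|\nabla\eta|^2+\eta^2)e^{-|\bx|}\,d\bx\,dt<\infty$, hence $\|\eta(t_n)\|_{H^1(|\bx|<R)}\to0$ along some sequence $t_n\to+\infty$ for every $R$; combined with the $L^2$-compactness this upgrades to $\|\eta(t_n)\|_{L^2(\mathbb R^3)}\to0$. A second monotonicity argument (or a backward-uniqueness argument) then promotes this to $\eta\equiv0$ for all $t\in\mathbb R$. With $\eta\equiv0$ one has $\tilde v(t,\bx)=Q_{\tilde c(t)}(\bx-\tilde{\bz}(t))$; substituting into \eqref{ZK} and matching the scaling and translation generators forces $\tilde c(t)\equiv\check c$ constant with $|\check c-\tilde c|\lesssim\alpha$ and $\tilde{\bz}(t)=\check{\bz}+\check c\,t\,\beone$, which is the claimed conclusion.
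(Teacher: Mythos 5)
The first thing to note is that the paper does not prove this statement at all: it is quoted verbatim as an external input, namely Proposition 2.7 of \cite{FHRY23}, and is used as a black box in Section 5 (exactly as Theorem \ref{theo:nonlinear_liouville} is imported from \cite{CMPS16} in dimension two). So there is no internal proof to compare your proposal against; what can be assessed is whether your outline matches the known strategy of \cite{FHRY23}.

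At the level of a road map, your sketch identifies the right ingredients: modulation around $Q_{\tilde{c}}$, the role of the uniform decay hypothesis \eqref{decay:3d} in providing compactness and in taming polynomially weighted quantities, a virial estimate whose coercivity in dimension three requires orthogonality conditions different from those of \cite{CMPS16} together with a numerically verified spectral positivity, and a final monotonicity/rigidity step. However, as written it is an outline rather than a proof, and it also departs structurally from the actual argument in one respect worth flagging: the Martel--Merle scheme followed in \cite{FHRY23} does not run the virial estimate directly on the nonlinear error $\eta$ and integrate over $\mathbb{R}$. Instead, the nonlinear Liouville property is reduced, by a contradiction/renormalization argument (taking a sequence of solutions with $\alpha_n\to 0$ and extracting a nontrivial limit), to a \emph{linear} Liouville theorem for the linearized flow around $Q$, and it is for that linear problem that the virial functional (built on Martel's dual problem) and the numerically checked spectral condition are deployed. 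Your direct nonlinear version would additionally have to justify that the cubic and modulation error terms are genuinely absorbable by the localized quadratic quantity uniformly in time, which is precisely what the renormalization step is designed to avoid. Finally, the last step (``$\eta\equiv 0$ implies the conclusion'') needs the standard argument that constancy of the modulation parameters follows from the equation once the error vanishes; this is routine but should be stated. In short: the proposal is a plausible summary of the strategy of \cite{FHRY23}, but all of the substantive analysis --- the choice of orthogonality conditions, the construction of the virial operator, the spectral coercivity, and the reduction to the linear problem --- is deferred, so the proposal cannot be counted as a proof, and in any case the paper itself deliberately does not supply one.
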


The condition \eqref{decay:3d} is a slightly stronger than the condition \eqref{decay:2d} of Theorem \ref{theo:nonlinear_liouville}. Thus, when taking the limit profiles around the two solitary waves, we have to ensure that they verify condition \eqref{decay:3d}. We explain how to ascertain this condition for the profile $\tilde{u}_1$ around the first solitary wave. We state an equivalent of Proposition \ref{propo:limit_object_1} in the $3$-dimensional case.

       \begin{propo}\label{propo:limit_object_1_3d}
		Under the assumptions of Theorem \ref{theo:orbital}, by taking $k=k(\underline{c},\bar{c})>0$ smaller and $K=K(\underline{c},\bar{c})>0$ larger if necessary, for any increasing sequence $\{t_n\}_n\rightarrow +\infty$, there exists a subsequence $\{t_{n_k}\}_k$ and $\tilde{u}_{0,1}\in H^1(\mathbb{R}^3)$ such that, for any $B>0$,
		\begin{align*}
			u \left( t_{n_k}, \cdot + \bz_1(t_{n_k}) \right) \underset{k\rightarrow +\infty}{\rightarrow} \tilde{u}_{0,1} \quad \text{in} \quad H^1(x >-B).
		\end{align*}
		
		Moreover, the solution $\tilde{u}_1$ of \ref{ZK} with initial condition $\tilde{u}_1(0)= \tilde{u}_{0,1}$ satisfies for any $t \in \mathbb{R}$
		\begin{align}\label{eq:u_tilde1_bound_3d}
			\left\| \tilde{u}_1 \left(t,\cdot + \tilde{\bz}_1(t) \right) - Q_{c_1^0} \right\|_{H^1} \lesssim \beta
		\end{align}
		with $\beta$ defined in \eqref{defi:alpha_star} and for any $t \in \mathbb R$, $|\theta_0| < \frac{\pi}3$ and $j=1,2$,
		\begin{align}\label{eq:u_tilde1_decay_3d}
			\int \tilde{u}_1^2  \left( t, \bx + \tilde{\bz}_1(t) \right) e^{\frac14\sqrt{\underline{c}}|x+\tan{\theta_0}y_j|} d\bx \lesssim 1. 
		\end{align}
		where $\tilde{\bz}_1$ corresponds to the function given by the modulation theory around one solitary wave. 
	\end{propo}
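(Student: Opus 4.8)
The plan is to rerun the seven-step scheme of the proof of Proposition \ref{propo:limit_object_1}, now in $\mathbb R^3$, replacing the single oblique half-plane used there by the two families of oblique half-spaces adapted to the transverse directions $y_1$ and $y_2$, in the spirit of \cite{FHRY23}. First I would carry out the compactness step exactly as in dimension $2$: since $u(t_{n_k},\cdot+\bz_1(t_{n_k}))$ is bounded in $H^1(\mathbb R^3)$ and $c_1(t_{n_k})$ is bounded, one extracts a subsequence converging weakly in $H^1$ to some $\tilde u_{0,1}$ with $c_1(t_{n_k})\to\tilde c_{0,1}$; since $z(t)\to+\infty$, the second soliton disappears in the limit, so $\|\tilde u_{0,1}-Q_{\tilde c_{0,1}}\|_{H^1}\le\beta$. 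Global well-posedness in $H^1(\mathbb R^3)$, now by \cite{HeKin}, together with the orbital stability of a single solitary wave from \cite{deB96}, gives that the solution $\tilde u_1$ with $\tilde u_1(0)=\tilde u_{0,1}$ is global and stays $\beta$-close to a modulated ground state $Q_{\tilde c_1(t)}(\cdot-\tilde\bz_1(t))$ for all $t$, with $\tilde\bz_1=(\tilde z_1,\tilde\omega_{1,1},\tilde\omega_{1,2})$.

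The genuinely new ingredient is the monotonicity step on oblique half-spaces. For $j\in\{1,2\}$, $|\theta_0|<\frac\pi3$, $x_0>0$, $t_0>0$ and $t\in[0,t_0]$, writing $\omega_1=(\omega_{1,1},\omega_{1,2})$, I would set
\begin{align*}
\tilde x_j := x - z_1(t_0) - x_0 + \tfrac{\underline c}{2}(t_0-t) + \tan(\theta_0)\big(y_j-\omega_{1,j}(t_0)\big)
\end{align*}
and study $\tilde I_{x_0,t_0,\theta_0,j}(t)=\int u^2(t,\bx)\phi(\tilde x_j)\,d\bx$ and the analogous weighted energy, with $\phi$ as in \eqref{defi:phi}. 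Differentiating and using \eqref{ZK}, the transverse direction $y_{j'}$ with $j'\neq j$ contributes a favorably signed term $(\partial_{y_{j'}}u)^2\phi'$, the cross term $2\tan(\theta_0)\,\partial_x u\,\partial_{y_j}u\,\phi'$ is absorbed into $3(\partial_x u)^2+(\partial_{y_j}u)^2$ by Young's inequality with a constant in $(1,3\tan^{-2}\theta_0)$ since $\tan^2\theta_0<3$, and $(1+\tan^2\theta_0)\phi^{(3)}$ is absorbed into $-\frac{\underline c}{2}\phi'$ because $1+\tan^2\theta_0\le4$ and $|\phi^{(3)}|\le\frac{\underline c}{16}\phi'$; the cubic term is treated exactly as in \eqref{eq:cubic_term_2}, using $\|R_i(t)\phi'(\tilde x_j)\|_{L^\infty}\lesssim e^{-\frac14\sqrt{\underline c}(x_0+\frac{\underline c}{4}(t_0-t))}$ as in \eqref{sol_wave_weight}. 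After integration and letting $t_0\to\infty$ this yields the three-dimensional analogue of Lemma \ref{lemm:monotonicity_2}:
\begin{align*}
\limsup_{t\to\infty}\int\big(|\nabla u|^2+u^2\big)\big(t,\bx+\bz_1(t)\big)\,\phi\big(x+\tan(\theta_0)y_j-x_0\big)\,d\bx\lesssim e^{-\frac14\sqrt{\underline c}x_0}.
\end{align*}

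Steps two through seven then go through essentially verbatim, with the single oblique direction of \cite{CMPS16} replaced by the two families above: strong $L^2$ convergence of $u(t_{n_k},\cdot+\bz_1(t_{n_k}))$ to $\tilde u_{0,1}$ on $\{x>-B\}$, by covering that set with oblique exterior regions (where the mass is small by the monotonicity bound and the decay of $\tilde u_{0,1}$) and a bounded set handled by Rellich--Kondrachov; exponential decay of $\tilde u_{0,1}$ on finite time intervals via a Gronwall estimate on the weighted mass and energy; strong $L^2$ and then $H^1$ convergence of $u(t_{n_k}+t,\cdot+\bz_1(t_{n_k}))$ to $\tilde u_1(t)$ with convergence of the modulation parameters; passing to the weak limit to obtain, for every $t$, $j$, $|\theta_0|<\frac\pi3$ and $x_0>0$,
\begin{align*}
\int \tilde u_1^2\big(t,\bx+\tilde\bz_1(t)\big)\,\phi\big(x+\tan(\theta_0)y_j-x_0\big)\,d\bx\lesssim e^{-\frac14\sqrt{\underline c}x_0};
\end{align*}
and finally the left decay, $\int\tilde u_1^2(t,\bx+\tilde\bz_1(t))\big(1-\phi(x+\tan(\theta_0)y_j+x_0)\big)d\bx\lesssim e^{-\frac14\sqrt{\underline c}x_0}$, from a monotonicity formula with a weight centered on an oblique line that moves leftward in time but remains strictly between the two solitons, as in Lemma \ref{lemm:mono_left} and Lemma \ref{lemm:limsup2_mass}. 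Summing these two estimates over dyadic values of $x_0$ gives \eqref{eq:u_tilde1_decay_3d}. Since $|\theta_0|$ may be taken arbitrarily close to $\frac\pi3$ and $\theta_0=0$ is allowed, one has $|\bx|\lesssim\max\{|x|,\,|x\pm\sqrt3\,y_1|,\,|x\pm\sqrt3\,y_2|\}$, so \eqref{eq:u_tilde1_decay_3d} implies $\int\tilde u_1^2(t,\cdot+\tilde\bz_1(t))e^{c|\bx|}d\bx\lesssim1$ for some $c>0$; in particular the polynomial bound \eqref{decay:3d} holds, the Liouville theorem of \cite{FHRY23} applies, $\tilde u_1$ is a rescaled ground state, and the three-dimensional version of Proposition \ref{propo:asymp:1soliton} follows. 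I expect the main obstacle to be the left-decay step: one must follow the oblique line $x+\tan(\theta_0)y_j=\mathrm{const}$ as it is pushed leftward in time and check that, on the relevant time interval and for all $|\theta_0|<\frac\pi3$, it remains strictly to the right of the second solitary wave while staying to the left of the first, as in the bookkeeping of Lemma \ref{lemm:limsup2_mass} (the transverse translations $\omega_{i,j}(t)$ now enter, but are controlled by \eqref{eq:bound_dot:z_omega_i_t} as in dimension $2$).
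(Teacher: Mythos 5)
Your proposal follows essentially the same route as the paper: rerun the seven steps of Proposition \ref{propo:limit_object_1} with the oblique weight $\phi(x+\tan(\theta_0)y_j-x_0)$ for $j=1,2$, propagate the resulting decay to $\tilde u_{0,1}$ and then to $\tilde u_1$, and obtain the left decay from a second monotonicity between the two solitary waves; the subsequent verification of \eqref{decay:3d} via $|\bx|\lesssim |x|+|x+y_1|+|x+y_2|$ is also exactly the paper's argument. One small caveat: your final step of ``summing over dyadic values of $x_0$'' is borderline-divergent at the rate $\tfrac14\sqrt{\underline c}$ (each shell $\{n-1\le x+\tan(\theta_0)y_j\le n\}$ contributes $O(1)$ after multiplying by $e^{\frac14\sqrt{\underline c}n}$), so you should instead use the single pointwise comparison $e^{\frac14\sqrt{\underline c}(x+\tan(\theta_0)y_j-x_0)}\lesssim \phi(x+\tan(\theta_0)y_j-x_0)$ on $\{x+\tan(\theta_0)y_j\le x_0\}$ and let $x_0\to+\infty$, as the paper does, or accept a slightly smaller exponent (which is harmless for \eqref{decay:3d}).
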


 \begin{proof}The proof follows the same steps than the one of Proposition \ref{propo:limit_object_1}. The monotonicity result in Lemma \ref{lemm:monotonicity_2} extends to the $3$-dimensional case by replacing $y$ by $y_j$ for $j=1,2$. Then one deduce that, for any $x_0>0$, 
 \begin{align*}
		\limsup_{t\rightarrow + \infty} \int \left( \vert \nabla u \vert^2 + u^2 \right) \left(t, \bx + \bz_1(t) \right) \phi \left( x+\tan{\theta_0}y_j- x_0 \right) d\bx \lesssim e^{- \frac14 \sqrt{\underline{c}} x_0}.
\end{align*}
This decay propagates to the limit profile $\tilde{u}_{0,1}$ and to its emanating solution $\tilde{u}_1$  as in the third, fourth and fifth steps. We deduce that, for any $t \in \mathbb R$, $|\theta_0|<\frac{\pi}3$, $j=1,2$, and $x_0>0$,
\begin{align*}
		 \int \left( \vert \nabla \tilde{u}_1 \vert^2 + \tilde{u}_1^2 \right) \left(t, \bx + \tilde{\bz}_1(t) \right) \phi \left( x+\tan{\theta_0}y_j- x_0 \right) d\bx \lesssim e^{- \frac14 \sqrt{\underline{c}} x_0} , 
\end{align*}
so that 
\begin{align*}
    \int \tilde{u}_1^2  \left( t, \bx + \tilde{\bz}_1(t) \right) e^{\frac14\sqrt{\underline{c}}(x+\tan{\theta_0}y_j)} d\bx \lesssim 1, 
\end{align*} 
by using that $e^{\frac14\sqrt{\underline{c}}(x+\tan{\theta_0}y_j-x_0)} \le \phi(x+\tan{\theta_0}y_j-x_0)$ for $x+\tan{\theta_0}y_j \le x_0$ and then letting $x_0 \to +\infty$. Finally, we deduce the decay on the left in \eqref{eq:u_tilde1_decay_3d} arguing as in the sixth and seventh steps. 
 \end{proof}

 With Proposition \ref{propo:limit_object_1_3d} in hand, we prove the $3$-dimensional version of Proposition \ref{propo:asymp:1soliton}. The only difference with the $2$-dimensional case consists in proving that $\tilde{u}_1$ satisfies \eqref{decay:3d}. We proceed as follows: observe that for $\bx \in \mathbb R^3$, $|\bx| \lesssim |x|+|x+y_1|+|x+y_2|$. Then an application of \eqref{eq:u_tilde1_decay_3d} with $\theta_0=0$ and $\theta_0=\frac{\pi}4$ for $j=1,2$ yields \eqref{decay:3d}. 

 The $3$-dimensional version of Proposition \ref{propo:asymp:2soliton} is proved similarly. Then the conclusion of the proof of Theorem \ref{theo:orbital_stab} (ii) in the $3$-dimensional case follows as in the $2$-dimensional case.

	\appendix

	\section{Proof of Lemmas \ref{est:R1R2} and \ref{lemma:est:Rpsi}} \label{app:R1R2}
	Before proving Lemma \ref{est:R1R2}, we state a technical lemma.
	\begin{lemm} \label{lemma:fg}
		Let $f,g \in C^{\infty}(\mathbb R^d)$ be two functions satisfying, for some $n \in \mathbb N$,
		\begin{equation} \label{def:fg}
			|f(\bx)|+|g(\bx)| \lesssim \langle \bx \rangle^n e^{-|\bx|}.
		\end{equation}
		Then, there exists a positive constant $c=c(n,d)>1$, such that for all $(z,\omega) \in \mathbb R^d$ with $z>c(n,d) $,  
		\begin{align} 
			\left|f(\bx-(z,\omega))g(\bx) \right| &\lesssim  e^{-z}(z^n+\langle (x,y-\omega) \rangle^n)\langle (x,y) \rangle^n, \label{est:fg:pointwise.1} \\ 
			\left|f(\bx-(z,\omega))g(\bx) \right| &\lesssim e^{-\frac{31}{32}z}\left( {\bf 1}_{\{x<0\}}e^{-\frac1{64}|(x,y- \omega)|}+{\bf 1}_{\{0<x<z\}}(e^{-\frac1{64}|y|}+e^{-\frac1{64}|y-\omega|}) \right. \nonumber\\ & \left.\quad \quad \quad \quad +{\bf 1}_{\{x>z\}}e^{-\frac1{64}|(x,y)|} \right) \label{est:fg:pointwise.2}
		\end{align}
		and, for all $1 \le p<\infty$,
		\begin{equation} \label{est:fg:Lp}
			\|f(\cdot-(z,\omega))g\|_{L^p} \lesssim e^{-\frac{15}{16}z}.
		\end{equation}
		
	\end{lemm}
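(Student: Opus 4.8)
\textbf{Proof sketch for Lemma \ref{lemma:fg}.}

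The plan is to treat the pointwise estimates first and then deduce the $L^p$ bound by integration. The whole argument rests on splitting the exponential weight $e^{-|\bx-(z,\omega)|}e^{-|\bx|}$ according to where the bulk of the decay comes from. First I would record the elementary inequality $|\bx-(z,\omega)| + |\bx| \ge |(z,\omega)| = \sqrt{z^2+\omega^2} \ge z$, valid for all $\bx$, which already gives $|f(\bx-(z,\omega))g(\bx)| \lesssim \langle \bx-(z,\omega)\rangle^n \langle \bx \rangle^n e^{-z}$. To upgrade the polynomial factor in \eqref{est:fg:pointwise.1} I would bound $\langle \bx - (z,\omega)\rangle^n \lesssim \langle (z,\omega)\rangle^n + \langle (x, y-\omega)\rangle^n \lesssim z^n + \omega^n + \langle(x,y-\omega)\rangle^n$; since $\omega$ can be absorbed into $\langle (x,y-\omega)\rangle^n$ up to a constant when needed, or more directly one keeps the statement as written after noting $\langle (x,y-\omega)\rangle$ already controls the $\omega$-part on the region where $|\bx|$ is not too large. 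The cleaner route: use $|\bx-(z,\omega)| \ge \max(0, z-|\bx|)$ together with $|\bx|+|\bx-(z,\omega)|\ge z$ to get, after a case distinction on $|\bx| \lessgtr z/2$, the bound $e^{-z}$ times a polynomial that is at most $C(z^n+\langle(x,y-\omega)\rangle^n)\langle(x,y)\rangle^n$.

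For \eqref{est:fg:pointwise.2}, which is the key refinement, I would keep a fraction of each exponential in reserve. Write $e^{-|\bx-(z,\omega)|-|\bx|} = e^{-\frac{31}{32}(|\bx-(z,\omega)|+|\bx|)}\,e^{-\frac1{32}(|\bx-(z,\omega)|+|\bx|)}$ and use $|\bx-(z,\omega)|+|\bx|\ge z$ on the first factor, so the prefactor $e^{-\frac{31}{32}z}$ is extracted. It remains to show that $e^{-\frac1{32}(|\bx-(z,\omega)|+|\bx|)}$, after absorbing the polynomial $\langle\bx-(z,\omega)\rangle^n\langle\bx\rangle^n$ into, say, a further $e^{-\frac1{64}(\cdots)}$, dominates the claimed indicator-weighted sum. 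Here I distinguish the three regions $x<0$, $0<x<z$, $x>z$. On $\{x<0\}$: then $|\bx-(z,\omega)| \ge |x-z| = z-x \ge z$ so in fact this factor is huge, and $|\bx| \ge |(x,y-\omega)| - |\omega| $... more carefully, on $x<0$ one has $|\bx| \ge |x|$ and one uses $|\bx-(z,\omega)| \ge |(x,y-\omega)| $ is false in general, so instead use that the $x$-component of $\bx-(z,\omega)$ is $x-z$ which has absolute value $\ge |(x,y-\omega)|$ only partially; the honest estimate is $|\bx-(z,\omega)|\ge |y-\omega|$ and $|\bx-(z,\omega)| \ge z-x \ge \tfrac12 z + \tfrac12|x| + \tfrac12(-x)\ge \ldots$; combining $\tfrac12(|\bx-(z,\omega)|+|\bx|)$ one can extract $e^{-\frac1{64}|(x,y-\omega)|}$. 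On $\{0<x<z\}$: here $|\bx| \ge |y|$ and $|\bx-(z,\omega)|\ge |y-\omega|$, which directly yields the factor $e^{-\frac1{64}|y|}+e^{-\frac1{64}|y-\omega|}$ (at least one of the two, hence bounded by the sum). On $\{x>z\}$: then $|\bx| \ge x \ge z$ so $|\bx| \ge |(x,y)|$ fails but $|\bx|=|(x,y)|$ tautologically, giving $e^{-\frac1{64}|(x,y)|}$. In each region the leftover polynomial is swallowed using $\langle t\rangle^{2n} e^{-t/64}\lesssim 1$ and the condition $z>c(n,d)$ guarantees no constants blow up.

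Finally, \eqref{est:fg:Lp} follows by integrating \eqref{est:fg:pointwise.2}: on $\{x<0\}$ integrate $e^{-\frac1{64}|(x,y-\omega)|}$ over $\mathbb R^d$ to get a finite constant; on $\{0<x<z\}$ the $x$-integral contributes a factor $\lesssim z$ and the $y$-integral of $e^{-\frac1{64}|y|}+e^{-\frac1{64}|y-\omega|}$ is $O(1)$, giving $z\cdot e^{-\frac{31}{32}p\cdot z/p}$... so after taking $p$-th roots one gets $z^{1/p}e^{-\frac{31}{32}z} \lesssim e^{-\frac{15}{16}z}$ since $z^{1/p}e^{-\frac1{32}z}\lesssim 1$ for $z$ large; on $\{x>z\}$ the integral of $e^{-\frac1{64}|(x,y)|}$ over $\{x>z\}$ is $\lesssim e^{-\frac1{64}z}$ or simply $O(1)$, again fine. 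Taking the $L^p$ norm and the $p$-th root turns $e^{-\frac{31}{32}z}$ (which appears to the power $p$ inside) into $e^{-\frac{31}{32}z}$, and the polynomial-in-$z$ losses are absorbed into the gap between $\frac{31}{32}$ and $\frac{15}{16}$, since $\frac{15}{16}=\frac{30}{32}<\frac{31}{32}$. The main obstacle is purely bookkeeping: carefully choosing the split of the exponents (the fractions $\frac{31}{32}$, $\frac1{32}$, $\frac1{64}$, $\frac{15}{16}$) so that every polynomial factor coming from $\langle\cdot\rangle^n$ and every power of $z$ from the volume of the middle strip is absorbed, while respecting that the constants are allowed to depend on $n$ and $d$ through the threshold $c(n,d)$. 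I would do the region analysis for $d=2$ explicitly and remark that the $d=3$ (and general $d$) case is identical with $y$ replaced by $\by$.
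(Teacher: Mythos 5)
Your proposal is correct and follows essentially the same route as the paper: the same three-region split $\{x<0\}$, $\{0<x<z\}$, $\{x>z\}$, the same device of reserving a small fraction of the exponential weight (the $\frac{31}{32}$ versus $\frac1{32}=\frac1{64}+\frac1{64}$ split) to produce the localized decay factors and absorb the polynomial weights $\langle\cdot\rangle^n$, and the same integration of the pointwise bound \eqref{est:fg:pointwise.2} to obtain \eqref{est:fg:Lp}, with the strip of width $z$ accounting for the loss from $\frac{31}{32}$ to $\frac{15}{16}$. The only cosmetic difference is that you extract $e^{-z}$ from the global triangle inequality $|\bx-(z,\omega)|+|\bx|\ge z$, whereas the paper uses only one of the two exponentials in each outer region; this changes nothing.
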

	
	\begin{proof}
		Observe that \eqref{est:fg:Lp} is a direct consequence of \eqref{est:fg:pointwise.2}. We turn to the proof of \eqref{est:fg:pointwise.1} and \eqref{est:fg:pointwise.2}. In the region $x<0$, $|(x-z,y-\omega)| \ge |x-z|>z$ and $|(x-z,y-\omega))| \ge \frac1{\sqrt{2}}(|x|+|y-\omega|) \ge \frac1{\sqrt{2}}|(x,y-\omega)|$. Thus it follows that
		\begin{align*}
		  \left|f(\bx-(z,\omega))g(\bx) \right| & \lesssim e^{-|(x-z,y-\omega)|} \langle (x-z,y-\omega) \rangle^n \\ 
   & \lesssim \min\{ e^{-z}(z^n+\langle (x,y-\omega)\rangle^n),e^{-\frac{31}{32}z}e^{-\frac1{64}|(x,y-\omega))|}\} .
		\end{align*}
		In the region $x>z$, we argue similarly and use $|x|>z$, so that 
		\begin{equation*}
			\left|f(\bx-(z,\omega))g(\bx) \right| \lesssim e^{-|(x,y)|} \langle (x,y) \rangle^n \lesssim \min\{ e^{-z}(z^n+\langle (x,y) \rangle^n),e^{-\frac{31}{32}z}e^{-\frac1{64}|(x,y)|}\} .
		\end{equation*}
		Finally, in the region $0<x<z$, we have $|(x-z,y-\omega)| \ge z-x$ and $|(x,y)| \ge x$, so that 
		\begin{equation*}
			\left|f(\bx-(z,\omega))g(\bx) \right| 
			\lesssim \min\{ e^{-z}(z^n+\langle (x,y-\omega) \rangle^n)\langle (x,y) \rangle^n,e^{-\frac{31}{32}z}(e^{-\frac1{64}|y|}+e^{-\frac1{64}|y+\omega|})\} .
		\end{equation*}
		Therefore, gathering these estimates conclude the proof of \eqref{est:fg:pointwise.1} and \eqref{est:fg:pointwise.2}.
	\end{proof}
	
	\begin{proof}[Proof of Lemma \ref{est:R1R2}]
		The first set of identities follow directly from a rescaling argument. 
		
		Let us prove the first inequality in the second set of estimates. 
		By using the bound \eqref{asym:Q} and the notation $(z,\omega)=(z_1-z_2,\omega_1-\omega_2)$, we find  
		\begin{align*} 
			\left| \int R_1 R_2 \right| & \lesssim c_1c_2 \int e^{-\sqrt{c_1}|\bx-(z_1,\omega_1)|}e^{-\sqrt{c_2}|\bx-(z_2,\omega_2)|}d\bx  \\ 
			&=c_1 \int e^{-\sqrt{c_1c_2^{-1}}|\bx-\sqrt{c_2}(z,\omega)|}e^{-|\bx|} d\bx \\
			& \le c_1 \int e^{-|\bx-\sqrt{c_2}(z,\omega)|}e^{-|\bx|} d\bx , 
		\end{align*}
		so that \eqref{est:fg:Lp} and 
  \eqref{estimate_modulation:c:bis} conclude the proof of the first estimate on the right-hand side of \eqref{est:R1R2.1}.
		
		The proof of \eqref{est:R1R2.1}-\eqref{est:R1R2.5} follows arguing similarly.
		
		Finally we estimate \eqref{est:R1R2.6}. By using the bound \eqref{asym:Q} and \eqref{est:fg:pointwise.1}, we deduce that
		\begin{align*}
			\MoveEqLeft
			\left\vert \int \partial_x(R_1 R_2) R_1 \right\vert \\ & \lesssim c_1^2 c_2 \int e^{-2\sqrt{c_1}|\bx-(z_1,\omega_1)|}e^{-\sqrt{c_2}|\bx-(z_2,\omega_2)|}d\bx   \le c_1^3 \int e^{-2|\bx-\sqrt{c_2}(z,\omega)|}e^{-|\bx|} d\bx \lesssim c_1^3 e^{-\sqrt{c_2} z} .
		\end{align*}
		Estimate \eqref{est:R1R2.7} follows by arguing in a similar way.
	\end{proof}

\begin{proof}[Proof of Lemma \ref{lemma:est:Rpsi}]
    We first start by proving \eqref{est:Rpsi}. Observe from \eqref{asym:Q} and \eqref{prop:psi} that
			\begin{align*} 
				\left| R_1(\bx) \psi_\gamma'\left( x - m(t)\right) \right| 
            \lesssim \bar{c} e^{-\frac12\sqrt{\underline{c}}\left(\left| x-z_1(t)\right|+\left| x-m(t)\right| \right)}.
			\end{align*}
   In the case where $x<m(t)$, we have $\vert x-z_1(t) \vert=z_1(t)-x>\frac12 z$, while in the case $x>z_1(t)$, we have $\vert x-m(t) \vert=x-m(t)>\frac12 z$. Finally, in the case $x \in [m(t),z_1(t)]$ we have $ \vert x-z_1(t) \vert + \vert x- m(t) \vert = z_1(t)-x +x- m(t) = \frac12 z(t)$. Thus, we conclude from the above estimates and \eqref{eq:boundz} that 
            \begin{align*}
				\| R_1 \psi_\gamma'\left( \cdot - m(t)\right) \|_{L^\infty}  \lesssim \bar{c} e^{-\frac14 \sqrt{\underline{c}} z} \lesssim e^{-\frac14 \sqrt{\underline{c}} \left(Z+\sigma t\right)}
			\end{align*}
			The estimate for the second term on the left-hand side of \eqref{est:Rpsi} is obtained arguing similarly. 

   Next, we turn to the proof of \eqref{ineq:R_psi_L2}. Observe that, for $z>0$, $\omega\in \mathbb{R}$ and $c>0$, it holds
			\begin{align}\label{ineq:Qc_psi}
				\left\| Q_c \left( \cdot + z, \cdot - \omega \right) \psi \right\|_{L^2} \lesssim c^{\frac12} e^{ - \frac12\min\{1,\sqrt{c}\} z}.
			\end{align}
			Indeed, on the one hand, in the region where $x\leq -\frac{z}{2}$, we have $\psi(x) \lesssim e^{- \frac{z}{2}}$ (see \eqref{defi:psi}), and thus, it follows from \eqref{asym:Q} that
			\begin{align*}
				\left\| Q_c \left( \cdot +z, \cdot - \omega \right) \psi(\cdot) \right\|_{L^2(x<-\frac{z}2)}^2  \lesssim c^2 \iint_{x \leq -\frac{z}{2}} e^{- 2 \sqrt{c} \left\vert (x+z,y-\omega) \right\vert } e^{-z} dx dy \lesssim c e^{-z}. 
			\end{align*}
			On the other hand, in the region $x>-\frac{z}{2}$, we use $0\le \psi \le 1$, a polar coordinate change and the decay \eqref{asym:Q} to deduce
			\begin{align*}
				\left\| Q_c \left( \cdot + z, \cdot - \omega \right) \psi \right\|_{L^2(x > - \frac{z}{2})}^2 
				& \lesssim c^2 \int_{r>\frac{z}{2}} \frac{1}{r} e^{- 2 \sqrt{c}r} r dr \lesssim c e^{-\sqrt{c}z}.
			\end{align*}

			We now prove \eqref{ineq:R_psi_L2}. By using $\psi(-x)=1-\psi(x)$, \eqref{ineq:Qc_psi} and $4c_1>\gamma$ (see \eqref{eq:c_n}), we have 
            \begin{align*}
				\left\| R_1 \left( \psi_\gamma (\cdot-m)- 1 \right) \right\|_{L^2} = \frac{\sqrt{\gamma}}2 \left\| Q_{\frac{4c_1}{\gamma}} \left( \cdot + \frac{\sqrt{\gamma}}4 z, \cdot - \frac{\sqrt{\gamma}}2\omega_1  \right) \psi \right\|_{L^2} \lesssim \bar{c}^{\frac12} e^{ -\frac18 \sqrt{\gamma}z} .
			\end{align*}
           Similarly, the same estimate holds for $\|R_2\psi_s(\cdot-m)\|_{L^2}$ in the case $4c_2 \geq \gamma$. In the case where $4c_2<\gamma$, we obtain from \eqref{ineq:Qc_psi} that 
			\begin{align*}
				\left\| R_2 \psi_\gamma \right\|_{L^2} = \frac{\sqrt{\gamma}}2 \left\| Q_{\frac{4c_2}{\gamma}} \left( \cdot +\frac{\sqrt{\gamma}}4 z , \cdot -\frac{\sqrt{\gamma}}4 \omega_2 \right) \psi \right\|_{L^2} \lesssim \underline{c}^\frac12  e^{-\frac14 \sqrt{c_2}z}.
			\end{align*}
			Therefore, we conclude the proof of \eqref{ineq:R_psi_L2} by combining these estimates with \eqref{estimate_modulation:c:bis} and \eqref{eq:c_n}.
\end{proof}

\section*{Acknowledgements} 

The authors were supported by a Trond Mohn Foundation grant.

	\bibliography{biblio}

\begin{thebibliography}{10}

\bibitem{Bellan}
Paul~M. Bellan.
\newblock {\em Fundamentals of Plasma Physics,}.
\newblock Cambridge University Press. Cambridge, 2006.

\bibitem{BLP81}
H.~Berestycki, P.-L. Lions, and L.~A. Peletier.
\newblock An {ODE} approach to the existence of positive solutions for
  semilinear problems in {${\bf R}^{N}$}.
\newblock {\em Indiana Univ. Math. J.}, 30(1):141--157, 1981.

\bibitem{CMPS16}
Rapha\"{e}l C\^{o}te, Claudio Mu\~{n}oz, Didier Pilod, and Gideon Simpson.
\newblock Asymptotic stability of high-dimensional {Z}akharov-{K}uznetsov
  solitons.
\newblock {\em Arch. Ration. Mech. Anal.}, 220(2):639--710, 2016.

\bibitem{deB96}
Anne de~Bouard.
\newblock Stability and instability of some nonlinear dispersive solitary waves
  in higher dimension.
\newblock {\em Proc. Roy. Soc. Edinburgh Sect. A}, 126(1):89--112, 1996.

\bibitem{Eyc22}
Arnaud Eychenne.
\newblock Asymptotic {N}-soliton-like solutions of the fractional
  {K}orteweg–de {V}ries equation.
\newblock {\em Rev. Mat. Iberoam}, 2022.

\bibitem{Fam}
A.~V. Faminski\u{\i}.
\newblock The {C}auchy problem for the {Z}akharov-{K}uznetsov equation.
\newblock {\em Diff. Eq.}, 31(6):1070--1081, 1103, 1995.

\bibitem{FHRY23}
Luiz~Gustavo Farah, Justin Holmer, Svetlana Roudenko, and Kai Yang.
\newblock Asymptotic stability of solitary waves of the 3d quadratic
  {Z}akharov-{K}uznetsov equation.
\newblock {\em American Journal of Mathematics}, 145(6):1695--1775, 2023.

\bibitem{GNN81}
B.~Gidas, Wei~Ming Ni, and L.~Nirenberg.
\newblock Symmetry of positive solutions of nonlinear elliptic equations in
  {${\bf R}^{n}$}.
\newblock In {\em Mathematical analysis and applications, {P}art {A}}, volume~7
  of {\em Adv. in Math. Suppl. Stud.}, pages 369--402. Academic Press, New
  York-London, 1981.

\bibitem{GruHe}
Axel Gr\"{u}nrock and Sebastian Herr.
\newblock The {F}ourier restriction norm method for the {Z}akharov-{K}uznetsov
  equation.
\newblock {\em Discrete Contin. Dyn. Syst.}, 34(5):2061--2068, 2014.

\bibitem{HanKwan}
Daniel Han-Kwan.
\newblock From {V}lasov-{P}oisson to {K}orteweg--de {V}ries and
  {Z}akharov-{K}uznetsov.
\newblock {\em Comm. Math. Phys.}, 324(3):961--993, 2013.

\bibitem{HeKin}
Sebastian Herr and Shinya Kinoshita.
\newblock Subcritical well-posedness results for the {Z}akharov-{K}uznetsov
  equation in dimension three and higher.
\newblock {\em Ann. Inst. Fourier (Grenoble)}, 73(3):1203--1267, 2023.

\bibitem{Kino}
Shinya Kinoshita.
\newblock Global well-posedness for the {C}auchy problem of the
  {Z}akharov-{K}uznetsov equation in 2{D}.
\newblock {\em Ann. Inst. H. Poincar\'{e} C Anal. Non Lin\'{e}aire},
  38(2):451--505, 2021.

\bibitem{KRS21}
Christian Klein, Svetlana Roudenko, and Nikola Stoilov.
\newblock Numerical study of soliton stability, resolution and interactions in
  the 3{D} {Z}akharov-{K}uznetsov equation.
\newblock {\em Phys. D}, 423:Paper No. 132913, 23, 2021.

\bibitem{ZK}
E.~A. Kuznetsov and V.~E. Zakharov.
\newblock On three dimensional solitons.
\newblock {\em Sov. Phys. JETP}, 39:285--286, 1974.

\bibitem{Kwo89}
Man~Kam Kwong.
\newblock Uniqueness of positive solutions of {$\Delta u-u+u^p=0$} in {${\bf
  R}^n$}.
\newblock {\em Arch. Rational Mech. Anal.}, 105(3):243--266, 1989.

\bibitem{LaLiSa}
David Lannes, Felipe Linares, and Jean-Claude Saut.
\newblock The {C}auchy problem for the {E}uler-{P}oisson system and derivation
  of the {Z}akharov-{K}uznetsov equation.
\newblock In {\em Studies in phase space analysis with applications to {PDE}s},
  volume~84 of {\em Progr. Nonlinear Differential Equations Appl.}, pages
  181--213. Birkh\"{a}user/Springer, New York, 2013.

\bibitem{LiPa}
Felipe Linares and Ademir Pastor.
\newblock Well-posedness for the two-dimensional modified
  {Z}akharov-{K}uznetsov equation.
\newblock {\em SIAM J. Math. Anal.}, 41(4):1323--1339, 2009.

\bibitem{LiSa}
Felipe Linares and Jean-Claude Saut.
\newblock The {C}auchy problem for the 3{D} {Z}akharov-{K}uznetsov equation.
\newblock {\em Discrete Contin. Dyn. Syst.}, 24(2):547--565, 2009.

\bibitem{Ma05}
Yvan Martel.
\newblock Asymptotic {$N$}-soliton-like solutions of the subcritical and
  critical generalized {K}orteweg-de {V}ries equations.
\newblock {\em Amer. J. Math.}, 127(5):1103--1140, 2005.

\bibitem{MA06}
Yvan Martel.
\newblock Linear problems related to asymptotic stability of solitons of the
  generalized {K}d{V} equations.
\newblock {\em SIAM J. Math. Anal.}, 38(3):759--781, 2006.

\bibitem{MM01}
Yvan Martel and Frank Merle.
\newblock Asymptotic stability of solitons for subcritical generalized {K}d{V}
  equations.
\newblock {\em Arch. Ration. Mech. Anal.}, 157(3):219--254, 2001.

\bibitem{MM05}
Yvan Martel and Frank Merle.
\newblock Asymptotic stability of solitons of the subcritical g{K}d{V}
  equations revisited.
\newblock {\em Nonlinearity}, 18(1):55--80, 2005.

\bibitem{MM08}
Yvan Martel and Frank Merle.
\newblock Asymptotic stability of solitons of the g{K}d{V} equations with
  general nonlinearity.
\newblock {\em Math. Ann.}, 341(2):391--427, 2008.

\bibitem{MM11}
Yvan Martel and Frank Merle.
\newblock Inelastic interaction of nearly equal solitons for the quartic
  g{K}d{V} equation.
\newblock {\em Invent. Math.}, 183(3):563--648, 2011.

\bibitem{MMT02}
Yvan Martel, Frank Merle, and Tai-Peng Tsai.
\newblock Stability and asymptotic stability in the energy space of the sum of
  {$N$} solitons for subcritical g{K}d{V} equations.
\newblock {\em Comm. Math. Phys.}, 231(2):347--373, 2002.

\bibitem{MMPP21}
Argenis~J. Mendez, Claudio Mu\~{n}oz, Felipe Poblete, and Juan~C. Pozo.
\newblock On local energy decay for large solutions of the
  {Z}akharov-{K}uznetsov equation.
\newblock {\em Comm. Partial Differential Equations}, 46(8):1440--1487, 2021.

\bibitem{Miura76}
Robert~M. Miura.
\newblock The {K}orteweg-de {V}ries equation: a survey of results.
\newblock {\em SIAM Rev.}, 18(3):412--459, 1976.

\bibitem{Mol18}
Luc Molinet.
\newblock A {Liouville} property with application to asymptotic stability for
  the {Camassa}-{Holm} equation.
\newblock {\em Arch. Ration. Mech. Anal.}, 230(1):185--230, 2018.

\bibitem{MoPi}
Luc Molinet and Didier Pilod.
\newblock Bilinear {S}trichartz estimates for the {Z}akharov-{K}uznetsov
  equation and applications.
\newblock {\em Ann. Inst. H. Poincar\'{e} C Anal. Non Lin\'{e}aire},
  32(2):347--371, 2015.

\bibitem{PV23}
Didier Pilod and Fr\'{e}d\'{e}ric Valet.
\newblock Dynamics of the collision of nearly equal solitons for the
  {Z}akharov-{K}uznetsov equation.
\newblock {\em in preparation}, 2023.

\bibitem{RiVe}
Francis Ribaud and St\'{e}phane Vento.
\newblock Well-posedness results for the three-dimensional
  {Z}akharov-{K}uznetsov equation.
\newblock {\em SIAM J. Math. Anal.}, 44(4):2289--2304, 2012.

\bibitem{Strau77}
Walter~A. Strauss.
\newblock Existence of solitary waves in higher dimensions.
\newblock {\em Comm. Math. Phys.}, 55(2):149--162, 1977.

\bibitem{Va21}
Fr\'{e}d\'{e}ric Valet.
\newblock Asymptotic {$K$}-soliton-like solutions of the {Z}akharov-{K}uznetsov
  type equations.
\newblock {\em Trans. Amer. Math. Soc.}, 374(5):3177--3213, 2021.

\end{thebibliography}
	\bibliographystyle{plain}
	
	
\end{document}